\newtheorem{theorem}{Theorem}[section]
\newtheorem{proposition}[theorem]{Proposition}
\newtheorem{lemma}[theorem]{Lemma}
\newtheorem{definition}[theorem]{Definition}
\newtheorem{remark}[theorem]{Remark}
\newtheorem{corollary}[theorem]{Corollary}
\newtheorem{question}[theorem]{Question}
\DeclareFontFamily{OMX}{MnSymbolE}{}
\DeclareSymbolFont{MnLargeSymbols}{OMX}{MnSymbolE}{m}{n}
\DeclareFontShape{OMX}{MnSymbolE}{m}{n}{
    <-6>  MnSymbolE5
   <6-7>  MnSymbolE6
   <7-8>  MnSymbolE7
   <8-9>  MnSymbolE8
   <9-10> MnSymbolE9
  <10-12> MnSymbolE10
  <12->   MnSymbolE12
}{}
\DeclareFontShape{OMX}{MnSymbolE}{b}{n}{
    <-6>  MnSymbolE-Bold5
   <6-7>  MnSymbolE-Bold6
   <7-8>  MnSymbolE-Bold7
   <8-9>  MnSymbolE-Bold8
   <9-10> MnSymbolE-Bold9
  <10-12> MnSymbolE-Bold10
  <12->   MnSymbolE-Bold12
}{}
\let\llangle\@undefined
\let\rrangle\@undefined
\DeclareMathDelimiter{\llangle}{\mathopen}%
                     {MnLargeSymbols}{'164}{MnLargeSymbols}{'164}
\DeclareMathDelimiter{\rrangle}{\mathclose}%
                     {MnLargeSymbols}{'171}{MnLargeSymbols}{'171}
\DeclareMathOperator{\C}{\mathcal{C}}
\DeclareMathOperator{\NC}{\mathcal{N_C}}
\DeclareMathOperator{\PT}{\mathcal{PT}}
\DeclareMathOperator{\proC}{pro-\mathcal{C}}
\let\Im\relax
\DeclareMathOperator{\Im}{Im}
\DeclareMathOperator{\supp}{supp}
\DeclareMathOperator{\stab}{stab}
\DeclareMathOperator{\Aut}{Aut}
\DeclareMathOperator{\Inn}{Inn}
\DeclareMathOperator{\Sym}{Sym}
\DeclareMathOperator{\id}{id}
\DeclareMathOperator{\ord}{ord}
\DeclareMathOperator{\Lift}{\mathrm{Lift}}
\newcommand\Tree{\ensuremath{T}}
\DeclareMathOperator{\fileq}{\leq_{\text{f.i.}}}
\DeclareMathOperator{\normleq}{\unlhd}
\title{$\mathcal{C}$-Hereditarily conjugacy separable groups and wreath products}
\author{Alex Bishop}
\address[Alex Bishop]{Section de math\'{e}matiques, Universit\'{e} de Gen\`{e}ve, rue du Conseil-G\'{e}n\'{e}ral 7-9, 1205 Gen\`{e}ve, Switzerland}
\email[Alex Bishop]{alexbishop1234@gmail.com}
\author{Michal Ferov}
\address[Michal Ferov]{School of Information and Physical Sciences, University of Newcastle, University Drive, Callaghan, NSW 2308, Australia}
\email[Michal Ferov]{michal.ferov@gmail.com}
\author{Mark Pengitore}
\address[Mark Pengitore]{IMPAN, ul. \'{S}niadeckich 8, 00-656 Warszawa, Poland}
\email[Mark Pengitore]{mpengito@gmail.com}
\renewcommand\leq\leqslant
\renewcommand\geq\geqslant
\begin{document}

\begin{abstract}
    We provide a necessary and sufficient condition for the restricted wreath product $A\wr B$ to be $\C$-hereditarily conjugacy separable where $\C$ is an extension-closed pseudovariety of finite groups. Moreover, we prove that the Grigorchuk group is 2-hereditarily conjugacy separable. As an application, we demonstrate that the lamplighter groups and $\mathbb{Z} \wr \mathbb{Z}$ are hereditarily conjugacy separable (but not $p$-conjugacy separable for any prime $p$) which provides infinitely many new examples of solvable, non-polycyclic hereditarily conjugacy separable groups. Furthermore, we study wreath products of cyclic subgroup separable groups and the derived length of iterated wreath products of solvable groups with an abelian base group and, as an application, we give an explicit construction of non-polycyclic hereditarily conjugacy separable groups of arbitrary derived length as an iterated wreath products of abelian groups.
\end{abstract}

\maketitle

\tableofcontents

\section{Introduction}
A standard way of studying infinite discrete groups is to consider their finite quotients.
Thus, a naturally arising question is how much information about the group can be recovered from its finite quotients. A group $G$ is called \emph{residually finite}~(\emph{RF}) if for any non-trivial element $g \in G$, there exists a surjective homomorphism $\pi \colon G \to F$ onto a finite group $F$ such that $\pi(g)$ is non-trivial in $F$. Informally speaking, a group is residually finite if we can distinguish its individual elements from the identity in its finite quotients. A group $G$ is called \emph{conjugacy separable}~(\emph{CS}) if for any pair of non-conjugate elements $f,g \in G$, there is a surjective homomorphism $\pi \colon G \to F$ onto a finite group $F$ such that the elements $\pi(f)$ and $\pi(g)$ are not conjugate in $F$. Again, informally speaking, a group is conjugacy separable if we can distinguish conjugacy classes of its elements in its finite quotients.

Group properties of this type are called \emph{separability properties}. In this paper, we study $\C$-conjugacy separability (which is a more general version of conjugacy separability), $\C$-hereditary conjugacy separability (which is a strengthening of $\C$-conjugacy separability), and the closure properties of the class of $\C$-hereditarily separable groups with respect to the construction of wreath products.

\subsection{Motivation}
Separability properties provide an algebraic analogue to the solvability of decision problems for finitely presented groups. Mal'tsev~\cite{malcev} proved that finitely presented residually finite groups have solvable word problem. Similarly, Mostowski~\cite{mostowski} showed that finitely presented conjugacy separable groups have solvable conjugacy problem.

Even though the definition of conjugacy separability is similar to that of residual finiteness, it is a much stronger condition. In particular, it can be easily seen that every CS group is RF, but the implication in the opposite direction does not hold. A simple example of a RF group which is not CS was given independently by Stebe~\cite{stebe_sl3z} and Remeslenikov~\cite{remeslennikov} when they proved that $\text{SL}_3(\mathbb{Z})$ is not CS.

The following classes of groups are known to be conjugacy separable: 
\begin{itemize}
    \item virtually free groups (Dyer~\cite{dyer}),
    \item virtually polycyclic groups (Formanek~\cite{polycyclic_formanek}, Remeslennikov~\cite{polycyclic_remeslennikov}),
    \item virtually surface groups (Martino~\cite{armando}),
    \item limit groups (Chagas and Zalesskii~\cite{limit}),
    \item finitely generated right-angled Artin groups (Minasyan~\cite{ashot_raags}),
    \item even Coxeter groups whose diagram does not contain $(4,4,2)$-triangles (Caprace and Minasyan~\cite{racgs}),
    \item finitely presented residually free groups (Chagas and Zalesskii~\cite{chagas}),
    \item one-relator groups with torsion (Minasyan and Zalesskii~\cite{1-rel}), and
    \item fundamental groups of compact orientable 3-manifolds (Hamilton, Wilton and Zalesskii~\cite{compact}).
\end{itemize} 
It is easy to see that being residually finite is a hereditary property, that is, if a group $G$ is residually finite, then every subgroup $H \leq G$ is residually finite as well. However, conjugacy separability does not behave well with respect to subgroup inclusion, not even of finite index. Martino and Minasyan~\cite{armando_ashot} showed that for every integer $m \geq 2$, there exists a finitely presented conjugacy separable group $T$ that contains a subgroup $S \leq T$ such that $|T:S| = m$ and $S$ is not conjugacy separable. We say that a group $G$ is \emph{hereditarily conjugacy separable} (\emph{HCS}) if $G$ and each of its finite index subgroups, $H\leq G$, are conjugacy separable.

It is natural to study the closure properties of the class of HCS groups with respect to group-theoretic constructions. For example, it is clear from the definition that the class of conjugacy separable groups is closed under forming finite direct product. It was proved by Stebe~\cite{stebe} and independently by Remeslennikov~\cite{remeslennikov} that the class of CS groups is closed under taking free products, and thus, it follows that a free product of HCS groups is again an HCS group. The second named author~\cite{mf} showed that graph products of HCS groups---a natural generalization of direct and free products of groups---are again HCS. Remeslennikov~\cite{remeslennikov} showed that if $A$ and $B$ are both CS groups, then their restricted wreath product $A \wr B$ is a conjugacy separable group if and only if either $B$ is finite, or $B$ is infinite with the additional properties that every cyclic subgroup of $B$ is separable and $A$ is abelian.

A natural way of generalizing separability properties is to only consider finite quotients of a specific type, say finite nilpotent groups or finite $p$-groups for some prime $p$. Let $\mathcal{C}$ be a class of finite groups (we will always assume that classes of finite groups are closed under isomorphisms), and let $G$ be a group.	We say that $G$ is \emph{residually}-$\C$ if for every non-trivial element $g \in G$, there is a group $F \in \C$ and a homomorphism $\phi \colon G \to F$ such that $\phi(g)$ is non-trivial in $F$. Similarly, we say that $G$ is \emph{$\C$-conjugacy separable} (\emph{$\C$-CS}) if for pairs of elements $f,g \in G$ such that $f$ is not conjugate to $g$ in $G$, there is a group $F \in \C$ and a homomorphism $\phi \colon G \to F$ such that $\phi(f)$ is not conjugate to $\phi(g)$ in $F$.
In the case that $\C$ is an extension-closed pseudovariety of finite groups (see subsection \ref{subsection:proc topologies} for more details), we say that $G$ is \emph{$\C$-hereditarily conjugacy separable} if it is $\C$-conjugacy separable and every subgroup $H \leq G$, open in pro-$\C$ topology, is $\C$-conjugacy separable as well ($H$ is open in pro-$\C$ topology if and only if there is $K \normleq G$ such that $K \leq H$ and $G/K \in \C$ -- see subsection \ref{subsection:proc topologies}).

In this paper, we are interested in classes of finite groups $\C$ which are \emph{extension-closed pseudovarieties of finite groups}, that is, classes of finite groups which are closed under taking subgroups, direct products, quotients and extensions. Three common examples of extension-closed pseudovarieties of finite groups are the class of all finite $p$-groups where $p$ is a prime number, the class of all finite solvable groups, and the class of all finite groups.  

We have the following examples of $\C$-conjugacy separable groups:
\begin{itemize}
    \item right-angled Artin groups and free-by-$\C$ groups are $\C$-conjugacy separable when $\C$ is the class of all finite $p$-groups for some prime $p$ (Toinet~\cite{toinet2013conjugacy}), and
    \item free-by-$\C$ groups are $\C$-conjugacy separable when $\C$ is an extension-closed pseudovariety of finite groups (Ribes and Zalesskii~\cite{zalesskii}).
\end{itemize}

In this paper we study the behaviour of $\C$-hereditary conjugacy separability, where the class $\C$ is an extension-closed pseudovariety of finite groups, under the construction of wreath products. It is easy to see that a direct product of $\C$-CS groups is again a $\C$-CS groups, and similarly for $\C$-hereditary conjugacy separable groups. Using the result of Ribes and Zalesskii~\cite{zalesskii} that free-by-$\C$ groups are $\C$-CS, one could easily generalize the result of Stebe~\cite{stebe} and Remeslennikov~\cite{remeslennikov} to show that $\C$-CS and $\C$-HCS groups are closed under taking free products. Furthermore, the second named author~\cite{mf} showed that the classes of $\C$-CS and $\C$-HCS groups are closed under forming graph products. 

\subsection{Statement of results}
The following theorem was proved by Remeslennikov \cite[Theorem~1]{remeslennikov} in the case when $\C$ is the class of all finite groups and later generalized by the authors Ferov and Pengitore~\cite[Theorem~A]{quantifying} to the case when $\C$ is any extension-closed pseudovariety of finite groups, as follows.
The term \emph{$\C$-cyclic subgroup separable}, as in the following theorem, is defined in Definition~\ref{def:C-props}.

\begin{theorem}[Theorem~A in~\cite{quantifying}]\label{theorem:CCS}
    Let $\C$ be an extension-closed pseudovariety of finite groups, and suppose that $A$ and $B$ are groups. Then, $A \wr B$ is $\C$-conjugacy separable if and only if at least one of the following is true:
    \begin{enumerate}[label=(\roman*)]
        \item $B \in \C$ and $A$ is $\C$-CS, or
        \item $A$ is non-trivial, abelian, and residually-$\C$, $B$ is $\C$-CS and every cyclic subgroup of $B$ is $\C$-separable in $B$.
    \end{enumerate}
\end{theorem}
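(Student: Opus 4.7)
The plan is to adapt Remeslennikov's original argument to the pseudo-variety setting, exploiting that $\C$ is closed under subgroups, quotients, direct products, and extensions. Write $W = A \wr B = A^{(B)} \rtimes B$, whose elements are pairs $(f,b)$ with $f\colon B \to A$ finitely supported. The key technical input is the standard criterion for conjugacy in $W$: $(f_1,b_1)$ is conjugate to $(f_2,b_2)$ iff there exists $c\in B$ with $b_2 = c b_1 c^{-1}$, and for every $\langle b_1\rangle$-orbit on $B$ the cycle products of $f_1$ and of $f_2$ (along the shifted orbit) are conjugate in $A$. When $A$ is abelian this collapses to equality of orbit sums in $A$.

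For sufficiency in case (i), namely $B \in \C$: for every $N\normleq A$ with $A/N\in\C$, the quotient $(A/N)\wr B$ lies in $\C$ as an extension of the finite $\C$-group $(A/N)^B$ by $B\in\C$. Given a non-conjugate pair $(f_1,b_1),(f_2,b_2)$, use $\C$-conjugacy separability of $A$ to choose $N$ which distinguishes the finitely many cycle products witnessing non-conjugacy; the induced homomorphism $W \to (A/N)\wr B$ then separates the pair inside $\C$. For sufficiency in case (ii), namely $A$ abelian and $B$ $\C$-cyclic subgroup separable: given a non-conjugate pair, first use $\C$-conjugacy separability of $B$ to separate $b_1$ and $b_2$ if they are non-conjugate in $B$; otherwise, use $\C$-cyclic subgroup separability of the relevant $\langle b_1\rangle$ to find a $\C$-quotient of $B$ faithfully encoding the $\langle b_1\rangle$-orbit partition of the finite set $\supp(f_1)\cup\supp(f_2)$, and combine with $\C$-conjugacy separability of $A$ to separate the distinct orbit sums.

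For necessity, suppose neither (i) nor (ii) holds, so $B\notin\C$ and either $A$ is non-abelian or $B$ fails $\C$-cyclic subgroup separability. In the non-abelian subcase, pick $a_1,a_2\in A$ with $[a_1,a_2]\neq 1$ and use $B\notin\C$ to find $b\in B$ admitting arbitrarily large $\langle b\rangle$-orbits in $\C$-quotients; stagger $a_1,a_2$ along such an orbit to build a non-conjugate pair in $W$ whose cycle products coincide in every $\C$-quotient because only finitely many coordinates survive any particular quotient. In the remaining subcase, pick $b\in B$ and $x\in B\setminus\langle b\rangle$ lying in the pro-$\C$ closure of $\langle b\rangle$, and use $x$ to manufacture a non-conjugate pair whose distinctness is witnessed only by $x$ versus the powers of $b$, and hence collapses in every $\C$-quotient.

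The main obstacle I expect is the necessity direction, specifically the explicit construction of inseparable non-conjugate pairs and verifying that they really collapse under \emph{every} homomorphism into a group in $\C$; the sufficiency direction is largely an exercise in bookkeeping, where extension-closure of $\C$ is crucial for combining independently chosen quotients of $A$ and of $B$ into a single quotient of $W$ that witnesses non-conjugacy.
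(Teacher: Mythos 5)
You should first note that the paper does not prove Theorem~\ref{theorem:CCS} at all: it is quoted verbatim as Theorem~A of~\cite{quantifying}, so there is no in-paper proof to match your sketch against; what can be assessed is whether your outline would actually close into a proof. The sufficiency half is broadly the right plan (for $B\in\C$ the group $(A/N)\wr B$ lies in $\C$ by closure under finite direct products and extensions, and finitely many cycle-product obstructions can be preserved in a single $A/N$), but in case~(ii) with $\ord(b_1)=\infty$ your step ``find a $\C$-quotient of $B$ faithfully encoding the $\langle b_1\rangle$-orbit partition'' is not enough: in any $\C$-quotient the image of $b_1$ has finite order, the infinite orbits wrap into cycles, and orbit sums can merge or cancel, so one must additionally control the order of the image of $b_1$ (this is exactly what Lemma~\ref{lemma:order is a multiple} is for) and rule out new coset permutations being realized by conjugators in the quotient (the role played by Lemma~\ref{lemma:permuting cosets} in the HCS argument). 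Neither control appears in your sketch.

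The serious gap is the necessity direction. First, your non-abelian construction starts from ``use $B\notin\C$ to find $b\in B$ admitting arbitrarily large $\langle b\rangle$-orbits in $\C$-quotients''; such a $b$ need not exist. Since $B$ is $\C$-CS it is residually-$\C$, so $B\notin\C$ only forces $B$ to be infinite; it may have finite exponent (take $B=\bigoplus_{i\in\mathbb{N}}\mathbb{Z}/2\mathbb{Z}$ with $\C$ the finite $2$-groups), in which case every image of every $\langle b\rangle$ has order at most $2$, yet the theorem still asserts non-$\C$-CS of $A\wr B$ for non-abelian $A$. Second, and more fundamentally, you must exhibit one fixed non-conjugate pair that is collapsed by \emph{every} homomorphism $\phi\colon A\wr B\to F$ with $F\in\C$, whereas which coordinates of $A^{(B)}$ get identified depends on $\ker(\phi|_B)$ and hence on $\phi$; moreover, when $A$ is non-abelian a quotient of $B$ does not extend to a wreath-shaped quotient of $A\wr B$ (Lemma~\ref{lemma:map_extension} requires $A$ abelian), so ``cycle products coincide in every $\C$-quotient because only finitely many coordinates survive'' is not an argument about arbitrary $\phi$ (coordinates are merged, not discarded). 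The known route here is structural: for infinite $B$, any $\phi$ to a finite group identifies the copies $A_y$ for $y$ in a common coset of the finite-index subgroup $\ker(\phi|_B)$, and since distinct copies commute this forces $\phi(A^{(B)})$ to be abelian; one then has to build the inseparable pair compatibly with that fact (and, in the torsion case, without infinite-order elements). The same issue of quantifying over all homomorphisms, not just wreath-shaped ones, also affects your cyclic-subgroup-separability subcase. As it stands, the necessity half of your proposal would not go through.
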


The main result of this paper, which is proven at the end of Section~\ref{sec:wreath-prop-B-infinite}, is the following strengthening of the above theorem to the setting of $\C$-hereditarily conjugacy separable groups. Note that the term \emph{$\C$-separable} is defined in Definition~\ref{def:C-open-and-closed}.

\begin{restatable*}{theorem}{MainTheoremHCS}\label{theorem:HCS}
    Let $\C$ be an extension-closed pseudovariety of finite groups, and suppose that $A$ and $B$ be groups. Then, the restricted wreath product $A \wr B$ is a $\C$-HCS group if and only if either one of the following is true:
    \begin{enumerate}[label=(\roman*)]
        \item $B\in \C$ and $A$ is $\C$-HCS, or
        \item\label{theorem:HCS/2} $A$ is non-trivial, abelian, and residually-$\C$, $B$ is $\C$-HCS, and every cyclic subgroup of $B$ is $\C$-separable in $B$.
    \end{enumerate}
\end{restatable*}
Throughout the paper, unless stated otherwise, we will always assume that $\C$ is an extension-closed pseudovariety of finite groups.

Let $p$ be a prime. Then, to simplify the presentation of this paper, we say that a group is \emph{residually-$p$}, \emph{$p$-conjugacy separable}, or \emph{$p$-hereditarily conjugacy separable} if it is residually-$\C$, $\C$-conjugacy separable, or $\C$-hereditarily conjugacy separable, respectively, where $\C$ is the class of finite $p$-groups.

It is an easy observation (see Section \ref{section:wreath products of CSS groups} for more detail) that if a group $G$ contains an element of infinite order, then for every prime number $p$ it contains a cyclic subgroup that is not $p$-separable.
This means that any infinite $p$-conjugacy separable group satisfying property~\ref{theorem:HCS/2} of Theorem~\ref{theorem:HCS} must be a torsion group. There are examples of infinite torsion groups that have desirable separability properties, for example,
\begin{itemize}
    \item the Grigorchuk group and Gupta-Sidki groups are conjugacy separable \cite{wilson1997conjugacy},
    \item the Gupta-Sidki group $\mathrm{GS}(3)$ is subgroup separable \cite{garrido_GS}, and
    \item Grigorchuk-Gupta-Sidki groups are subgroup separable \cite{francoeur2020subgroup}.
\end{itemize}
To the best of the authors' knowledge none of the above examples in the literature is known to be hereditarily conjugacy separable.
Motivated by this gap in knowledge, we prove the following strengthening of the result of Wilson and Zalesskii \cite{wilson1997conjugacy} in Section~\ref{section:grigorchuk}.

\begin{restatable*}{theorem}{GrigTheorem}\label{thm:hered-conj-sep}
    The Grigorchuk group is hereditarily conjugacy separable. In particular, the Grigorchuk group is $2$-hereditarily conjugacy separable.
\end{restatable*}

In Section 6 we study wreath products of cyclic subgroup separable groups. In particular, we give the following characterization of when a wreath product of groups is a cyclic subgroup separable group.
\begin{restatable*}{proposition}{PropositionCSS}\label{proposition:wreath products of cyclic subgroup separable}
            Let $\C$ be an extension-closed pseudovariety of finite groups and suppose that $A$ and $B$ are $\C$-CSS groups. Then, $A \wr B$ is $\C$-CSS if and only if at least one of the following is true:
    \begin{enumerate}[label=(\roman*)]
        \item $B \in \C$, or
        \item $A$ is abelian.
    \end{enumerate}
\end{restatable*}

\subsection{Applications}
Using Theorem~\ref{theorem:HCS}, we construct many new examples of hereditarily conjugacy separable groups and $\C$-hereditarily conjugacy separable groups as follows.

In~\cite[Corollary~A.1]{quantifying}, the second and third named authors characterized when the wreath product of $p$-conjugacy separable groups is itself $p$-conjugacy separable.
Using this result and Theorem~\ref{theorem:HCS}, we characterize when the wreath product of $p$-hereditarily conjugacy separable groups is $p$-hereditarily conjugacy separable as in the following corollary.

\begin{corollary}\label{cor_1}
Suppose that $A$ and $B$ are $p$-hereditarily conjugacy separable groups for some prime $p$.
Then $A \wr B$ is $p$-hereditarily conjugacy separable if and only if at least one of the following is true:
\begin{enumerate}
    \item $B$ is a finite $p$-group, or
    \item $A$ is abelian, and $B$ is a $p$-group.
\end{enumerate}
\end{corollary}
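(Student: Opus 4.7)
The plan is to specialize Theorem~\ref{theorem:HCS} to the pseudo-variety $\C$ of finite $p$-groups and translate its two cases into the simpler cases of the corollary, exploiting the extra hypothesis that both $A$ and $B$ are $p$-HCS. Case~(i) of Theorem~\ref{theorem:HCS} requires $A$ to be $\C$-HCS (granted) and $B\in\C$, which is exactly ``$B$ is a finite $p$-group'' and so matches case~(1) of the corollary. Case~(ii) requires $A$ abelian and residually-$p$, $B$ to be $p$-HCS (granted), and every cyclic subgroup of $B$ to be $p$-separable. The substance of the corollary is therefore to show that, under the standing hypotheses, ``every cyclic subgroup of $B$ is $p$-separable'' is equivalent to ``$B$ is a $p$-group,'' and that the residually-$p$ hypothesis on $A$ is automatic.

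The automatic residually-$p$-ness comes from the easy observation that any $\C$-CS group is residually-$\C$: if $g\neq 1$, then $g$ is not conjugate to $1$, so some $\C$-quotient separates them; hence $A$ (and likewise $B$) being $p$-HCS is already residually-$p$. For the forward direction of the equivalence, the observation quoted in the introduction tells us that an infinite cyclic subgroup is never $p$-separable, so $B$ must be torsion. To promote torsion to $p$-power order, I would take any $b\in B$ of finite order $n=p^a m$ with $\gcd(m,p)=1$ and consider $c=b^{p^a}$, which has order $m$: in any homomorphism $\phi\colon B\to P$ into a finite $p$-group, $\phi(c)$ has order simultaneously a $p$-power and a divisor of $m$, forcing $\phi(c)=1$. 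Residual-$p$-ness of $B$ then gives $c=1$, so $m=1$, and $b$ has $p$-power order; hence $B$ is a $p$-group.

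For the converse of the equivalence, assume $B$ is $p$-HCS and a $p$-group. Fix $b\in B$, so that $\langle b\rangle$ is finite of some $p$-power order $p^k$, and let $g\in B\setminus\langle b\rangle$. Each $gb^{-j}$ for $0\leq j<p^k$ is non-trivial, so by residual-$p$-ness there exists $\phi_j\colon B\to P_j\in\C$ with $\phi_j(gb^{-j})\neq 1$; the diagonal map into the direct product $P_0\times\cdots\times P_{p^k-1}$, itself a finite $p$-group, then separates $g$ from $\langle b\rangle$, so $\langle b\rangle$ is $p$-separable. With the equivalence in hand, both directions of the corollary follow by unpacking Theorem~\ref{theorem:HCS}. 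The main obstacle is the forward step in the previous paragraph, namely extracting ``$p$-group'' (and not merely ``torsion'') from the separability hypothesis on $B$; everything else is routine matching of conditions.
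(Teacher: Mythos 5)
Your proposal is correct and takes essentially the same route as the paper, which gives no separate argument beyond specializing Theorem~\ref{theorem:HCS} to the class of finite $p$-groups and invoking the observation that, for residually-$p$ groups, having every cyclic subgroup $p$-separable is equivalent to being a $p$-group --- precisely the equivalence you verify in detail. One small wording point: an infinite cyclic subgroup can itself be $p$-separable (e.g.\ a direct summand of $\mathbb{Z}^2$); what the paper's observation actually provides, and what your argument needs, is that if $b\in B$ has infinite order then the cyclic subgroup $\langle b^q\rangle$, for a prime $q\neq p$, fails to be $p$-separable, which still forces $B$ to be torsion as you conclude.
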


We have the following immediate application of the above corollary.

\begin{corollary}\label{cor_2}
    Suppose that $G$ is the Grigorchuk group. Then the group $\mathbb{Z}^m \wr G$ is $2$-hereditarily conjugacy separable for any $m \in \mathbb{N}$.
\end{corollary}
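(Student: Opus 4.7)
The plan is to deduce this immediately from Corollary~\ref{cor_1} applied with $A = \mathbb{Z}^m$, $B = G$ the Grigorchuk group, and $p = 2$, using condition~(2) of that corollary. There are three boxes to tick: that $\mathbb{Z}^m$ is $2$-HCS, that $G$ is $2$-HCS, and that $\mathbb{Z}^m$ is abelian while $G$ is a $2$-group.

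To see that $\mathbb{Z}^m$ is $2$-HCS, I would first note that since $\mathbb{Z}^m$ is abelian, every conjugacy class is a singleton, so $\C$-conjugacy separability collapses to being residually-$\C$ for any class $\C$. The group $\mathbb{Z}^m$ is manifestly residually a finite $2$-group: given any nonzero $v \in \mathbb{Z}^m$ one can pick $k$ so that the image of $v$ in $(\mathbb{Z}/2^k\mathbb{Z})^m$ is nonzero. Moreover, every finite-index subgroup of $\mathbb{Z}^m$ is itself free abelian of rank $m$, hence again residually-$2$ and so $2$-conjugacy separable. Therefore every pro-$2$ open subgroup of $\mathbb{Z}^m$ is $2$-CS, i.e.\ $\mathbb{Z}^m$ is $2$-HCS.

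For $B = G$, Theorem~\ref{thm:hered-conj-sep} tells us that the Grigorchuk group is $2$-HCS, and it is a standard fact that every element of $G$ has order a power of $2$, so $G$ is a $2$-group. Since $\mathbb{Z}^m$ is abelian and $G$ is a $2$-group, condition~(2) of Corollary~\ref{cor_1} is satisfied, and the conclusion $\mathbb{Z}^m \wr G$ is $2$-HCS follows.

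There is essentially no obstacle; the substantive work has already been carried out in Corollary~\ref{cor_1} and Theorem~\ref{thm:hered-conj-sep}, and the only minor verification is the $2$-HCS property of $\mathbb{Z}^m$, which is immediate from its abelian structure.
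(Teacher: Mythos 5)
Your proposal is correct and follows exactly the paper's route: the paper also deduces the statement immediately from Corollary~\ref{cor_1} and Theorem~\ref{thm:hered-conj-sep}. Your additional verifications (that $\mathbb{Z}^m$ is $2$-HCS and that the Grigorchuk group is a $2$-group) are the details the paper leaves implicit, and they are argued correctly.
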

\begin{proof}
    This follows from Corollary~\ref{cor_1} and Theorem~\ref{thm:hered-conj-sep}.
\end{proof}

It is well known that all finitely generated nilpotent groups, and more generally polycyclic groups, are hereditarily conjugacy separable and have separable cyclic subgroups. As a consequence of Corollary~\ref{cor_1}, in the following corollary, we produce infinitely many new examples of hereditarily conjugacy separable solvable groups that are not polycyclic, and are not $p$-conjugacy separable for any prime $p$.

\begin{corollary}\label{cor_3}
    Suppose that $G$ is an infinite finitely generated nilpotent group or an infinite polycyclic group, and let $A$ be an abelian group. Then $A \wr G$ is hereditarily conjugacy separable but is not $p$-conjugacy separable for any prime $p$.
\end{corollary}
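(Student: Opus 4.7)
The plan is to derive both assertions from Theorems~\ref{theorem:HCS} and~\ref{theorem:CCS} together with classical facts about polycyclic groups. For the hereditary conjugacy separability, I would apply Theorem~\ref{theorem:HCS} with $\C$ the class of all finite groups, via alternative~\ref{theorem:HCS/2}. Polycyclic groups (and in particular finitely generated nilpotent groups) are hereditarily conjugacy separable, since every finite-index subgroup of a polycyclic group is again polycyclic and polycyclic groups are conjugacy separable by the theorems of Formanek and Remeslennikov cited in the introduction. Moreover, Mal'cev's theorem that polycyclic groups are subgroup separable (LERF) ensures that every cyclic subgroup of $G$ is separable. Provided that $A$ is residually finite (implicit in the statement, as otherwise $A \wr G$ itself would fail to be residually finite), the hypotheses of Theorem~\ref{theorem:HCS}~\ref{theorem:HCS/2} are met and $A \wr G$ is hereditarily conjugacy separable.

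For the second claim, fix a prime $p$ and assume for contradiction that $A \wr G$ is $p$-conjugacy separable. I would first exhibit both $A$ and $G$ as retracts of $A \wr G$: the quotient map $A \wr G \twoheadrightarrow G$ by the base group is split by the top copy of $G$, and the coordinate-product map $(f,g) \mapsto \prod_{h \in G} f(h)$ (well defined as $A$ is abelian) is a retraction onto $A$. A routine check shows that $p$-conjugacy separability passes to retracts: a pair of non-conjugate elements of the retract remains non-conjugate in the ambient group (apply the retraction to any hypothetical conjugator), and any separating $p$-quotient of the ambient group composes with the inclusion to yield a separating $p$-quotient of the retract. Hence both $A$ and $G$ are $p$-conjugacy separable, so Theorem~\ref{theorem:CCS} is applicable with $\C$ the class of finite $p$-groups.

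By Theorem~\ref{theorem:CCS}, the $p$-conjugacy separability of $A \wr G$ forces either $G$ to be a finite $p$-group or $G$ to be $p$-cyclic subgroup separable. The first alternative fails because $G$ is infinite. For the second, any infinite finitely generated nilpotent or polycyclic group contains an element of infinite order, so by the observation recorded just before Theorem~\ref{thm:hered-conj-sep}, $G$ contains a cyclic subgroup that is not $p$-separable. Both alternatives fail, contradicting the assumption, so $A \wr G$ cannot be $p$-conjugacy separable. The only genuinely new step in this proof is the retract reduction, which is needed because Theorem~\ref{theorem:CCS} is stated under the hypothesis that both factors are $\C$-conjugacy separable; everything else is a direct appeal to results already in the paper.
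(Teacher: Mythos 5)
Your argument is correct and is essentially the proof the paper intends (the corollary is stated without an explicit proof): the HCS half is Theorem~\ref{theorem:HCS}\ref{theorem:HCS/2} with $\C$ the class of all finite groups together with the classical facts that polycyclic groups are hereditarily conjugacy separable and subgroup (hence cyclic subgroup) separable, while the non-$p$-CS half is Theorem~\ref{theorem:CCS} combined with the paper's observation that an infinite-order element yields a cyclic subgroup that is not $p$-separable. Your retract step legitimizing the use of the ``only if'' direction of Theorem~\ref{theorem:CCS}, and your remark that $A$ must implicitly be residually finite (one should also implicitly take $A$ nontrivial, since $\{1\}\wr\mathbb{Z}\cong\mathbb{Z}$ is $p$-CS; this caveat is inherited from the statements of the corollary and of Theorem~\ref{theorem:CCS} themselves), are reasonable ways of handling hypotheses the statement leaves implicit.
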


We have the following immediate corollary to Corollary~\ref{cor_3}.
\begin{corollary}\label{cor_4}
     If $C_p$ is the cyclic group of order $p$, for some prime $p$, then the lamplighter groups $C_p \wr  \mathbb{Z}$ and $\mathbb{Z} \wr \mathbb{Z}$ are hereditarily conjugacy separable, but not $q$-hereditarily conjugacy separable for any prime $q$. 
\end{corollary}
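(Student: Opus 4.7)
The plan is to apply Corollary~\ref{cor_3} directly to the two specified wreath products. Observe that $\mathbb{Z}$ is an infinite finitely generated nilpotent group, since it is infinite cyclic and hence abelian, and both $C_p$ and $\mathbb{Z}$ are abelian. Therefore, taking $G = \mathbb{Z}$ and $A$ to be either $C_p$ or $\mathbb{Z}$ in Corollary~\ref{cor_3} immediately yields that $C_p \wr \mathbb{Z}$ and $\mathbb{Z} \wr \mathbb{Z}$ are hereditarily conjugacy separable and fail to be $p$-conjugacy separable for every prime~$p$.

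To match the exact phrasing of Corollary~\ref{cor_4}, one then notes that $p$-hereditary conjugacy separability implies $p$-conjugacy separability by definition, so any group that is not $p$-conjugacy separable is a fortiori not $p$-hereditarily conjugacy separable. Combining this with the conclusion of the previous paragraph finishes the proof.

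There is no substantive obstacle here: the statement is a direct specialization of Corollary~\ref{cor_3}, and the genuinely nontrivial content lies upstream in Theorem~\ref{theorem:HCS} (for the \emph{HCS} direction) and in~\cite[Corollary~A.1]{quantifying} (for the failure of $p$-conjugacy separability, which stems from $\mathbb{Z}$ not being a $p$-group for any prime $p$). The only thing to verify is that $\mathbb{Z}$ is an infinite finitely generated nilpotent group and that $C_p$ and $\mathbb{Z}$ are abelian, both of which are immediate.
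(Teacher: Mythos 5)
Your proof is correct and is exactly the paper's intended argument: the paper states Corollary~\ref{cor_4} as an immediate specialization of Corollary~\ref{cor_3} with $G=\mathbb{Z}$ and $A\in\{C_p,\mathbb{Z}\}$, and your added observation that failure of $p$-conjugacy separability implies failure of $p$-hereditary conjugacy separability is the right (and only) bridging step needed to match the stated phrasing.
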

Recall that the groups in the statement of Corollary~\ref{cor_4} are both solvable of derived length 2. To the best of the authors' knowledge, they are the first known example of non-polycyclic solvable groups that are hereditarily conjugacy separable. Motivated by this observation, in Section \ref{section: Wreath prouducts of solvable groups} we give an explicit construction of torsion-free hereditarily conjugacy separable non-polycyclic solvable groups of arbitrary derived length as an iterated wreath product of infinite cyclic groups.

\begin{restatable*}{corollary}{IteratedWreath}\label{corollary:iterated wreath}
     Let $G_1 = \mathbb{Z}$ and set $G_{k+1} = \mathbb{Z} \wr G_k$ for $k > 1$. Then $G_k$ is a torsion-free, hereditarily conjugacy separable, and cyclic subgroup separable $k$-step solvable group. Furthermore, $G_k$ is not polycyclic for $k>1$.
\end{restatable*}
    
\subsection*{Acknowledgements}

Michal Ferov would like to thank Ashot Minasyan for suggesting the study of wreath products of hereditarily conjugacy separable groups and for useful email conversations.
Michal Ferov was supported by the Australian Research Council Laureate Fellowship FL170100032 of professor George Willis.
Alex Bishop acknowledges support from Swiss NSF grant 200020-200400, and from an honorary fellowship with the University of Technology Sydney.
Mark Pengitore is supported by the National Science Center Grant Maestro-13 UMO-2021/42/A/ST1/00306.
We would like to thank the anonymous referees for their comments and useful suggestions.
\section{Preliminaries}
Given a group $G$ and elements $f,g \in G$ we will write $f \sim_G g$ to denote that $f$ and $g$ are conjugate in $G$, i.e.~that there is an element $c \in G$ such that $f = cgc^{-1}$. Our commutator convention will be given by $[x,y] = xyx^{-1}y^{-1}$. For $x_1, \ldots, x_s$ where $s > 2$, we write $[x_1, \ldots, x_s] = [[x_1, \ldots, x_{s-1}], x_s]$ where $[x_1, \ldots, x_{s-1}]$ is defined inductively. For subgroups $H_1, \ldots, H_s$ of $G$, we define $[H_1, \ldots, H_s]$ to be the subgroup generated by elements of the form $[h_1, \ldots, h_s]$ where $h_i \in H_i$. Given a subgroup $H \leq G$, we will use $C_H(g)$ to denote the $H$-centralizer of $g$, i.e.\@ $C_H(g) = \{h \in H \mid hg = gh\}$.

Given a group $G$, we say that a subgroup $R \leq G$ is a \emph{retract} in $G$ if there is a surjective homomorphism $\rho \colon G \to R$ such that $\rho|_R = \id_{R}$. Thus, if $R \leq G$ is a retract, then $G$ splits as a semidirect product $G = K \rtimes R$ where $K$ is the kernel of the associated retraction map $\rho \colon G \to R$.

Unless stated otherwise, all actions considered will be left actions.

\subsection{Profinite topologies on groups}
\label{subsection:proc topologies}

To ensure that this paper is self-contained, we present in this subsection a background on the basic facts about $\proC$ topologies on groups.
Experts in the field may feel free to skip this subsection. Proofs of all 
statements can be found in the classic book by Ribes and Zalesskii~\cite{rz} or in the second named author's doctoral thesis~\cite{mf_thesis}.

Let $\mathcal{C}$ be a class of groups, and let $G$ be a group. We say that a normal subgroup $N \unlhd G$ is a \textbf{co-$\mathcal{C}$ subgroup} of $G$ if $G/N \in \mathcal{C}$. We then write
$\NC(G)$ for the class of all co-$\mathcal{C}$ subgroups of $G$. Following the notation in~\cite{mf_thesis}, we define the following three closure properties of which a class of groups $\C$ may possess: 
	\begin{itemize}
	    \item[(c0)] $\mathcal{C}$ is closed under taking finite subdirect
	    products,
	    \item[(c1)]	$\mathcal{C}$ is closed under taking subgroups,
	    \item[(c2)]	$\mathcal{C}$ is closed under taking finite direct products.
	\end{itemize}
Recall that a subgroup $S \leq A \times B$ is a subdirect product of $A$ and $B$ if $\pi_A(S) = A$ and $\pi_B(S) = B$ where  $\pi_A \colon A \times B \to A$ and $\pi_B \colon A \times B \to B$ are the natural projections onto the direct factors. Notice that 
\[(c0) \Rightarrow  (c2)\qquad\text{and}\qquad (c1)+ (c2) \Rightarrow (c0).\]
We also note that in \cite[p.~19]{rz}, the properties (c0), (c1) and (c2) are instead labelled as ($\mathcal{C}$4), ($\mathcal{C}$1) and ($\mathcal{C}$2), respectively.

\begin{remark}\label{remark:intersections}
If the class of groups $\mathcal{C}$ satisfies (c0), then for every group $G$, the set $\NC(G)$ is closed under finite intersections. That is, if $N_1, N_2 \in \NC(G)$, then $N_1\cap N_2\in\NC(G)$.
\end{remark}

Following Remark~\ref{remark:intersections}, we see that whenever the class $\C$ satisfies property (c0), the set $\NC(G)$ is a base at $1$ for a topology on $G$. Hence, the group $G$ can be equipped with a group topology given by a base of open sets
\[\{gN \mid g\in G,\  N \in \NC(G)\}.\]
This topology is known as the \textbf{pro-$\C$ topology} on $G$ and is denoted pro-$\C(G)$. In the case when $\C$ is the class of all finite groups, the pro-$\C$ topology on $G$, usually denoted as $\PT(G)$, is called the \emph{profinite} topology on $G$.

If the class $\mathcal{C}$ satisfies (c1) and (c2), or equivalently, (c0) and (c1), then  equipping a group $G$ with its pro-$\mathcal{C}$ topology is a faithful functor from the category of groups to the category of topological groups, as witnessed by the following lemma.
\begin{lemma}[Lemma~2.2 in~\cite{mf_thesis}]
	\label{lemma:continuous}
	Let $\C$ be a class of groups satisfying (c1) and (c2). Given groups $G$ and $H$, every morphism $\varphi\colon G\to H$ is a continuous map with respect to the corresponding pro-$\mathcal{C}$ topologies of $G$ and $H$. Furthermore, if $\varphi$ is a group isomorphism, then it is a homeomorphism of the topologies.
\end{lemma}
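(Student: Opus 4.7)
The plan is to reduce continuity of $\varphi$ to continuity at the identity element (since $\varphi$ is a group homomorphism between topological groups, and left-translations by $g$ and $\varphi(g)$ are homeomorphisms of the source and target). Thus it suffices to show: for every basic open neighborhood $N$ of $1_H$ in $\text{pro-}\C(H)$, the preimage $\varphi^{-1}(N)$ contains a basic open neighborhood of $1_G$ in $\text{pro-}\C(G)$. Since the bases at the identity are $\NC(H)$ and $\NC(G)$, respectively, it suffices to find, for each $N \in \NC(H)$, some $M \in \NC(G)$ with $\varphi(M) \subseteq N$, or equivalently $M \subseteq \varphi^{-1}(N)$.

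The natural candidate is $M := \varphi^{-1}(N)$ itself. First, $M \normleq G$ because $N \normleq H$ and preimages of normal subgroups under homomorphisms are normal. Second, consider the composition $G \xrightarrow{\varphi} H \twoheadrightarrow H/N$; its kernel is precisely $M$, so by the first isomorphism theorem the induced map $G/M \hookrightarrow H/N$ is an injective homomorphism. Since $H/N \in \C$ and $\C$ satisfies (c1) (closure under subgroups), the image of this embedding lies in $\C$, hence $G/M \in \C$ (up to isomorphism, and we assumed classes of finite groups are closed under isomorphism). Therefore $M \in \NC(G)$, as required. Note that property (c2) is not invoked in this continuity argument itself; it is used, together with (c1), only to guarantee that the $\text{pro-}\C$ topologies are well-defined via (c0), so that the statement of the lemma makes sense.

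For the second part, if $\varphi$ is a group isomorphism then its inverse $\varphi^{-1} \colon H \to G$ is also a morphism of groups, and so applying the first part to $\varphi^{-1}$ yields its continuity. A bijective continuous map with continuous inverse is a homeomorphism, completing the proof.

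I do not expect any real obstacle; the only mildly subtle point is making sure to invoke the correct closure property, namely (c1), when concluding that the subgroup $G/M \leq H/N$ belongs to $\C$, and keeping clear the distinction between (c1)+(c2) being used to define the topology versus (c1) alone being used for the continuity step.
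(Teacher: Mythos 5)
Your proof is correct and follows the standard argument that the paper (which defers the proof of this lemma to the cited thesis) relies on: for $N \in \NC(H)$ one checks $\varphi^{-1}(N) \in \NC(G)$ via the embedding $G/\varphi^{-1}(N) \hookrightarrow H/N$ and closure property (c1), and continuity then follows coset by coset, with the homeomorphism statement obtained by applying the same argument to $\varphi^{-1}$. Your side remark that (c2) is only needed (with (c1), giving (c0)) to make the pro-$\C$ topologies well defined is also accurate.
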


We have the following additional terminology for subsets of pro-$\C(G)$

\begin{definition}\label{def:C-open-and-closed}
A subset $X \subseteq G$ is \textbf{$\mathcal{C}$-closed} in $G$ if $X$ is closed in pro-$\mathcal{C}(G)$. We then say that a subset $X \subseteq G$ is \textbf{$\C$-separable} if it is $\C$-closed. Accordingly, a subset is \textbf{$\mathcal{C}$-open} in $G$ if it is open in pro-$\mathcal{C}(G)$.
\end{definition}

We then have the following alternative characterization of a $\C$-closed subset.
\begin{lemma}[Lemma~2.1 in~\cite{mf_thesis}]
Suppose that $G$ is a finitely generated group equipped with the pro-$\C$ topology and that $X \subseteq G$ is a non-empty subset. Then $X$ is $\mathcal{C}$-closed if and only if for every element $g \notin X$, there exists a subgroup $N \in \NC(G)$ such that $\pi_N(g) \notin \pi_N(X)$ in $G/N$ where $\pi_N \colon G \to G/N$ is the natural projection.
\end{lemma}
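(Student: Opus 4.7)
The plan is to proceed by directly unpacking the definition of the pro-$\C$ topology, using the fact (Remark~\ref{remark:intersections}) that $\NC(G)$ is a neighborhood base at $1$ consisting of subgroups, so that the basic open sets containing a point $g \in G$ are precisely the cosets $gN$ with $N \in \NC(G)$. Throughout, I will use the elementary observation that for $N \normleq G$ and $x, g \in G$, the statement $\pi_N(g) = \pi_N(x)$ is equivalent to $x \in gN$, and hence $\pi_N(g) \in \pi_N(X)$ is equivalent to $gN \cap X \neq \emptyset$.

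For the forward direction, I would assume that $X$ is $\C$-closed and let $g \in G \setminus X$. Since the complement $G \setminus X$ is $\C$-open and contains $g$, by the description of the base of pro-$\C(G)$ there exist $h \in G$ and $N \in \NC(G)$ with $g \in hN \subseteq G \setminus X$. Because $g \in hN$ we have $hN = gN$, and hence $gN \cap X = \emptyset$. By the observation in the previous paragraph, this is exactly $\pi_N(g) \notin \pi_N(X)$, as required.

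For the reverse direction, assume the stated separation condition holds. For each $g \in G \setminus X$, pick $N_g \in \NC(G)$ witnessing $\pi_{N_g}(g) \notin \pi_{N_g}(X)$; then $gN_g \cap X = \emptyset$, i.e.\ $gN_g \subseteq G \setminus X$. Therefore
\[
    G \setminus X \;=\; \bigcup_{g \in G \setminus X} gN_g
\]
is a union of basic open sets in pro-$\C(G)$, so $G \setminus X$ is open and $X$ is $\C$-closed.

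There is no real obstacle here: the lemma is a bookkeeping statement that translates topological closedness in pro-$\C(G)$ into the concrete separation property via finite quotients, and the proof is essentially a rewriting of the definition using that cosets of members of $\NC(G)$ form a basis. The finite generation hypothesis on $G$ plays no role in the argument above; it is inherited from the setup in \cite{mf_thesis} and can safely be ignored in the proof.
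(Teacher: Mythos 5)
Your proof is correct and is the standard unpacking of the definition: the paper itself only cites this lemma from \cite{mf_thesis} without reproducing a proof, and the argument there is the same coset-translation you give (observing that $\pi_N(g)\notin\pi_N(X)$ is equivalent to $gN\cap X=\emptyset$ and using that the cosets of subgroups in $\NC(G)$ form a base). Your remark that finite generation plays no role is also accurate; the hypothesis is not needed for this equivalence.
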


The following easy lemma will be useful for shortening proofs.
\begin{lemma}
    \label{lemma:lazy lemma separability}
    Let $G$ be a group, and let $X \subseteq G$ be a subset. Then $X$ is $\C$-separable in $G$ if and only if for every $g \notin X$ there is a homomorphism $\pi \colon G \to Q$ such that $\pi(g) \notin \pi(X)$ and $\pi(X)$ is $\C$-separable in $Q$.
\end{lemma}
\begin{proof}
    Let $X \subseteq G$ be as above. By assumption, for every $g \in G \setminus X$ there is a homomorphism $\pi_g \colon G \to Q_g$ such that $\pi_G(x) \notin \pi_G(X)$ and $\pi_g(X)$ is $\C$-closed in $Q_g$. This means that $\pi_g^{-1}(\pi_g(X))$ is $\C$-closed in $G$ as it is preimage of a closed set. Furthermore, we clearly see that $g\notin \pi_g^{-1}(\pi_g(X))$. It immediately follows that 
    \begin{displaymath}
        X = \bigcap_{g \in G\setminus X} \pi_g^{-1}(\pi_g(X))
    \end{displaymath}
    is closed in $\proC(G)$ as it is an intersection of $\C$-closed subsets.
\end{proof}

We can now describe various separability properties in topological terms by specifying what kind of subsets are required to be closed in the pro-$\C$ topology.
\begin{definition}\label{def:C-props}
We say that a group $G$ is
\begin{itemize}
    \item \textbf{residually-$\C$} if $\{1\}$ is a $\C$-closed subset of $G$;
    \item \textbf{$\C$-conjugacy separable} ($\C$-CS) if every conjugacy class is $\C$-closed;
    \item \textbf{$\C$-hereditarily conjugacy separable} ($\C$-HCS) $G$ is $\C$-CS and every $\C$-open subgroup of $G$ is $\C$-CS as well; and
    \item \textbf{$\C$-cyclic subgroup separable} ($\C$-CSS) if every cyclic subgroup is $\C$-closed.
\end{itemize}
\end{definition}

In this paper, we consider classes of finite groups such as the class of all finite groups or the class of all finite $p$-groups where $p$ is some prime. These two classes of finite groups are examples of extension-closed pseudovarieties of finite groups as seen in the following definition.

\begin{definition}
A class of finite groups that is closed under subgroups, finite direct products, quotients, and extensions is called an \textbf{extension-closed pseudovariety of finite groups}.
\end{definition}
From this point onward, we assume that the class $\C$ is an extension-closed pseudovariety of finite groups. 
\begin{lemma}
    \label{lemma:finite subgroups}
    Let $\C$ be an extension-closed pseudovariety of finite groups, and let $G$ be a residually-$\C$ group. If $F \leq G$ is finite then $F \in \C$.
\end{lemma}
\begin{proof}
    Let $F \leq G$ be finite. Since $G$ is residually-$\C$, there is $K \in \NC(G)$ such that the natural projection $\pi \colon G \to G/K$ is injective on $F$, i.e.\ $F \simeq \pi(F) \leq G/K$. Since $G/K \in \C$ we see that $\pi(F) \in \C$ as the class $\C$ is closed under taking subgroups.
\end{proof}

In the profinite case, i.e.~when $\C$ is the class of all finite subgroups, one can easily demonstrate that a subgroup $H \leq G$ is open in the profinite topology on $G$ if and only if it has finite index in $G$. Furthermore, one can show that $H$ is closed in $G$ if and only if it is an (possibly infinite) intersection of subgroups of finite index. An analogous statement can be made in the general case as seen in the following lemma which combines \cite[Theorem~3.1]{hall} and \cite[Theorem~3.3]{hall}.
\begin{lemma}
	\label{lemma:closed/open subgroups}
	Let $G$ be a group and let $H \leq G$. Then $H$ is $\mathcal{C}$-open in $G$ if and only if there is $N \in \NC(G)$ such that $N \leq H$. In particular, every $\mathcal{C}$-open subgroup is $\mathcal{C}$-closed in $G$ and is of finite index in $G$. Furthermore, $H$ is $\mathcal{C}$-closed in $G$ if and only if it is an intersection of $\mathcal{C}$-open subgroups of $G$.
 \end{lemma}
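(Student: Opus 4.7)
The plan is to verify the three assertions in turn, relying only on the fact that $\NC(G)$ is a neighborhood base at the identity for the pro-$\C$ topology together with the observation (Lemma~\ref{lemma:continuous}) that translations $x \mapsto gx$ are homeomorphisms.

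First I would prove the characterization of $\C$-open subgroups. If $N \in \NC(G)$ and $N \leq H$, then $H = \bigcup_{h \in H} hN$ is a union of basic open sets, so $H$ is $\C$-open. Conversely, if $H$ is $\C$-open, then it is a neighborhood of $1$, and by definition of the base at $1$ it must contain some $N \in \NC(G)$.

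For the ``in particular'' claim, assume $H$ is $\C$-open and fix $N \in \NC(G)$ with $N \leq H$; then $[G:H] \leq [G:N] = |G/N| < \infty$, so $H$ has finite index. Its complement is the union of the finitely many non-trivial cosets $gH$, each $\C$-open as a translate of the open set $H$, so the complement is $\C$-open and $H$ is $\C$-closed.

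For the last equivalence, the backward direction follows immediately from the previous paragraph, since arbitrary intersections of $\C$-closed sets are $\C$-closed. For the forward direction, I would establish the identity $H = \bigcap_{N \in \NC(G)} HN$. Each $HN$ is a subgroup, since $N \unlhd G$, and contains $N \in \NC(G)$, so it is $\C$-open by the first part. The inclusion $H \subseteq \bigcap_N HN$ is clear. For the reverse, observe that $g \in HN$ is equivalent to $gN \cap H \neq \emptyset$; hence if $g$ lies in every $HN$, then every basic neighborhood $gN$ of $g$ meets $H$, putting $g$ in the closure of $H$, which equals $H$ by hypothesis.

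I do not expect any real obstacle: every step unfolds directly from the description of the topology by the base $\NC(G)$ at $1$ together with the homeomorphism property of translations. The one mildly subtle point worth flagging is that $HN$ is genuinely a subgroup rather than just an open neighborhood of $H$, which uses normality of $N$; this is precisely what upgrades the final statement from ``intersection of $\C$-open sets'' to ``intersection of $\C$-open \emph{subgroups}''.
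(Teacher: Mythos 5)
Your argument is correct, and it is the standard proof of this statement; the paper itself offers no proof but cites Hall, and your reasoning (base at $1$ given by $\NC(G)$, complement of an open subgroup is a finite union of open cosets, and the identity $H=\bigcap_{N\in\NC(G)}HN$ for closed $H$, with each $HN$ an open subgroup by normality of $N$) is exactly the classical route. One small correction: Lemma~\ref{lemma:continuous} concerns group homomorphisms, and a translation $x\mapsto gx$ is not a homomorphism, so it does not follow from that lemma; instead, note either that pro-$\C(G)$ is a group topology, or more directly that $gH=\bigcup_{h\in H}ghN$ is a union of basic open sets, which is all your argument actually needs.
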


Recall that subgroup $R \leq G$ is a retract in $G$ if there is a surjective homomorphism $\rho \colon G \to R$ such that $\rho|_R = \id_R$. Given two elements $x,y \in R$, we see that $x \sim_R y$ if and only if $x \sim_G y$. In particular, if $y = cxc^{-1}$ for some $c \in G$, then $y = \rho(y) = \rho(cxc^{-1}) = \rho(c) x \rho(c)^{-1}$. It then follows that if $G$ is a $\C$-CS group and $R \leq G$ is a retract, then $R$ is $\C$-CS as well. Given $x,y \in R$ such that $x \not\sim_R y$, we have $x \not\sim_G y$, and thus, we take the appropriate quotient of $G$ and restrict the projection map to $R$. 

It follows from the definition that if $G$ is a $\C$-HCS group, then every $\C$-open subgroup of $G$ is in fact $\C$-HCS as well. We say that $R \leq G$ is a $\C$-virtual retract of $G$ if there is a $\C$-open subgroup $H \leq G$ such that $R$ is a retract in $H$. Following the previous paragraph, it can be easily seen that the property of being $\C$-HCS is passed down to $\C$-virtual retracts as well.
\begin{remark}
    \label{remark:retracts are C-HCS}
    Let $G$ be a $\C$-hereditarily conjugacy separable group, and suppose that $R \leq G$ is a $\C$-virtual retract of $G$. Then $R$ is a $\C$-HCS group as well.
\end{remark}

Recall that a group $G$ is $\C$-cyclic subgroup separable ($\C$-CSS for short) if every cyclic subgroup of $G$ is $\C$-closed in $G$. In other words, $G$ is $\C$-CSS if for every $g,h \in G$ such that $h \notin \langle g \rangle$, there exists a subgroup $K \in \NC(G)$ such that $h \notin \langle g \rangle K$ in $G$. In the case when $\C$ is the class of all finite $p$-groups, a residually-$\C$ group $G$ is $\C$-CSS if and only if $G$ is a torsion group, i.e.~$G$ cannot contain an element of infinite order. This is a consequence of the following observation. If $a \in G$ is of infinite order and $q$ is a prime number distinct from $p$, then the subgroup $\langle a^q \rangle$ is dense in $\langle a \rangle$, meaning that for any homomorphism $\pi \colon G \to Q$, where $Q$ is a finite $p$-group, we have $\pi(a) \in \langle \pi (a^q) \rangle$. Thus, the element $a$ cannot be separated from the subgroup $\langle a^q\rangle$ even though $a \notin \langle a^q \rangle$. 
\begin{lemma}
    \label{lemma:order is a multiple}
    Let $G$ be a group, and suppose that every cyclic subgroup of $G$ is $\C$-closed in $G$. Then for every $g \in G$ with $\ord(g) = \infty$ and every integer $N >1$, there exists a subgroup $K \in \NC(G)$ and $c \in \mathbb{N}$ such that $\langle g\rangle \cap K = \langle g^{cN}\rangle$, i.e.~the order of the image of $g$ under the natural projection onto $G/K$ is a multiple of $N$.
\end{lemma}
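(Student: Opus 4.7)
The plan is to leverage the $\C$-closedness of the specific cyclic subgroup $\langle g^N \rangle$. Since $g$ has infinite order, each of the $N-1$ elements $g, g^2, \ldots, g^{N-1}$ sits outside $\langle g^N \rangle$, so the closedness hypothesis separates each of them from $\langle g^N \rangle$ in some co-$\C$ quotient; intersecting these finitely many witnesses produces one quotient of $G$ in which all the required separations hold simultaneously, and a short arithmetic computation then pins down the order of $\pi(g)$.

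Concretely, after disposing of the trivial case $N = 1$ (take $K = G \in \NC(G)$ and $c = 1$), for $N \geq 2$ I would, for each $i \in \{1, \ldots, N-1\}$, invoke the characterization of $\C$-closedness stated earlier to pick $K_i \in \NC(G)$ with $g^i \notin \langle g^N \rangle K_i$. Set $K := \bigcap_{i=1}^{N-1} K_i$. By Remark~\ref{remark:intersections} this finite intersection still lies in $\NC(G)$, and because $K \leq K_i$ for each $i$, the projection $\pi \colon G \to G/K$ satisfies $\pi(g)^i \notin \langle \pi(g)^N \rangle$ for every $i \in \{1, \ldots, N-1\}$.

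To finish, let $m := \ord(\pi(g))$, which is finite since $G/K \in \C$. In the cyclic group $\langle \pi(g) \rangle$ of order $m$, the subgroup $\langle \pi(g)^N \rangle$ coincides with $\langle \pi(g)^{\gcd(N, m)} \rangle$, so $\pi(g)^i \in \langle \pi(g)^N \rangle$ if and only if $\gcd(N, m)$ divides $i$. Since no $i \in \{1, \ldots, N-1\}$ satisfies this, the smallest positive multiple of $\gcd(N, m)$ must be at least $N$, forcing $\gcd(N, m) = N$ and hence $N \mid m$. Setting $c := m/N \in \mathbb{N}$ then gives $\langle g \rangle \cap K = \langle g^m \rangle = \langle g^{cN} \rangle$, as required.

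The step I expect to require the most care is producing a \emph{single} co-$\C$ subgroup that simultaneously separates each $g^i$ ($1 \leq i \leq N-1$) from $\langle g^N \rangle$: handling each $i$ in isolation is immediate from the hypothesis, but passing to a common witness is what forces us to use the closure of $\NC(G)$ under finite intersections and, implicitly, property (c0) of $\C$. The arithmetic step at the end is standard once this common $K$ is in hand.
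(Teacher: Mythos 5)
Your proposal is correct and follows essentially the same route as the paper: use the $\C$-closedness of $\langle g^N\rangle$ to separate finitely many powers of $g$ from it, intersect the finitely many witnesses (using closure of $\NC(G)$ under finite intersections), and conclude $N \mid \ord(\pi(g))$ by a gcd argument. The only cosmetic differences are that you separate all exponents $1,\dots,N-1$ where the paper only treats the proper divisors of $N$ (plus an auxiliary $K_0$ with $g^N\notin K_0$), which changes nothing essential.
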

\begin{proof}
    Denote $D = \{d \in \mathbb{N} \mid d<N \mbox{ and } d|N\}$. Clearly, $g^d \notin \langle g^N \rangle$ for each $d \in D$. Thus, as every cyclic subgroup of $G$ is $\C$-separable in $G$, for every $d \in D$ there is a subgroup $K_d \in \NC(G)$ such that $g^d \notin \langle g^N\rangle K_d$. Furthermore, as $G$ is residually-$\C$, there is a subgroup $K_0 \in \NC(G)$ such that $g^N \notin K_0$. We then set
    \begin{displaymath}
        K = K_0 \cap \bigcap_{d \in D} K_d.
    \end{displaymath}
    Hence, $g^d \notin \langle g^N \rangle K$ for all $d\in D$ and $g^N \notin K$.

    As $K \in \NC(G)$, and thus is a finite index subgroup of $G$, we can write $\langle g \rangle \cap K = \langle g^m\rangle$ for some $m \in \mathbb{N}$. Denote $d' = \gcd(m,N)$. By Bezout's identity, we have $\langle g^{d'} \rangle \subseteq \langle g^N \rangle K$, but by construction, we have $g^d \notin \langle g^N \rangle K$ for all proper divisors of $N$. Thus, we see that $\gcd(m,N) = N$ by necessity, because any proper divisor $d|N$ would contradict the construction of $K$. In particular, we see that $\langle g \rangle \cap K = \langle g^{cN} \rangle$ for some $c \in \mathbb{N}$. 
\end{proof}

Let $G$ be a group, and let $H \leq G$. We say that $\proC(H)$ is a \emph{restriction} of $\proC(G)$ if a subset $X \subseteq H$ is $\C$-closed in $H$ if and only if it is $\C$-closed in $G$. Note that if $\proC(H)$ is a restriction of $\proC(G)$, then $H$ is $\C$-closed in $G$ as $H$ is $\C$-closed in $H$ by definition. The following straightforward lemma was proved in \cite[Lemma~2.4]{separating_cyclic_subgroups_graph_products}.
\begin{lemma}
    \label{lemma:restriction}
    Let $G$ be a group, and let $H \leq G$ be a subgroup. Then $\proC(H)$ is a restriction of $\proC(G)$ if and only if every $N \in \NC(H)$ is $\C$-closed in $G$.
\end{lemma}
One can show the following lemma by using the proof of \cite[Lemma~3.1.5]{rz}
\begin{lemma}
    \label{lemma:retract_restriction}
    Let $G$ be a residually $\C$ group, and suppose that $R \leq G$ is a retract. Then $\proC(R)$ is a restriction of $\proC(G)$.
\end{lemma}
The following lemma was proved in \cite[2.6]{separating_cyclic_subgroups_graph_products}
\begin{lemma}
    \label{lemma:restriction_product}
    Let $G_1, G_2$ be groups, and let $H_1 \leq G_1$, $H_2 \leq G_2$ be subgroups. Suppose that $\proC(H_1)$ is a restriction of $\proC(G_1)$ and that $\proC(H_2)$ is a restriction of $\proC(G_2)$. Then $\proC(H_1 \times H_2)$ is a restriction of $\proC(G_1 \times G_2)$.
\end{lemma}

\subsection{Centralizer conditions}
Hereditary conjugacy separability is tightly connected to the ``growth'' of centralizers in finite quotients. Therefore, we have the following definition.
\begin{definition}
    We say that a group $G$ satisfies the \textbf{$\C$-centralizer condition} if for every $g \in G \setminus\{1\}$ and every subgroup $K \in \NC(G)$, there is a subgroup $L \in \NC(G)$ such that $L \leq K$ and 
\begin{displaymath}
    C_{G/L}(\pi_L(g)) \subseteq \pi_L(C_G(g)K) \text{ in } G/L,
\end{displaymath}
where $\pi_L \colon G \to G/L$ is the natural projection map.
\end{definition}

The following theorem was first proved by Minasyan~\cite{ashot_raags} in the case when $\C$ is the class of all finite groups, later by Toinet~\cite{toinet2013conjugacy} in the case when $\C$ is the class of all finite $p$-groups for some prime number $p$, and by the second named author~\cite{mf} in the general case when $\C$ is an extension-closed variety of finite groups. 
\begin{theorem}[Theorem~4.2 in~\cite{mf}]
    \label{theorem:CC_HCS}
    Let $G$ be a group. Then the following are equivalent:
    \begin{enumerate}[label=(\roman*)]
        \item $G$ is $\C$-hereditarily conjugacy separable,
        \item $G$ is $\C$-conjugacy separable and satisfies $\C$-centralizer condition.
    \end{enumerate}
\end{theorem}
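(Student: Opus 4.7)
The plan is to prove the equivalence in two directions.

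For (i) $\Rightarrow$ (ii), $\C$-CS is immediate from the definition, so the work is in verifying the $\C$-centralizer condition. Given $g \neq 1$ and $K \in \NC(G)$, I would consider the $\C$-open subgroup $H := C_G(g) K$. By $\C$-HCS, $H$ is $\C$-CS, so the conjugacy class $[g]_H$ is $\C$-closed in $\proC(H)$. A standard fact, relying essentially on extension-closedness of $\C$, is that for every $M \in \NC(H)$ there exists $N \in \NC(G)$ with $N \leq M$; combined with $H$ being $\C$-closed in $G$ (Lemma~\ref{lemma:closed/open subgroups}), this upgrades $[g]_H$ to a $\C$-closed subset of $G$. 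By continuity of conjugation (Lemma~\ref{lemma:continuous}), $y[g]_H y^{-1}$ is then $\C$-closed in $G$ for every $y \in G$.

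Now suppose the centralizer condition fails for $g, K$. Then for each $L \in \NC(G)$ with $L \leq K$, there exists $x_L \notin H$ with $[x_L, g] \in L$. Since $[G:H] < \infty$ and the directed family $\{L \in \NC(G) : L \leq K\}$ is closed under finite intersections, a pigeonhole argument produces a single coset $yH \neq H$ and a cofinal subfamily with $x_L \in yH$. Writing $x_L = y h_L$ with $h_L \in H$, the elements $z_L := (y h_L) g (y h_L)^{-1} = [y h_L, g]\, g$ lie in $gL$ and thus form a net converging to $g$ in $\proC(G)$; yet every $z_L$ lies in $y[g]_H y^{-1}$. By $\C$-closedness, $g \in y[g]_H y^{-1}$, giving $y \in H C_G(g) = H$, contradicting $y \notin H$.

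For (ii) $\Rightarrow$ (i), let $H \leq G$ be $\C$-open and take $f, g \in H$ with $f \not\sim_H g$; I need $M \in \NC(H)$ with $\pi_M(f) \not\sim_{H/M} \pi_M(g)$. If $f \not\sim_G g$, then $\C$-CS of $G$ provides $L \in \NC(G)$ separating them, and $M := L \cap H \in \NC(H)$ works. Otherwise, write $f = x g x^{-1}$; since $f \not\sim_H g$, $x \notin H C_G(g)$. Pick $K \in \NC(G)$ with $K \leq H$ (Lemma~\ref{lemma:closed/open subgroups}) and invoke the $\C$-centralizer condition to obtain $L \in \NC(G)$, $L \leq K$, with $C_{G/L}(\pi_L(g)) \subseteq \pi_L(C_G(g) K)$. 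If some $h \in H$ satisfied $h g h^{-1} \in f L$, then $\pi_L(x^{-1} h) \in C_{G/L}(\pi_L(g)) \subseteq \pi_L(C_G(g) K)$, forcing $x \in h K C_G(g) \subseteq H C_G(g)$, a contradiction. Hence $M := L \in \NC(H)$ separates $f$ and $g$.

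The main obstacle is the topological bookkeeping in the forward direction, namely upgrading $\C$-closedness of $[g]_H$ from $\proC(H)$ to $\proC(G)$; this is where extension-closedness of $\C$ enters decisively, by ensuring that every $M \in \NC(H)$ contains some $N \in \NC(G)$. The pigeonhole step also requires some care to extract a cofinal, rather than merely infinite, subfamily of $\NC(G)$, which follows from the directedness of $\{L \in \NC(G) : L \leq K\}$ under intersection.
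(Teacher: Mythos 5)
Your argument is correct. Note that the paper does not prove this statement itself — it imports it as Theorem~4.2 of~\cite{mf} (the $\C$-analogue of Minasyan's Proposition~3.2 in~\cite{ashot_raags}) — and your proof follows essentially that standard route: for (ii)$\Rightarrow$(i) the conjugator analysis splitting into $f\not\sim_G g$ and $f=xgx^{-1}$ with $x\notin HC_G(g)$, and for (i)$\Rightarrow$(ii) the $\C$-closedness of the $H$-conjugacy class in the $\C$-open subgroup $H=C_G(g)K$ together with a limiting argument. The only step you lean on without proof, that every $M\in\NC(H)$ contains some $N\in\NC(G)$ when $H$ is $\C$-open, is indeed the standard consequence of extension-closedness of $\C$ (take the normal core in $G$ of $M\cap K_0$ for $K_0\in\NC(G)$ contained in $H$), so the proof is complete; the pigeonhole/cofinality step could even be bypassed by noting that the finite union $\bigcup_{yH\neq H} y[g]_H y^{-1}$ is already $\C$-closed.
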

Let us note that the centralizer condition can also be characterized in terms of pro-$\C$ completion - see the appendix for more details.

Before we proceed, we introduce some more technical refinements of centralizer conditions as follows.
\begin{definition}
We say that an element $g \in G$ satisfies the $\C$-centralizer condition ($\C$-CC) in $G$ if for every subgroup $K \in \NC(G)$ there is a subgroup $L \in \NC(G)$ such that $L \leq K$ and where
\begin{displaymath}
    C_{G/L}(\pi_L(g)) \subseteq \pi_L(C_G(g)K) \mbox{ in } G/L.
\end{displaymath}
\end{definition}
Clearly, a group $G$ satisfies the $\C$-centralizer condition if and only if every element $g \in G$ satisfies the $\C$-centralizer condition in $G$. 

For the ease of writing, and to improve readability, we introduce the following notation. Let $G$ be a group, and suppose that $g \in G$ and $K \in \NC(G)$ are arbitrary. We say that a subgroup $L \in \NC(G)$ is a $\C$-CC \emph{witness} for the pair $(g,K)$ if
\begin{displaymath}
    C_{G/L}(\pi_L(g)) \subseteq \pi_L(C_G(g)K) \mbox{ in } G/L
\end{displaymath}
where $\pi_L \colon G \to G/L$ is the natural projection map. Clearly, an element $g$ satisfies $\C$-CC in $G$ if and only if for every $K \in \NC(G)$ the pair $(g,K)$ has a $\C$-CC witness. Similarly, a group $G$ satisfies $\C$-CC if and only if every pair $(g,K)$, where $g \in G$ and $K \in \NC(G)$, has a $\C$-CC witness.

As the following lemma demonstrates, the property of being a witness for a given pair is inherited by subgroups.
\begin{lemma}
    \label{lemma:witness goes down}
    Let $G$ be a group and let $g \in G$ and $K, L \in \NC(G)$ be arbitrary. If $L$ is a $\C$-CC witness for the pair $(g,K)$, then every subgroup $L' \in \NC(G)$ such that $L' \leq L$ is also a $\C$-CC witness for $(g,K)$.
\end{lemma}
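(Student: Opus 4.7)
The plan is to translate the witness condition into an elementwise statement about $G$ and exploit the inclusion $L' \leq L$ to propagate it. Specifically, the containment
\[
C_{G/L}(\pi_L(g)) \subseteq \pi_L(C_G(g) K)
\]
in $G/L$ is equivalent, by taking preimages under $\pi_L$, to the assertion that every $x \in G$ with $[x, g] \in L$ lies in the set $(C_G(g) K) \cdot L$. Since the witness notion is deployed (as in the $\C$-CC definition) under the standing assumption $L \leq K$, the product $K \cdot L$ simplifies to $K$, and the witness property of $L$ becomes the elementwise implication: $[x, g] \in L$ implies $x \in C_G(g) K$.

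Now for any $L' \in \NC(G)$ with $L' \leq L$, we also have $L' \leq K$, so the analogous reformulation applies: to verify that $L'$ is a witness for $(g, K)$ it suffices to check that every $x \in G$ with $[x, g] \in L'$ satisfies $x \in C_G(g) K$. I would then take any such $x$; since $L' \leq L$, we have $[x, g] \in L$, and the witness property of $L$ already gives $x \in C_G(g) K$, as required. Translating back to the quotient $G/L'$ yields the desired inclusion $C_{G/L'}(\pi_{L'}(g)) \subseteq \pi_{L'}(C_G(g) K)$.

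There is no genuine obstacle here: the argument is a direct unpacking of definitions, combined with the observation that the commutator condition $[x, g] \in L'$ is strictly stronger than $[x, g] \in L$. The only technical subtlety worth flagging is the implicit requirement $L \leq K$, which is built into the $\C$-CC setup and makes the product $(C_G(g) K) \cdot L$ collapse cleanly to $C_G(g) K$; without this, a priori one only recovers $x \in (C_G(g) K) \cdot L$, which is generally larger than $(C_G(g) K) \cdot L'$.
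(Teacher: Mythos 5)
Your argument is correct and is essentially the paper's own proof, just phrased by pulling everything back to $G$ instead of factoring through the intermediate quotient map $\lambda\colon G/L'\to G/L$: in both versions the only content is that $[x,g]\in L'$ forces $[x,g]\in L$, together with the identification of $\pi_L^{-1}\bigl(\pi_L(C_G(g)K)\bigr)$ with $C_G(g)K$. The subtlety you flag, however, is not merely cosmetic, and you are right to insist on $L\leq K$: with $K,L\in\NC(G)$ literally arbitrary, as in the lemma's wording, the statement is false. For instance, take $\C$ to be the class of finite $2$-groups, $G=D_4=\langle r,s\mid r^4=s^2=1,\ srs=r^{-1}\rangle$, $g=s$, $K=\{1\}$, $L=\langle r\rangle$ and $L'=\langle r^2\rangle\leq L$: then $C_{G/L}(\pi_L(s))=G/L=\pi_L(C_G(s))$, so $L$ is a witness for $(s,K)$, but $G/L'$ is abelian while $\pi_{L'}(C_G(s))$ has index $2$ in it, so $L'$ is not. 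Your elementwise translation makes visible exactly where $L\leq K$ enters; the paper's proof uses it tacitly in the final equality $\lambda^{-1}\bigl(\pi(C_G(g)K)\bigr)=\pi'(C_G(g)K)$, which in general only gives $\pi'(C_G(g)KL)$. Since the definition of $\C$-CC (and every application of the lemma in the paper) produces witnesses with $L\leq K$, the statement you prove is the intended one; without that hypothesis the correct conclusion of the argument is that $L'$ is a witness for the pair $(g,KL)$.
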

\begin{proof}
    Suppose that $L$ is a $\C$-CC witness for $(g,K)$, and let $L' \in \NC(G)$ be an arbitrary subgroup such that $L' \leq L$. Let $\pi \colon G \to G/L$ and $\pi' \colon G \to G /L'$ be the corresponding quotient maps. By assumption, 
    \begin{displaymath}
        C_{G/L}(\pi(g)) \subseteq \pi(C_G(g)K) \mbox{ in } G/L.
    \end{displaymath}
    Let $\lambda \colon G/L' \to G/L$ denote the map defined by $\lambda(xL') = xL$ for all $x \in G$. Thus, $\lambda$ is an epimorphism and $\pi = \lambda \circ \pi'$, and it can be seen that $C_{G/L'}(\pi'(g)) \subseteq \lambda^{-1}(C_{G/L}(\pi(g))$.  Altogether, we see that
    \begin{align*}        
        C_{G/L'}(\pi'(g))   &\subseteq \lambda^{-1}(C_{G/L}(\pi(g))\\
                            &\subseteq \lambda^{-1}(\pi(C_G(g)K)) \\
                            &= \pi'(C_G(g)K)L' = \pi'(C_G(g)K),
    \end{align*}
    meaning that $L'$ is a $\C$-CC witness for $(g,K)$.
\end{proof}

Since showing that the centralizer condition holds can be tedious, it is sometimes convenient to reduce the problem to constructing a well-behaved homomorphism to a group that satisfies the centralizer condition. Thus, we have the following lemma, which is an immediate consequence of \cite[Lemma~4.5]{mf}.

\begin{lemma}\label{lemma:lazy_lemma_CC}
    Let $G$ be a group. An element $g\in G$ satisfies $\C$-CC if and only if for every subgroup $K \in \NC(G)$, there exists a group $Q$ and a surjective homomorphism $\pi \colon G \to Q$ such that $\ker(\pi) \leq K$, $\pi(g)$ satisfies $\C$-CC in $Q$, and
    \begin{displaymath}
        C_{Q}(\pi(g)) \subseteq \pi(C_G(g)K) \mbox{ in }Q.
    \end{displaymath}
\end{lemma}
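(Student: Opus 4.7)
The statement is an ``if and only if,'' so I would split the proof accordingly.

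The forward direction is immediate: if $g$ satisfies $\C$-CC in $G$, then for any $K \in \NC(G)$ one may take $Q = G$ and $\pi = \id_G$; the hypotheses are satisfied trivially, and the inclusion $C_Q(\pi(g)) \subseteq \pi(C_G(g)K)$ reduces to $C_G(g) \subseteq C_G(g)K$.

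For the backward direction, I fix an arbitrary $K \in \NC(G)$ and must produce a $\C$-CC witness $L \in \NC(G)$ for the pair $(g,K)$. By hypothesis, there exists a surjection $\pi \colon G \to Q$ with $\ker \pi \leq K$ such that $\pi(g)$ satisfies $\C$-CC in $Q$ and $C_Q(\pi(g)) \subseteq \pi(C_G(g)K)$. Since $\ker \pi \leq K$, the image $\pi(K)$ is a normal subgroup of $Q$ with $Q/\pi(K) \cong G/K \in \C$, so $\pi(K) \in \NC(Q)$. Applying the fact that $\pi(g)$ satisfies $\C$-CC in $Q$ to the subgroup $\pi(K)$, I obtain a witness $M \in \NC(Q)$ with $M \leq \pi(K)$ and $C_{Q/M}(\pi_M(\pi(g))) \subseteq \pi_M(C_Q(\pi(g))\,\pi(K))$. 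I then set $L = \pi^{-1}(M)$; this lies in $\NC(G)$ (since $G/L \cong Q/M \in \C$) and satisfies $L \leq \pi^{-1}(\pi(K)) = K \cdot \ker\pi = K$.

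The remaining step is a diagram chase. Given $xL \in C_{G/L}(\pi_L(g))$, its image in $Q/M$ lies in $C_{Q/M}(\pi_M(\pi(g)))$, so $\pi(x) \in C_Q(\pi(g))\,\pi(K)\,M = C_Q(\pi(g))\,\pi(K)$ (using $M \leq \pi(K)$). Invoking the given inclusion $C_Q(\pi(g)) \subseteq \pi(C_G(g)K)$ and absorbing $\pi(K)$ gives $\pi(x) \in \pi(C_G(g)K)$, and since $\ker\pi \leq K$ this lifts to $x \in C_G(g)K$, hence $\pi_L(x) \in \pi_L(C_G(g)K)$, as required.

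There is no real obstacle here beyond bookkeeping: the whole content is the bijection between $\NC(G)$-subgroups containing $\ker\pi$ and $\NC(Q)$-subgroups, combined with the hypothesized inclusion $C_Q(\pi(g)) \subseteq \pi(C_G(g)K)$, which is exactly what allows centralizers in $Q$ to be pulled back to the form $\pi_L(C_G(g)K)$ in $G/L$.
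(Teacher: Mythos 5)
Your proof is correct. Both directions are sound: the forward direction via $Q=G$, $\pi=\id_G$ is exactly right, and in the backward direction the key steps all check out --- $\pi(K)\in\NC(Q)$ because $\ker\pi\leq K$ gives $Q/\pi(K)\cong G/K\in\C$; the witness $M\leq\pi(K)$ pulls back to $L=\pi^{-1}(M)\in\NC(G)$ with $L\leq K$; and the chase $\pi(x)\in C_Q(\pi(g))\pi(K)M=C_Q(\pi(g))\pi(K)\subseteq\pi(C_G(g)K)$ lifts to $x\in C_G(g)K\ker\pi=C_G(g)K$ precisely because $\ker\pi\leq K$.

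The only point of comparison worth noting is that the paper does not prove this lemma at all: it is stated as ``an immediate consequence of'' Lemma~4.5 of the second author's earlier paper~\cite{mf}, so no argument appears in the text. Your self-contained proof --- using the correspondence between co-$\C$ subgroups of $Q$ and co-$\C$ subgroups of $G$ containing $\ker\pi$ to transport a $\C$-CC witness for $(\pi(g),\pi(K))$ back to a witness for $(g,K)$ --- is the standard argument underlying that citation, so there is no gap and nothing that would fail; it simply makes explicit what the paper outsources to the reference.
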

Note that there is no assumption on the group $Q$ though it will usually be infinite. In particular, if $Q = G$ and $\pi \in \Aut(G)$, we get the following immediate corollary of Lemma~\ref{lemma:lazy_lemma_CC}.
\begin{remark}
    \label{remark:lazy lemma automorphism}
    Let $G$ be a group, and let $g \in G$ be an arbitrary element. If there is an automorphism $\alpha \in \Aut(G)$ such that $\alpha(g)$ satisfies $\C$-CC in $G$, then $g$ satisfies $\C$-CC in $G$.
\end{remark}

\subsection{Wreath products}
Let $A$ and $B$ be groups. We denote the restricted wreath product of $A$ and $B$ as
    \begin{displaymath}
        A \wr B = \left(\bigoplus_{b \in B} A \right) \rtimes B
    \end{displaymath}
where $B$ acts on coordinates by left translation. An element $f \in \bigoplus_{b \in B} A$ is understood as a function $f \colon B \to A$ such that $f(b) \neq 1$ for only finitely many elements $b \in B$. With a slight abuse of notation, we will use $A^B$ to denote $\bigoplus_{b \in B} A$. The action of $B$ on $A^B$ is then realized as $b \cdot f(x) = f(b x)$. 

The \textbf{support} of $f$, i.e.~the elements on which $f$ does not vanish, is denoted as
\begin{displaymath}
    \supp(f) = \{b \in B \mid f(b) \neq 1\}.
\end{displaymath}

The proofs of the following two lemmas can be found in \cite[Subsection 4.1]{quantifying}.
\begin{lemma}[Lemma~4.2 in~\cite{quantifying}]
    \label{lemma:map_extension}
    Let $A, B$ be groups, and let $N \unlhd B$. If $A$ is abelian, then the natural projection $\pi_B \colon B \to B/N$ extends to a projection $\pi \colon A \wr B \to A \wr (B/N)$ with $\ker(\pi) = K_N \rtimes N$ where 
    \[
        K_N = \left\{f \in A^B \ \middle|\  \mathrm{for\ each}\ x\in B, \mathrm{\ we\ have}\ \prod_{b \in N} f(bx) = 1\right\}.
    \]
\end{lemma}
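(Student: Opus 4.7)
The plan is to construct $\pi$ explicitly, verify it is a well-defined homomorphism, and then compute its kernel. The guiding idea is that when $A$ is abelian we may unambiguously multiply the values of a finitely supported function $f\colon B\to A$ along each coset of $N$, producing a function on $B/N$. This defines the extension of $\pi_B$ to the base group.

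Concretely, for $f\in A^B$ and any coset $bN$, set
\[
    \bar f(bN) \;:=\; \prod_{n\in N} f(bn),
\]
which is well-defined as a finite product because $\supp(f)$ is finite. Independence from the coset representative follows from $A$ being abelian: if $b'=bn_0$ with $n_0\in N$, the map $n\mapsto n_0 n$ is a bijection of $N$, so the products $\prod_n f(b'n)$ and $\prod_n f(bn)$ differ only in the ordering of finitely many factors and hence are equal. Clearly $\bar f$ is finitely supported, so $\bar f\in A^{B/N}$. Define
\[
    \pi\colon A\wr B \;\longrightarrow\; A\wr(B/N), \qquad (f,b) \;\longmapsto\; (\bar f,\; bN).
\]

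To verify that $\pi$ is a homomorphism, the key step is to check equivariance of $f\mapsto\bar f$ with respect to the $B$-action used in the wreath product, namely that $\overline{b_0\cdot f} = \pi_B(b_0)\cdot \bar f$ for every $b_0\in B$ and $f\in A^B$. This is the same reindexing argument as above. Together with $\pi_B$ being a group homomorphism and the semidirect product structure $(f_1,b_1)(f_2,b_2)=(f_1\,(b_1\cdot f_2),\,b_1b_2)$, this yields multiplicativity of $\pi$ directly.

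Finally, unpack the kernel. An element $(f,b)$ lies in $\ker\pi$ exactly when $bN = N$ and $\bar f$ is identically $1$ on $B/N$, i.e. when $b\in N$ and $\prod_{n\in N} f(xn) = 1$ for every $x\in B$; this last condition is precisely $f\in K_N$. Hence $\ker\pi = K_N\rtimes N$ (noting that $K_N$ is $N$-invariant using normality of $N$ in $B$, so the semidirect product notation is justified). The only conceptually delicate point is the well-definedness of $\bar f$ on cosets, which is exactly the step that forces the hypothesis that $A$ is abelian; everything else is bookkeeping.
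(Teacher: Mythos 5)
Your construction is correct and is the standard argument: the paper itself does not prove this lemma but defers to \cite[Subsection 4.1]{quantifying}, and the cited proof proceeds in essentially the same way, defining the induced function by multiplying the values of $f$ over each coset of $N$ (where commutativity of $A$ gives both well-definedness and multiplicativity) and then reading off the kernel. No gaps to report.
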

When the base group $A$ is abelian, it is sometimes convenient to use the additive notation for the group operation in $A$ and $A^B$, respectively. In that case, the subgroup $K_N$ from the lemma above is given by 
\begin{displaymath}
    K_N = \left\{f \in A^B \ \middle|\  \text{for each } x\in B, \text{ we have }\sum_{b \in N} f(bx) = 0\right\}.
\end{displaymath}
\begin{lemma}[Lemma~4.3 in~\cite{quantifying}]
\label{lemma:factoring_through_base}
    Let $G = A \wr B$ be a restricted wreath product of groups $A$ and $B$ such that $B \in \C$. If $K \in \NC(G)$ is a subgroup, then there is a subgroup $K_A \in \NC(A)$ such that $(K_A)^B \in \NC(G)$ and $(K_A)^B \leq K$.
\end{lemma}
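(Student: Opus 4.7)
The strategy is to extract a candidate subgroup from $K$ by intersecting with a single coordinate copy of $A$ inside $A^B$, and then use the normality of $K$ under the $B$-action to promote it to an ``equal-coordinate'' subgroup of the whole base $A^B$. Concretely, let $\iota_1 \colon A \hookrightarrow A^B \leq G$ denote the natural inclusion sending $a \in A$ to the function supported at $1 \in B$ with value $a$, and set $K_A := \iota_1^{-1}(K)$. Since $B \in \C$ is finite, the sum $A^B$ is actually a finite direct product of copies of $A$, one for each $b \in B$, which will make the later coordinate-wise arguments painless.

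The first thing to verify is $K_A \in \NC(A)$. Normality is immediate from normality of $K \cap A^B$ in $A^B$. For the quotient, the composition $A \xrightarrow{\iota_1} G \twoheadrightarrow G/K$ has kernel exactly $K_A$, and therefore induces an injection $A/K_A \hookrightarrow G/K \in \C$; closure of $\C$ under subgroups forces $A/K_A \in \C$. Next, I would show $(K_A)^B \leq K$. Here the point is that $K \cap A^B$ is $B$-invariant (because $K \unlhd G$ and conjugation by $B$ on $A^B$ is the shift action), and a direct computation from the formula $b \cdot f(x) = f(bx)$ yields $b \cdot \iota_1(a) = \iota_{b^{-1}}(a)$. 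Consequently, if $a \in K_A$ then $\iota_b(a) \in K$ for every $b \in B$, and any $f \in (K_A)^B$ factors as the finite product $\prod_{b \in B} \iota_b(f(b))$ of such elements and so lies in $K$.

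Finally I would verify $(K_A)^B \in \NC(G)$. Coordinate-wise normality of $K_A$ inside each factor of $A^B$, combined with the fact that $(K_A)^B$ has the same factor at every coordinate, makes it a $B$-invariant normal subgroup of $A^B$ and hence normal in $G = A^B \rtimes B$. The quotient is naturally isomorphic to $(A/K_A) \wr B$; since $B$ is finite and $A/K_A, B \in \C$, the base $(A/K_A)^B$ is a finite direct product of groups in $\C$ and so lies in $\C$, after which $(A/K_A) \wr B$ lies in $\C$ by the extension-closure hypothesis on $\C$. The only mildly subtle point in the whole argument is the bookkeeping in the second step, where one must make peace with the ``pull-back'' nature of the action $b \cdot f(x) = f(bx)$ and track that conjugation carries $\iota_1(a)$ to $\iota_{b^{-1}}(a)$ rather than to $\iota_b(a)$; this does not affect the conclusion since the index $b^{-1}$ ranges over all of $B$.
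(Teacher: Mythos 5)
Your argument is correct and complete: taking $K_A = \iota_1^{-1}(K)$, using the coordinate embedding to get $A/K_A \hookrightarrow G/K \in \C$, using normality of $K$ together with the shift action to place every $\iota_b(K_A)$ (and hence $(K_A)^B$) inside $K$, and identifying $G/(K_A)^B \cong (A/K_A)\wr B \in \C$ via closure under finite direct products and extensions is exactly the natural proof of this statement, which the present paper only cites from~\cite{quantifying} rather than reproving. Your handling of the conjugation formula sending $\iota_1(a)$ to $\iota_{b^{-1}}(a)$ is also the right way to deal with the paper's action convention, and it indeed does not affect the conclusion.
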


\subsection{Centralizers in wreath products}\label{subsec: centralizer wreath product}
Centralizers in restricted wreath products have been studied by Meldrum in \cite{meldrum_centralizers}. In this subsection, we recall some of the notation introduced in \cite{meldrum_centralizers} and restate the results to the setting relevant to our paper, i.e.~either when the acting group $B$ is finite or when the base group $A$ is infinite. All the statements given in this subsection are direct applications of Meldrums results, once one replaces the roles of the elements present according to the table
\begin{center}
\begin{tabular}{|c|c|}
    \hline
     Meldrum & this paper  \\\hline\hline
     $f$ & $f$\\\hline
     $g$ & $b$\\\hline
     $d$ & $g$\\\hline
     $h$ & $c$\\\hline
\end{tabular}
\end{center}
and replaces the right actions by left translations.

For $h \in A^B$, we define a function $\overline{h} \colon B \times B \to A$  as
\begin{displaymath}
    \overline{h}(d, x) =    \begin{cases}
                                h(x) h(dx) \cdots h(d^{n-1}x) & \mbox{if $\ord(d) < \infty$}\\
                                \prod_{i\in \mathbb{Z}} h(d^i x) &\mbox{if $\ord(d) = \infty$}, 
                            \end{cases}
\end{displaymath}
where $n = \ord(d)$ in the case that $d$ is of finite order. Since $h$ has non-trivial values only on finitely many elements of $B$, the above is well-defined.

The following lemma is a special restatement of \cite[Theorem~15]{meldrum_centralizers}.
\begin{lemma}
    \label{lemma:centralizers_finite}
    Let $A$ and $B$ be groups with $|B| < \infty$. Let $f,g \in A^B$ and $b,c \in B$ be arbitrary elements. Then $gc \in C_{A \wr B}(fb)$ if and only if all of the following hold:
    \begin{enumerate}[label=(\roman*)]
        \item\label{lemma:centralizers_finite/1}  $c\in C_B(b)$,
        \item\label{lemma:centralizers_finite/2} $\overline{f}(b, cx) \sim_A \overline{f}(b, x)$ for all $x\in B$,
        \item\label{lemma:centralizers_finite/3} $\overline{f}(b, cx) =  g(x)^{-1}\overline{f}(b, x) g(x)$ for all $x\in B$, and
        \item\label{lemma:centralizers_finite/4} $g(bx) = f^{-1}(x) g(x) f(cx)$ for all $x\in B$.
    \end{enumerate}
\end{lemma}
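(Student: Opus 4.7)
The plan is to unpack the commutation relation $(gc)(fb) = (fb)(gc)$ directly using the semidirect product structure of $A\wr B = A^B\rtimes B$, extract conditions~\ref{lemma:centralizers_finite/1} and~\ref{lemma:centralizers_finite/4} as the content of that single equation, and then derive~\ref{lemma:centralizers_finite/2} and~\ref{lemma:centralizers_finite/3} from them by iterating~\ref{lemma:centralizers_finite/4} along powers of $b$. The proof is essentially a bookkeeping exercise, and the finiteness of $B$ is used only to ensure that the products defining $\overline{f}$ terminate, i.e.~that $b^n = 1$ for $n = \ord(b)$.

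First, I would compute both products in coordinate form. Using the left-translation action $b\cdot f(x) = f(bx)$, one obtains
\[
(gc)(fb) = \bigl(g\cdot(c\cdot f),\, cb\bigr) \quad\text{and}\quad (fb)(gc) = \bigl(f\cdot(b\cdot g),\, bc\bigr).
\]
Equating the $B$-coordinates yields $cb = bc$, which is condition~\ref{lemma:centralizers_finite/1}. Equating the $A^B$-coordinates and evaluating at $x\in B$ gives $g(x)f(cx) = f(x)g(bx)$, which rearranges to condition~\ref{lemma:centralizers_finite/4}. Thus $gc\in C_{A\wr B}(fb)$ is equivalent to the conjunction of~\ref{lemma:centralizers_finite/1} and~\ref{lemma:centralizers_finite/4}; this is the entirety of the ``if'' direction and half of the ``only if'' direction.

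It remains to show that~\ref{lemma:centralizers_finite/1} and~\ref{lemma:centralizers_finite/4} together imply~\ref{lemma:centralizers_finite/3} (from which~\ref{lemma:centralizers_finite/2} is immediate). To this end, I would iterate~\ref{lemma:centralizers_finite/4} along the orbit of $x$ under $\langle b\rangle$. Since $b$ and $c$ commute by~\ref{lemma:centralizers_finite/1}, we have $b^i c x = c b^i x$ for every $i$, and a straightforward induction on $k\geq 0$ gives
\[
g(b^k x) = \bigl(f(x)f(bx)\cdots f(b^{k-1}x)\bigr)^{-1} g(x)\bigl(f(cx)f(bcx)\cdots f(b^{k-1}cx)\bigr).
\]
Setting $k = n = \ord(b)$, the left-hand side collapses to $g(x)$, and the two bracketed products are precisely $\overline{f}(b,x)$ and $\overline{f}(b,cx)$. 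Rearranging then yields~\ref{lemma:centralizers_finite/3}. The only real care needed is to track the left-action convention consistently and to keep the ordering of factors in the finite products defining $\overline{f}$ correct; there is no conceptual obstacle, merely the risk of a direction or ordering error in the bookkeeping.
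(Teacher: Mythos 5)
Your proof is correct, and it takes a genuinely different route from the paper: the paper gives no argument at all for this lemma, but obtains it by citing Meldrum's Theorem~15 and translating his notation and right actions into the left-translation convention fixed in Subsection~2.4, whereas you verify the statement directly from the semidirect-product multiplication. Your coordinate expansion of $(gc)(fb)=(fb)(gc)$, carried out with the paper's convention $b\cdot f(x)=f(bx)$, yields exactly condition (i) together with $g(bx)=f(x)^{-1}g(x)f(cx)$, i.e.\ condition (iv); and your induction $g(b^k x)=\bigl(f(x)f(bx)\cdots f(b^{k-1}x)\bigr)^{-1}g(x)\bigl(f(cx)f(bcx)\cdots f(b^{k-1}cx)\bigr)$ is the right bookkeeping: at $k=n=\ord(b)$ the left side returns to $g(x)$ and the two products are precisely $\overline{f}(b,x)$ and $\overline{f}(b,cx)$, giving (iii) and hence (ii). A pleasant byproduct of your argument, not visible from the citation, is that for finite $B$ (indeed, whenever $\ord(b)<\infty$) conditions (ii) and (iii) are redundant, being consequences of (i) and (iv); this is consistent with the lemma as stated, since an ``if and only if all of (i)--(iv)'' claim is unaffected by redundant conjuncts, but it explains why the paper's later uses of the lemma lean almost entirely on (i) and (iv). What the paper's route buys is uniformity: Meldrum's theorem also covers infinite $B$ and underlies the companion Lemma on abelian base groups, where (ii)--(v) do carry independent content. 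What your route buys is a short, self-contained and more transparent proof; the only caveat, which you already flag, is that its correctness is relative to the paper's stated left-translation convention (inherited from converting Meldrum's right actions), and you apply that convention consistently, so there is no gap.
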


Let us note that if $c = 1$, then $\overline{g}(c, x) = g(x)$. In this particular case, we see that \ref{lemma:centralizers_finite/2} and \ref{lemma:centralizers_finite/3} in the statement of Lemma~\ref{lemma:centralizers_finite} are equivalent, i.e.~$fb \in C_{A \wr B}(g)$ if and only if $b \in C_B(c)$ and $g(bx) = f(x)^{-1}g(x)f(x)$ for every $x$. Similarly, if $g = 1$, then $fb \in C_{A \wr B}(c)$ if and only if $b \in C_B(c)$ and $f(cx) = f(x)$. In general, \ref{lemma:centralizers_finite/1} specifies admissible elements $b \in C_B(c)$, \ref{lemma:centralizers_finite/2} states that all possible values of $f(x)$ must belong to the same left coset of $C_A(\overline{g}(c,x))$, and \ref{lemma:centralizers_finite/3} gives a way of determining $f(cx)$ from $g(x), f(x)$, and $g(bx)$.

When $A$ is abelian, the value of $\overline{g}(c,x)$ is independent of the order in which we multiply the individual elements. In particular, if given $h \in A^B$ and $d \in B$, fixing the first input of $\overline{h}$ to $d$ (also known as ``carrying'' $\overline{h}$ by $d$), we then get a function $\overline{h}(d) \colon B \to A$ which is constant on right cosets of $\langle d \rangle$. In fact, $\overline{h}(d)$ can be seen as a function $\overline{h}(d) \colon \langle d\rangle \backslash B \to A$.

We can now state the following lemma, which is a combination of~\cite[Theorem~3 and Theorem~15]{meldrum_centralizers} in the case when the base group $A$ is abelian.
\begin{lemma}
\label{lemma:centralizers_abelian_base}
    Let $A$ and $B$ be groups, and suppose that $A$ is abelian.  Let $f,g \in A^B$ and $b,c \in B$ be arbitrary elements. Then $gc \in C_{A \wr B}(fb)$ if and only if all of the following hold:
    \begin{enumerate}
        \item[(i)]  $c\in C_B(b)$,
        \item[(ii)] $\overline{f}(b, cx) = \overline{f}(b, x)$ for all $x\in B$,
        \item[(iii)] $g(bx) =  g(x) f(x)^{-1} f(cx)$ for all $x\in B$,
        \item[(iv)] if $\ord(b) < \infty$ and $\overline{f}(c, x) \neq 1$ for some $x \in B$, then $\ord(c) < \infty$,
        \item[(v)] if $\ord(b) = \infty$, then for all $x \in B$ we have that
        \begin{displaymath}
            g(b^{k+n}x) = f(b^{k + n-1}x)^{-1} \cdots f(b^{k}x)^{-1}f(b^{k}cx) \cdots f(b^{k-n+1}cx)
        \end{displaymath}
        for all $n \geq 1$, where $k \in \mathbb{Z}$ is maximal such that $g(b^{k'}x) = 1$ for all $k' \leq k$.
    \end{enumerate}
\end{lemma}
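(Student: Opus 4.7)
The plan is to derive the stated conditions directly from the commutation identity $(gc)(fb)=(fb)(gc)$ in $A\wr B$, exploiting the fact that $A$ is abelian to collapse the more general centralizer description coming from Meldrum into this sharper form. First I would expand both products using the semidirect-product structure. Equating the $B$-components immediately gives $cb=bc$, which is~(i). Equating the $A^B$-components yields a single functional equation; because $A$ is abelian, the two parallel conditions~\ref{lemma:centralizers_finite/2} and~\ref{lemma:centralizers_finite/3} of Lemma~\ref{lemma:centralizers_finite} collapse into the single relation $g(bx)=g(x)f(x)^{-1}f(cx)$ for all $x\in B$, which is exactly~(iii).

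Next I would iterate~(iii). A straightforward induction, in which abelianness of $A$ is repeatedly used to reorder factors, produces the closed-form expression
\[
    g(b^n x)\;=\;g(x)\cdot\Bigl(\prod_{i=0}^{n-1}f(b^i x)\Bigr)^{-1}\cdot\prod_{i=0}^{n-1}f(b^i cx)
\]
valid for every $n\geq 1$. If $\ord(b)=n<\infty$, then $g(b^n x)=g(x)$ and the identity collapses to $\overline{f}(b,cx)=\overline{f}(b,x)$, giving~(ii). If $\ord(b)=\infty$, then because $g\in A^B$ has finite support, along each $\langle b\rangle$-orbit there is a maximal integer $k$ with $g(b^{k'}x)=1$ for all $k'\leq k$; feeding this in as a base case for the iterated identity---and, in parallel, running the dual backward iteration of~(iii) to rewrite the $f$-factors---yields the formula in~(v).

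Condition~(iv) I would obtain from the symmetry of the centralizer relation, namely that $gc\in C_{A\wr B}(fb)$ is equivalent to $fb\in C_{A\wr B}(gc)$. Swapping the roles of $(f,b)$ and $(g,c)$ in the above iteration produces the symmetric version for $\overline{g}(c,\cdot)$, and combining this with the finite support of $f$ and with the dual of~(iii) iterated along $\langle c\rangle$-orbits forces $\ord(c)<\infty$ whenever $\ord(b)<\infty$ and $\overline{f}(c,x)\neq 1$ for some $x\in B$; otherwise an infinite $\langle c\rangle$-orbit would contradict the finiteness of $\supp(f)$.

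The main obstacle I anticipate is the index-bookkeeping in the infinite-order case~(v): the two products in the target formula run over the asymmetric ranges $\{k,\ldots,k+n-1\}$ and $\{k-n+1,\ldots,k\}$, and matching this precisely requires a careful combination of the forward iteration of~(iii) starting from the base point $b^k x$ together with the backward iteration translating $f$-values at $b^{k+i}cx$ into $f$-values at $b^{k-j}cx$, using the finite support of both $f$ and $g$ to eliminate the tail contributions. The necessity direction~((i)--(v) follow from the computation sketched above; for sufficiency, the same computation run in reverse shows that the listed conditions suffice to verify the commutation identity termwise on $B$.
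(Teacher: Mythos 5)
Your expansion of the commutation identity is correct as far as it goes: equating the two components of $(gc)(fb)=(fb)(gc)$ gives (i) and, using that $A$ is abelian, exactly (iii); iterating (iii) over one full period when $\ord(b)<\infty$ gives (ii), and when $\ord(b)=\infty$ condition (ii) follows by running the iteration across the whole (finite) support of $g$ along $\langle b\rangle x$ -- a case your sketch omits but which is easy to repair. For comparison, the paper does not prove this lemma at all: it is presented as a restatement of Meldrum's Theorems 3 and 15, together with a remark that (v) is ``seemingly stronger'' than Meldrum's statement. So your attempt at a self-contained verification is doing more than the paper does, and it is precisely at (iv) and (v) that it fails.

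The step you yourself flag as the main obstacle in (v) is a genuine gap, and the ``backward iteration'' you propose cannot close it. Forward iteration of (iii) from the base point $b^kx$ gives $g(b^{k+n}x)=\prod_{j=k}^{k+n-1}f(b^jx)^{-1}f(b^jcx)$, and the only extra information supplied by the maximality of $k$ is $f(b^jx)=f(b^jcx)$ for $j\le k-1$; this does not convert $\prod_{j=k}^{k+n-1}f(b^jcx)$ into $\prod_{j=k-n+1}^{k}f(b^jcx)$, and no argument can. Concretely, take $A=\mathbb{Z}$ (written additively), $B=\langle b\rangle\times\langle c\rangle\cong\mathbb{Z}\times(\mathbb{Z}/2)$, and let $\delta_y$ denote the function with value $1$ at $y$ and $0$ elsewhere; with $f=\delta_1+\delta_{bc}$ and $g=-\delta_b+\delta_{bc}$ one checks directly that $gc\in C_{A\wr B}(fb)$, that $k=0$ for $x=1$, and that for $n=2$ the right-hand side of (v) equals $-1$ while $g(b^2)=0$ (the formula with both products over $j=k,\dots,k+n-1$ is what the iteration actually produces). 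Your argument for (iv) has the same defect: swapping the roles of $(f,b)$ and $(g,c)$ and appealing to finiteness of $\supp(f)$ does not force $\ord(c)<\infty$; with $B=\langle b\rangle\times\langle c\rangle\cong(\mathbb{Z}/2)\times\mathbb{Z}$, $f=\delta_1-\delta_b$ and $g=-\delta_b+\delta_{bc^{-1}}$, one has $gc\in C_{A\wr B}(fb)$ and $\overline{f}(c,1)\neq 1$ although $\ord(c)=\infty$. So your proposal does establish that membership in the centralizer is equivalent to (i) and (iii), and it yields (ii), but it does not -- and, by the examples above, cannot -- derive (iv) and (v) in the form stated; those clauses would need either a corrected formulation (e.g.\ the index range in (v)) or a careful translation of Meldrum's conventions rather than the bridging step you sketch.
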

Note that 
item (v) of the above lemma is a seemingly stronger statement than (ii) of \cite[Theorem~3]{meldrum_centralizers}, but a careful inspection of the proof of \cite[Theorem~3]{meldrum_centralizers} in fact shows that it is equivalent.

\section{Wreath products of HCS groups \texorpdfstring{$A \wr B$ where $|B| < \infty$}{where B is finite}}
The aim of this section is to show the following proposition.
\begin{proposition}
    \label{proposition:C_HCS_finite_wreath}
    If $A$ is a group and $B \in \C$, then the wreath product $G = A \wr B$ is a $\C$-hereditarily conjugacy separable group if and only if $A$ is a $\C$-hereditarily conjugacy separable group.
\end{proposition}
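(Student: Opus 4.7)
The forward direction is the easier half and uses that $A$ sits inside $G := A \wr B$ as a $\C$-virtual retract. Since $B \in \C$, the base subgroup $A^B$ is $\C$-open in $G$ (as $G/A^B \cong B \in \C$), and evaluation at the identity, $\rho(f) := f(1_B)$, is a homomorphic retraction of $A^B$ onto the copy of $A$ supported at the identity coordinate. Remark~\ref{remark:retracts are C-HCS} then passes $\C$-HCS from $G$ down to $A$.

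For the converse, assume $A$ is $\C$-HCS. Combined with $B \in \C$, Theorem~\ref{theorem:CCS}(i) gives that $G = A\wr B$ is $\C$-CS, so by Theorem~\ref{theorem:CC_HCS} it suffices to verify the $\C$-centralizer condition. Fix $g = fb \in G$ and $K \in \NC(G)$. By Lemma~\ref{lemma:factoring_through_base} I may replace $K$ by $(K_A)^B$ for some $K_A \in \NC(A)$, since shrinking $K$ only makes the problem harder. The witness I would produce is $L := (L_A)^B$, where $L_A \in \NC(A)$ with $L_A \leq K_A$ is chosen so that (1) $L_A$ is simultaneously a $\C$-CC witness in $A$ for every pair $(\overline{f}(b,x), K_A)$ with $x \in B$, which is available because $A$ satisfies $\C$-CC; and (2) $L_A$ distinguishes every pair $\overline{f}(b,x) \not\sim_A \overline{f}(b,y)$ in the quotient $A/L_A$, which is available because $A$ is $\C$-CS. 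Both requirements involve only finitely many co-$\C$ subgroups since $B$ is finite, and intersecting them with $K_A$ yields the desired $L_A$; the shrinking preserves both properties, the first by Lemma~\ref{lemma:witness goes down}.

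The verification of $C_{G/L}(\pi_L(g)) \subseteq \pi_L(C_G(g) K)$ proceeds by lifting. Given $\pi_L(f'b') \in C_{G/L}(\pi_L(g))$, I would construct $f'' \in A^B$ with $f''(x) \equiv f'(x) \pmod{K_A}$ for every $x$, such that $f''b' \in C_G(g)$; this forces $f'b' \in C_G(g) K$ and hence the desired inclusion. Applying Lemma~\ref{lemma:centralizers_finite} in $G/L = (A/L_A)\wr B$ gives the modular versions of conditions (i)--(iv). A routine calculation using condition (iv) shows that, on each left $\langle b\rangle$-orbit on $B$, the value of $f''$ across the whole orbit is determined by a single value $f''(x_0)$ at an orbit representative $x_0$, and that the consistency of this recursion at $x_0$ is precisely condition (iii) at $x_0$, namely
\[
    f''(x_0)^{-1}\, \overline{f}(b,x_0)\, f''(x_0) = \overline{f}(b, b'x_0).
\]
Solving this single equation with $f''(x_0)$ in the prescribed coset $f'(x_0) K_A$ is the main obstacle. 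Property (2) of $L_A$ upgrades the modular conjugacy $\overline{f}(b,x_0) \sim_{A/L_A} \overline{f}(b,b'x_0)$, implied by condition (iii) in the quotient, to an honest conjugacy $c^{-1}\overline{f}(b,x_0) c = \overline{f}(b,b'x_0)$ in $A$ for some $c \in A$. Then $f'(x_0)c^{-1}$ centralizes $\overline{f}(b,x_0)$ modulo $L_A$, and property (1) of $L_A$ produces $d \in C_A(\overline{f}(b,x_0))$ and $k \in K_A$ with $f'(x_0) c^{-1} = dk$ in $A$. The choice $f''(x_0) := dc$ then satisfies the displayed equation using $d \in C_A(\overline{f}(b, x_0))$, and lies in $f'(x_0) K_A$ by normality of $K_A$ in $A$. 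Propagating $f''$ along each orbit via condition (iv) preserves the congruence modulo $K_A$ by an induction that again relies only on normality of $K_A$, completing the construction.
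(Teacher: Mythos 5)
Your proposal is correct and follows essentially the same route as the paper: the forward direction via the $\C$-virtual retract remark, and the converse via Theorem~\ref{theorem:CCS}, Theorem~\ref{theorem:CC_HCS}, Lemma~\ref{lemma:factoring_through_base}, and Meldrum's centralizer description, choosing $L=(L_A)^B$ with $L_A$ combining $\C$-CC witnesses for the orbit products $\overline{f}(b,x)$ with $\C$-CS separation of their non-conjugate pairs, and then repairing condition (iv) orbit by orbit modulo $K_A$. The only difference is presentational: you treat all elements $fb$ uniformly and build the centralizing lift $f''$ directly from an orbit representative, whereas the paper splits into three lemmas and corrects a chosen exact solution by elements of $L$.
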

The proof of the implication from left to right is more-or-less trivial: if $A \wr B$ is a $\C$-hereditarily conjugacy separable group, then $A$ is $\C$-hereditarily conjugacy separable group by Remark \ref{remark:retracts are C-HCS} because it is isomorphic to a $\C$-virtual retract of $A \wr B$. In particular, $A^B \in \NC(G)$ since $G/A^B = B \in \C$ and $A$ is a direct factor of $A^B$. The implication in the opposite direction, however, requires substantially more work. The proof relies on Theorem~\ref{theorem:CC_HCS}, meaning that we need to show that every element of the wreath product $A \wr B$ satisfies the $\C$-centralizer condition. Lemma~\ref{lemma:CC - B in C,g in B} deals with the case when an element belongs to $B$, Lemma~\ref{lemma:CC - B in C,g in A^B} deals with the case when an element belongs to $A^B$, and finally, Lemma~\ref{lemma:CC - B in C, g general} deals with the general case. Formally speaking, one only needs Lemma~\ref{lemma:CC - B in C, g general}, but we felt that, for the sake of readability, it would be beneficial to treat those cases separately.

Throughout this section, we will assume that $G  = A \wr B$, where $B \in \C$. This first lemma demonstrates that any element in the acting group $G$ satisfies the $\C$-centralizer condition in $G$.
\begin{lemma}
    \label{lemma:CC - B in C,g in B}
   Let $A$ be a $\C$-HCS group and that $B \in \C$. If $b \in B$, then $b$ satisfies $\C$-centralizer condition in $G = A\wr B$.
\end{lemma}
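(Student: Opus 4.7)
The plan is to combine Lemma \ref{lemma:factoring_through_base} with the explicit description of $C_G(b)$ coming from Lemma \ref{lemma:centralizers_finite}. Because the element being centralised is a ``pure'' $b \in B$ (so $f = 1$ in the notation of Lemma \ref{lemma:centralizers_finite}), the twisting terms $\overline{f}(b,\cdot)$ vanish and centralising $b$ reduces to a purely combinatorial requirement on the base component. This is the key simplification, and I expect no serious obstacle beyond a transversal/averaging construction over the $\langle b \rangle$-orbits of $B$.

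Given $K \in \NC(G)$, apply Lemma \ref{lemma:factoring_through_base} to obtain $K_A \in \NC(A)$ such that $L := (K_A)^B$ lies in $\NC(G)$ and satisfies $L \leq K$. I will show that $L$ is a $\C$-CC witness for the pair $(b,K)$. Specialising Lemma \ref{lemma:centralizers_finite} to $f = 1$ shows that $gc \in C_G(b)$ if and only if $c \in C_B(b)$ and $g(bx) = g(x)$ for all $x \in B$; that is, $g$ is constant on every orbit of $\langle b \rangle$ acting on $B$ by left translation. Since $G/L \cong (A/K_A) \wr B$ and $\pi_L$ is just the natural coordinatewise projection, the same description with $A$ replaced by $A/K_A$ characterises $C_{G/L}(\pi_L(b))$.

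Now take any $gc \in G$ with $\pi_L(gc) \in C_{G/L}(\pi_L(b))$. Then $c \in C_B(b)$ and $g(bx) \equiv g(x) \pmod{K_A}$ for every $x \in B$, so $g$ is constant modulo $K_A$ along each left $\langle b \rangle$-orbit. Fix a transversal $T \subseteq B$ for these orbits and define $g_0 \in A^B$ by $g_0(x) := g(t_x)$, where $t_x \in T$ represents the orbit $\langle b \rangle x$. By construction, $g_0$ is $\langle b \rangle$-invariant, so $g_0 c \in C_G(b)$, while $g_0^{-1} g \in (K_A)^B = L$ pointwise. Using that $L \unlhd G$, we conclude
\[
    gc \;=\; (g_0 c)\bigl(c^{-1} g_0^{-1} g \, c\bigr) \;\in\; C_G(b)\cdot L \;\subseteq\; C_G(b)\cdot K,
\]
so $\pi_L(gc) \in \pi_L(C_G(b)K)$, which finishes the verification.
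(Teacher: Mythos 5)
Your proof is correct and follows essentially the same route as the paper: the same witness $L=(K_A)^B$ obtained from Lemma~\ref{lemma:factoring_through_base}, the same specialization of Lemma~\ref{lemma:centralizers_finite} to $f=1$, and the same idea of replacing $g$ by a $\langle b\rangle$-invariant function congruent to it modulo $L$. The only difference is cosmetic: you define the invariant representative $g_0$ directly via a transversal of the $\langle b\rangle$-orbits, whereas the paper builds it by an iterative coset-by-coset correction with elements of $L$; your packaging is cleaner but proves the same thing.
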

\begin{proof}

    Let $K \in \NC(G)$ be a subgroup. Following Lemma~\ref{lemma:factoring_through_base}, we see that there is a subgroup $L_A \in \NC(A)$ such that $(L_A)^B \in \NC(G)$ and $(L_A)^B \leq K$. Denote $L = (L_A)^B$. We note that $G/L = (A/L_A) \wr B$, and thus, we let $\pi \colon A \wr B \to (A/L_A) \wr B$ denote the natural projection. 
    
    Let $g \in A^B$ and $c \in B$ be elements satisfying $\pi(g)c \in C_{G/L}(b)$. Following Lemma~\ref{lemma:centralizers_finite}, we see that this is the case if and only if
     \begin{enumerate}
        \item[(i)]  $c\in C_B(b)$, and
        \item[(iv)] $\pi(g)(x) = \pi(g)(bx)$ for all $x \in B$.
    \end{enumerate}
    Hence, $\pi(g)c \in C_{G/L}(b)$ if and only if $c$ commutes with $b$ and $\pi(g)$ is constant on right cosets of $\langle b \rangle$ in $B$. If $gc \notin C_G(b)$, there must be an element $x \in B$ such that $g$ is not constant on the coset $\langle b \rangle x$. By assumption, we then see 
    \begin{displaymath}
        \pi(g)(x) = \pi_A(g(x)) = g(x)L_A  = g(bx)L_A = \pi_A(g(bx)) = \pi(g)(bx).
    \end{displaymath}
    Thus, we have that $g(x)^{-1} g(bx) \in L_A$ for every $x \in B$. 
    
    We now construct a function $k_x \colon B \to A$ so that $k_x$ is constant on the right coset $\langle b \rangle x.$ Towards this end, let $i_i$ be the minimal index such that $g(b^{i_1 + 1}x) \neq g(b^{i_1}x)$ where $n = \ord(b)$,  and define a function $k_1 \colon B \to A$ as
    \begin{displaymath}
        k_1(y) =  \begin{cases}
                    g(y)^{-1}g(by) & \mbox{ if $y = b^{i_1+1}x$,}\\
                    1                   & \mbox{ otherwise.}
                \end{cases}
    \end{displaymath}
    Since $g(y)^{-1}g(bx) \in L_A$, we have $k_1 \in L$. We then have $$gk_1(b^{i + 1}x) = g(b^i x) = gk_1(b^i x)$$ for all $i \leq i_1 + 1$. Now, pick a minimal index $i_2 \in \{0, \dots, n-1\}$ such that $gk_1(b^{i_2 + 1}x) \neq gk_1(b^{i_2} x)$. It is straightforward that $i_1 < i_2$. Again, we define a function $k_2 \colon B \to A$ as
    \begin{displaymath}
        k_2(y) =  \begin{cases}
                    gk_1(y)^{-1}gk_1(by) & \mbox{ if $y = b^{i_2+1}x$,}\\
                    1                   & \mbox{ otherwise.}
                \end{cases}
    \end{displaymath}
    As before, we note that $k_2 \in L$ and $gk_1 k_2(b^{i + 1}x) = g k_1(b^i x) = gk_1 k_2(b^i x)$ for all $i \leq i_2 + 1$. Repeating this argument at most $n-1$ times, we get a finite sequence of functions $k_1, k_2, \dots, k_m \in L$, where $m \leq n-1$, such that $(g k_1 \dots k_m)(b^{i+1}x) = (g k_1 \dots k_m)(b^{i+1}x)$. We see that the function $k_x = k_1 \dots k_m \in L$ is such that $gk_x$ is constant on the right coset $\langle b \rangle x$.
    
    By repeating the previous procedure for every coset in $\langle b \rangle \backslash B$, we eventually construct a function $k \in A^B$ such that $gk(bx) = gk(x)$ for all $x \in B$ and $g L = gk L$. Thus, we have that $gk c \in C_G(b)$ and $gc = gkcL \leq  C_G(b) L$. Therefore,
    \begin{displaymath}
        \pi_L^{-1}(C_{G/L}(b)) \subseteq C_G(b)L \subseteq C_G(b)K,
    \end{displaymath}
    and we see that $b$ satisfies $\C$-centralizer condition in $G$.
\end{proof}

This next lemma demonstrates that any function $f \colon B \to A$ of finite support satisfies the $\C$-centralizer condition in $G$.
\begin{lemma}
    \label{lemma:CC - B in C,g in A^B}
    If $f \in A^B$ where $B \in \C$ and $A$ is $\C$-HCS, then $f$ satisfies the $\C$-centralizer condition in $G$.
\end{lemma}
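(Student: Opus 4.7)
The plan is to apply Lemma~\ref{lemma:lazy_lemma_CC}: given an arbitrary $K\in\NC(G)$, I will build a quotient $\pi\colon G\to Q$ with $\ker\pi\le K$ for which $Q$ lies in $\C$ (so that $\pi(f)$ trivially satisfies $\C$-CC in $Q$, taking the trivial witness) and for which $C_Q(\pi(f))\subseteq \pi(C_G(f)K)$. The target $Q$ will be of the shape $(A/L_A)\wr B$ for a suitably small $L_A\in\NC(A)$; since $\C$ is closed under subgroups, finite direct products, and extensions, and since $B\in\C$, such a $Q$ automatically belongs to $\C$.

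By Lemma~\ref{lemma:factoring_through_base} I may assume $K=(K_A)^B$ for some $K_A\in\NC(A)$. I would choose $L_A\le K_A$ small in two ways. First, because $A$ is $\C$-CS, for each of the finitely many ordered pairs $(x,y)\in B\times B$ with $f(x)\not\sim_A f(y)$ there is a subgroup $N_{x,y}\in\NC(A)$ that separates the two conjugacy classes. Secondly, because $A$ is $\C$-HCS, Theorem~\ref{theorem:CC_HCS} supplies, for every $x\in B$, a $\C$-CC witness $L_A^{(x)}\in\NC(A)$ for the pair $(f(x),K_A)$. Let $L_A$ be the intersection of $K_A$ with all the $N_{x,y}$ and all the $L_A^{(x)}$; this lies in $\NC(A)$ by Remark~\ref{remark:intersections}. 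Set $L=(L_A)^B\in\NC(G)$, so $L\le K$ and $Q=G/L=(A/L_A)\wr B\in\C$.

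The key step is to verify $C_Q(\pi_L(f))\subseteq \pi_L(C_G(f)K)$. Take $gc\in G$ with $\pi_L(gc)\in C_Q(\pi_L(f))$. Applying Lemma~\ref{lemma:centralizers_finite} inside $Q$ to $\pi_L(f)$ (whose acting coordinate is trivial), conditions~(ii) and~(iii) translate to $\pi_{L_A}(f(cx))\sim_{A/L_A}\pi_{L_A}(f(x))$ and $\pi_{L_A}(g(x))^{-1}\pi_{L_A}(f(x))\pi_{L_A}(g(x))=\pi_{L_A}(f(cx))$ for every $x\in B$. By the choice of the $N_{x,y}$, the first relation upgrades to $f(cx)\sim_A f(x)$ in $A$, so for each $x$ I pick $h(x)\in A$ with $h(x)^{-1}f(x)h(x)=f(cx)$ and set $u(x)=h(x)^{-1}g(x)$; the second relation then reads $\pi_{L_A}(u(x))\in C_{A/L_A}(\pi_{L_A}(f(cx)))$. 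Since $L_A\le L_A^{(cx)}$, Lemma~\ref{lemma:witness goes down} shows $L_A$ is itself a $\C$-CC witness for $(f(cx),K_A)$, so $u(x)\in C_A(f(cx))K_A$; writing $u(x)=z(x)k(x)$ with $z(x)\in C_A(f(cx))$ and $k(x)\in K_A$, I define $\tilde g(x)=h(x)z(x)$ where $f(x)$ or $f(cx)$ is nontrivial, and $\tilde g(x)=g(x)$ otherwise. Then $\tilde g\in A^B$ has finite support, $\tilde g(x)^{-1}f(x)\tilde g(x)=f(cx)$ for every $x$ so that $\tilde gc\in C_G(f)$ by Lemma~\ref{lemma:centralizers_finite}, and $\tilde g(x)\in g(x)K_A$ for every $x$; using normality of $K_A$ in $A$ this yields $g\tilde g^{-1}\in K$, whence $gc\in C_G(f)K$ by normality of $K$ in $G$.

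The main obstacle is coordinating the choices across all of $B$: one must anticipate the permutation action of the a priori unknown element $c$ when selecting $\C$-CC witnesses, which is why I take a witness $L_A^{(x)}$ at every $x\in B$ rather than only at $x\in\supp(f)$. This uniformity allows Lemma~\ref{lemma:witness goes down} to transport the witness from $x$ to $cx$ without further refinement, and in conjunction with the preliminary separation of conjugacy classes of values of $f$, is what makes a single $L_A$ work simultaneously for all $c$ in the centralizer of $\pi_L(f)$.
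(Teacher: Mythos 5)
Your proof is correct and follows essentially the same strategy as the paper's: reduce to $K=(K_A)^B$ via Lemma~\ref{lemma:factoring_through_base}, intersect finitely many $\C$-conjugacy-separating subgroups for the values of $f$ with $\C$-CC witnesses in $A$ (from Theorem~\ref{theorem:CC_HCS}) to get $L_A$, and pass to $(A/L_A)\wr B$, analysing the centralizer there by Lemma~\ref{lemma:centralizers_finite}. The only cosmetic differences are your per-point correction $\tilde g$ in place of the paper's coset description of $C_G(f)$, and your (harmless but unnecessary) appeal to Lemma~\ref{lemma:lazy_lemma_CC}, since $L=(L_A)^B$ is already a direct $\C$-CC witness.
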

\begin{proof}
    Let $K \in \NC(G)$ be an arbitrary subgroup, and let $K_A \in \NC(A)$ be the subgroup of $A$ we obtain by applying Lemma~\ref{lemma:factoring_through_base} to $K$. For $g \in A^B$, denote
    \begin{displaymath}
         C^-_A(g) = \{c \in B \mid \mbox{there is an element $x_c\in B$ such that $g(x_c) \not\sim_A g(cx_c)$} \}.
    \end{displaymath}
    In particular, the set $C^-_A(g)$ is the collection of elements $c$ in $B$ satisfying $g \neq c^{-1} \cdot g$ as functions. In particular, $c \notin C_G(g).$ For each $c \in C^-_A(g)$, set 
    $$
    X_c = \{x \in B \mid g(c) \not\sim_A g(cx) \}
    $$
    which is the set of elements in $B$ such that $g(x) \neq (c^{-1} \cdot g)(x)$. Denote $C^+_A(g) = B \setminus C^-_A(g)$.
    As $A$ is $\C$-conjugacy separable, we see that for every $c \in C^-_A(g)$, there is a subgroup $N_c \in \NC(A)$ such that $\pi_{N_c}(g(x)) \not\sim_{A/N_c} \pi_{N_c}(g(cx))$ for all $x \in X_c$.
    Set $$N = K_A \cap  \bigcap_{c \in B^-}N_c \in \NC(A).$$ As the group $A$ satisfies the $\C$-centralizer condition, we see that for every $b \in B$ there is a subgroup $L_b \in \NC(A)$ such that $L_b \leq N \leq K_A$ and
    \begin{displaymath}
        C_{A/L_b}(\pi_b(g(b))) \subseteq \pi_b(C_A(g(b))N) \subseteq \pi_b(C_A(g(b))K_A),
    \end{displaymath}
    where $\pi_b \colon A \to A/N_b$ is the natural projection. Set $$L_A = \bigcap_{b \in B} L_b$$ and denote $L = (L_A)^B$. It is straightforward that $G/L = (A/L_A) \wr B \in \C$, and thus, we see that $L_A \in \NC(G)$. Let $\pi \colon G \to G/L$ be the natural projection. With a slight abuse of notation, we will also use $\pi$ to denote the natural projection $A \to A/L_A$.
    
    Suppose that $g \in A^B$ and $c \in B$ are given such that $\pi(g)c \in C_{G/L}(\pi(f))$. Following Lemma~\ref{lemma:centralizers_finite}, we see that this is the case if and only if all of the following are true:
    \begin{enumerate}
        \item[(ii)] $\pi(f)(cx) \sim_{A/L_A} \pi(f)(x)$ for all $x\in B$,
        \item[(iii)] $\pi(g)(cx) =  (\pi(f)(x))^{-1}\pi(g)(x) \pi(f)(x)$ for all $x\in B$.
    \end{enumerate}
    
    As $L_A \leq N$, we see that $c$ cannot belong to $C^-_A(f)$. In particular, if we denote
    \begin{displaymath}
        C^-_{A/L_A}(\pi(f)) = \{c \in B \mid \mbox{there is $x_c\in B$ such that $\pi(f)(x_c) \not\sim_{A/L_A} \pi(f)(cx_c)$} \},
    \end{displaymath}
    and by setting $C^+_{A/L_A}(\pi(f)) = B \setminus C^-_{A/L_A}(\pi(f))$, we then see that $C^+_{A/L_A}(\pi(f)) = C^+_{A}(f)$.
    
    For each $c' \in C^+_{A}(f)$, pick some $h_{c'} \in A^B$ such that $f(c'x) = h_{c'}(x)^{-1} g(x) h_{c'}(x)$. Following Lemma~\ref{lemma:centralizers_finite}, we see that
    \begin{align*}
        C_G(f) &= \bigcup_{c' \in C^+_A(f)} h_{c'} C_{A^B}(g) c',\\
        C_{G/L}(\pi(f)) &= \bigcup_{c' \in C^+_A(f)} \pi(h_{c'}) C_{A^B /L}(\pi(f)) c'.
    \end{align*}
    
    Recall that from construction of $L$ we have 
    \begin{displaymath}
        C_{A^B/L}(\pi(f)) = \prod_{b \in B} C_{A/L_A}(\pi(f(b))) \subseteq \prod_{b \in B}\pi(C_A(f(b))N) = \pi(C_{A^B}(f)N^B).
    \end{displaymath}
    In particular, we see that
    \begin{displaymath}
        \pi(gc) \in  \pi(g_{c}) C_{A^B /L}(\pi(f)) c \subseteq \pi(g_c)\pi(C_{A^B}(f)N^B) \subseteq \pi(C_G(f) N^B) \subseteq \pi(C_G(f)K).
    \end{displaymath}
    As $g \in A^B$, $c \in B$ were arbitrary, it follows $C_{G/L}(\pi(f)) \subseteq \pi(C_G(f)K)$ which means the element $f$ satisfies the $\C$-centralizer condition in $G$.
\end{proof}

For the following remark, see subsection~\ref{subsec: centralizer wreath product} for the definition of $f(b,x)$.
\begin{remark}
    \label{remark:collecting_coset_values_in_a quotient}
Let $N \unlhd A$ be an arbitrary subgroup. Then for every $f \in A^B$ and $b \in B$ we have $\overline{\pi_N(f)}(b,x) = \pi_N(\overline{f}(b,x))$ for all $x \in B$.
\end{remark}

The proof of the following lemma utilizes the techniques used in the proofs of Lemmas \ref{lemma:CC - B in C,g in B} and \ref{lemma:CC - B in C,g in A^B}.
\begin{lemma}
    \label{lemma:CC - B in C, g general}
    Let $b \in B$ and $f \in A^B$ be non-trivial elements. If $A$ is $\C$-hereditarily conjugacy separable and $B \in \C$, then the element $fb$ satisfies the $\C$-centralizer condition in $G$.
\end{lemma}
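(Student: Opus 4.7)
The plan is to combine the ideas from Lemmas \ref{lemma:CC - B in C,g in B} and \ref{lemma:CC - B in C,g in A^B}, controlling $fb$ via Meldrum's description (Lemma \ref{lemma:centralizers_finite}) and exploiting the fact that, since $B$ is finite, $\overline{f}(b, \cdot)$ takes only finitely many values (up to cyclic conjugation by $b$), indexed by representatives of $\langle b \rangle \backslash B$. Given $K \in \NC(G)$, first apply Lemma \ref{lemma:factoring_through_base} to obtain $K_A \in \NC(A)$ with $(K_A)^B \leq K$. The aim is to produce $L_A \in \NC(A)$ with $L_A \leq K_A$ so that $L = (L_A)^B \in \NC(G)$ is a $\C$-CC witness for $(fb, K)$; here $G/L = (A/L_A) \wr B$ automatically lies in $\C$ by extension-closure once $A/L_A \in \C$ and $B \in \C$.

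Next, I would impose two requirements on $L_A$. Let $X \subseteq B$ be a finite set of representatives for $\langle b \rangle \backslash B$. Using $\C$-conjugacy separability of $A$, for each pair $(c, x) \in C_B(b) \times X$ with $\overline{f}(b, cx) \not\sim_A \overline{f}(b, x)$, pick a separating subgroup in $\NC(A)$ and require $L_A$ to lie below the intersection of these (finitely many) subgroups. Using that $A$ is $\C$-HCS and hence satisfies the $\C$-centralizer condition by Theorem \ref{theorem:CC_HCS}, further require $L_A$ to be a $\C$-CC witness in $A$ for each pair $(\overline{f}(b, x), K_A)$, $x \in X$; this is possible by Lemma \ref{lemma:witness goes down} applied to a finite intersection.

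Now suppose $\pi(g) c \in C_{G/L}(\pi(fb))$, where $\pi \colon G \to G/L$ is the natural projection, and write $\pi$ also for the induced map $A \to A/L_A$. Applying Lemma \ref{lemma:centralizers_finite} in $G/L$ together with Remark \ref{remark:collecting_coset_values_in_a quotient}, conditions (i)--(iv) hold modulo $L_A$. Condition (ii) modulo $L_A$, combined with the $\C$-CS separators, forces $\overline{f}(b, cx) \sim_A \overline{f}(b, x)$ in $A$ for every $x$, so we can pick conjugators $h_{c, x} \in A$ at each $x \in X$. Condition (iii) modulo $L_A$ then places $g(x) h_{c, x}^{-1}$ in $C_{A/L_A}(\pi(\overline{f}(b, x)))$, and the $\C$-CC witness yields $g(x) \in t_x h_{c, x} K_A$ for some $t_x \in C_A(\overline{f}(b, x))$ (using normality of $K_A$ to move the factor through $h_{c,x}$). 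Define $g_0 \colon B \to A$ by $g_0(x) = t_x h_{c, x}$ for $x \in X$ and extend to all of $B$ using condition (iv) of Lemma \ref{lemma:centralizers_finite}. A direct verification, using the cyclic-covariance identity $\overline{f}(b, by) = f(y)^{-1} \overline{f}(b, y) f(y)$, shows that $g_0 c \in C_G(fb)$; and an induction along each $\langle b \rangle$-orbit, using $K_A \unlhd A$ and condition (iv) modulo $L_A$, gives $g(y) g_0(y)^{-1} \in K_A$ for all $y \in B$, so $g g_0^{-1} \in (K_A)^B \leq K$ and hence $gc \in C_G(fb) \cdot K$, as required.

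The main obstacle is the bookkeeping: $L_A$ must simultaneously lie below the finitely many $\C$-CS separators and be a $\C$-CC witness for each of the values $\overline{f}(b, x)$ with $x \in X$. Both demands are handled by taking a finite intersection of appropriate normal subgroups of $A$. A second delicate point is the reconstruction of $g_0$: once its values at coset representatives are pinned down and propagated via condition (iv), one must verify it satisfies all four of Meldrum's centralizer conditions globally, which reduces to the cyclic-covariance identity above and the compatibility of (iii) with the $\langle b\rangle$-action.
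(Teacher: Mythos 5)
Your proposal is correct and follows essentially the same route as the paper: pass to $L=(L_A)^B$ where $L_A\leq K_A$ is built from finitely many $\C$-CS separators for the values $\overline{f}(b,\cdot)$ together with $\C$-CC witnesses in $A$, then use Meldrum's conditions (i)--(iv) in $(A/L_A)\wr B$ to reconstruct an honest element of $C_G(fb)$ congruent to $gc$ modulo $K$. The only difference is in the bookkeeping: you define $g_0$ by propagating the recursion (iv) from the values $t_xh_{c,x}$ at coset representatives (consistency being exactly the wrap-around identity $g_0(x)^{-1}\overline{f}(b,x)g_0(x)=\overline{f}(b,cx)$), whereas the paper starts from an arbitrary conjugating function and corrects it coset-by-coset with functions in $L$ --- a cosmetic variation, and yours is arguably the cleaner of the two.
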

\begin{proof}
    Let $K \in \NC(G)$ be a subgroup, and let $K_A \in \NC(A)$ be the subgroup of $A$ obtained by applying Lemma~\ref{lemma:factoring_through_base} to $K$. Denote
    \begin{displaymath}
        C_A^+(f,b) = \{c \in B \mid \overline{f}(b,cx) \sim_A \overline{f} \mbox{ for all $x \in B$}\},
    \end{displaymath}
    and set $C_A^-(f,c) = B \setminus C_A^+(f,b)$. For $c \in C_A^-(f,c)$, denote 
    $$
    X_c = \{x \in B \mid \overline{f}(b,cx) \not\sim_A \overline{f}(b, x)\}.
    $$ 
    
    As $A$ is $\C$-conjugacy separable, we see that for each $c \in C_A^-(f,c)$, there is a subgroup $N_c \in \NC(A)$ such that $\pi_{N_c}(\overline{f}(b,cx)) \not\sim_{A/N_C} \pi_{N_c}(\overline{f}(b,x))$ for all $x \in X_c$.  
    
    Set 
    $$
    N = K_A \cap \bigcap_{c \in C_A^-(f,c)} N_c.
    $$
    As $A$ is $\C$-hereditary conjugacy separable, we see that it satisfies the $\C$-centralizer condition by Theorem~\ref{theorem:CC_HCS}. It follows that for every $x \in B$ there is a subgroup $L_x \in \NC(A)$ such that $L_x \leq N$ and
    \begin{displaymath}
        C_{A/L_x}(\pi_{L_x}(\overline{f}(b,x))) \subseteq \pi_{L_x}(C_A(\overline{f}(b,x))N).
    \end{displaymath}
    Set $$L_A = \bigcap_{x \in B} L_x,$$ and let $\pi_A \colon A \to A/L_A$ be the natural projection. Let $\pi \colon A\wr B \to (A/L_A) \wr B$ be the canonical extension of $\pi_A$, and denoting $L = \ker(\pi)$, we see that $L = L_A^B$. We claim that $C_{G/L}(\pi(fb)) \subseteq \pi(C_G(fb)K)$. Suppose that $g \in A^B$ and $c \in B$ are given such that $\pi(gc) \in C_{G/L}(\pi(fb))$. Following Theorem~\ref{lemma:centralizers_finite}, we have that is the case if and only if all of the following are satisfied:
    \begin{enumerate}
        \item[(i)] $c \in C_B(b)$,
        \item[(ii)] $\overline{\pi(f)}(b,cx) \sim_{A/L_A} \overline{\pi(f)}(b,x)$ for all $x \in B$,
        \item[(iii)] $\overline{\pi(f)}(b,cx) = \pi(g)(x)^{-1}\overline{\pi(f)}(b,x)\pi(g)(x)$ for all $x \in B$,
        \item[(iv)] $\pi(g)(bx) = \pi(f)(x)^{-1} \pi(g)(x) \pi(f)(cx)$.
    \end{enumerate}
    Following Remark \ref{remark:collecting_coset_values_in_a quotient}, we see that $\overline{\pi(f)}(x) = \pi(\overline{f}(x))$ for all $x \in B$. If we denote
    \begin{displaymath}
        C^+_{A/L_A}(\pi(f),b) = \left\{c \in B \ \middle|\  \overline{\pi(f)}(b,cx) \sim_{A/L_A} \overline{\pi(f)}(b,x) \mbox{ for all $x\in B$}\right\}
    \end{displaymath}
    and $C^-_{A/L_A}(\pi(f),b) = B \setminus C^+_{A/L_A}(\pi(f),b)$, we see that (ii) is equivalent to $c \in C^+_{A/L_A}(\pi(f),b)$. From the construction of $L_A$, we see that $C^+_{A/L_A}(\pi(f),b) = C^-_A(f,b)$. In particular, $\overline{f}(b,cx) \sim_A \overline{f}(b,x)$ for all $x \in B$.
    
    By (iii), we have that $\overline{\pi(f)}(b,cx) = \pi(g)(x)^{-1} \overline{\pi(f)}(b,x) \pi(g)(x)$. Suppose that $g_0 \in A^B$ is such that $\overline{f}(b,cx) = g_0(x)^{-1} \overline{f}(b,x) g_0(x)$ for all $x \in B$. It then follows that $\pi(g)(x) \in \pi(g_0)(x)C_{A/L_A}(\overline{\pi(f)}(b,x))$ for all $x \in B$. To ease notation, we define a new function $f_b \in A^B$ given by $f_b(x) = \overline{f}(b,x)$. From the construction of $L_A$, we see that
    \begin{displaymath}
        g(x) \in g_0(x) \pi_A^{-1}(C_{A/L_A}(\pi(f_b)(x))) \subseteq g_0(x) C_A(f_b(x))N \subseteq g_0(x) C_A(f_b(x))K_A.
    \end{displaymath}

    Finally, by (iv), we see that $\pi(g)(bx) = \pi(f)(x)^{-1} \pi(g)(x) \pi(f)(cx)$. This means that $g(bx)^{-1}f(x)^{-1}g(x)f(cx) \in L_A$ for all $x \in B$. Since $c \in C_B(b)$, we have $c b^i x = b^i cx$.
    Suppose there is an element $x \in B$ such that the values of $g_0$ on the right coset of $\langle b \rangle x$ do not satisfy the rule $g(b^{i+1}x) = f(b^i x)^{-1} g(b^i) f(b^i c x)$ for $i = 0, \dots, n-1$ where $n = \ord(b)$. Let $i_0 \in \{0, \dots, n-1\}$ be the smallest index such that $g_0(b^{i_0+1}x) \neq f(b^i_0 x)^{-1} g_0(b^i_0) f(b^i_0 c x)$. Define a function $k_0 \in A^B$ by
    \begin{displaymath}
        k_0(y) =    \begin{cases}
                        g_0(y)^{-1} f(b^{-1}y)^{-1} g_0(b^{-1}y)f(b^{-1}xy)  &\mbox{ if $y = b^{i_0+1}x$},\\
                        1   &\mbox{otherwise}.
                    \end{cases}
    \end{displaymath}
    It follows that $k_0 \in L \leq K$. Setting $g_1 = k_0 g_0$, we have that $$g_1(b^{i+1}x) = f(b^i x)^{-1} g_1(b^ix) f(b_i cx)$$ for all $0 \leq i \leq i_0$. Pick $i_1 \in \{i_0 +1 , \dots, n-2\}$ to be the smallest index such that $g_1(b^{i_1+1}x) \neq f(b^i_1 x)^{-1} g_0(b^i_1) f(b^i_1 c x)$. Define function $k_1 \in A^B$ by
    \begin{displaymath}
        k_1(y) =    \begin{cases}
                        g_1(y)^{-1} f(b^{-1}y)^{-1} g_1(b^{-1}y)f(b^{-1}xy)  &\mbox{ if $y = b^{i_1+1}x$},\\
                        1   &\mbox{otherwise}.
                    \end{cases}
    \end{displaymath}
    We again see that $k_1 \in L \leq K$. Thus, by setting $g_2 = k_1 g_1 = k_1 k_0 g_0$, it is clear that $g_2(b^{i+1}x) = f(b^i x)^{-1} g_2(b^ix) f(b_i cx)$ for all $0 \leq i \leq i_1$. By repeating this process at most $(n-2)$-times, we get a sequence of functions $k_0, \dots, k_m \in L$, for some $m \in \{0, \dots, n-2\}$, such that $g_{m+1}(b^{i+1} x) = f(b^i x)^{-1}g_{m+1}(b^i x) f(b^i cx)$ for all $0 \leq i \leq n-2$. We can then write
    \begin{displaymath}
        f(b^{n-1}x)^{-1} g_{m+1}(b^{n-1}x) f(b^{n-1}cx) = \overline{f}(b,x)^{-1} g_0(x) \overline{f}(b,cx).
    \end{displaymath}
    However, by assumption, we have that $\overline{f}(b,cx) = g_0(x)^{-1} \overline{f}(b,x) g_0(x)$. In particular, we have that
    \begin{align*}
        f(b^{n-1}x)^{-1} g_{m+1}(b^{n-1}x) f(b^{n-1}cx) &= g_0(x)\\
                                                        &= g_{m+1}(b^n x).
    \end{align*}
    This means that $g_{m+1}c \in C_G(fb)$. Now, we have that $g_0 \in g_{m+1}L$ and $g \in g_0 C_A(f_b)K_A$. Thus, we see that $gc \in C_G(fb) K_A \subseteq C_G(fb) K$.
\end{proof}

\begin{proof}[Proof of Proposition~\ref{proposition:C_HCS_finite_wreath}]
    As stated before, if $A \wr B$ is $\C$-hereditarily conjugacy separable, then $A$ must be $\C$-hereditarily conjugacy separable as it is isomorphic to a $\C$-virtual retract of $A \wr B$.
    
    For the implication in the opposite direction, let us first note that $A \wr B$ is $\C$-conjugacy separable by Theorem~\ref{theorem:CCS}. If $A$ is $\C$-hereditarily conjugacy separable then, combining Lemmas \ref{lemma:CC - B in C,g in B}, \ref{lemma:CC - B in C,g in A^B}, and \ref{lemma:CC - B in C, g general}, we see that every element of $A \wr B$ satisfies the $\C$-centralizer condition. It then follows by Theorem~\ref{theorem:CC_HCS} that $A \wr B$ is $\C$-hereditarily conjugacy separable.
\end{proof}

\section{Wreath products of HCS groups \texorpdfstring{$A \wr B$ where $B$}{A wr B where B} is infinite}\label{sec:wreath-prop-B-infinite}
The aim of this section is to prove the following proposition.
\begin{proposition}
    \label{proposition:infinite acting group}
    Let $A$ and $B$ be residually-$\C$ groups such that $A$ is abelian and $B$ is infinite. If $B$ satisfies $\C$-CC and every cyclic subgroup of $B$ is $\C$-separable in $B$, then $A \wr B$ satisfies $\C$-CC.
    
    Moreover, if $A$ and $B$ are groups such that $B$ is infinite, then the wreath product $A \wr B$ is a $\C$-HCS group if and only if both of the following hold
    \begin{itemize}
        \item[(i)] $A$ is an abelian residually-$C$ group,
        \item[(ii)] $B$ is $\C$-HCS and every cyclic subgroup of $B$ is $\C$-closed in $B$.
    \end{itemize}
\end{proposition}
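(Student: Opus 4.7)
The argument naturally splits into the ``moreover'' characterization and the main claim that $A\wr B$ satisfies $\C$-CC. I would handle the former first, since it is a short consequence of the main claim combined with already-established results. For the $(\Rightarrow)$ direction, if $A\wr B$ is $\C$-HCS then it is in particular $\C$-CS, and since $B$ is infinite (hence $B\notin\C$), Theorem~\ref{theorem:CCS} forces $A$ to be abelian and every cyclic subgroup of $B$ to be $\C$-separable in $B$; residual-$\C$ passes to subgroups, so $A$ (a subgroup of $A\wr B$) is residually-$\C$, and since $B$ is a retract of $A\wr B$ via the quotient by $A^B$, Remark~\ref{remark:retracts are C-HCS} gives that $B$ is $\C$-HCS. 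For $(\Leftarrow)$, assume~(i) and~(ii): an abelian residually-$\C$ group is automatically $\C$-HCS (its conjugacy classes are singletons, and both it and its $\C$-open subgroups are abelian and residually-$\C$), Theorem~\ref{theorem:CCS} then yields $\C$-CS of $A\wr B$, the first assertion gives $\C$-CC, and Theorem~\ref{theorem:CC_HCS} combines these into $\C$-HCS.

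For the first assertion, the plan is to fix $fb\in G:=A\wr B$ and $K\in\NC(G)$ and reduce to a finite wreath product $Q:=A\wr(B/N)$ via Lemma~\ref{lemma:map_extension}, for a carefully chosen $N\in\NC(B)$. Set $N_0:=B\cap K\in\NC(B)$ and $K_A:=A\cap K\in\NC(A)$ (with $A$ identified as the identity coordinate of $A^B$). Using $\C$-CC of $B$ at $b$, I would take a $\C$-CC witness $N_1\leq N_0$ in $\NC(B)$; then, using $\C$-separability of $\langle b\rangle$ in $B$ together with Lemma~\ref{lemma:order is a multiple}, refine to $N\leq N_1$ in $\NC(B)$ so that $\ord(bN)$ in $B/N$ exceeds a threshold making $\overline{\tilde f}(\bar b,\cdot)$ in $Q$ faithfully encode $\overline f(b,\cdot)$ on the relevant cosets, and so that $N$ separates a prescribed finite subset of $B$ (built from $\supp(f)$, its $\langle b\rangle$-translates, and lifts of the finitely many elements of $C_{B/N_1}(\bar b)$). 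Lemma~\ref{lemma:map_extension} gives a surjection $\pi\colon G\to Q$ with kernel $K_N\rtimes N$; since $A$ is $\C$-HCS and $B/N\in\C$, Proposition~\ref{proposition:C_HCS_finite_wreath} implies $Q$ is $\C$-HCS and in particular $\pi(fb)$ satisfies $\C$-CC in $Q$. Applying this to $\bar K:=\pi(K)\in\NC(Q)$ produces a witness $L'\leq\bar K$ in $\NC(Q)$; setting $L_0:=\pi^{-1}(L')\in\NC(G)$ and $L:=L_0\cap K$ yields $L\in\NC(G)$ with $L\leq K$. By Lemma~\ref{lemma:witness goes down}, it suffices to show $L_0$ is a $\C$-CC witness for $(fb,K)$ in $G$, which (after unpacking through the isomorphism $G/L_0\cong Q/L'$) reduces to the single inclusion $C_Q(\pi(fb))\subseteq\pi(C_G(fb)K)$ in $Q$.

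This last inclusion is the main technical obstacle. Given $\tilde g\cdot\bar c\in C_Q(\pi(fb))$, Lemma~\ref{lemma:centralizers_abelian_base} applied in $Q$ (with $\bar b$ of finite order in $B/N$) yields Meldrum-type relations on $(\tilde g,\bar c)$. The condition $\bar c\in C_{B/N}(\bar b)$ lifts via the $\C$-CC witness property of $N_1$ to $c=c_0\cdot n$ with $c_0\in C_B(b)$ and $n\in N_0\leq K$, and the equality $\overline{\tilde f}(\bar b,\bar c\bar x)=\overline{\tilde f}(\bar b,\bar x)$ translates---through the separating choice of $N$, which makes the averaging sums pick out individual values of $\overline f(b,\cdot)$---into the pointwise equality $\overline f(b,c_0 x)=\overline f(b,x)$ required by Lemma~\ref{lemma:centralizers_abelian_base}(ii). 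The function $\tilde g\colon B/N\to A$ is then lifted to $g\colon B\to A$ by assigning values on a transversal of $N$-cosets and making successive corrections via elements of $K_A^B\leq K$ to enforce conditions~(iii)--(v) of Lemma~\ref{lemma:centralizers_abelian_base} along each $\langle b,c_0\rangle$-orbit, in the same spirit as the coset-by-coset adjustment of Lemma~\ref{lemma:CC - B in C, g general}. The two delicate subcases I expect to absorb most of the work are $\ord(b)=\infty$, where condition~(v) of Lemma~\ref{lemma:centralizers_abelian_base} constrains $g$ over an infinite $\langle b\rangle$-orbit (handled by using the large value of $\ord(\bar b)$ together with the finite support of $f$ to confine the required modifications to a finite window of coordinates inside $K$), and $\ord(b)<\infty$ with the lift $c_0$ of infinite order, where condition~(iv) of Lemma~\ref{lemma:centralizers_abelian_base} forces $\overline f(c_0,x)=1$ for all $x$ and must be verified by upgrading the corresponding identity from $Q$ using residual-$\C$-ness of $A$.
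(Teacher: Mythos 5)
Your treatment of the ``moreover'' part is correct and essentially identical to the paper's, and the broad skeleton of the main claim --- pass to a quotient wreath product with finite acting group, invoke Proposition~\ref{proposition:C_HCS_finite_wreath} together with Theorem~\ref{theorem:CC_HCS} and Lemma~\ref{lemma:lazy_lemma_CC}/Lemma~\ref{lemma:witness goes down}, and reduce everything to the single inclusion $C_Q(\pi(fb))\subseteq\pi(C_G(fb)K)$ --- is also the strategy of the paper (which organizes it as Lemmas~\ref{lemma:elements of B have C-CC}--\ref{lemma:general elements have C-CC}). The gap is in the case $\ord(b)=\infty$, where your stated mechanism would not work. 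By Proposition~\ref{proposition:centraliser characterisation}, $C_G(fb)$ is virtually cyclic (or, for $f=1$, equals $C_B(b)$ with no function part), whereas in $Q=A\wr(B/N)$ the element $\pi(b)$ has finite order, so $C_Q(\pi(fb))$ contains the entire abelian group of functions $B/N\to A$ that are constant on right cosets of $\langle \pi(b)\rangle$. These coset-constant functions are the whole difficulty: they must be shown to lie in $\pi(C_G(fb)K)$, and this is \emph{not} achieved by making $\ord(\pi(b))$ ``exceed a threshold'' or by ``confining modifications to a finite window inside $K$.'' Concretely, take $G=\mathbb{Z}\wr\mathbb{Z}$, $b$ a generator of the acting copy, $f=1$, and $K=\ker(\mathbb{Z}\wr\mathbb{Z}\to \mathbb{Z}/m\wr\mathbb{Z}/n)$; if $N=n'\mathbb{Z}$ with $n'$ large but $m\nmid n'/n$, then the diagonal constant function with value $1$ lies in $C_Q(\pi(b))$ but not in $\pi(C_G(b)K)$, so your key inclusion fails for that choice of $Q$.

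What actually makes this work in the paper is a divisibility-and-counting argument, not largeness: one fixes $K_A\in\NC(A)$ with $K':=\ker\bigl(\kappa'\colon A\wr B\to A/K_A\wr B/K_B\bigr)\leq K$, and uses Lemma~\ref{lemma:order is a multiple} to force $\ord(\pi(b))$ to be a \emph{multiple of} $|A/K_A|\cdot\ord(\kappa'(b))$. Then any function constant on $\langle\pi(b)\rangle$-cosets, when pushed on to $A/K_A\wr B/K_B$, has each value raised to a power divisible by $|A/K_A|$ and hence becomes trivial; therefore it lies in $\pi(K')\subseteq\pi(K)$, and only then does the inclusion $C_Q(\pi(fb))\subseteq\pi(C_G(fb)K)$ go through (this is exactly Claim~2 in the proof of Lemma~\ref{lemma:general elements have C-CC} and the analogous computation in Lemma~\ref{lemma:elements of B have C-CC}; the paper in fact also quotients the base to $A/K_A$, but the same counting is what is needed in your ``keep $A$'' variant). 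Your sketch never identifies this mechanism, so as written the infinite-order case does not go through. Two smaller points: for $f\neq 1$ you should first replace $fb$ by a reduced conjugate (Lemma~\ref{lemma:support in distinct cosets}), since the centralizer description you rely on requires it; and lifting $\bar c\in C_{B/N}(\pi(f),\pi(b))$ back to an honest element of $C_B(f,b)$ needs more than separating a finite set of points --- it needs the full content of Lemma~\ref{lemma:permuting cosets}, i.e.\ separating the non-realizable coset permutations from the subgroup $\langle b\rangle$ using its $\C$-separability, which your sketch only gestures at.
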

In line with the assumptions of Proposition~\ref{proposition:infinite acting group}, within this section, unless stated otherwise, we will assume that $G = A \wr B$ is a wreath product of groups such that the base group $A$ is abelian.

The section is organized as follows: in subsection \ref{subsection:centralizers with abelian base} we give full characterization of centralizers is wreath products with abelian base group, in subsection \ref{subsection:Infinite act preliminaries} we prove several technical lemmas about finding finite quotients of the acting group $B$ such that the extended homomorphism $A \wr B \to A \wr (B/K)$ has desired properties, and in subsection \ref{subsection:infinite act main statements} we use these results to establish the $\C$-centralizer condition for $A \wr B$, thus proving Proposition~\ref{proposition:infinite acting group} and Theorem~\ref{theorem:HCS}.

\subsection{Centralizers in wreath products with abelian base group}
\label{subsection:centralizers with abelian base}
In this subsection, we introduce new notation that will allow us to simplify the statement of Lemma~\ref{lemma:centralizers_abelian_base} and, as a byproduct, we give a full characterization of centralizers in wreath products with abelian base group.

First,  given a group $B$ and an element $b \in B$, the centralizer $C_B(b)$ has a left action on the space of right cosets $\langle b \rangle \backslash B$ given by $c \cdot \langle b \rangle x = \langle b \rangle cx$. Given a finitely supported function $f \colon B \to A$, where $A$ is an abelian group, it makes sense to consider the set of $c \in C_B(b)$ that stabilize the right cosets of $\langle b \rangle$ covering non-trivial fibers of $f$. This motivates the following definition.
\begin{definition}
    \label{definition:C(f,b)}
    Let $A$ and $B$ be groups such that $A$ is abelian. Given elements $f \in A^B$ and $b \in B$, we define $C^+_B(f,b) \subseteq B$ as
\begin{displaymath}
    C^+_B(f,b) = \{c \in B \mid \langle b \rangle c f^{-1}(a) = \langle b \rangle f^{-1}(a) \mbox{ for every } a \in \Im(f)\setminus\{1\}\}.
\end{displaymath}
We then define $C_B(f,b) = C^+_B(f,b) \cap C_B(b)$.
\end{definition}
We note that if $b = 1$, then
\begin{displaymath}
    C_B(f,1) = \{c \in B \mid c f^{-1}(a) = f^{-1}(a) \mbox{ for all } a \in \Im(f)\setminus\{1\}\}.
\end{displaymath}
Similarly, if $f = 1$, then $C_B(1, b) = C_B(b)$. As mentioned before, $C_B(f,b)$ can be equivalently described as the subgroup of $C_B(b)$ that stabilizes (set-wise) every collection of right cosets of $\langle b \rangle$ covering each non-trivial fibre of $f$. In other words,
\begin{displaymath}
    C_B(f,b) = \bigcap_{a \in \Im(f)\setminus\{1\}} \stab_{C_B(b)}(\langle b\rangle f^{-1}(a)),
\end{displaymath}
where $\stab_{C_B(b)}(\langle b\rangle f^{-1}(a))$ denotes the set-wise stabilizer.

We say that an element $fb \in A \wr B$, where $f \in A^B$ and $b \in B$ is \emph{reduced} if all elements of $\supp(f)$ lie in distinct right cosets of $\langle b \rangle$ in $B$, i.e.~if $\supp(f) = \{s_1,\ldots,s_n\} \subseteq B,$ then $\left\langle b \right\rangle s_i \neq \left\langle b \right\rangle  s_j$ whenever $s_i \neq s_j$. The following lemma allows use to always assume that $fb$ is reduced.
\begin{lemma}[Lemma~5.9 in~\cite{quantifying}]\label{lemma:support in distinct cosets}
           Let $G = A \wr B$ with $A$ abelian, and let $w = fb \in A \wr B$ where $f \in A^B \backslash \{1\}$ and $b \in B \backslash \{1\},$ be arbitrary. Then there exists an element $w^\prime = f^\prime b \in w^{A^B}$ where $f^\prime \in A^B$ such that the element $f'b$ is reduced.
\end{lemma}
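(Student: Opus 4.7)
My plan is to construct the conjugator $h \in A^B$ explicitly by a telescoping argument performed coset by coset. The starting observation is that, using $A$ abelian and the action $(b \cdot h)(x) = h(bx)$, conjugation in the wreath product gives
\[
    h (fb) h^{-1} = f'b, \qquad \text{where } f'(x) = h(x)\, f(x)\, h(bx)^{-1}
\]
for every $x \in B$. Since the value $f'(x)$ only depends on the values of $f$ and $h$ at points of the right coset $\langle b\rangle x$, the problem decouples over $\langle b\rangle\backslash B$: it suffices to design $h$ separately on each coset $\langle b\rangle s$ so that $f'$ has at most one non-trivial value there.

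Fix a coset $\langle b\rangle s$, parametrised by $x = b^j s$. I would define $h$ to be a running product of values of $f$ along the coset, so that $f'$ telescopes to $1$ at all but one position. Explicitly, in the case $\ord(b) = n < \infty$, set $h(s) = 1$ and $h(b^j s) = f(s)f(bs)\cdots f(b^{j-1}s)$ for $1 \leq j \leq n-1$; a direct calculation then gives $f'(b^j s) = 1$ for $0 \leq j \leq n-2$ and $f'(b^{n-1}s) = \overline{f}(b,s)$. In the case $\ord(b) = \infty$, only finitely many powers $b^j s$ lie in $\supp(f)$; if the coset meets the support, let $i_{\min} \leq i_{\max}$ be the smallest and largest such exponents, put $h(b^j s) = \prod_{k=i_{\min}}^{j-1} f(b^k s)$ for $i_{\min} < j \leq i_{\max}$, and $h \equiv 1$ at the remaining points of the coset. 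The same telescoping yields $f'(b^j s) = 1$ for all $j \neq i_{\max}$ and $f'(b^{i_{\max}}s) = \overline{f}(b,s)$.

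Performing this construction on each coset that meets $\supp(f)$, and setting $h \equiv 1$ on the rest of $B$, produces an element $h \in A^B$ of finite support. By construction, $w' = h(fb)h^{-1} = f'b$ lies in $w^{A^B}$ and satisfies $|\supp(f') \cap \langle b\rangle s| \leq 1$ for every coset $\langle b\rangle s$, which is precisely the definition of $f'b$ being reduced. The only subtle points are bookkeeping: the telescoping works because $A$ is abelian (so partial products along each $\langle b\rangle$-orbit may be rearranged freely), and because one must track the orbits of \emph{left} multiplication by $b$, i.e.\ the right cosets $\langle b\rangle x$, to match the formula $(b \cdot h)(x) = h(bx)$ used in the conjugation identity above.
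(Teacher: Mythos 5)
Your proof is correct, and since the paper only cites this statement from~\cite{quantifying} (Lemma~5.9 there) rather than reproving it, the natural comparison is with the standard argument, which is exactly what you give: conjugating by a base-group element $h$ with $f'(x)=h(x)f(x)h(bx)^{-1}$ and choosing $h$ as a running product along each right coset $\langle b\rangle s$ so that the values of $f$ telescope into the single ``collected'' value $\overline{f}(b,s)$. Your conventions match the paper's (left-translation action $(b\cdot h)(x)=h(bx)$, abelian base group), and both the finite- and infinite-order cases, the finiteness of $\supp(h)$, and the coset-by-coset decoupling are handled correctly.
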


These notions allow us to simplify the statement of Lemma~\ref{lemma:centralizers_abelian_base}.
\begin{lemma}
    \label{lemma:simplified centraliser}
    Let $G = A \wr B$ with $A$ abelian, and let $f \in A^B$, $b \in B$ be elements such that element $fb$ is reduced. For $g \in A^B$ and $c \in B$, we have $gc \in C_G(fb)$ if and only if both of the following hold:
    \begin{enumerate}
        \item[(i)] $c \in C_B(f,b)$;
        \item[(ii)] $g(bx) = g(x) f(x)^{-1} f(cx)$ for all $x \in B$.
    \end{enumerate}
\end{lemma}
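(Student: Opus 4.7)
The plan is to derive this statement as a direct simplification of Lemma \ref{lemma:centralizers_abelian_base}: I will show that under the reducedness hypothesis, the five conditions in that lemma collapse to the two stated here. Conditions (i) and (iii) of Lemma \ref{lemma:centralizers_abelian_base} match, respectively, the requirement $c \in C_B(b)$ (half of $c \in C_B(f,b)$) and the simplified condition (ii); the remaining work is to translate condition (ii) of Lemma \ref{lemma:centralizers_abelian_base} into the statement $c \in C^+_B(f,b)$, and to verify that (iv) and (v) are automatic consequences of the simplified pair.

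For the main translation, I would exploit that $A$ is abelian together with reducedness: since $\overline{f}(b,\cdot)$ is constant on right $\langle b\rangle$-cosets, and each such coset meets $\supp(f)$ in at most one point, one sees that $\overline{f}(b,x) = f(s)$ whenever $s \in \supp(f)$ is the unique element of $\supp(f) \cap \langle b\rangle x$, and $\overline{f}(b,x) = 1$ if no such $s$ exists. Given $c \in C_B(b)$, the equation $\overline{f}(b,cx) = \overline{f}(b,x)$ for all $x \in B$ is therefore equivalent to the statement that left multiplication by $c$ preserves, for each $a \in \Im(f)\setminus\{1\}$, the union $\bigcup_{s \in f^{-1}(a)} \langle b\rangle s = \langle b\rangle f^{-1}(a)$; this is exactly the defining condition $\langle b\rangle c f^{-1}(a) = \langle b\rangle f^{-1}(a)$ of $C^+_B(f,b)$.

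To handle Lemma \ref{lemma:centralizers_abelian_base}(v), I would iterate the simplified condition (ii): since $g$ has finite support, there is a maximal $k \in \mathbb{Z}$ with $g(b^{k'}x) = 1$ for all $k' \leq k$, and repeatedly applying $g(by) = g(y) f(y)^{-1} f(cy)$ starting from $y = b^k x$ yields exactly the product formula of (v). For (iv), assume $\ord(b) < \infty$ and $\overline{f}(c, x_0) \neq 1$ for some $x_0$; then some $c^i x_0 \in \supp(f)$, so $\supp(f) \neq \emptyset$, and because $c \in C_B(b) \cap C^+_B(f,b)$, left multiplication by $c$ permutes the finite set $\{\langle b\rangle s : s \in \supp(f)\}$. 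Choosing $m > 0$ so that $c^m$ fixes this set pointwise and any $s_0 \in \supp(f)$, we obtain $c^m s_0 = b^k s_0$ and hence $c^m = b^k$ for some $k \in \mathbb{Z}$; since $\ord(b) < \infty$, this forces $\ord(c) < \infty$. The converse direction is then immediate from the same identifications.

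The main conceptual obstacle is the translation carried out in the second paragraph above: reducedness is precisely what is needed to make the single equation $\overline{f}(b,cx) = \overline{f}(b,x)$ equivalent to the coset-wise permutation condition defining $C^+_B(f,b)$. Once that is done, verifying conditions (iv) and (v) is mostly bookkeeping, using the finiteness of $\supp(f)$ and $\supp(g)$ together with commutativity of $A$.
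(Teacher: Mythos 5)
Your proposal is correct and its core is exactly the paper's argument: the paper likewise observes that reducedness makes $\overline{f}(b,s)=f(s)$ for $s\in\supp(f)$, so that condition (ii) of Lemma~\ref{lemma:centralizers_abelian_base} becomes the statement $\langle b\rangle c f^{-1}(a)=\langle b\rangle f^{-1}(a)$ for all $a\in\Im(f)\setminus\{1\}$, i.e.\ $c\in C^+_B(f,b)$, while conditions (i) and (iii) are carried over verbatim. Your additional verification that (iv) and (v) follow automatically (the paper's proof silently omits them, invoking only (i)--(iii)) is sound, with one small caveat: iterating $g(by)=g(y)f(y)^{-1}f(cy)$ from the maximal $k$ produces the product $f(b^{k+n-1}x)^{-1}\cdots f(b^kx)^{-1}\,f(b^kcx)\cdots f(b^{k+n-1}cx)$, i.e.\ with ascending indices on the $c$-side rather than the descending ones printed in (v) of Lemma~\ref{lemma:centralizers_abelian_base}; this discrepancy lies in the paper's transcription of Meldrum's condition (cf.\ the remark following that lemma) rather than in your reasoning.
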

\begin{proof}
    Following Lemma~\ref{lemma:centralizers_abelian_base}, we see that $gc \in C_G(fb)$ if and only if all of the following are true:
    \begin{itemize}
        \item[(i)] $c \in C_B(b)$;
        \item[(ii)] $\overline{f}(b,cx) = \overline{f}(b,x)$ for all $x \in B$;
        \item[(iii)] $g(bx) = g(x) f(x)^{-1}f(cx)$.
    \end{itemize}
    As the elements of $\supp(f)$ lie in distinct cosets of $\langle b \rangle$, we see that $\overline{f}(b,s) = f(s)$ for any $s \in \supp(f)$. In particular, for any $s,s' \in \supp(f)$, it follows that $\overline{f}(b,s) = \overline{f}(b,s')$ if and only if $f(s) = f(s')$. Therefore, for any $x,x' \in B$ we see that $\overline{f}(b,x) = \overline{f}(b,x')$ if and only if either $x,x' \in \langle b \rangle f^{-1}(a)$ for some $a \in \Im(f)\setminus\{1\}$ or 
    $$
    \langle b \rangle x \cap \supp(f) = \emptyset = \langle b \rangle x' \cap \supp(f).
    $$
    We see that $\overline{f}(b,cx) = \overline{f}(b,x)$ for all $x \in B$ if and only $\langle b \rangle c f^{-1}(a) = \langle b \rangle f^{-1}(a)$ for all $x \in B$, meaning that the condition (ii) of Lemma~\ref{lemma:centralizers_abelian_base} is equivalent to $s \in C^+_B(f,b)$, assuming that the element $fb$ is reduced.
\end{proof}

As noted before, the centralizer $C_B(b)$ permutes right cosets of $\langle b \rangle$ by multiplication on the left and the subgroup $C_B(f,b) \leq C_B(b)$ consists of the elements that setwise fix collections of cosets covering non-trivial fibres of $f$. It makes sense to consider all potential permutations with this property, which motivates the following definition. For the following discussion, given a non-empty set $X$, we denote $\Sym(X)$ to be group of permutations of $X$.
\begin{definition}
    \label{definition:sigma}
    Given non-trivial elements $f \in A^B$ and $b \in B$, such that the element $fb$ is reduced, we define $\Sigma(f,b)$ as
    \begin{displaymath}
        \Sigma(f,b) = \left\{\sigma \in \Sym(\supp(f)) \mid f^{-1}(a) = \sigma(f^{-1}(a)) \mbox{ for all $a \in \Im(f) \setminus \{1\}$}\right\}.
    \end{displaymath}
\end{definition}
Note that $\Sigma(f,b)$ is the subgroup of $\Sym(\supp(f))$ consisting of permutation that fix each non-trivial fibre of $f$ setwise, i.e.
\begin{displaymath}
    \Sigma(f,b) = \bigcap_{a \in \Im(f) \setminus \{1\}} \stab_{\Sym(\supp(f))}(f^{-1}(a)).
\end{displaymath}

Given some enumeration of $\supp(f) = \{s_1, \dots, s_n\}$, where $n = |\supp(f)|$, we will usually slightly abuse notation and consider elements of $\Sigma(f,b)$ as elements of $\Sym(|\supp(f)|)$, permuting the indices.

We note that the assumption that the element $fb$ is reduced is essential, as the subsets of the form $\langle b \rangle f^{-1}(a)$ might not be disjoint.

\begin{lemma}
    \label{lemma:virtually cyclic}
    If $f \in A^B$ and $b \in B$ are non-trivial elements such that $fb$ is reduced, then $C_B(f,b)$ is virtually cyclic. Indeed, the following short sequence is exact:
    \begin{displaymath}
         1   \rightarrow \langle b\rangle \rightarrow C_B(f,b) \rightarrow \Sigma(f,b) \rightarrow 1.
    \end{displaymath}
\end{lemma}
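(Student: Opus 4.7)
The plan is to exhibit a natural homomorphism $\Phi \colon C_B(f,b) \to \Sigma(f,b)$ whose kernel is $\langle b \rangle$ and which is surjective; this yields the claimed short exact sequence, and virtual cyclicity of $C_B(f,b)$ follows immediately since $\Sigma(f,b)$, as a subgroup of $\Sym(\supp(f))$, is finite.

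I would construct $\Phi$ as follows. Fix $c \in C_B(f,b)$ and $s \in \supp(f)$ with $f(s) = a$. The assumption $c \in C_B^+(f,b)$ combined with $cb = bc$ gives
\[ cs \in c\langle b \rangle f^{-1}(a) = \langle b \rangle c f^{-1}(a) = \langle b \rangle f^{-1}(a). \]
Since $fb$ is reduced, the right cosets $\{\langle b \rangle s' : s' \in f^{-1}(a)\}$ are pairwise disjoint, so there is a unique $\sigma_c(s) \in f^{-1}(a)$ with $cs \in \langle b \rangle \sigma_c(s)$. Right cancellation together with $cb = bc$ shows $\sigma_c$ restricted to each fiber is injective, hence bijective by finiteness, so $\sigma_c \in \Sigma(f,b)$. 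Writing $c_2 s = b^{k_2}\sigma_{c_2}(s)$ and $c_1 \sigma_{c_2}(s) = b^{k_1}\sigma_{c_1}\sigma_{c_2}(s)$ and using $c_1 b = b c_1$ yields $c_1 c_2 s = b^{k_1+k_2}\sigma_{c_1}\sigma_{c_2}(s)$, so $\sigma_{c_1 c_2} = \sigma_{c_1}\sigma_{c_2}$ and $\Phi$ is a homomorphism.

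To compute $\ker \Phi$, suppose $\sigma_c = \id$. Since $f \neq 1$, pick any $s_0 \in \supp(f)$; then $cs_0 = b^k s_0$ for some $k$, whence $c = b^k$ by right cancellation. Conversely, $\langle b \rangle \subseteq \ker \Phi$ is clear, so $\ker \Phi = \langle b \rangle$.

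The main obstacle is surjectivity. Given $\sigma \in \Sigma(f,b)$, one needs to exhibit $c \in C_B(f,b)$ realizing $\sigma$. Because $\sigma$ preserves each fiber $f^{-1}(a)$ setwise (reducedness forces cosets corresponding to distinct fibers to be disjoint, so the defining equation $\langle b \rangle f^{-1}(a) = \langle b \rangle \sigma(f^{-1}(a))$ implies the two collections of cosets coincide and, since each $\supp(f)$-element is determined by its coset, the underlying subsets agree), it suffices to realize transpositions within a single fiber and then compose. For a transposition $s \leftrightarrow s'$ in $f^{-1}(a)$, any witness $c$ must lie in $\langle b \rangle s' s^{-1} \cap \langle b \rangle s(s')^{-1} \cap C_B(b)$; showing this intersection is nonempty, and that the resulting element acts correctly on the remaining elements of $\supp(f)$, is the technical heart of the proof and requires careful book-keeping of cosets using both reducedness and the $\Sigma(f,b)$-defining conditions. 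Once surjectivity is established, the exact sequence exhibits $\langle b \rangle$ as a cyclic subgroup of finite index in $C_B(f,b)$, completing the proof.
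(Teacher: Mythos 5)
Your construction of the action homomorphism $\Phi\colon C_B(f,b)\to\Sym(\supp(f))$, the verification that it is a homomorphism, and the computation $\ker\Phi=\langle b\rangle$ are correct and are exactly the paper's argument (the paper lets $C_B(f,b)$ act on the right cosets $\langle b\rangle s$, $s\in\supp(f)$, and shows that two elements inducing the same permutation lie in the same coset of $\langle b\rangle$). Note that this much already yields virtual cyclicity: $C_B(f,b)/\ker\Phi$ embeds in the finite group $\Sym(\supp(f))$, so $\langle b\rangle$ has finite index in $C_B(f,b)$, and no surjectivity statement is needed for that part of the lemma.

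The genuine gap is the surjectivity step, which you explicitly defer as ``the technical heart'' and never carry out --- and which is in fact false under the reading of $\Sigma(f,b)$ that you adopt. You interpret Definition~\ref{definition:sigma} as saying that $\Sigma(f,b)$ consists of \emph{all} permutations of $\supp(f)$ preserving each fibre $f^{-1}(a)$ setwise, and you propose to realize every transposition inside a fibre by some $c\in C_B(f,b)$. This cannot succeed: take $B=\mathbb{Z}^2=\langle s\rangle\times\langle t\rangle$, $b=s$, and $f$ supported on $\{1,t\}$ with a single nontrivial value $a$. Then $fb$ is reduced, the transposition of the two support points is fibre-preserving, but writing $c=s^mt^n$ one sees that realizing it would force $n=1$ and $n+1=0$ simultaneously (equivalently, your intersection $\langle b\rangle ts^{-1}\cap\langle b\rangle st^{-1}$ with $s=1$, $s'=t$ is $\langle s\rangle t\cap\langle s\rangle t^{-1}=\emptyset$); here $C_B(f,b)=\langle b\rangle$ and the image of $\Phi$ is trivial while the fibre-preserving group is $\Sym(2)$. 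The way the paper uses $\Sigma(f,b)$ (see the recollection in the proof of Lemma~\ref{lemma:general elements have C-CC}) is as the set of permutations of the cosets corresponding to $\supp(f)$ that \emph{are realized} by left multiplication by elements of $C_B(f,b)$, i.e.\ precisely the image of your $\Phi$; with that meaning surjectivity is tautological and the entire content of the lemma is the kernel computation you did prove. So either you adopt the paper's reading (and delete the transposition argument, which then proves nothing that is needed), or you keep your reading, in which case the exact sequence you are trying to establish is false and no amount of coset book-keeping will close the gap.
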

\begin{proof}
    Let $X \subseteq \langle b\rangle \backslash B$ be the collection of right cosets of $\langle b \rangle$ in $B$ corresponding to the elements of $\supp(f)$. As stated earlier, elements of $C_B(f,b)$ act on $X$ by $c \cdot \langle b\rangle x = \langle b\rangle cx$. Suppose that the elements $c_1, c_2 \in C_B(f,b)$ realize the same permutation on $X$. We then see that $\langle b \rangle c_1 s = \langle b \rangle c_2 s$ for all $s \in \supp(f)$, meaning that $c_1 \langle b \rangle = c_2 \langle b \rangle$. Clearly, $\langle b \rangle$ is the kernel of the action and $C_B(f,b)/\langle b \rangle = \Sigma(f,b)$.
\end{proof}

 Now we can give a full characterization of the structure of centralizers in the case when the base group $A$ is abelian.
\begin{proposition}
    \label{proposition:centraliser characterisation}
    Let $G = A \wr B$, where $A$ is abelian, and let $f \in A^B$ and $b \in B$ be arbitrary elements such that the element $fb$ is reduced. Then
    \begin{itemize}
        \item[(i)] if $\ord(b) = \infty$ and $f = 1$, then $C_G(b)=C_B(b)$,
        \item[(ii)] if $\ord(b) = \infty$ and $f \neq 1$, then the restriction map $\pi_B \colon G \to B$ defines an isomorphism from $C_G(fb)$ to $C_B(f,b)$. In particular, $C_G(fb)$ is virtually cyclic;       
        \item[(iii)] $\ord(b) < \infty$, then 
        \begin{displaymath}
            C_G(b) \simeq  \left( \bigoplus_{i \in \langle b\rangle \backslash B} A \right) \rtimes C_B(f,b).
        \end{displaymath}
    \end{itemize}
\end{proposition}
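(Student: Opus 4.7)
The plan is to apply Lemma~\ref{lemma:simplified centraliser} in each of the three cases and analyse the resulting recurrence $g(bx) = g(x) f(x)^{-1} f(cx)$ on the $\langle b\rangle$-orbits of $B$, combined with the requirement that $g \in A^B$ have finite support.

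In case~(i), where $f = 1$ and $\ord(b) = \infty$, the recurrence collapses to $g(bx) = g(x)$, so $g$ is constant on every $\langle b\rangle$-orbit. Since all orbits are infinite, the finite-support condition forces $g \equiv 1$, yielding the identification $C_G(b) = \{(1,c) : c \in C_B(b)\} \cong C_B(b)$.

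For case~(ii), where $f \neq 1$ and $\ord(b) = \infty$, I would show that the projection $\pi\colon C_G(fb) \to C_B(f,b)$, $(g,c) \mapsto c$, is a bijective group homomorphism. Uniqueness of the preimage of each $c$ follows from applying the argument of case~(i) to the difference of two candidate $g$'s. For existence, given $c \in C_B(f,b)$, I would construct $g$ orbit-by-orbit: since $fb$ is reduced, each $\langle b\rangle$-orbit meets $\supp(f)$ in at most one point; moreover, the hypothesis $c \in C^+_B(f,b)$ forces $c$ to permute the orbits meeting $\supp(f)$ within each fibre of $f$, which, combined with the reduced condition, yields the key identity $f(s) = f(s')$ where $s \in \supp(f) \cap \langle b\rangle x$ and $s' \in \supp(f) \cap \langle b\rangle cx$. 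This identity makes the telescoping product $\prod_{i=0}^{k-1} f(b^i x)^{-1} f(cb^i x)$ stabilise to $1$ for $|k|$ sufficiently large, so choosing $g \equiv 1$ on a single representative of each orbit produces a finite-support solution. Virtual cyclicity of $C_G(fb)$ then follows from Lemma~\ref{lemma:virtually cyclic}.

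For case~(iii), where $\ord(b) < \infty$, I would again use the projection $\pi\colon C_G(fb) \to C_B(f,b)$. Its kernel is the set of $g \in A^B$ satisfying $g(bx) = g(x)$; since all orbits are now finite of size $\ord(b)$, such a $g$ is determined by one value per orbit, giving $\ker(\pi) \cong \bigoplus_{i \in \langle b\rangle \backslash B} A$, with the natural action of $C_B(f,b)$ on this direct sum induced by the permutation $\langle b\rangle x \mapsto \langle b\rangle cx$ of the orbit set $\langle b\rangle \backslash B$. To identify $C_G(fb)$ with the full semidirect product, I would exhibit a section of $\pi$: the element $fb \in C_G(fb)$ itself provides a lift of $b$, giving a section over $\langle b\rangle \leq C_B(f,b)$; one then extends through a transversal of $\langle b \rangle$ in $C_B(f,b)$ (whose quotient is $\Sigma(f,b)$ by Lemma~\ref{lemma:virtually cyclic}) and verifies that the resulting set-theoretic section is multiplicative, using the semidirect product law in $G$ and matching the translation action of $c$ with the prescribed permutation of orbits on the kernel.

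The main obstacle I anticipate is case~(iii): constructing the splitting of the short exact sequence $1 \to \ker(\pi) \to C_G(fb) \to C_B(f,b) \to 1$ and checking that the chosen lifts of a transversal of $\langle b\rangle$ in $C_B(f,b)$ combine into a homomorphism, since no naive choice (such as $c \mapsto (0,c)$) lies in $C_G(fb)$. The explicit description of $C_B(f,b)$ via Lemma~\ref{lemma:virtually cyclic} and the fact that $fb$ provides a canonical lift of $\langle b\rangle$ should make this verification tractable, but it is the most delicate computation. Cases~(i) and~(ii) are essentially book-keeping once the recurrence and the reduced hypothesis are fully exploited.
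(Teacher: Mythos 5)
Your treatment of cases (i) and (ii) follows essentially the same route as the paper: apply Lemma~\ref{lemma:simplified centraliser}, note that the homogeneous recurrence $g(bx)=g(x)$ has no nontrivial finitely supported solution on infinite $\langle b\rangle$-orbits (this gives (i) and the injectivity in (ii)), and for surjectivity in (ii) use the value-matching identity $f(s)=f(s')$ (the paper's $f(b^{m_i}s_i)=f(b^{n_i}cs_i)$) to produce an explicit finite-support solution of $g(bx)=g(x)f(x)^{-1}f(cx)$; virtual cyclicity then comes from Lemma~\ref{lemma:virtually cyclic}, exactly as in the paper. One small repair: normalising $g$ to be $1$ at an \emph{arbitrary} orbit representative does not always give finite support -- if the representative lies strictly between the two positions $m_i$ and $n_i$ where $f(\cdot)$ and $f(c\,\cdot)$ are nontrivial on that orbit, the resulting solution equals the nontrivial constant $f(b^{m_i}s_i)^{\pm 1}$ outside that interval. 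You must normalise at a point outside the interval, which is what the paper's explicit case formulas for $g(b^e s_i)$ do.

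The genuine gap is in case (iii), at exactly the step you flagged. The assignment $b\mapsto fb$ is \emph{not} a homomorphic section over $\langle b\rangle$: with $n=\ord(b)$ one has $(fb)^n=\overline{f}(b,\cdot)\in A^B$, an element of the kernel which is nontrivial whenever the product of the values of $f$ along a supporting coset is nontrivial. Worse, the extension $1\to\bigoplus_{\langle b\rangle\backslash B}A\to C_G(fb)\to C_B(f,b)\to 1$ need not split at all: take $A=\mathbb{Z}/4$, $B=\mathbb{Z}/2=\langle b\rangle$, and $f$ supported at the identity with value a generator of $A$. Then $fb$ is reduced, $C_B(f,b)=B$, the kernel is $\mathbb{Z}/4$, but a direct check (or Lemma~\ref{lemma:simplified centraliser}) gives $C_G(fb)=\langle fb\rangle\cong\mathbb{Z}/8$, which is not isomorphic to $\mathbb{Z}/4\rtimes\mathbb{Z}/2$ for any action. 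So no choice of lifts over a transversal of $\langle b\rangle$ in $C_B(f,b)$ can be made multiplicative, and the verification you propose cannot succeed for general $f\neq 1$. For comparison, the paper's own argument in this case only establishes the set-theoretic decomposition $g=hg'$ with $h$ constant on $\langle b\rangle$-cosets and $g'$ determined by $f,b,c$ (i.e.\ the exact sequence with a set-theoretic section); read literally, part (iii) concerns $C_G(b)$, i.e.\ $f=1$, where $c\mapsto c$ is already a homomorphic section and the semidirect decomposition is immediate -- that is the only version of (iii) your splitting strategy (or any splitting argument) can deliver.
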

\begin{proof}
    Let $g \in A^B$ and $c \in B$ be arbitrary elements such that $gc \in C_B(fb)$. By Lemma~\ref{lemma:simplified centraliser}, we see that
    \begin{enumerate}
        \item[(a)] $c \in C_B(f,b)$ 
        \item[(b)] $g(bx) = g(x) f(x)^{-1} f(cx)$ for all $x \in B$.
     \end{enumerate}

    Let $\{s_1,s_2, \dots, s_n \} = \supp(f)$ be some enumeration of the support of $f$. As $c \in C_B(f,b)$, we see that $\langle b \rangle cs \cap \supp(f) \neq \emptyset$ if and only if $\langle b \rangle s \cap \supp(f) \neq \emptyset$. In particular, as the element $fb$ is reduced, for each $s_i \in S$ there are unique $m_i, n_i \in \mathbb{Z}$ such that $f(b^{m_i}s_i) \neq 1$ and $f(b^{n_i}cs_i) \neq 1$ (in case if $\ord(b) < \infty$ we require $m_i, n_i \in \{0, 1, \dots, \ord(b)-1\}$). We note that $f(b^{m_i}s_i) = f(b^{n_i}cs_i)$.
     
    If (i) holds, then by (a), we see  that $c \in C_B(b)$, and by (b), we see that $g$ is constant on right cosets of $\langle b\rangle$. This means that $g(x) = 1$ for all $x \in B$ as $b$ is of infinite order and $g$ must be finitely supported.

    Suppose that (ii) holds. By (a), we see that $c \in C_B(f,b)$. We will now demonstrate that $g$ is fully determined by $f,b$ and $c$. From (b), we immediately see that $g(b^e x) = 1$ for all $e \in \mathbb{Z}$ whenever $\langle b \rangle x \cap \supp(f) = \emptyset$. Similarly, if $m_i = n_i$, then we see that $g(b^e s_i) = 1$ for all $e \in \mathbb{Z}$. If $m_i < n_i$, we then see that
    \begin{displaymath}
        g(b^e s_i) = \begin{cases}
            1 & \mbox{ if $e < m_i $},\\
            f(b^{m_i}s_i)^{-1} & \mbox{ if $m_i \leq e < n_i$},\\
            1 & \mbox{ if $n_i < e$}.
        \end{cases}
    \end{displaymath}
    Conversely, if $n_i < m_i$, we then see that
    \begin{displaymath}
        g(b^e s_i) = \begin{cases}
            1 & \mbox{ if $e < n_i $},\\
            f(b^{m_i}s_i) & \mbox{ if $n_i \leq e < m_i$},\\
            1 & \mbox{ if $m_i < e$}.
        \end{cases}
    \end{displaymath}
    Thus, the element $g$ is fully determined by $f,b$, and $c$, meaning that the restriction map $\pi_B \colon G \to B$ has a well-defined inverse map when restricted to $C_G(fb)$. In particular, for every $c \in C_B(f,b)$, we can set $\pi_B^{-1}(c) = g c$, where the function $g \colon B \to A$ is given as above. The assertion that $C_G(fb)$ is virtually cyclic then immediately follows, as $C_B(f,b)$ is virtually cyclic by Lemma~\ref{lemma:virtually cyclic}.

    Now suppose that (c) is the case, i.e.~$\ord(b) < \infty$. We will use $\bigoplus_{i \in \langle b\rangle \backslash B} A$ to denote the set of all functions in $A^B$ that are constant on right cosets of $\langle b \rangle$, i.e.
    \begin{displaymath}
        \bigoplus_{i \in \langle b\rangle \backslash B} A = \{h \in A^B \mid h(bx) = h(x) \mbox{ for all $x \in B$}\}.
    \end{displaymath}
    We will show that  $g$ can be  decomposed as $g = h g'$, where $h \in \bigoplus_{i \in \langle b\rangle \backslash B} A$ and $g'$ is fully determined by $f,b$ and $c$.
    
    Let $X = \{x_i \in B \mid i \in \langle b\rangle \backslash B\}$ be some right transversal for $\langle b \rangle$ in $B$, and let $I \subseteq X$ be collection of representatives corresponding to the elements of $\supp(f)$. Without loss of generality, we may assume that $x_i = s_i$ for some $s_i \in \supp(f)$ whenever $i \in I$. Following (b), we see that $g$ is constant on the coset $\langle b \rangle x_i$ whenever $i \notin I$ or if $i \in I$ and $m_i = n_i$. For each $i \in I$ such that $m_i \neq n_i$ we pick some $e_i \in \{0, \dots, \ord(b)-1\} \setminus \{\min\{m_i, n_i\}, \max\{m_i, n_i\} - 1\}$, and then for each $x_i \in X$, we set 
    \begin{displaymath}
        a_i = \begin{cases}
            g(x_i)          &\mbox{ if $i \notin I$ or $m_i = n_i$},\\
            g(b^{e_i} x_i)  &\mbox{ if $i \in I$ and $m_i \neq n_i$}. 
        \end{cases}
    \end{displaymath}
    From (b), we see that for all $i \in I$ such that $m_i \neq n_i$, if $m_i < n_i$ have
    \begin{displaymath}
        g(b^e s_i) = \begin{cases}
            a_i & \mbox{ if $e < m_i $},\\
            a_i f(b^{m_i}s_i)^{-1} & \mbox{ if $m_i \leq e < n_i$},\\
            a_i & \mbox{ if $n_i < e$};
        \end{cases}
    \end{displaymath}
    and, conversely, if $n_i < m_i$, then we have
    \begin{displaymath}
        g(b^e s_i) = \begin{cases}
            a_i & \mbox{ if $e < n_i $},\\
            a_i f(b^{m_i}s_i) & \mbox{ if $n_i \leq e < m_i$},\\
            a_i & \mbox{ if $m_i < e$}.
        \end{cases}
    \end{displaymath}
    We set $h \in A^B$ to be the function given by $h(x) = a_i$ whenever $x \in \langle b \rangle x_i$. Thus, we see that $h \in \bigoplus_{i \in \langle b\rangle \backslash B} A$. Next, we set $g' = h^{-1} g$. Hence, if $i\notin I$ or $m_i = n_i$, then $g'$ is constant on the coset $\langle b \rangle x_i$. Furthermore, if $m_i < n_i$, then we see that
    \begin{displaymath}
        g'(b^e s_i) = \begin{cases}
            1 & \mbox{ if $e < m_i $},\\
            f(b^{m_i}s_i)^{-1} & \mbox{ if $m_i \leq e < n_i$},\\
            1 & \mbox{ if $n_i < e$};
        \end{cases}
    \end{displaymath}
    and, conversely, if $n_i < m_i$, we then see that
    \begin{displaymath}
        g'(b^e s_i) = \begin{cases}
            1 & \mbox{ if $e < n_i $},\\
            f(b^{m_i}s_i) & \mbox{ if $n_i \leq e < m_i$},\\
            1 & \mbox{ if $m_i < e$},
        \end{cases}
    \end{displaymath}
    meaning that $g'$ is fully determined by $f,b$ and $c$.
\end{proof}
We note that $g'$ in the case (iii) is not unique. However, it is unique if we require that $g'(x) = i$ whenever $i \notin I$ or $m_i = n_i$ and that $g'(b^e x_i)$ whenever $e \in \{0, \dots, \ord(b)-1\} \setminus \{\min\{m_i, n_i\}, \max\{m_i, n_i\}-1\}$. 

\subsection{Preliminary technical statements}
\label{subsection:Infinite act preliminaries}
In this subsection we give several technical statements that will be crucial for proving Proposition~\ref{proposition:infinite acting group}.
\begin{lemma}
    \label{lemma:permuting cosets}
    Let $B$ be a residually-$\C$ group. Suppose that the element $b \in B$ is such that $\langle b \rangle$ is $\C$-closed in $B$, and suppose that $S_1, \dots, S_r$ is a collection of mutually disjoint finite subsets of $B$ such that $\langle b \rangle s_i \neq \langle b \rangle s_j$ whenever $s_i \neq s_j$, for any $s_i, s_j \in S = S_1 \cup \dots \cup S_r$.
    
    Then there exists a subgroup $L \in \NC(B)$ such for the natural projection $\pi \colon B \to B/L$ we have $\langle \pi(b)\rangle \pi(s_i) \neq \langle \pi(b) \rangle \pi(s_j)$ whenever $s_i \neq s_j$. Furthermore, $L$ can be chosen so that for every $c \in B$ satisfying $\langle \pi(b) \rangle \pi(S_i) = \langle \pi(b) \rangle \pi(c)\pi(S_i)$ for all $i \in \{1,\dots, r\}$, we have $c = c'l$ for some $l \in L$ and $c' \in B$ satisfying $\langle b \rangle cS_i = \langle b \rangle S_i$ for all $i \in \{1, \dots, r\}$. Furthermore, $\pi(c) \in C_{B/L}(\pi(b))$ if and only if $c' \in C_B(b).$
\end{lemma}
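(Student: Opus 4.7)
The strategy is to build $L$ as a finite intersection of members of $\NC(B)$, each chosen to kill a specific obstruction. Fix an enumeration $S := S_1 \cup \dots \cup S_r = \{s_1, \dots, s_n\}$, set $P := \prod_{i=1}^{r} \Sym(S_i) \leq \Sym(S)$, and, for each $\sigma \in P$, introduce the transporter
\[
T_\sigma := \{c \in B \mid c\, s_k \in \langle b \rangle\, \sigma(s_k) \text{ for all } 1 \leq k \leq n\}
\]
together with the elements $g_{\sigma,k} := \sigma(s_1)\, s_1^{-1}\, s_k\, \sigma(s_k)^{-1}$. A direct calculation shows that $T_\sigma \neq \emptyset$ if and only if every $g_{\sigma,k}$ lies in $\langle b \rangle$, and in that case $T_\sigma = \langle b \rangle\, \sigma(s_1)\, s_1^{-1}$.

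Now assemble $L$ in three stages. First, for each pair $s \neq t$ in $S$, the hypothesis gives $st^{-1} \notin \langle b \rangle$; by $\C$-closedness of $\langle b \rangle$ choose $N_{s,t} \in \NC(B)$ with $st^{-1} \notin \langle b \rangle N_{s,t}$ and intersect to get $N_0 \in \NC(B)$. Second, for each $\sigma \in P$ with $T_\sigma = \emptyset$, pick $k_\sigma$ with $g_{\sigma, k_\sigma} \notin \langle b \rangle$ and (again by $\C$-closedness) a subgroup $N_\sigma \in \NC(B)$ with $g_{\sigma, k_\sigma} \notin \langle b \rangle N_\sigma$. Third, for each $\sigma \in P$ with $T_\sigma \neq \emptyset$, fix a representative $c'_\sigma \in T_\sigma$ and set $z_\sigma := [c'_\sigma, b]$; when $z_\sigma \neq 1$, use that $B$ is residually-$\C$ to pick $M_\sigma \in \NC(B)$ avoiding $z_\sigma$. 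Define $L$ to be the intersection of $N_0$, all the $N_\sigma$ and all the $M_\sigma$: since $P$ is finite and $\C$ satisfies (c0), we have $L \in \NC(B)$.

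The distinctness-of-cosets condition is immediate from $L \leq N_0$. For the structural claim, write $\pi \colon B \to B/L$ for the natural projection and suppose $c \in B$ satisfies $\langle \pi(b) \rangle\, \pi(S_i) = \langle \pi(b) \rangle\, \pi(c)\, \pi(S_i)$ for each $i$. Distinctness of the cosets in $B/L$ forces a unique $\sigma \in P$ with $\pi(c)\, \pi(s_k) \in \langle \pi(b) \rangle\, \pi(\sigma(s_k))$ for every $k$, and one must have $T_\sigma \neq \emptyset$: otherwise $g_{\sigma, k_\sigma} \in \langle b \rangle L \subseteq \langle b \rangle N_\sigma$, contradicting the choice of $N_\sigma$. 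Pick $n_1 \in \mathbb{Z}$ so that $l_1 := c\, s_1\, \sigma(s_1)^{-1}\, b^{-n_1} \in L$, set $v := b^{n_1}\, \sigma(s_1)\, s_1^{-1} \in T_\sigma$ (so $c = l_1 v$), and define $l := v^{-1} l_1 v$ and $c' := c l^{-1}$. Normality of $L$ yields $l \in L$, while a direct computation gives $c' = v \in T_\sigma$, whence $c' s \in \langle b \rangle \sigma(s)$ for every $s \in S$ and hence $\langle b \rangle c' S_i = \langle b \rangle S_i$ for each $i$.

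Finally, since $c = c' l$ with $l \in L$ we have $\pi(c) = \pi(c')$, so $\pi(c) \in C_{B/L}(\pi(b))$ if and only if $[c', b] \in L$. Writing $c' = b^m c'_\sigma$ for some $m \in \mathbb{Z}$, one computes $[c', b] = b^m z_\sigma b^{-m}$; normality of $L$ combined with the choice of $M_\sigma$ gives $[c', b] \in L$ if and only if $z_\sigma = 1$, which is in turn equivalent to $c' \in C_B(b)$. The main obstacle is producing a single $l \in L$ that simultaneously repairs $c$ at every $s_k$; the key trick, by matching $m_1 = n_1$, is that $l = v^{-1} l_1 v$ becomes a conjugate of an element of $L$ and hence lies in $L$ by normality.
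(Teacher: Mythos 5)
Your proof is correct and follows essentially the same strategy as the paper's: intersect normal subgroups that (a) keep the $\langle b\rangle$-cosets of $S$ distinct using $\C$-closedness of $\langle b\rangle$, (b) rule out non-realizable block permutations via the coset-equality witnesses ($g_{\sigma,k}$, the paper's $D_\sigma$), and (c) detect non-commuting realizers by separating the commutators $[c_\sigma,b]$ using residual-$\C$-ness. Your variations (fixing the index $1$ in the witnesses, the conjugation trick $l=v^{-1}l_1v$, and the explicit computation $[c',b]=b^m z_\sigma b^{-m}$) are only cosmetic refinements of the same argument.
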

\begin{proof}
    Denote
    \begin{displaymath}
        \Sigma = \{ \sigma \in \Sym(S) \mid \sigma(S_i) = S_i\},
    \end{displaymath}
    and let $S = \{s_1, \dots, s_k\}$ be an enumeration of the set $S$. Set 
    $$
    D = \{s_i s_j^{-1} \mid s_i, s_j \in S \mbox{ and } i\neq j\}.
    $$ 
    As $\langle b \rangle s_i \neq \langle b \rangle s_j$ whenever $i \neq j$ by assumption, we see that $D \cap \langle b \rangle = \emptyset$. As $\langle b \rangle$ is $\C$-closed in $B$, there is a subgroup $L_1 \in \NC(B)$ such that $DL_1 \cap \langle b \rangle L_1 = \emptyset$. Suppose that $\langle b \rangle s_i L_1 = \langle b \rangle s_j L_1$ for some $i \neq j$. This means that $s_i s_j^{-1} L_1 \in \langle b \rangle L_1$. However, $s_i s_j^{-1} L_1 \subseteq DL_1$ and $D L_1 \cap \langle b\rangle L_1 = \emptyset$ by construction, which is a contradiction. We see that $\langle b \rangle s_i L_1 \neq \langle b \rangle s_j L_1$ whenever $i \neq j$, which proves the first part of the statement.
    
    For each permutation $\sigma \in \Sigma$, we denote 
    \begin{displaymath}
        D_\sigma = \{s_{\sigma(i)}s_i^{-1} s_j s_{\sigma(j)}^{-1} \mid i,j \in \{1, \dots, k\}\}.
    \end{displaymath}
    We set $\Sigma^+ = \{\sigma  \in \Sigma \mid D_\sigma \subseteq \langle b\rangle \}$ and $\Sigma^- = \Sigma \setminus \Sigma^+$. We see that $\Sigma^+$ denotes the set of all permutations of the set $S$ that can be realized as a left action of some element $c$ on the right coset space $\langle b\rangle\backslash B$ given by $c \cdot \langle b \rangle x = \langle b \rangle cx$ and $\Sigma^-$ are the permutations that cannot. Indeed, if there is an element $c \in B$ such that for all $s_i \in S$ we have $\langle b \rangle c s_i = \langle b \rangle s_{\sigma(i)}$, we then have
    \begin{displaymath}
        \langle b \rangle s_{\sigma(1)}s_1^{-1} = \dots = \langle b \rangle s_{\sigma(k)}s_k^{-1} = \langle b \rangle c.
    \end{displaymath}
    It then follows that for any $i,j$ we must have $s_{\sigma(i)}s_i^{-1} s_j s_{\sigma(j)}^{-1} \in \langle b \rangle$. One can easily check that the implication holds in opposite direction as well. In particular, if $D_\sigma \subseteq \langle b \rangle$, then there is $c \in B$ such that $\langle b \rangle cs_i = \langle b \rangle s_{\sigma(i)}$. Thus, if $D_\sigma  \subseteq \langle b\rangle$, then
    \begin{displaymath}
        \langle b \rangle s_{\sigma(1)}s_1^{-1} = \dots = \langle b \rangle s_{\sigma(k)}s_k^{-1}.
    \end{displaymath}
    Hence, there is some $c \in B$ such that $\langle b \rangle s_{\sigma(i)}s_i^{-1} = \langle b \rangle c$ for all $i$, meaning that $\langle b \rangle c s_i = \langle b \rangle s_{\sigma(i)}$.

    We set $D^- = \cup_{\sigma \in \Sigma^-} D_\sigma \setminus \langle b \rangle$ which one can see is a finite and non-empty subset of $B$, and since $\langle b \rangle$ is $C$-closed in $B$, there exists a subgroup $L_2 \in \NC(B)$ such that $D^- L_2 \cap \langle b \rangle L_2 = \emptyset$. In particular, $s_{\sigma(i)}s_i^{-1}s_j s_{\sigma(j)}^{-1} \in \langle b \rangle L_2$ for all pairs $i,j \in \{1, \dots, k\}$ if and only if $\sigma \in \Sigma^+$.

    Before we proceed with the final part of the construction, let us make one crucial observation. All elements realizing the same permutation $\sigma$ lie in the same right coset of $\langle b \rangle$. Indeed, given $\sigma \in \Sigma^+$ and $c_1, c_2 \in B$ such that $\langle b\rangle c_1 s_i = \langle b\rangle s_{\sigma(i)} = \langle b\rangle c_2 s_i$ for all $i$, we see that $\langle b \rangle c_1 = \langle b \rangle c_2$. In fact, it can be easily seen that any element from the same coset realizes the same permutation. As $\langle b \rangle \leq C_B(b)$ we see that either $\langle b \rangle c \leq C_B(b)$ or $\langle b \rangle c \cap C_B(b) = \emptyset$, meaning that either all elements realizing $\sigma$ commute with $b$ or none do.
    
    For each $\sigma \in \Sigma^+$, let us pick some element $c_\sigma \in B$ that realizes $\sigma$.  If $[c_\sigma, b] \neq 1$, then there is a subgroup $L_\sigma \in \NC(B)$ such that $[c_\sigma, b] \notin L_\sigma$. We set $L_3 = \bigcap_{\sigma \in \Sigma^+} L_\sigma$. Finally, set $L = L_1 \cap L_2 \cap L_3$ and let $\pi \colon B \to B/L$ be the natural projection. 
    
    Now suppose that there exists an element $c \in B$ such that $\langle b \rangle cS_i L = \langle b \rangle S_i L$ for all $i \in \{1, \dots, r\}$. This means that there is some permutation $\sigma \in \Sigma$ such that $\langle b \rangle c s_i L = \langle b \rangle s_{\sigma(i)}L$ for all $i \in \{1, \dots, k\}$. Hence, 
    \begin{displaymath}
         \langle b \rangle c s_{\sigma(1)}s_1^{-1}L = \dots = \langle b \rangle c s_{\sigma(k)}s_k^{-1}L = \langle b \rangle cL, 
    \end{displaymath}
    meaning that $s_{\sigma(i)}s_i^{-1}s_j s_{\sigma(j)}^{-1} \in \langle b \rangle L$ for any pair $i,j \in  \{1, \dots, k\}$. It then follows from the construction of $L$ that $\sigma \in \Sigma^+$. Indeed, if $\sigma \in \Sigma^-$, then there are some $i,j$ such that $s_{\sigma(i)}s_i^{-1}s_j s_{\sigma(j)}^{-1} \notin \langle b \rangle,$ and therefore, $s_{\sigma(i)}s_i^{-1}s_j s_{\sigma(j)}^{-1} \notin \langle b\rangle L$. In particular, we see that 
    \begin{displaymath}
        \langle b \rangle s_{\sigma(1)}s_1^{-1} = \dots = \langle b \rangle s_{\sigma(k)}s_k^{-1} \subseteq \langle b \rangle c L.
    \end{displaymath}
    Hence, there is an element $l' \in L$ such that $\langle b \rangle s_{\sigma(i)}s_i^{-1} = \langle b \rangle cl'$ for all $i \in \{1, \dots, k\}$. We then set $c' = cl'$ and $l = (l')^{-1}$. It then follows $c = c' l$ and $\langle b \rangle c S_i = \langle b \rangle S_i$ for all $i \in \{1, \dots, r\}$, which proves the second part of the statement.

    Finally, suppose that $c' \notin C_B(b)$.  Let $c_\sigma \in \langle b \rangle c'$ be the element we picked for construction of $L_3$. Following the previous argument, we see that $c_\sigma \notin C_B(b)$. By construction, $\pi(c_\sigma) \notin C_{B/L}(\pi(b))$. By the same argument as before, we see that $\langle \pi(b) \rangle \pi(c_\sigma) = \langle \pi(b) \rangle \pi(c) \cap C_{G/L}(\pi(b))$, meaning that $\pi(c) \notin C_{B/L}(\pi(b))$. We see that $\pi(c) \in C_{B/L}(\pi(b))$ if and only if $c' \in C_B(b)$, which concludes the proof.
\end{proof}

Applying Lemma~\ref{lemma:permuting cosets} to the setting when $b = 1$ immediately gives us the following corollary.
\begin{corollary}   
    \label{corollary:permuting support}
    Let $B$ be a residually-$\C$ group, and suppose that $S_1, \dots, S_r$ is a collection of mutually disjoint finite subsets of $B$. Then there exists a subgroup $L \in \NC(B)$ such that the natural projection $\pi \colon B \to B/L$ is injective on the finite set $S = \cup_{i = 1}^r S_i$. Moreover, for every $c \in B$ satisfying $cS_i L = S_i L$ for all $i \in \{1,\dots, r\}$, we have $c = c'l$ for some $l \in L$ and $c' \in B$ satisfying $cS_i = S_i$ for all $i \in \{1, \dots, r\}$.
\end{corollary}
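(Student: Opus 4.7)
The plan is to derive the corollary as a direct specialization of Lemma \ref{lemma:permuting cosets} to the case $b = 1$. First I would verify the hypotheses. The group $B$ is residually-$\C$, and residual $\C$-ness is equivalent to the trivial subgroup $\{1\} = \langle 1 \rangle$ being $\C$-closed in $B$ (this is just the definition of residually-$\C$ from Definition \ref{def:C-props}), which matches the standing hypothesis on $\langle b \rangle$ in Lemma \ref{lemma:permuting cosets}. The condition in that lemma that $\langle b \rangle s_i \neq \langle b \rangle s_j$ for $s_i \neq s_j$ degenerates, with $b = 1$, to the assertion that the elements $s_i$ of $S = S_1 \cup \cdots \cup S_r$ are pairwise distinct; this is automatic from the assumption that the $S_i$ are mutually disjoint finite subsets.

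Next I would translate the two conclusions of Lemma \ref{lemma:permuting cosets} under the substitution $b = 1$. For the projection $\pi \colon B \to B/L$ produced by the lemma, the subgroup $\langle \pi(b) \rangle$ is trivial, so the conclusion $\langle \pi(b)\rangle \pi(s_i) \neq \langle \pi(b)\rangle \pi(s_j)$ whenever $s_i \neq s_j$ reduces to $\pi(s_i) \neq \pi(s_j)$, which is exactly the required injectivity of $\pi$ on the finite set $S$. Similarly, the hypothesis $\langle \pi(b) \rangle \pi(S_i) = \langle \pi(b) \rangle \pi(c)\pi(S_i)$ in the lemma simplifies to $\pi(S_i) = \pi(c)\pi(S_i)$, that is $S_i L = c S_i L$, and the conclusion $\langle b \rangle c' S_i = \langle b \rangle S_i$ becomes $c' S_i = S_i$. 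This yields the stated decomposition $c = c' l$ with $l \in L$ and $c' \in B$ satisfying $c' S_i = S_i$.

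The final clause in Lemma \ref{lemma:permuting cosets} concerning centralizers is vacuous in this setting, since $b = 1$ gives $C_B(b) = B$ and $C_{B/L}(\pi(b)) = B/L$. Since no substantive new work is required beyond the translation, I do not anticipate any real obstacle; the only care needed is to make sure the cosets $\langle b \rangle s_i$ are correctly interpreted as singletons $\{s_i\}$ throughout, so that all the coset-based statements in Lemma \ref{lemma:permuting cosets} collapse to element-wise statements about $S$.
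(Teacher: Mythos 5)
Your proposal is correct and is exactly the paper's argument: the paper derives this corollary by applying Lemma~\ref{lemma:permuting cosets} with $b=1$, and your verification of the hypotheses (residual-$\C$-ness giving $\C$-closedness of $\{1\}=\langle 1\rangle$, disjointness of the $S_i$ giving distinctness of the trivial cosets) and translation of the conclusions matches that specialization. No further comment is needed.
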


The lemma and the corollary we have just proved will be crucial in establishing $\C$-CC for elements of $A \wr B$ that have non-trivial function part. Recall that by Lemma~\ref{lemma:centralizers_abelian_base} if $gc \in C_B(f)$, where $f, g \in A^B$ and $c \in B$, then $c$ must permute elements of $\supp(f)$ (or the corresponding right cosets of $\langle b \rangle$) by left multiplication. Informally speaking, Lemma~\ref{lemma:permuting cosets} and Corollary~\ref{corollary:permuting support} will allow us to construct a quotient of the acting group $B$ such that no new permutations can be realized in the quotient. Using the somewhat simpler criteria of Lemma~\ref{lemma:simplified centraliser}, we will be able to construct quotient map $\lambda \colon B \to B/L$ such that $C_{B/L}(\lambda(f), \lambda(b)) = \lambda(C_B(f,b))$ and $\Sigma(f,b) \simeq \Sigma(\lambda(f), \lambda(b))$.
\begin{lemma}
    \label{lemma:admissible permutations}
    Let $f \in A^B$ and $b \in B$ be given such that the element $fb$ is reduced in $G = A \wr B$, and suppose that $L_A \unlhd A$ and $L_B \unlhd B$ are subgroups satisfying $\Im(f) \cap L_A = \{1\}$ and the following properties
    \begin{itemize}
        \item[(i)] for any $s, s' \in \supp(f)$ we have $\langle b \rangle s L_B = \langle b \rangle s' L_B$ if and only if $\langle b \rangle s = \langle b \rangle s'$;
        \item[(ii)] for any $c \in B$ such that $\langle b \rangle c f^{-1}(a) L_B = \langle b \rangle f^{-1}(a) L_B$ for all $a \in \Im(f)$ there is $c' \in cL_B$ such that $\langle b \rangle c f^{-1}(a) = \langle b \rangle f^{-1}(a)$ for all $a \in \Im(f)$;
        \item[(iii)] for any such $c$ we have $c' \in C_B(b)$ if and only if $c L_B \in C_{B/L_B}(b L_B)$.
    \end{itemize}
    Then the element $\lambda(f)\lambda(b)$ is reduced in $A/L_A \wr B/L_B$ and $C_{G/L}(\lambda(f),\lambda(b)) = \lambda(C_B(f,b))$ where $L$ is the kernel of the natural projection $\lambda \colon A \wr B \to A/L_A \wr B/L_B$.
\end{lemma}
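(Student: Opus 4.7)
The plan is to verify the two conclusions of the lemma in order. To show that $\lambda(f)\lambda(b)$ is reduced in $A/L_A \wr B/L_B$, I would use Lemma~\ref{lemma:map_extension} to write $\lambda(f)(xL_B) = \pi_A\bigl(\prod_{l\in L_B} f(xl)\bigr)$ where $\pi_A\colon A \to A/L_A$ is the natural projection. Condition (i) combined with the reducedness of $fb$ implies that for each $s \in \supp(f)$ the coset $sL_B$ meets $\supp(f)$ only at $s$ itself: if $s' \in \supp(f) \cap sL_B$, then $\langle b\rangle s' L_B = \langle b\rangle s L_B$ forces $\langle b\rangle s' = \langle b\rangle s$ by (i), and then $s' = s$ by reducedness of $fb$. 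Combined with $\Im(f) \cap L_A = \{1\}$, this yields $\supp(\lambda(f)) = \{sL_B : s \in \supp(f)\}$ with $\lambda(f)(sL_B) = \pi_A(f(s)) \ne 1$. A further appeal to (i) ensures these cosets $sL_B$ lie in distinct $\langle\lambda(b)\rangle$-cosets, so $\lambda(f)\lambda(b)$ is reduced.

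For the equality $C_{G/L}(\lambda(f),\lambda(b)) = \lambda(C_B(f,b))$, the forward inclusion $\lambda(C_B(f,b)) \subseteq C_{G/L}(\lambda(f),\lambda(b))$ follows by applying $\pi_{L_B}$ to the defining conditions of $C_B(f,b)$: the identification of supports from the previous step translates the condition $\langle b\rangle c f^{-1}(a) = \langle b\rangle f^{-1}(a)$ into $\langle\lambda(b)\rangle \lambda(c)\lambda(f)^{-1}(\pi_A(a)) = \langle\lambda(b)\rangle\lambda(f)^{-1}(\pi_A(a))$, and $c \in C_B(b)$ projects to $\lambda(c) \in C_{B/L_B}(\lambda(b))$.

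The reverse inclusion is the main content. Given $\bar{c} \in C_{G/L}(\lambda(f),\lambda(b))$, I would pick any lift $c \in B$ and rewrite $\bar{c} \in C^+_{B/L_B}(\lambda(f),\lambda(b))$ as the hypothesis of (ii), namely $\langle b\rangle c f^{-1}(a) L_B = \langle b\rangle f^{-1}(a) L_B$ for all $a \in \Im(f)\setminus\{1\}$. Condition (ii) then produces $c' \in cL_B$ with $c' \in C^+_B(f,b)$, and condition (iii), combined with $\bar{c} \in C_{B/L_B}(\lambda(b))$, yields $c' \in C_B(b)$. Hence $c' \in C_B(f,b)$ with $\lambda(c') = \bar{c}$. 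The main obstacle is the bookkeeping required to match preimage partitions cleanly between $f$ and $\lambda(f)$: one must show that the $\langle\lambda(b)\rangle$-cosets of $\lambda(f)^{-1}(\bar{a})$ are precisely the projections of the $\langle b\rangle$-cosets of $f^{-1}(a)$, so that the quantifier ``for all $a \in \Im(f)$'' in (ii) lines up with the quotient centralizer condition. This rests on $\pi_A$ distinguishing distinct nontrivial values of $f$, which is ensured by $\Im(f)\cap L_A = \{1\}$ (and the reducedness of $fb$, which keeps the preimages $f^{-1}(a)$ disjoint from one another after projection).
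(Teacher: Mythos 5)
Your proposal is correct and follows essentially the same route as the paper's proof: identify $\supp(\lambda(f))$ with the image of $\supp(f)$ to obtain reducedness of $\lambda(f)\lambda(b)$, note the easy inclusion $\lambda(C_B(f,b))\subseteq C_{B/L_B}(\lambda(f),\lambda(b))$, and for the converse translate the quotient condition into $\langle b \rangle c f^{-1}(a) L_B = \langle b \rangle f^{-1}(a) L_B$ for all $a\in\Im(f)\setminus\{1\}$ and invoke (ii) and (iii). The only caveat, shared with the paper's own argument, is that the identification $\lambda(f)^{-1}(\pi_A(a)) = f^{-1}(a)L_B$ really uses injectivity of $\pi_A$ on $\Im(f)$ rather than just $\Im(f)\cap L_A=\{1\}$; the paper reads its hypothesis in exactly this way, and in the application the subgroup $L_A$ is chosen so that this injectivity holds.
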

\begin{proof}
    By assumption, we have that the elements of the support of $f$ lie in distinct cosets of $\langle b \rangle$, no two cosets of $\langle b \rangle$ corresponding to elements of $\supp(f)$ collide in $B/L$, and that the natural projection $\lambda_A \colon A \to A/L_A$ is injective on $\Im(f)$. We immediately see that $s\supp(\lambda(f)) = \lambda(\supp(f))$. Furthermore, we immediately see that the elements of $\supp(\lambda(f))$ lie in distinct cosets of $\langle \lambda(b)\rangle$ in $B/L_B$, meaning that the element $\lambda(f)\lambda(b)$ is reduced.

    It is straightforward that if $c\in C_B(f,b)$, then $\lambda(c)\in C_{B/L_B}(\lambda(f), \lambda(b))$. Now, suppose that $c \in B$ is given such that $\lambda(c) \in C_{B/L_B}(\lambda(f),\lambda(b))$. By definition, that means that $\langle b \rangle c \lambda(f)^{-1}(\overline{a}) L_B$ for all $\overline{a} \in \Im(\lambda(f))$ and $\lambda(c) \in C_{B/L_B}(\lambda(b))$. Since the natural projection $\lambda_A \colon A \to A/L_A$ is injective on $\Im(f)$, we see that $\lambda(f)^{-1}(\lambda_A(a)) = \lambda(f^{-1}(a)) = f^{-1}(a) L_B$ for every $a \in \Im(f) \setminus \{1\}$. Thus, we can rewrite the previous equality as $\langle b \rangle c f^{-1}(a) L_B = \langle b \rangle f^{-1}(a) L_B$ for every $a \in \Im(f) \setminus \{1\}$. By assumption, there is an element $c' \in c L_B$ such that $\langle b \rangle c' f^{-1}(a) = \langle b \rangle f^{-1}(a)$ for every $a \in \Im(f) \setminus \{1\}$ and $c' \in C_B(b)$, meaning that $c \in C_B(f,b)$, which concludes the proof.
\end{proof}

\subsection{Main statements}\label{subsection:infinite act main statements}

Throughout this section, we will assume that the acting group $B$ is an infinite $\C$-HCS group such that all of its cyclic subgroups are $\C$-separable. When appropriate, the proofs given in this section will be split in two cases:
\begin{enumerate}
    \item[(i)] $\ord(b) < \infty$, i.e.~$b$ is a torsion element;
    \item[(ii)] $\ord(b) = \infty$, i.e.~$b$ has infinite order.
\end{enumerate}

\begin{lemma}
    \label{lemma:elements of B have C-CC}
    Let $b \in B$ be arbitrary. Then $b$ satisfies the $\C$-centralizer condition in $G = A \wr B$.
\end{lemma}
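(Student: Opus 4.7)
The plan is to apply Lemma~\ref{lemma:lazy_lemma_CC}: for every $K \in \NC(G)$ I will exhibit a surjective homomorphism $\pi \colon G \to Q$ with $\ker(\pi) \leq K$ onto a finite wreath product $Q = (A/L_A) \wr (B/L_B^*)$ such that $\pi(b)$ satisfies $\C$-CC in $Q$ and $C_Q(\pi(b)) \subseteq \pi(C_G(b)K)$. By the universal property of wreath products, the finite quotient $G/K$ factors through $(A/L_A) \wr (B/L_B)$ for some $L_A \in \NC(A)$ and $L_B \in \NC(B)$, so $K$ contains the kernel $K_{L_B, L_A} \rtimes L_B$ (with $K_{L_B, L_A}$ as in Lemma~\ref{lemma:map_extension}). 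Since a $\C$-CC witness for a smaller $K$ is also a witness for any larger $K$, I may shrink $K$ to this kernel.

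Next I choose $L_B^* \leq L_B$ in $\NC(B)$ by combining two independent constructions. Since $B$ is $\C$-HCS, Theorem~\ref{theorem:CC_HCS} supplies a $\C$-CC witness for the pair $(b, L_B)$ in $B$. When $\ord(b) = \infty$, the assumption that $\langle b \rangle$ is $\C$-separable in $B$ lets me apply Lemma~\ref{lemma:order is a multiple} to find a second subgroup in $\NC(B)$ in which the image of $b$ has order divisible by $e \cdot \ord(\pi_{L_B}(b))$, where $e$ is the exponent of the finite abelian group $A/L_A$. Intersecting both candidates with $L_B$ and invoking Lemma~\ref{lemma:witness goes down} to preserve the witness property produces a single $L_B^* \in \NC(B)$ meeting both requirements; when $\ord(b) < \infty$ only the first is needed.

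Set $Q := (A/L_A) \wr (B/L_B^*)$, which is finite and lies in $\C$ by extension-closure, and let $\pi \colon G \to Q$ be the extension of $B \to B/L_B^*$ furnished by Lemma~\ref{lemma:map_extension}. Its kernel $K_{L_B^*, L_A} \rtimes L_B^*$ is contained in $K$ because $L_B^* \leq L_B$ implies $K_{L_B^*, L_A} \leq K_{L_B, L_A}$. The base group $A/L_A$ is abelian and finite, hence $\C$-HCS, so Proposition~\ref{proposition:C_HCS_finite_wreath} shows that $Q$ is $\C$-HCS, and in particular $\pi(b)$ satisfies the $\C$-centralizer condition in $Q$.

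The main obstacle is the containment $C_Q(\pi(b)) \subseteq \pi(C_G(b)K)$. Given $\tilde g \tilde c \in C_Q(\pi(b))$, Lemma~\ref{lemma:centralizers_abelian_base} forces $\tilde c \in C_{B/L_B^*}(\pi(b))$ and forces $\tilde g \in (A/L_A)^{B/L_B^*}$ to be constant on the $\langle \pi(b)\rangle$-orbits in $B/L_B^*$. The $\C$-CC witness property immediately places $\tilde c$ in $\pi(C_B(b) L_B)$, handling the $B$-component. For $\tilde g$, the image $\pi(K_{L_B, L_A})$ consists exactly of those $h \in (A/L_A)^{B/L_B^*}$ whose sum over each fiber $xL_B/L_B^*$ vanishes in $A/L_A$. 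When $\ord(b) = \infty$, each $\langle \pi(b)\rangle$-orbit projects to an orbit in $B/L_B$ with uniform fiber intersection of size $\ord(\pi_{L_B^*}(b))/\ord(\pi_{L_B}(b))$; by construction this integer is a multiple of $e$, so each coset-sum of $\tilde g$ is a multiple of $e$ in $A/L_A$ and hence vanishes, giving $\tilde g \in \pi(K_{L_B, L_A})$. Since $C_G(b) = C_B(b)$ in this case by Proposition~\ref{proposition:centraliser characterisation}(i), we are done. When $\ord(b) < \infty$, the base group $\bigoplus_{\langle b\rangle \backslash B} A$ of $C_G(b)$ from Proposition~\ref{proposition:centraliser characterisation}(iii) supplies enough functions constant on right cosets of $\langle b\rangle$ that every orbit-constant $\tilde g$ lies in $\pi(\bigoplus_{\langle b\rangle \backslash B} A) \cdot \pi(K_{L_B, L_A})$, following the template of the finite-$B$ argument of Section~4. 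Applying Lemma~\ref{lemma:lazy_lemma_CC} concludes.
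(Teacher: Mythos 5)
Your overall strategy (reduce to a wreath-product quotient via Lemma~\ref{lemma:lazy_lemma_CC}, handle the acting part by a $\C$-CC witness in $B$, and kill the base part by a divisibility argument on fibre sizes) is the paper's strategy, and your infinite-order case is essentially the paper's argument (the paper uses $|A/K_A|\cdot\ord(\kappa'(b))$ where you use the exponent of $A/L_A$; both work). The opening claim that every $K\in\NC(G)$ contains $\ker\bigl(G\to (A/L_A)\wr(B/L_B)\bigr)$ is also correct (take $L_A=K\cap A$, $L_B=K\cap B$; it uses that $A$ is abelian and $K$ is normal), though ``by the universal property'' is not a proof.

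The genuine gap is in the torsion case. You take $L_B^*$ to be only (witness)$\cap L_B$, and then assert that every $\tilde g\in (A/L_A)^{B/L_B^*}$ that is constant on right cosets of $\langle\pi(b)\rangle$ lies in $\pi\bigl(\bigoplus_{\langle b\rangle\backslash B}A\bigr)\cdot\pi(K_{L_B,L_A})$ ``following the template of Section~4''. That template works in Section~4 because there the acting group is not quotiented at all; here it fails whenever $\langle b\rangle\cap L_B^*\neq\{1\}$. Indeed, if $h\in A^B$ is constant on right cosets of $\langle b\rangle$, then each value of $\pi(h)$ is an $m$-th power with $m=|\langle b\rangle\cap L_B^*|$, so when $m>1$ the image of the $\langle b\rangle$-constant functions is far too small. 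Concretely, take $A=C_2$, $L_A=\{1\}$, $b$ of order $2$, and $K=\ker\bigl(G\to A\wr(B/N)\bigr)$ with $b\in N$; if the witness also contains $b$ (nothing in your construction prevents this, e.g.\ when $b$ is central in $B$), then $\pi(b)=1$, so $C_Q(\pi(b))=Q$, while the base component of $\pi(C_G(b)K)$ consists only of functions whose fibre products over cosets of $L_B/L_B^*$ vanish; hence $C_Q(\pi(b))\not\subseteq\pi(C_G(b)K)$ and your witness candidate simply does not work. The missing ingredient is exactly what the paper adds in its torsion case: since $\langle b\rangle$ is finite and $B$ is residually-$\C$, one may shrink $L_B^*$ further so that $B\to B/L_B^*$ is injective on $\langle b\rangle$ (equivalently $m=1$); with that extra condition every $\langle\pi(b)\rangle$-coset-constant function lifts to a $\langle b\rangle$-coset-constant function (choose one preimage coset per image coset and lift the values), and your argument goes through.
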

\begin{proof}
    Let $K \in \NC(G)$ be a subgroup. We set $K_b = K\cap B$, and note that $K_B \in \NC(B)$. Since $b \in B$ satisfies the $\C$-centralizer condition in $B$, there exists a subgroup $L_B \in \NC(B)$ such that $L_B$ is a $\C$-CC witness for $(b, K_B)$ in $B$. 

    First, assume that $b$ is a torsion element. Since $\langle b\rangle$ is finite and $B$ is residually-$\C$, there is a subgroup $L'_B \in \NC(B)$ such that $\langle b \rangle \cap L'_B = \langle b \rangle$, i.e.~the natural projection map $B \to B/L'_B$ is injective on $\langle b \rangle$. Without loss of generality, we may replace $L_B$ by $L_B \cap L'_B$ which by Lemma~\ref{lemma:witness goes down} will still enjoy the property of being a $\C$-CC witness for $(g, K_B)$.

    Now, set 
    $$
    L_A = \left\{f \in A^B \ \middle|\  \forall c\in B \quad \prod_{l \in L_B}f(lc)=1 \right\},
    $$ 
    and set $L = L_A L_B$. Finally, let $\lambda \colon G \to G/L_AL_B = A \wr (B/L_B)$ be the natural projection.

    Let $h \in A^B$ and $c \in B$ be arbitrary elements. Following Lemma~\ref{lemma:centralizers_abelian_base}, we see that $hc \in C_G(b)$ if and only if the following are true:
    \begin{itemize}
      \item[(i)] $c \in C_B(b)$, and
        \item[(ii)] $h(x) = h(bx)$ for all $x\in B$.
    \end{itemize}
    In particular, we see that $h$ must be constant on right cosets of $\langle b\rangle$ in $B$. Similarly, we see that $\lambda(hc)\in C_{G/L}(\lambda(b))$ if and only if
    \begin{itemize}
        \item[(i)] $\lambda(c) \in C_{B/L_B}(\lambda(b))$, and
        \item[(ii)] $\lambda(h)(xL_b) = \lambda(h)(b)xL_b$ for all $x \in B$. 
    \end{itemize}
    Again, this means that $\lambda(h)$ must be constant on right cosets of $\langle \lambda(b)\rangle$ in $B/L_B$. Now, suppose that $\lambda(hc) \in C_{G/L}(\lambda(b))$. From the construction of the subgroup $L_B$, we see that $c \in C_B(b)K_B$. Thus, we will show that $h \in C_G(b)L$. First, let us recall that
    \begin{displaymath}
        \lambda(h)(xL_b) = \prod_{l\in L_B}h(lx).
    \end{displaymath}
    Let $x_1, x_2, \dots, x_n \in B$ be representatives of the double coset decomposition $\langle b\rangle \backslash B /L_B$ corresponding to the right cosets of $\langle\lambda(b) L_B\rangle$ in $B/L_B$ such that $\lambda(h)(x_i L_b) \neq 1$. We note that $\lambda_B$ is injective on $\langle b \rangle$ by construction, and thus, it is injective on every right coset of $\langle b \rangle$ in $B$. We define a function $h' \colon B \to A$ as
    \begin{displaymath}
        h'(x) = \begin{cases}
                    h(x_i)  &\mbox{if $x \in \langle b\rangle x_i$},\\
                    1       &\mbox{ otherwise}.
                \end{cases}
    \end{displaymath}
    We see that $h'$ is constant on right cosets of $\langle b \rangle$ in $B$, and therefore, $h' \in C_G(b)$. Furthermore, $\lambda(h) = \lambda(h')$ which implies $h = h'l$ for some $l \in L$. As $L$ is normal, we see that $lc$ = $cl'$ for some $l' \in L$. Altogether, we see that
    \begin{displaymath}
        hc = h'lc = h'cl' \in C_G(b) C_B(b)K_b L \leq C_G(b)K
    \end{displaymath}
    which means $C_{G/L}(\lambda(b))\subseteq \lambda(C_G(b)K)$. By construction, $G/L = A \wr (B/L_B)$ is a $\C$-HCS group by Proposition~\ref{proposition:C_HCS_finite_wreath}, and therefore, Theorem~\ref{theorem:CC_HCS} implies that $\lambda(b)$  satisfies the $\C$-centralizer condition in $G/L$. Following Lemma~\ref{lemma:lazy_lemma_CC}, we see that the element $b$ satisfies $\C$-CC in $G$.

    Now let's assume that (ii) is the case, i.e.~the element $b$ has infinite order. Let $K_A \in \NC(A)$ be a maximal subgroup (with respect to inclusion) such that
    \begin{displaymath}
        \ker\left(\kappa' \colon A \wr B \to A/K_A \wr B/K_B \right) \leq K,
    \end{displaymath}
     and let $K'$ be the kernel of the projection described above. By construction, $K' \leq K$ and $K' \in \NC(A \wr B)$. By Lemma~\ref{lemma:order is a multiple}, there exists a subgroup $N_B \in \NC(B)$ such that order of $b N_B$ in $B/N_B$ is a multiple of $|A/K_A| \ord(\kappa'(b))$. Set $L_B' = L_B \cap N_B$, and let $L$ be the kernel of the natural projection
    \begin{displaymath}
        \lambda \colon A \wr B \to A/K_A \wr B/L'_B.
    \end{displaymath}
    We see that $L \leq K$ and $L'_B$ is a $\C$-CC witness for $(b, K_B)$ in $B$ by Lemma~\ref{lemma:witness goes down}. Furthermore, the order of $bL'_B$ in $B/L'_B$ is $C' |A/K_A| \ord(\kappa'(b))$ for some $C' \in \mathbb{N}$. We will demonstrate that $L$ is a $\C$-CC witness for $(b, K)$ in $G$. Let $g \in A^B$ and $c \in C$ be arbitrary elements such that $\lambda(gc) \in C_{G/L}(\lambda(b))$. Following Lemma~\ref{lemma:centralizers_abelian_base}, we see that $\lambda(gc) \in C_{G/L}(\lambda(b))$ if and only if $\lambda(c) \in C_{B/L'_B}(\lambda(b))$ and $\lambda(g)$ is constant on right cosets of $\langle \lambda(b)\rangle$.

    Let $\kappa \colon A/K_A \wr B/L'_B \to A/K_A \wr B/L'_B$ be the unique projection map such that $\kappa = \kappa' \circ \lambda$. We will now demonstrate that $\kappa(\lambda(g)) = \kappa(g') = 1$, i.e.~that $g \in K$. Let $\overline{x} \in B/L'_B$ be arbitrary and for the ease of writing, denote $\overline{g} = \lambda(g)$ and $\overline{b} = \lambda(b)$. We observe that
    \begin{displaymath}
        \kappa(\overline{g})(\kappa(\overline{x})) = \prod_{\overline{y} \in \kappa^{-1}(\kappa(\overline{x}))} \overline{g}(\overline{y}). 
    \end{displaymath}
    Note that if $\overline{y} \in \kappa^{-1}(\kappa(\overline{x}))$, then $\overline{b}^{\ord(\kappa'(b))}\overline{y} \in \kappa^{-1}(\kappa(\overline{x}))$. In particular, we see that if $\overline{y} \in \kappa^{-1}(\kappa(\overline{x}))$, then $\left\langle \overline{b}^{\ord(\kappa'(b))} \right\rangle \overline{y} \subseteq \kappa^{-1}(\kappa(\overline{x}))$. Consequently, there are some elements $\overline{x}_1, \dots, \overline{x}_n$ such that
    \begin{displaymath}
        \kappa^{-1}(\kappa(\overline{x})) = \bigsqcup_{i = 1}^{n}\left\langle \overline{b}^{\ord(\kappa'(b))} \right\rangle \overline{x_i}.
    \end{displaymath}
    As $\overline{g}(\overline{y}) = \overline{g}(\overline{b}\overline{y})$ for all $\overline{y} \in B/L'_B,$ we see that
    \begin{displaymath}
                \kappa(\overline{g})(\kappa(\overline{x})) = \prod_{i =1}^{n} \overline{g}(\overline{x}_i)^N,     
    \end{displaymath}
    where $N = \left|\left\langle \overline{b}^{\ord(\kappa'(b))} \right\rangle\right|$ in $B/L'_B$. Recall that $\ord(\overline{b}) = C' |A/K_A| \ord(\kappa'(b))$, meaning that $N = C' |A/K_A|$. This means that we can further write
    \begin{displaymath}
                \kappa(\overline{g})(\kappa(\overline{x})) = \prod_{i =1}^{n}\left( \overline{g}(\overline{x}_i)^{|A/K_A|}\right)^{C'} = \prod_{i =1}^{n}\left( 1\right)^{C'} = 1,     
    \end{displaymath}
    demonstrating that $\overline{g} \in \ker(\kappa)$, as $\overline{x} \in B/L'_B$ was arbitrary.
    This means that $\kappa'(g) = \kappa \circ \lambda(g) = 1$, i.e.~$g \in \ker(\kappa') = K' \leq K$.
    Finally, we note that $\lambda(c) \in \lambda(C_B(b))K_B$, as $L'_B$ is a $\C$-CC witness for $(b, K_B)$.
    We see that $gc \in C_B(b)K' \subseteq C_G(b)K$, meaning that $L$ is a $\C$-CC witness for $(b, K)$ in $G$.
    This shows that the element $b$ satisfies $\C$-CC in $G$.
\end{proof}

We now demonstrate that every element of the form $f \in A^B$ satisfies the $\C$-centralizer condition in $A \wr B$.
\begin{lemma}
    \label{lemma:elements in A^B have C-CC}
    Let $f \in A^B$ be arbitrary. Then $f$ satisfies the $\C$-centralizer condition in $G$.
\end{lemma}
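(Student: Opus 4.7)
The plan is to apply Lemma~\ref{lemma:lazy_lemma_CC} using a projection $\pi \colon G \to Q := (A/L_A) \wr (B/L_B)$ for appropriate $L_A \in \NC(A)$ and $L_B \in \NC(B)$, exploiting the clean structure of $C_G(f)$: since $b = 1$ for $f = f \cdot 1$ (which is trivially reduced), Proposition~\ref{proposition:centraliser characterisation}(iii) gives
\[
C_G(f) \;=\; A^B \rtimes C_B^{+}(f,1),
\]
so the $A^B$-coordinate of any centralizing element is unconstrained, and the only condition on $c \in B$ is that $c$ preserve each fiber $f^{-1}(a)$ setwise. This reduces the lifting problem to one purely in the acting group.

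Given $K \in \NC(G)$, I would first use residual-$\C$-ness of $A$ to pick $L_A \in \NC(A)$ with $L_A \le K \cap A$ and $\pi_A \colon A \to A/L_A$ injective on the finite set $\Im(f) \cup \{1\}$. Then I would apply Corollary~\ref{corollary:permuting support} to the pairwise disjoint finite sets $\{f^{-1}(a)\}_{a \in \Im(f)\setminus\{1\}}$ to obtain $L_B^{(0)} \in \NC(B)$ such that (a) $\pi_B$ is injective on $\supp(f)$, and (b) every $c \in B$ whose image fixes each $\pi_B(f^{-1}(a))$ setwise can be written $c = c' l$ with $c' \in C_B^{+}(f,1)$ and $l \in L_B^{(0)}$. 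Setting $L_B = L_B^{(0)} \cap (K \cap B)$, I would further shrink $L_A$ if necessary---using Lemma~\ref{lemma:map_extension} exactly as in the proof of Lemma~\ref{lemma:elements of B have C-CC}---so that the kernel $L = \ker(\pi)$ lies inside $K$.

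Since $A$ is abelian and residually-$\C$ it is automatically $\C$-HCS (every centralizer in $A$ equals $A$), so by Proposition~\ref{proposition:C_HCS_finite_wreath} the quotient $Q$ is $\C$-HCS, and by Theorem~\ref{theorem:CC_HCS} the element $\pi(f)$ satisfies $\C$-CC in $Q$. To verify $C_Q(\pi(f)) \subseteq \pi(C_G(f)K)$, I would take any $gc \in G$ with $\pi(g)\pi(c) \in C_Q(\pi(f))$. Applying Lemma~\ref{lemma:simplified centraliser} inside $Q$ with acting element $1$, the centralizing condition reduces to $\pi(c) \in C_{B/L_B}^{+}(\pi(f),1)$, which by properties (a) and (b) lifts to $c = c' l$ with $c' \in C_B^{+}(f,1)$ and $l \in L_B \le K$. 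Since $A^B \le C_G(f)$, the element $gc'$ already lies in $C_G(f)$, so $gc = (gc')l \in C_G(f)K$. Lemma~\ref{lemma:lazy_lemma_CC} then yields that $f$ satisfies $\C$-CC in $G$.

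The main expected obstacle is arranging in the construction above that $\ker(\pi) \le K$: because $A$ may be infinite, the kernel of the wreath extension $A \wr B \to A \wr (B/L_B)$ (from Lemma~\ref{lemma:map_extension}) lives in $A^B$ and must be controlled by combining residual-$\C$-ness of $A$ with careful bookkeeping on the $L_A$-side, mirroring the analogous step in Lemma~\ref{lemma:elements of B have C-CC}. Once this technicality is handled, the remaining verification is essentially a combinatorial exercise on the supports of $f$, made straightforward by the semidirect decomposition of $C_G(f)$.
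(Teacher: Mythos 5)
Your proposal is correct and shares the paper's skeleton: the decomposition $C_G(f)=A^B\,C_B(f)$, an application of Corollary~\ref{corollary:permuting support} to the fibres $f^{-1}(a)$ so that no new permutations of these fibres can be realized in the quotient, and Lemma~\ref{lemma:lazy_lemma_CC} to conclude. The one real difference is the target of the projection. The paper leaves the base untouched and maps onto $A\wr(B/L_B)$, certifying that the image of $f$ satisfies $\C$-CC there via Proposition~\ref{proposition:C_HCS_finite_wreath} and Theorem~\ref{theorem:CC_HCS}; in particular the paper's argument for this lemma never uses residual-$\C$-ness of $A$. You additionally quotient the base by some $L_A\in\NC(A)$ with $L_A\leq K\cap A$ that is injective on $\Im(f)$ (legitimate, since $A$ is residually-$\C$ in the standing hypotheses), so your $Q=(A/L_A)\wr(B/L_B)$ is a finite group in $\C$ and the $\C$-CC hypothesis on $\pi(f)$ required by Lemma~\ref{lemma:lazy_lemma_CC} is automatic; moreover the injectivity on $\Im(f)$ combined with injectivity on $\supp(f)$ is exactly what identifies the fibres of $\pi(f)$ with the images of the fibres of $f$, so your reduction of the centralizing condition to $\pi(c)\in C^+_{B/L_B}(\pi(f),1)$ goes through. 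What your route costs is the condition $\ker(\pi)\leq K$, which you flag as the main obstacle; in fact it is immediate and no further shrinking of $L_A$ is needed: the kernel is generated by $L_B\leq K$, by $(L_A)^B$ (each generator is a $G$-conjugate of an element of $L_A\leq K\cap A$, and $K\normleq G$), and by functions supported on two points of a single $L_B$-coset with mutually inverse values, each of which is a commutator $[w,l]$ with $w\in A^B$ and $l\in L_B\leq K$, hence lies in $K$ by normality. Finally, note (a step the paper also leaves implicit) that after replacing $L_B^{(0)}$ by $L_B=L_B^{(0)}\cap K\cap B$ you still need the lift $c=c'l$ with $l$ in the smaller subgroup $L_B$ rather than merely in $L_B^{(0)}$; this does hold, because the final argument in the proof of Lemma~\ref{lemma:permuting cosets} only uses that the permutation realized modulo the chosen subgroup is realizable in $B$, so it runs verbatim for any co-$\C$ subgroup contained in $L_B^{(0)}$.
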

\begin{proof}
    Let $K \in \NC(G)$ be an arbitrary subgroup, and set $K_B = K \cap B$. We then see that 
    $$
    \supp(f) = \bigcup_{a \in \Im(f) \setminus \{1\}} f^{-1}(a)
    $$ 
    which is a finite subset. By Corollary~\ref{corollary:permuting support}, there is a subgroup $L'_B \in \NC(B)$ such that the natural projection $\lambda'_B \colon B \to B/L'_B$ is injective on $\supp(f)$ and for every $c \in B$ such that $c f^{-1}(a) L'_b = f^{-1}(a)L'_B$ in $B/L'_B$ for all $a \in \Im(f) \setminus \{1\}$, there is an element $c' \in B$ such that $\lambda_B(c') = cL'_B$ and $c' f^{-1}(a) = f^{-1}(a)$ for all $a \in \Im(f)$. 

    Set $L_B = K_B \cap L'_B$, and let 
    $$
    L_A = \left\{f \in A^B : \forall c\in B \quad \prod_{l \in L_B}f(lc)=1 \right\}.
    $$
    Also set $L = L_A L_B$. Let $\lambda \colon G \to G/L_AL_B = A \wr B/L_B$ be the natural projection. Since $A$ is abelian, both $A^{B}$ and $A^{B/L_B}$ are abelian as well. That means 
    \begin{displaymath}
        C_{A^{B/B_L}}(\lambda(f)) = A^{B/L_B} = \lambda(A^B) = \lambda(C_{A^B}(f)).
    \end{displaymath}
    Since $G/L = A^{B/L_B} \rtimes B/L_B$ and $G = A^B \rtimes B$, we see that 
    $$
    C_{G/L}(\lambda(f)) = A^{B/L_B} C_{B/L_B}(\lambda(f))
    $$
    and $C_G(f) = A^B C_B(f)$. Suppose that an element $c \in B$ is given such that $\lambda(c) \in C_{B/L_B}(\lambda(f))$. As noted in the proof of Lemma~\ref{lemma:elements of B have C-CC}, this is the case if and only if $\lambda(f)(\lambda(c) x) = \lambda(f)(x)$ for all $x \in B/L_B$. That means
    \begin{equation}
        \label{eq:commutes}
        \prod_{l \in L_B} f(cx'l) = \lambda(f)(\lambda(c) x) = \lambda(f)(x) = \prod_{l \in L_B} f(x'l),
    \end{equation}
    where $x' \in B$ is an arbitrary element such that $\lambda(x') = x$. Therefore, $\lambda(c)\lambda(f)^{-1}(a) = \lambda(f)^{-1}(a)$ for all $a \in \Im(\lambda(f))$. Since $\lambda$ is injective on the set $\supp(f)$, we have $\supp(\lambda(f)) = \supp(f)L_B$, $\Im(\lambda(f)) = \Im(f)$, and $\lambda(f)^{-1}(a) = f^{-1}(a)L_B$ for all $a \in \Im(f) \setminus \{1\}$. Keeping this in mind, equation \ref{eq:commutes} is equivalent to saying that $c f^{-1}(a) L_B = f^{-1}(a) L_B$ for all $a \in \Im(f)\setminus \{1\}$. From the construction of $L_B$, we see that there is an element $c' \in B$ such that $\lambda(c') = \lambda(c)$ and $c' f^{-1}(a) = f^{-1}(a)$ for all $a \in \Im(f)\setminus \{1\}$. That means $f(c'x) = f(x)$ for all $x \in B$, and therefore, $c' \in C_B(f)$. Hence,
    \begin{displaymath}
        C_{G/L}(\lambda(f)) = A^{B/L_B} C_{B/L_B}(\lambda(f)) = \lambda(A^B C_B(f)) = \lambda(C_G(f)) \subseteq \lambda(C_G(f)K).
    \end{displaymath}
    By construction, $G/L = A \wr (B/L_B)$ is a $\C$-HCS group by Proposition~\ref{proposition:C_HCS_finite_wreath} and thus, $\lambda(f)$  satisfies the $\C$-centralizer condition in $G/L$ by Theorem~\ref{theorem:CC_HCS}. Following Lemma~\ref{lemma:lazy_lemma_CC}, we see that the element $f$ satisfies $\C$-CC in $G$.
    \end{proof}

The proof of the following 
lemma will rely heavily on Lemma~\ref{lemma:permuting cosets}, in a way similar to how the proof of Lemma~\ref{lemma:elements in A^B have C-CC} relied on Corollary~\ref{corollary:permuting support}. Recall that by Lemma~\ref{lemma:centralizers_abelian_base} that if $gc \in C_B(fb)$, where $f, g \in A^B$ and $f, c \in B$, then, by assuming that elements of $\supp(f)$ lie in distinct right cosets of $\langle b \rangle$, $c$ must permute right cosets of $\langle b \rangle$ the form $\langle b \rangle x$ for which $\overline{f}(b, x) \neq 1$. In particular, if we set $S_i = f^{-1}(a_i)$, where $\{a_1, \dots, a_n\} = \Im(f)\setminus \{1\}$, then $\langle b \rangle c S_i = \langle b \rangle S_i$ for every $i$. Informally speaking, Lemma~\ref{lemma:permuting cosets} will allow us to construct a quotient of the group $B$ such that no new permutations can be realized in the quotient.
\begin{lemma}\label{lemma:general elements have C-CC}
    Let $h \in A^B$ and $b \in B$ be non-trivial. Then the element $hb$ satisfies $\C$-CC in $G$.
\end{lemma}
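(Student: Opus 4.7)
The plan is to apply Lemma \ref{lemma:lazy_lemma_CC}: given $K \in \NC(G)$, I will build a quotient $\lambda \colon G \to Q$ with $\ker(\lambda) \leq K$, where $Q = (A/L_A) \wr (B/L_B)$ and $B/L_B \in \C$, so that $\lambda(hb)$ satisfies $\C$-CC in $Q$ (by Proposition \ref{proposition:C_HCS_finite_wreath}, using that abelian residually-$\C$ groups are trivially $\C$-HCS) and $C_Q(\lambda(hb)) \subseteq \lambda(C_G(hb)K)$. By Lemma \ref{lemma:support in distinct cosets} together with Remark \ref{remark:lazy lemma automorphism}, I may assume throughout that $hb$ is reduced. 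The rest of the argument mirrors Lemmas \ref{lemma:elements of B have C-CC} and \ref{lemma:elements in A^B have C-CC}, splitting on whether $\ord(b)$ is finite or infinite.

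Construction of $Q$. Set $K_B = K \cap B \in \NC(B)$, enumerate $\Im(h) \setminus \{1\} = \{a_1, \ldots, a_r\}$, and let $S_i = h^{-1}(a_i)$. Since $hb$ is reduced, Lemma \ref{lemma:permuting cosets} furnishes $L^{(1)}_B \in \NC(B)$ realizing hypotheses (i)--(iii) of Lemma \ref{lemma:admissible permutations} for $b$ and $S_1, \ldots, S_r$. Using that $B$ is $\C$-HCS (hence satisfies $\C$-CC by Theorem \ref{theorem:CC_HCS}), take a $\C$-CC witness $L^{(2)}_B \leq K_B \cap L^{(1)}_B$ for the pair $(b,\, K_B \cap L^{(1)}_B)$. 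If $\ord(b) < \infty$, further intersect with $L^{(3)}_B \in \NC(B)$ making the projection injective on $\langle b \rangle$, set $L_B$ to be the intersection, and take $Q = A \wr (B/L_B)$ via Lemma \ref{lemma:map_extension}; the kernel $K_{L_B} \rtimes L_B$ lies in $K$ by construction. If $\ord(b) = \infty$, additionally let $K_A \in \NC(A)$ be maximal with $\ker\bigl(A \wr B \to (A/K_A) \wr (B/K_B)\bigr) \leq K$, apply Lemma \ref{lemma:order is a multiple} to obtain $L^{(4)}_B$ forcing $\ord(\pi_{L_B}(b))$ to be a multiple of $|A/K_A| \cdot \ord(\pi_{K_B}(b))$ with $L_B = L^{(2)}_B \cap L^{(4)}_B$, and take $Q = (A/K_A) \wr (B/L_B)$.

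Verification. Let $g \in A^B$, $c \in B$ with $\lambda(gc) \in C_Q(\lambda(hb))$. By the construction, $\lambda(hb)$ is reduced in $Q$, so Lemmas \ref{lemma:simplified centraliser} and \ref{lemma:admissible permutations} apply and yield $\lambda(c) \in \lambda(C_B(h,b))$ modulo $L_B \leq K$; after replacing $c$ by an element of $C_B(h,b)$, only the base-group matching remains. Proposition \ref{proposition:centraliser characterisation} pins down the $A^B$-part of $C_G(hb)$. In the case $\ord(b) < \infty$, $C_G(hb)$ contains the whole $\bigoplus_{\langle b \rangle \backslash B} A$-summand (twisted by a specific $g_0$ determined by $h,b,c$), and the coset-collecting procedure from Lemma \ref{lemma:elements of B have C-CC}(i) realises any prescribed $\lambda(g g_0^{-1})$ inside $C_G(hb)$. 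In the case $\ord(b) = \infty$, the function $g_0$ is uniquely determined and finitely supported; the recursion $\lambda(g)(\lambda(b)x) = \lambda(g)(x)\,\lambda(h)(x)^{-1}\lambda(h)(cx)$ forces $\lambda(g)\lambda(g_0)^{-1}$ to be constant on $\langle \lambda(b) \rangle$-cosets of $B/L_B$, and the divisibility built into $L^{(4)}_B$ kills this residual freedom after further projection to $(A/K_A) \wr (B/K_B)$, exactly as in Lemma \ref{lemma:elements of B have C-CC}(ii). Thus $g g_0^{-1} \in K$, and the conclusion follows from Lemma \ref{lemma:lazy_lemma_CC}.

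The main obstacle is the infinite-order case: one must calibrate $K_A$ and $L_B$ so that the affine recursion satisfied by $\lambda(g)$ in $(A/K_A) \wr (B/L_B)$, which for fixed $c$ admits $|A/K_A|^{|\langle \lambda(b) \rangle \backslash (B/L_B)|}$ solutions, collapses to the single admissible $g_0$ upon further factoring through $(A/K_A) \wr (B/K_B)$. This is the multiplicative-averaging technique underpinning Lemma \ref{lemma:elements of B have C-CC}(ii), now executed against the nontrivial twist supplied by $h$.
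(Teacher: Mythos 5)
Your proposal follows essentially the same route as the paper's proof: reduce to a reduced element via Lemma~\ref{lemma:support in distinct cosets} and Remark~\ref{remark:lazy lemma automorphism}; control the acting part through Lemma~\ref{lemma:permuting cosets} and Lemma~\ref{lemma:admissible permutations}; and split on $\ord(b)$, using the quotient $A \wr (B/L_B)$ of Lemma~\ref{lemma:map_extension} together with Proposition~\ref{proposition:C_HCS_finite_wreath} and Lemma~\ref{lemma:lazy_lemma_CC} when $b$ is torsion, and the maximal $K_A$ plus the order-divisibility/averaging trick when $\ord(b)=\infty$. Your observation that any two solutions of the recursion $g(bx)=g(x)h(x)^{-1}h(c'x)$ in the quotient differ by a function constant on $\langle\lambda(b)\rangle$-cosets (so that $\lambda(g)\lambda(g_0)^{-1}$ is automatically of that form, $g_0$ being the genuine solution supplied by Proposition~\ref{proposition:centraliser characterisation}) is a mild simplification of the paper's argument, which instead forces $\ord(\lambda(b))$ to exceed the support spread $N(f,b)$ and builds the lift coordinate by coordinate; your justification that the kernel $K_{L_B}\rtimes L_B$ lies in $K$ is also correct (the elementary differences generating $K_{L_B}$ are commutators with elements of $L_B\leq K$), a point the paper leaves implicit.

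There is, however, one concrete gap in your infinite-order case: you take the base quotient to be $A/K_A$, with $K_A$ chosen only so that $\ker\bigl(A\wr B \to (A/K_A)\wr(B/K_B)\bigr)\leq K$. Lemma~\ref{lemma:admissible permutations}, and hence your step ``$\lambda(c)\in\lambda(C_B(h,b))$'', additionally requires the base projection to be injective on $\Im(h)$; nothing in your construction even guarantees $\Im(h)\cap K_A=\{1\}$. If two distinct values $a_i\neq a_j$ of $h$ merge modulo $K_A$ (or a value dies), the fibres $h^{-1}(a_i)$ and $h^{-1}(a_j)$ merge in the quotient, new permutations of the support cosets become realizable there, and $C_{B/L_B}(\lambda(h),\lambda(b))$ can be strictly larger than $\lambda(C_B(h,b))$; Lemma~\ref{lemma:permuting cosets}, applied to the original fibres $S_i$, does not control such elements, so the replacement of $c$ by some $c'\in C_B(h,b)$ fails and the rest of the verification collapses. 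The paper repairs exactly this by choosing $H_A\in\NC(A)$ injective on the image of the function part, setting $L_A=K_A\cap H_A$ for the quotient $\lambda$, and reserving $A/K_A$ for the further averaging projection $\kappa$; the same patch (harmless, since shrinking the base kernel keeps $\ker\lambda\leq K$, and the averaging computation in $(A/K_A)\wr(B/K_B)$ is unaffected) completes your argument.
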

\begin{proof}
    Let $K \in \NC(G)$ be an arbitrary subgroup, and set $K_B = K \cap B$. Following Lemma~\ref{lemma:support in distinct cosets}, we see that there is an inner automorphism $\phi \in \Inn(G)$ such that $\phi(hb) = fb$ and where elements of $\supp(f)$ lie in distinct right cosets of $\langle b \rangle$ in $B$. We will show that the element $fb$ satisfies $\C$-centralizer condition in $B$.

    By Lemma~\ref{lemma:permuting cosets}, there is a subgroup $N_B \in \NC(B)$ such that
    \begin{itemize}
        \item[(i)] if $\langle b \rangle s N_B = \langle b \rangle s' N_B$, then $s = s'$ for all $s, s' \in S$;
        \item[(ii)] if there is some element $c \in B$ such that $\langle b \rangle c f^{-1}(a) N_B= \langle b \rangle f^{-1}(a) N_b$ for all non-trivial elements $a \in \Im(f)$, then there is $c' \in c N_B$ such that $\langle b \rangle c f^{-1}(a) = \langle b \rangle f^{-1}(a)$ for all non-trivial $a \in \Im(f)$;
        \item[(iii)] for all such elements $c$ we have $c N_B \in C_{B/N_B}(b N_B)$ if and only $c' \in C_B(b)$.
    \end{itemize}
    We now split the proof into two cases, depending on whether $b$ is a torsion element or not. \newline

    \noindent \textbf{Case 1:} $\ord(b) < \infty$. \newline
    There is a subgroup $H_B \in \NC(B)$ such that $\langle b \rangle \cap H_B = \{1\}$, i.e.~the cyclic subgroup $\langle b \rangle$ embeds into $B/H_B$.
    Set $L_B = K_B \cap N_B$, and let 
    \begin{displaymath}
         L_A = \left\{f \in A^B \mid \forall c\in B \quad \prod_{l \in L_B}f(lc)=1 \right\}.
    \end{displaymath}
    Also set $L = L_A L_B$. Define $\lambda \colon G \to G/L_AL_B = A \wr B/L_B$ as the natural projection. Now, let $g \in A^B$ and $c \in B$ be given such that $\lambda(gc) \in C_{G/L}(\lambda(fb))$. We will show that there are elements $g' \in A^B$ and $c' \in B$ such that $g'c' \in C_B(fb)$ and $\lambda(g'c') = \lambda(gc)$. Following Lemma~\ref{lemma:admissible permutations}, we see that the element $\lambda(g)\lambda(c)$ is reduced and that $C_{G/L}(\lambda(f), \lambda(b)) = \lambda(C_B(f,b))$.
    Following Lemma~\ref{lemma:simplified centraliser}, we see that $\lambda(gc) \in C_{G/L}(\lambda(fb))$ if and only if all of the following are true:
    \begin{itemize}
        \item[(i)] $\lambda(c) \in C_{B/L_B}(\lambda(f), \lambda(b))$,
        \item[(ii)] $\lambda(g)(bx L_B) = \lambda(g)(x L_b) \lambda(f)(xL_b)^{-1} \lambda(f)(cx L_B)$ for all $x \in B$.
    \end{itemize}
    As mentioned before, $C_{B/L_B}(\lambda(f), \lambda(b)) = \lambda(C_B(f,b))$ by Lemma~\ref{lemma:admissible permutations}. This means that there is an element $c' \in C_B(f,b)$ such that $\lambda(c') = \lambda(c)$, i.e.~$c' \in c L_B$.
    
    Finally, let $x_1, \dots, x_r \in B$ be some representatives of the double cosets of $\langle b\rangle \backslash B /L_B$ corresponding to the elements of $\supp(\lambda)(g)$ in $B/L_B$. If $x_i L_B = s L_B$ for some $s \in \supp(f)$, then set $x_i = s$. Similarly, if $c'x_iL_b = s L_B$ for some $s \in \supp(f)$, then set $x_i = c'^{-1}s$. We now define a function $g' \colon B \to A$ in the following way: for each $x_i$, we set $g'(x_i) = \lambda(g)(x_iL_b),$ and then for $j \in \{0, \dots, \ord(b)-2\},$ we set inductively $g'(b^{j+1}x_i) = g'(b^j x_i) f(b^j x_i)^{-1} f(c' b^j x_i)$. For any other $x \in B$ we set $g'(x) = 1$. From the construction of $g'$, it is clear that $\lambda(g') = \lambda(g)$. Similarly, we see that $g'(bx) = g'(x)f(x)^{-1}f(c'x)$ for all $x \in b$. 

    Following Lemma~\ref{lemma:simplified centraliser} we see that $g'c' \in C_G(fb)$ as $c' \in C_B(f,b)$ and $g'(bx) = g'(x)f(x)^{-1}f(c'x)$ for all $x \in b$. This then implies 
    $$
    C_{G/L}(\lambda(fb)) \subseteq \lambda(C_G(fb)) \subseteq \lambda(C_G(fb)K).
    $$ 
    Furthermore, we see that the group $G/L$ satisfies $\C$-CC by Theorem~\ref{theorem:CC_HCS} as it is $\C$-HCS by Proposition~\ref{proposition:C_HCS_finite_wreath}. By Lemma~\ref{lemma:lazy_lemma_CC}, we see that the element $fb$ has $\C$-CC in $G$, and therefore, the element $hb$ has $\C$-CC in $G$ by Remark \ref{remark:lazy lemma automorphism}, as $\phi(hb) = fb$ for some $\phi \in \Inn(G) \subseteq \Aut(G)$. \newline

    \noindent \textbf{Case 2:} $\ord(b) = \infty$. \newline
    The proof will combine the techniques introduced in the proofs of Lemma~\ref{lemma:elements of B have C-CC} and Lemma~\ref{lemma:elements in A^B have C-CC}. However, significantly more work is required. The intuition for why comes from Proposition~\ref{proposition:centraliser characterisation}: $C_G(fb)$ is ``very small'' - it is virtually cyclic, so extra care needs to be applied when constructing the quotient in order to maintain control over the size of the centralizer of the image of $fb$ in the quotient. We employ methods similar to those used in the proof of Proposition~\ref{proposition:centraliser characterisation} to achieve this.
    
    We start by constructing our $\C$-CC witness. Let $K_A \in \NC(A)$ be a maximal subgroup (with respect to inclusion) such that
    \begin{displaymath}
        \ker\left(\kappa \colon A \wr B \to A/K_A \wr B/K_B \right) \leq K,
    \end{displaymath}
    and let $K'$ be the kernel of the natural projection $\kappa$ described above. By construction, $K' \leq K$ and $K' \in \NC(A \wr B)$. Let $H_A \in \NC(A)$ be a subgroup such that natural projection from $A$ to $A/H_A$ is injective on the set $\Im(f)$. Note that since $\Im(f)$ is finite and $A$ is residually-$\C$, such a subgroup $H_A$ exists. Set $L_A = K_A \cap H_A$.

    Let $X_b = \{x_i \in B \mid i \in \langle b\rangle\backslash B\}$ be a right transversal for $\langle b \rangle$ in $B$, and set 
    $$
    I = \{i \in i \in \langle b\rangle\backslash B \mid \langle b \rangle x_i \cap \supp(f) \neq \emptyset\}.
    $$
    As the element $fb$ is reduced, we see that if $\langle b \rangle x_i \cap \supp(f) \neq \emptyset$, then there is a unique $n_i \in \mathbb{Z}$ such that $f(b^{n_i}x_i) \neq 1$. Replacing $x_i$ by $b^{-n_i}x_i$, we can without loss of generality assume that $n_i = 0$. For every $\sigma \in \Sigma(f,b)$ we pick some $c_\sigma \in C(f,b)$ realizing $\sigma$. We see that $\langle b \rangle x_i \cap \supp(f) \neq \emptyset$ if and only if $\langle b \rangle c_{\sigma}x_i \cap \supp(f) \neq \emptyset$, and thus, if $i \in I$, there is unique $n_i(c_\sigma) \in \mathbb{Z}$ such that $f(b^{n_i(c_\sigma)}c_\sigma x_i) \neq 1$. Again, replacing $c_\sigma$ by $b^{n_\sigma}c_\sigma$ for some appropriate $n_\sigma < 0$, we can without loss of generality assume that $n_i(c_\sigma) \geq 0$ for all $i \in I$. We then set
    \begin{displaymath}
        N(f,b) = \max_{i \in I, \sigma \in \Sigma(f,b)}\{n_i(c_\sigma) \}.
    \end{displaymath}
    By Lemma~\ref{lemma:order is a multiple}, there exists a subgroup $H_B \in \NC(B)$ such that order of $b H_B$ in $B/H_B$ is a multiple of $|A/K_A| \ord(\kappa'(b))$. As $B$ is residually-$\C$, we can assume that $b, b^2, \dots, b^{N(f,b)} \notin H_B$, so we can assume that order of $bH_B$ in $B/H_B$ is greater than $N(f,B)$.
     
     Set $L_B = K_B \cap N_B \cap H_B$ and let $L$ be the kernel of the natural projection
    \begin{displaymath}
        \lambda \colon A \wr B \to A/L_A \wr B/L_B.
    \end{displaymath}
    We will demonstrate that $L$ is a $\C$-CC witness for $(fb, K)$ in $G$. In particular, we will demonstrate that for every pair $c \in B$ and $g \in A^B$ such that $\lambda(gc) \in C_{G/L}(\lambda(fb))$ there are $c' \in B$, $g' \in A^B$, $\overline{h} \in {A/L_a}^{B/L_B}$ where the following hold:
    \begin{itemize}
        \item $g'c' \in C_G(fb)$;
        \item $\overline{h} \in \lambda(K)$;
        \item $\lambda(gc) = \overline{h}\lambda(g'c')$;
    \end{itemize}
    thus demonstrating that $C_{G/L}(\lambda(fb))\subseteq \lambda(C_G(fb)K)$. The rest of the proof will be split in several individual claims for readers convenience.

    Let $g \in A^B$ and $c \in C$ be arbitrary elements such that $\lambda(gc) \in C_{G/L}(\lambda(fb))$. By Lemma~\ref{lemma:admissible permutations}, we see that the element $\lambda(f)\lambda(b)$ is reduced in $G/L$, and thus, by Lemma~\ref{lemma:simplified centraliser}, we see that:
    \begin{itemize}
        \item[(i)] $\lambda(c) \in C_{B/L_B}(\lambda(f), \lambda(b))$
        \item[(ii)] $\lambda(g)(bx L_B) = \lambda(g)(x L_b) \lambda(f)(xL_b)^{-1} \lambda(f)(cx L_B)$ for all $x \in B$.
    \end{itemize}
    Again, $C_{B/L_B}(\lambda(f), \lambda(b)) = \lambda(C_B(f,b))$ by Lemma~\ref{lemma:admissible permutations}. This means that there is an element $c' \in C_B(f,b)$ such that $\lambda(c') = \lambda(c)$. Recall that by Definition \ref{definition:sigma}, $\Sigma(f,b)$ is the set of all permutations (of the right cosets in $\langle b \rangle \backslash B$ corresponding to the elements of $\supp(f)$) that can be realized by the left action of an element from $C(f,b)$. There is a unique permutation $\sigma \in \Sigma(f,b)$ such that $\langle b \rangle c' = \langle b \rangle c_\sigma$, and  thus, there is a unique integer $k \in \mathbb{Z}$ such that $c' = b^k c_\sigma$. Without loss of generality, we may assume that $0 \leq k < \ord(\lambda(b))$.
    
    For increased readability, let us denote $\overline{f} = \lambda(f)$ and $\overline{g} = \lambda(g)$. \newline
    \noindent \textbf{Claim:} There are elements $g' \in A^B$ and $\overline{h} \in A/L_A^{B/L_B}$ such that $\overline{g} = \lambda(g') \overline{h}$, $g'c' \in C_B(fb)$, and where $\overline{h}$ is constant on right cosets of $\langle \overline{b} \rangle$ in $B/L_B$. \newline

    \noindent \textbf{Proof of claim:}\newline
    The argument relies heavily on Lemma~\ref{lemma:simplified centraliser} which, informally speaking, tells us that the function part of an element belonging to a centralizer is almost completely determined by the element it centralizes and its acting part.
    
    Recall that the element $\overline{f}\overline{b}$ is reduced, and thus, for every element $\overline{x} \in B/L_B$, we have $|\langle \overline{b}\rangle \overline{x} \cap \supp(\overline{f})| \leq 1$. Furthermore, let us note that $\langle \overline{b}\rangle \overline{x} \cap \supp(\overline{f}) = \emptyset$ if and only if $\langle \overline{b}\rangle \overline{c} \overline{x} \cap \supp(\overline{f}) = \emptyset$ because $\overline{c} \in C_{B/L_B}(\overline{f},\overline{b})$. It them immediately follows that if $\langle \overline{b} \rangle \overline{x}_0 \cap \supp(\overline{f}) = \emptyset$ for some $\overline{x}_0$, then $\overline{g}$ is constant on the coset $\langle \overline{b}\rangle\overline{x}_0$ since $\overline{g}(\overline{b}\overline{x}) = \overline{g}(\overline{x}) \overline{f}(\overline{x})^{-1}\overline{f}(\overline{c}\overline{x})$.
    
    Let $X_{\overline{b}} = \{x_j \in B \mid j \in J\} \subseteq X_b$ be a set of representatives of the double cosets space $\langle b \rangle \backslash B / L_B$ and denote $\overline{x_j} = \lambda(x_j)$. Following previously established notation, $\{\overline{x}_i \mid i \in I \}$ is the set of representatives corresponding to the support of $\overline{g}$. By construction, for each $i \in I$ we have
    \begin{displaymath}
        \overline{f}(\overline{b}^e \overline{x}_i) = \begin{cases}
            \lambda_A(f(x_i)) \neq 1   &\mbox{ if $e = 0$ }\\
            1               &\mbox{ otherwise}.
        \end{cases}
    \end{displaymath}
    We note here that the integer $e$ is considered modulo $\ord(\overline{b})$. Recall that for each $i \in I$ there is unique integer $0 \leq n_i(c_\sigma)$ such that $f(b^{n_i(c_\sigma)} c_{\sigma}x_i) = f(x_i) \neq 1$, and since $c' = b^k c_\sigma$, we see that $f(b^e c'x_i) \neq 1$ if and only if $e = n_i(c_\sigma) - k$. It then follows that 
    \begin{displaymath}
        \overline{f}(\overline{b}^e \overline{c}\overline{x}_i) = \begin{cases}
            \lambda_A(f(x_i)) \neq 1   &\mbox{ if $e = n_i(c_\sigma) - k$ },\\
            1               &\mbox{ otherwise}.
        \end{cases}
    \end{displaymath}
    We note that $0 \leq n_i(c_\sigma) < \text{ord}(\overline{b})$ by assumption,  and therefore, $|n_i(c_\sigma) - k| < \ord(\overline{b})$. For each $j \in I$, we set $\overline{a}_j \in A/L_A$ as follows:
    \begin{displaymath}
        \overline{a}_j = \begin{cases}
            \overline{g}(\overline{x}_j) & \mbox{ if $j \notin I$},\\
            \overline{g}(\overline{x}_j) & \mbox{ if $j \in I$ and $0 \leq n_i(c_\sigma) - k$},\\
            \overline{g}(\overline{b}\overline{x}_j) & \mbox{ if $j \in I$ and $n_i(c_\sigma) - k < 0$}.
        \end{cases}
    \end{displaymath}
    From the identity $\overline{g}(\overline{b}\overline{x}) = \overline{g}(\overline{x}) \overline{f}(\overline{x})^{-1}\overline{f}(\overline{c}\overline{x})$, we immediately see that if $j \notin I$ or $j \in I$ and $n_j(c_\sigma)-k = 0$, then the function $\overline{g}$ is constant on the coset $\langle \overline{b} \rangle \overline{x}_j$ and the unique value it attains is $\overline{a}_j$. Furthermore, if $j \in I$ and $n_j(c_\sigma) - k \neq 0$, we see that, depending on whether $n_j(c_\sigma) - k < 0$ or $n_j(c_\sigma) - k > 0$, either 
    \begin{displaymath}
        \overline{g}(\overline{b}^e x_j) = \begin{cases}
            \overline{a}_j \lambda_A(f(x_j))^{-1}   &\mbox{ if $1 \leq e \leq n_i(c_\sigma)-k$,}\\
            \overline{a}_j                          &\mbox{ othewise }
        \end{cases}
    \end{displaymath}
    if $0 < n_i(c_\sigma)-k$, or
    \begin{displaymath}
        \overline{g}(\overline{b}^e x_j) = \begin{cases}
            \overline{a}_j \lambda_A(f(x_j))   &\mbox{ if $e = 0$ or $\leq k - n_i(c_\sigma) +1 \leq e \leq \ord(\overline{b})-1$},\\
            \overline{a}_j                          &\mbox{ otherwise }
        \end{cases}
    \end{displaymath}
    if $n_j(c_\sigma) - k < 0$.
    We define a function $\overline{h} \in A/L_B^{B/L_B}$ defined on cosets of $\langle\overline{b} \rangle$ in the following way: $\overline{h}(\overline{x}) = \overline{a}_j$ whenever $\overline{x} \in \langle \overline b\rangle \overline{x_j}$. We set $\overline{g}' = \overline{h}^{-1}\overline{g}$.
    We now define a function $g' \in A^B$ given by
    \begin{displaymath}
        g'(b^e x_i) = \begin{cases}
            f(x_i)^{-1} &\mbox{ if $i \in I$ and $0 \leq e \leq n_i(c_\sigma)-k$},\\
            f(x_i)      &\mbox{ if $i \in I$ and $n_i(c_\sigma)-k <  e \leq 0$},\\
            1           &\mbox{ otherwise}.
        \end{cases}
    \end{displaymath}
    We immediately see that $\lambda(g') = \overline{g}'$. Also, we see that $g'(b x) = g'(x)f(x)^{-1}f(c'x)$ for all $x \in B$, and therefore, $g'c' \in C_G(fb)$ by Lemma~\ref{lemma:simplified centraliser} as $c' \in C_B(f,b)$. This finishes the proof of the above claim. \newline
    
    To finish our proof, let $\kappa \colon A/K_A \wr B/L_B \to A/L_A \wr B/L_B$ be the unique projection map such that $\kappa = \kappa' \circ \lambda$. \newline
    \noindent \textbf{Claim 2:} $\kappa(\overline{h}) = 1$ or, equivalently, $\overline{h} \in \lambda(K')$.\newline
    
    \noindent \textbf{Proof of Claim 2:} The argument is analogous to the one used in the infinite-order case in the proof of Lemma~\ref{lemma:elements of B have C-CC}. Letting $\overline{x} \in B/L'_B$ be arbitrary, we observe that
    \begin{displaymath}
        \kappa(\overline{h})(\kappa(\overline{x})) = \prod_{\overline{y} \in \kappa^{-1}(\kappa(\overline{x}))} \overline{h}(\overline{y}). 
    \end{displaymath}
    Note that if $\overline{y} \in \kappa^{-1}(\kappa(\overline{x}))$, then $\overline{b}^{\ord(\kappa'(b))}\overline{y} \in \kappa^{-1}(\kappa(\overline{x}))$. In particular, we see that if $\overline{y} \in \kappa^{-1}(\kappa(\overline{x}))$, then $\left\langle \overline{b}^{\ord(\kappa'(b))} \right\rangle \overline{y} \subseteq \kappa^{-1}(\kappa(\overline{x}))$. Consequently, there are some elements $\overline{x}_1, \dots, \overline{x}_n$ such that
    \begin{displaymath}
        \kappa^{-1}(\kappa(\overline{x})) = \bigsqcup_{i = 1}^{n}\left\langle \overline{b}^{\ord(\kappa'(b))} \right\rangle \overline{x_i}.
    \end{displaymath}
    As $\overline{h}(\overline{y}) = \overline{h}(\overline{b}\overline{y})$ for all $\overline{y} \in B/L'_B$, we see that
    \begin{displaymath}
                \kappa(\overline{h})(\kappa(\overline{x})) = \prod_{i =1}^{n} \overline{h}(\overline{x}_i)^N     
    \end{displaymath}
    where $N = \left|\left\langle \overline{b}^{\ord(\kappa'(b))} \right\rangle\right|$ in $B/L'_B$. Recall that $\ord(\overline{b}) = C' |A/K_A| \ord(\kappa'(b))$, meaning that $N = C' |A/K_A|$. This means that we can further write
    \begin{displaymath}
                \kappa(\overline{g})(\kappa(\overline{x})) = \prod_{i =1}^{n}\left( \overline{g}(\overline{x}_i)^{|A/K_A|}\right)^{C'} = \prod_{i =1}^{n}\left( 1\right)^{C'} = 1,     
    \end{displaymath}
    demonstrating that $\overline{h} \in \ker(\kappa) = \lambda(K')$, as $\overline{x} \in B/L'_B$ was arbitrary, which proves the claim.\newline 
    
    We have demonstrated that if $\lambda(gc) \in C_{G/L}(\lambda(fb))$, then there are elements $c' \in B$, $g'\in A^B$, and $\overline{h} \in A/L_A ^{B/L_B}$ such that $g'c' \in C_G(fb)$, $\overline{H} \in \lambda(K')$, and $\lambda(gc) = \overline{h}\lambda(g'c')$, meaning that 
    \begin{displaymath}
        \lambda(gc) = \lambda(g')\overline{h} \lambda(c') \in \lambda(g')\lambda(c') \kappa^{-1}(1) = \lambda(g'c')\lambda(K') \subseteq \lambda(g'c')\lambda(K).
    \end{displaymath}
    Therefore, we see that $C_{G/L}(\lambda(fb)) \leq \lambda(C_G(fb)K)$, meaning that $L$ is a $\C$-CC witness for $(fb, K)$ in $G$. This shows that the element $fb$ has $\C$-CC in $G$, and thus, by Remark \ref{remark:lazy lemma automorphism} we see that the element $hb$ has $\C$-CC in $G$.
\end{proof}

We are now ready to prove Proposition~\ref{proposition:infinite acting group}.
\begin{proof}
    Suppose that $A$ and $B$ are residually-$\C$ groups such that $B$ is infinite. Furthermore, assume that $B$ satisfies $\C$-CC and every cyclic subgroup of $B$ is $\C$-separable in $B$. Combining Lemma~\ref{lemma:elements of B have C-CC}, Lemma~\ref{lemma:elements in A^B have C-CC}, and Lemma~\ref{lemma:general elements have C-CC}, we see that every element of $A \wr B$ satisfies $\C$-CC and thus $A \wr B$ satisfies $\C$-CC.

    To prove the `moreover' part of the statement, let us suppose that $A$ is an abelian residually-$\C$ group and $B$ is an infinite $\C$-HCS group such that every cyclic subgroup of $B$ is $\C$-closed in $B$. By Theorem~\ref{theorem:CCS}, we see that $G = A \wr B$ is a $\C$-CS group. By Theorem~\ref{theorem:CC_HCS} we see that $B$ satisfies $\C$-CC and by the first part of the proposition we get that $G$ satisfies $\C$-CC. Thus, by Theorem~\ref{theorem:CC_HCS}, we see that $G$ is a $\C$-HCS group. For the implication in the opposite direction, let $A$ and $B$ be groups such that $B$ is infinite and suppose that $G = A \wr B$ is a $\C$-HCS group. First, let us note that $B$ is a retract in $G$ and thus $B$ must be $\C$-HCS by Remark \ref{remark:retracts are C-HCS}. Then by Theorem~\ref{theorem:CCS}, we see that $A$ must be abelian residually-$\C$ group and every cyclic subgroup of $B$ must be $\C$-separable in $B$, which concludes the proof.
\end{proof}

We can now state the main result.
\MainTheoremHCS
\begin{proof}
  The proof follows immediately from Proposition~\ref{proposition:infinite acting group} and Proposition~\ref{proposition:C_HCS_finite_wreath}.
\end{proof}

\section{Hereditarily conjugacy separable torsion groups}
\label{section:grigorchuk}

In this section, we prove that the Grigorchuk group is hereditarily conjugacy separable. In particular, we show that every finite-index subgroup of the Grigorchuk group is conjugacy separable.
We begin by defining the Grigorchuk group as follows.

For each binary sequence $\sigma = \sigma_1\sigma_2\cdots\sigma_k \in \{0,1\}^*$, we write $|\sigma| = k$ for its length.
We then view these sequences as the vertices of an infinite binary rooted tree $\Tree$ with root given by the empty sequence, $\varepsilon$, and each vertex $\sigma\in \{0,1\}^*$ having two children given as $\sigma 0$ and $\sigma 1$.
We then write $\Aut(\Tree)$ for the group of all graph automorphisms of this tree.
Notice that every automorphism $x\in \Aut(\Tree)$ preserves the root and each level of the tree.
Moreover, we see that $\Aut(\Tree) = \Aut(\Tree)\wr \Sym(2)$, that is, for each automorphism $x\in \Aut(\Tree)$, we may write $x = (x_0, x_1) \cdot s$ where each $x_i\in \Aut(\Tree)$ is the action that $x$ has on the subtree rooted at $i$, and $s\in \Sym(2)$ is the permutation that the element performs to the first level of the tree.

Given an automorphism $x = (x_0, x_1)\cdot s \in \Aut(\Tree)\wr \Sym(2) = \Aut(\Tree)$, we write $x|_0 = x_0$ and $x|_1 = x_1$ for the \emph{restrictions} of $x$ to its first-level subtrees.
For each sequence $\sigma \in \{0,1\}^*$, we then define the \emph{restriction} $x|_\sigma$ recursively such that $x|_\varepsilon = x$, and $x|_{\alpha 0} = (x|_\alpha)|_0$ and $x|_{\alpha 1} = (x|_\alpha)|_1$ for each $\alpha \in \{0,1\}^*$.

The Grigorchuk group is defined as $\Gamma = \left\langle a,b,c,d\right\rangle\subset\Aut(\Tree)$ where the actions of the generators $a$, $b$, $c$ and $d$ are defined recursively as
\begin{align*}
    a(0\sigma) &= 1 \sigma &
    b(0\sigma) &= 0 a(\sigma) &
    c(0\sigma) &= 0 a(\sigma) &
    d(0\sigma) &= 0\sigma
    \\
    a(1\sigma) &= 0 \sigma &
    b(1\sigma) &= 1 c(\sigma) &
    c(1\sigma) &= 1 d(\sigma) &
    d(1\sigma) &= 1 b(\sigma)
\end{align*}
for each $\sigma \in \{0,1\}^*$.
Alternatively, we may think of the generators of Grigorchuk's group as the automorphisms of $\Tree$ described in Figure~\ref{fig:grigorchuk-generators}.

\begin{figure}[ht!]
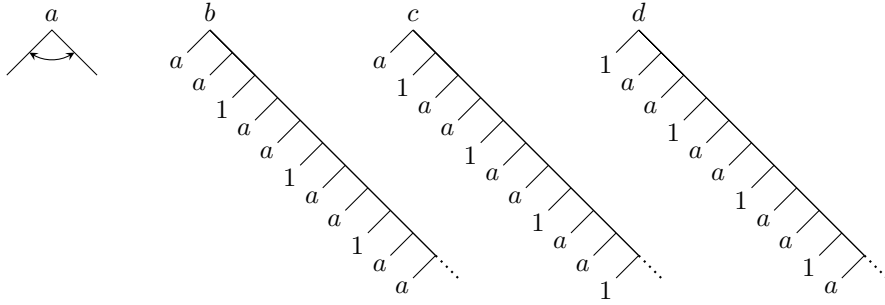

\centering
\includestandalone{figure/grigorchuk}
\caption{Generators of Grigorchuk group}
\label{fig:grigorchuk-generators}
\end{figure}

Let $\ell\colon \Gamma\to \mathbb{N}$ be the norm on $\Gamma$ induced by the word metric with respect to the generating set $X = \{a,b,c,d\}$.
We then have the following lemma which relates the length of an element to the length of its restrictions.

\begin{lemma}[Lemma~8.2 and Corollary~8.3 in~\cite{grigorchuk2008groups}]\label{lem:length-contraction}
Let $g\in \Gamma$, then
\begin{enumerate}
\item if $\ell(g) \equiv 0 \bmod 2$, then $\ell(g|_0), \ell(g|_1)\leq \ell(g)/2$; and
\item if $\ell(g) \equiv 1 \bmod 2$, then
\begin{enumerate}
\item $\ell(g|_0), \ell(g|_1)\leq \ell(g)/2 + 1$ and
\item $\ell(g|_0) + \ell(g|_1) \leq \ell(g) + 1$.
\end{enumerate}
\end{enumerate}
Hence, we have $\ell(g|_0) + \ell(g|_1) \leq \ell(g) + 1$ for each $g\in \Gamma$.
\end{lemma}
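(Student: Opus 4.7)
The plan is to bound $\ell(g|_0)$ and $\ell(g|_1)$ by taking a geodesic word $w = y_1 y_2 \cdots y_n$ for $g$ in the generators $X = \{a, b, c, d\}$ and writing down explicit (not necessarily reduced) words for $g|_0$ and $g|_1$ obtained by unfolding $w$ through the wreath product structure of $\Aut(\Tree)$.

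First I would establish an \emph{alternating normal form} for geodesic words: using $a^2 = 1$ together with the Klein-four relations $bc = cb = d$, $bd = db = c$, $cd = dc = b$ in $\{1, b, c, d\}$, the word $w$ can contain neither two consecutive letters from $\{b, c, d\}$ nor two consecutive $a$'s, since either situation would allow us to shorten it. Writing $k$ for the number of non-$a$ letters in $w$, this alternating form gives $k = n/2$ when $n$ is even and $k \in \{(n-1)/2,\, (n+1)/2\}$ when $n$ is odd; in particular $2k \leq n + 1$.

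Next, using the wreath product decompositions $a = (1,1) \cdot \tau$, $b = (a,c)$, $c = (a,d)$, and $d = (1,b)$, where $\tau$ is the non-trivial element of $\Sym(2)$ and $b, c, d$ all act trivially on level one, I would prove by induction on $n$ via the multiplication rule in $\Aut(\Tree) \wr \Sym(2)$ that
\[w|_i = \prod_{j = 1}^{n} y_j|_{\tau^{m_j}(i)},\]
where $m_j$ counts the $a$'s appearing among $y_{j+1}, \ldots, y_n$, the product is taken in increasing order of $j$, and the letters $y_j = a$ contribute trivially. Each non-$a$ letter $y_j$ then contributes exactly one element of $X \cup \{1\}$ to each of $w|_0$ and $w|_1$; moreover this contribution is always non-trivial when $y_j \in \{b, c\}$, whereas for $y_j = d$ it is trivial on exactly one side, since $d|_0 = 1$ but $d|_1 = b$.

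The bounds now follow by counting non-trivial contributions. For each $i \in \{0, 1\}$, $\ell(g|_i)$ is at most the number of non-trivial contributions to $w|_i$, and this is at most $k$; hence $\ell(g|_i) \leq n/2$ when $n$ is even and $\ell(g|_i) \leq (n+1)/2 \leq n/2 + 1$ when $n$ is odd. For the sum, the total number of non-trivial contributions across both restrictions equals $2(\#b + \#c) + \#d = 2k - \#d \leq 2k \leq n + 1$, giving $\ell(g|_0) + \ell(g|_1) \leq n + 1$ in all cases. The only non-routine step is the alternating normal form for geodesics, which rests on the Klein-four relations among $\{b, c, d\}$; the remainder is a direct induction followed by elementary counting.
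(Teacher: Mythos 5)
Your proof is correct: the alternating (geodesic) form, the unfolding formula $w|_i=\prod_j y_j|_{\tau^{m_j}(i)}$ under the paper's left-action convention, and the count $2(\#b+\#c)+\#d\leq 2k\leq n+1$ all check out and yield exactly the stated bounds. The paper does not prove this lemma but simply cites Lemma~8.2 and Corollary~8.3 of the referenced Grigorchuk survey, and your argument is essentially the standard contraction proof given there, so no comparison beyond that is needed.
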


For each $n\geq 1$, we define the \emph{$n$-th level stabilizer subgroup} as
\[
    \mathrm{Stab}_\Gamma(n)
    =
    \{
        g\in \Gamma
    \mid
        g\cdot v = v
        \text{ for each }
        v\in \{0,1\}^*\text{ with }|v|\leq n
    \}.
\]
We notice here that each subgroup $\mathrm{Stab}_\Gamma(n)$ is finite index and normal in $\Gamma$.
These stabilizer subgroups characterize the finite-index subgroups of $\Gamma$ as follows.

\begin{lemma}[Theorem~3.7~in~\cite{Noce2023}]\label{lem:csp}
    The Grigorchuk group has the \emph{congruence subgroup property (csp)}, that is, for each finite index subgroup $H\leq \Gamma$, there exists some $n\geqslant 1$ such that $\mathrm{Stab}_\Gamma(n) \leq H$.
\end{lemma}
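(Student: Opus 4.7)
The plan is to reduce to the case of a normal subgroup and then exploit the branch/self-similar structure of $\Gamma$ to show that every finite-index normal subgroup of $\Gamma$ contains some level stabilizer. Let $H \leq \Gamma$ have finite index. First I would pass to the normal core $N = \bigcap_{g\in\Gamma} gHg^{-1}$, which is normal of finite index in $\Gamma$ (since there are only finitely many conjugates of $H$) and is contained in $H$; it therefore suffices to find $n$ with $\mathrm{Stab}_\Gamma(n) \leq N$.

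The main tool will be the wreath decomposition $\mathrm{Stab}_\Gamma(1) \hookrightarrow \Gamma \times \Gamma$ given by $g \mapsto (g|_0, g|_1)$, together with the branch structure of $\Gamma$. For $v \in \{0,1\}^*$, define the rigid vertex stabilizer $\mathrm{Rist}_\Gamma(v)$ as the subgroup of elements of $\Gamma$ acting trivially outside the subtree rooted at $v$, and set $\mathrm{Rist}_\Gamma(k) = \prod_{|v|=k} \mathrm{Rist}_\Gamma(v) \leq \mathrm{Stab}_\Gamma(k)$. A standard fact about $\Gamma$ (part of its being a branch group) is that each $\mathrm{Rist}_\Gamma(k)$ has finite index in $\Gamma$. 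Intersecting $N$ with $\mathrm{Rist}_\Gamma(k)$ and projecting to a single factor $\mathrm{Rist}_\Gamma(v)$ via the restriction $g \mapsto g|_v$ produces a subgroup that, combined with the fact that $\Gamma$ is just infinite (every proper quotient is finite, a theorem of Grigorchuk), forces $N$ to contain the commutator $[\mathrm{Rist}_\Gamma(k),\mathrm{Rist}_\Gamma(k)]$ for some $k$ large enough in terms of $[\Gamma:N]$.

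The crux of the argument, and what I expect to be the main obstacle, is to show that for some $m = m(k)$ one has $\mathrm{Stab}_\Gamma(m) \leq [\mathrm{Rist}_\Gamma(k),\mathrm{Rist}_\Gamma(k)]$. This step is not formal: it depends on the explicit recursive definitions of $a,b,c,d$ and requires computing sufficiently many commutators of elements supported on single level-$k$ subtrees to cover, modulo a deeper level stabilizer, all of $\mathrm{Stab}_\Gamma(m)$. The length-contraction estimate in Lemma~\ref{lem:length-contraction} allows such computations to be controlled inductively on depth. This is essentially the content of Pervova's (and originally Grigorchuk's) theorem establishing the congruence subgroup property, and it is where the proof becomes specific to $\Gamma$ rather than following from general branch-group principles.

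Combining the two steps yields $\mathrm{Stab}_\Gamma(m) \leq [\mathrm{Rist}_\Gamma(k),\mathrm{Rist}_\Gamma(k)] \leq N \leq H$, as required.
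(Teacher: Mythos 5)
The paper never proves this lemma: it is imported wholesale as Theorem~3.7 of \cite{Noce2023} and used as a black box, so there is no internal argument to measure yours against. Taken on its own terms, your reduction is the standard branch-group route: pass to the normal core $N$, invoke the fact that a nontrivial normal subgroup of a branch group contains $[\mathrm{Rist}_\Gamma(k),\mathrm{Rist}_\Gamma(k)]$ for some $k$ (Grigorchuk's normal subgroup lemma; just-infiniteness is a consequence rather than an input, but citing it does no harm), and then try to trap a level stabilizer inside that derived rigid stabilizer. The first two steps are legitimate to cite.

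The genuine gap is exactly at the point you flag as the crux and then skip: you never establish $\mathrm{Stab}_\Gamma(m)\leq[\mathrm{Rist}_\Gamma(k),\mathrm{Rist}_\Gamma(k)]$ for some $m=m(k)$, and instead defer it to ``Pervova's (and originally Grigorchuk's) theorem establishing the congruence subgroup property'' --- that is, to the very statement being proved, which makes the argument circular as written. This containment is the entire content of the csp for $\Gamma$; it does not follow from Lemma~\ref{lem:length-contraction}, whose contraction estimate controls word length but says nothing about which subgroup a given element lies in. What is actually needed is explicit subgroup bookkeeping of the kind the paper records later for a different purpose: $\mathrm{Stab}_\Gamma(3)\leq K$, $K\times K\leq\Psi_\Gamma(K)$ (hence $\mathrm{Stab}_\Gamma(m+3)\leq K_m$), the finiteness of $K/[K,K]$, and a finite computation showing that the commutator subgroup of the geometric copies of $K$ sitting inside $\mathrm{Rist}_\Gamma(k)$ still contains a deep level stabilizer. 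Until that computation (or a precise citation for it, which is what the paper does by quoting \cite{Noce2023}) is supplied, your proposal is an outline of the known proof strategy rather than a proof.
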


We define the function $\Psi_\Gamma\colon \mathrm{Stab}_\Gamma(1)\to \Gamma\times \Gamma$ as
\[
    \Psi_\Gamma(g)
    =
    (g|_0, g|_1)
\]
for each $g\in \mathrm{Stab}_\Gamma(1)\leq \Gamma$.
Notice then that $\Psi_\Gamma$ is an injective homomorphism.
We make use of this function in the following definition.

\begin{definition}\label{def:lift-function}
Suppose that $A,B\leq \mathrm{Stab}_\Gamma(1)$ are finite-index normal subgroups of $\Gamma$ for which $B\times B\leq \Psi_\Gamma(A)$.
Then we define a function
\[
    \Lift_{B,A}\colon
    \Gamma/B \times \Gamma/B  \to \mathcal{P}(\mathrm{Stab}_\Gamma(1)/A)
\]
such that
$g A \in \Lift_{B,A}(g_0 B, g_1 B) $
for each $g = (g_0,g_1) \in \mathrm{Stab}_\Gamma(1)$.
\end{definition}
This function has the following important property.

\begin{lemma}\label{lem:branch-subgroup-prop}
    Let $A,B\leq \mathrm{Stab}_\Gamma(1)$ be finite-index normal subgroups of $\Gamma$ such that $B\times B\leq \Psi_\Gamma(A)$.
    For each pair of elements $g_0,g_1\in \Gamma$, there exists an element $g = (g_0,g_1)\in \mathrm{Stab}_\Gamma(1)$ if and only if $\Lift_{B,A}(g_0 B, g_1 B)$ is non-empty.
    Moreover, for such an element $g$, we have $gA \in \Lift_{B,A}(g_0 B, g_1 B)$.
\end{lemma}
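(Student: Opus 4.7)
The plan is to unwind the definition of $\Lift_{B,A}$ and exploit the inclusion $B\times B\leq \Psi_\Gamma(A)$ to bridge between the statement about the existence of an exact lift $g = (g_0,g_1)$ and the assertion that some element of the form $gA$ sits in $\Lift_{B,A}(g_0B, g_1B)$. Recall that $\Psi_\Gamma$ is a genuine group homomorphism on $\mathrm{Stab}_\Gamma(1)$, since the restriction $g\mapsto g|_i$ is multiplicative whenever the first-level action is trivial. I will also use that $A,B\leq \mathrm{Stab}_\Gamma(1)$, so in particular $A\leq \mathrm{Stab}_\Gamma(1)$.

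For the forward implication together with the ``moreover'' assertion, suppose $g \in \mathrm{Stab}_\Gamma(1)$ satisfies $\Psi_\Gamma(g) = (g_0,g_1)$. Then by Definition~\ref{def:lift-function}, $gA \in \Lift_{B,A}(g_0 B, g_1 B)$, so the lift set is nonempty and contains $gA$.

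For the converse, suppose that $\Lift_{B,A}(g_0B, g_1B)$ is nonempty, so there exists some $hA$ in it. By Definition~\ref{def:lift-function}, this means there exist $g'_0 \in g_0 B$ and $g'_1 \in g_1 B$ with $\Psi_\Gamma(h) = (g'_0, g'_1)$ and $h \in \mathrm{Stab}_\Gamma(1)$. Writing $b_0 = (g'_0)^{-1} g_0 \in B$ and $b_1 = (g'_1)^{-1} g_1 \in B$, the hypothesis $B\times B \leq \Psi_\Gamma(A)$ yields an element $a\in A$ with $\Psi_\Gamma(a) = (b_0,b_1)$. Since $a\in A \leq \mathrm{Stab}_\Gamma(1)$, the product $g = h a$ lies in $\mathrm{Stab}_\Gamma(1)$, and the homomorphism property of $\Psi_\Gamma$ gives
\[
    \Psi_\Gamma(g) \;=\; \Psi_\Gamma(h)\,\Psi_\Gamma(a) \;=\; (g'_0 b_0,\, g'_1 b_1) \;=\; (g_0, g_1),
\]
as required.

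There is no substantial obstacle here: the entire argument is a formal consequence of Definition~\ref{def:lift-function} and the fact that $\Psi_\Gamma$ restricted to $\mathrm{Stab}_\Gamma(1)$ is a homomorphism. The only non-bookkeeping ingredient is the hypothesis $B\times B\leq \Psi_\Gamma(A)$, which is precisely what allows one to correct a lift at the level of $B$-cosets to an exact lift in $\mathrm{Stab}_\Gamma(1)$, at the (acceptable) cost of modifying the lifted element by an element of $A$.
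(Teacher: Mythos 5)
Your proof is correct and follows essentially the same route as the paper: lift a representative $h$ from the nonempty set $\Lift_{B,A}(g_0B,g_1B)$, then correct it by an element of $A$ mapping under $\Psi_\Gamma$ to the pair of $B$-discrepancies, using $B\times B\leq\Psi_\Gamma(A)$. The forward direction and the ``moreover'' part are handled exactly as in the paper, directly from Definition~\ref{def:lift-function}.
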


\begin{proof}
Let $g_0,g_1\in \Gamma$ be a pair of elements such that $c\in \Lift_{B,A}(g_0 B, g_1 B)$.
Then from the definition of the function $\Lift_{B,A}$, we see that there exists some element $h = (h_0, h_1)\in \mathrm{Stab}_\Gamma(1)$ for which $c = h A$, $g_0 B = h_0 B$, and $g_1 B = h_1 B$.
Let $b_0,b_1\in B$ be such that $g_0 = h_0 b_0$ and $g_1 = h_1 b_1$.

We see that $(b_0, b_1) \in B\times B \leq \Psi_{\Gamma}(A)$, and thus, $b = \Psi_\Gamma^{-1}(b_0, b_1)\in A$.
Hence, our desired element is given by $g = hb \in \Gamma$, that is, $g = (g_0, g_1) = (h_0 b_0, h_1 b_1)\in \Gamma$.

Now suppose that we are given an element $g = (g_0, g_1)$ as before, then by the definition of the function $\Lift_{B,A}$, we see that $gA \in \Lift_{B,A}(g_0 B, g_1 B)$.
\end{proof}

\subsection{The conjugacy problem}
Suppose that $N\leq \mathrm{Stab}_\Gamma(1)$ is a finite-index normal subgroup of $\Gamma$.
We then define the function $Q^N\colon \Gamma\times\Gamma \to \mathcal{P}(\Gamma/N)$ as
\[
    Q^N(g,h)
    =
    \{
        x N\in \Gamma/N
    \mid
        x\in \Gamma
        \text{ with }
        x^{-1}gx=h
    \}
\]
for each $g,h\in \Gamma$.
Notice then that the elements $g,h\in \Gamma$ are conjugate in $\Gamma$ if and only if $Q^N(g,h)$ is non-empty.
Suppose for $n\geqslant 1$ such that $\mathrm{Stab}_\Gamma(n)\leq N$. We then define the function $Q^N_n\colon \Gamma\times\Gamma \to \mathcal{P}(\Gamma/N)$ as
\[
    Q^N_n(g,h)
    =
    \{
        x N\in \Gamma/N
    \mid
        x\in \Gamma
        \text{ with }
        x^{-1}gx\cdot \mathrm{Stab}_\Gamma(n) = h \cdot \mathrm{Stab}_\Gamma(n)
    \}
\]
for each $g,h\in \Gamma$.
We then see $g,h\in \Gamma$ are conjugate in $\Gamma/\mathrm{Stab}_\Gamma(n)$ if and only if $Q^N_n(g,h)$ is non-empty.
We then see for each $m\geq n$ that
\begin{equation}\label{eq:Qcontainment}
    Q^N(g,h) \subseteq Q^N_{m}(g,h) \subseteq Q^N_n(g,h)
\end{equation}
for each $g,h\in \Gamma$.
In Corollaries~\ref{cor:rec-compute-Q} and~\ref{cor:rec-compute-QFin}, we define a recursive algorithm to compute these functions.
These corollaries following from Lemma~\ref{lem:lifting-conjugacy} as follows.

\begin{lemma}\label{lem:lifting-conjugacy}
Let $N\leq \mathrm{Stab}_\Gamma(1)$ and $M\leq \Gamma$ be a normal subgroup of $\Gamma$ (possibly with infinite index) such that $\Psi_\Gamma(N) = M\times M$.
In the following let $x\in \Gamma$:

\begin{enumerate}[leftmargin=*]
    \item\label{lem:lifting-conjugacy/1} Let $g,h\in \Gamma$ with \[(x^{-1}gx)\, N = h\, N,\] then $g\in \mathrm{Stab}_\Gamma(1)$ if and only if $h\in \mathrm{Stab}_\Gamma(1)$.
    \item\label{lem:lifting-conjugacy/2}  Let $g,h\in \mathrm{Stab}_\Gamma(1)$ with $g=(g_0, g_1)$ and $h = (h_0,h_1)$, then the following hold:
    \begin{enumerate}
        \item if $x = (x_0, x_1) \in \mathrm{Stab}_\Gamma(1)$, then
        \[
        (x^{-1} g x)\, N = h \, N \iff
        \begin{cases}
            (x_0^{-1} g_0 x_0) \, M = h_0 \, M\\
            (x_1^{-1} g_1 x_1) \, M = h_1 \, M;
        \end{cases}
        \]
        \item if $x = (x_0, x_1) a \in \mathrm{Stab}_\Gamma(1) \cdot a$, then
        \[
        (x^{-1} g x) \, N = h \, N
        \iff
        \begin{cases}
            (x_0^{-1} g_0 x_0) \, M = h_1 \, M\\
            (x_1^{-1} g_1 x_1) \, M = h_0 \, M.
        \end{cases}
        \]
    \end{enumerate}
    \item\label{lem:lifting-conjugacy/3}  Let $g,h\in \mathrm{Stab}_\Gamma(1)\cdot a$ with $g=(g_0, g_1)a$ and $h = (h_0,h_1)a$. Then the following hold:
    \begin{enumerate}
        \item if $x = (x_0, x_1) \in \mathrm{Stab}_\Gamma(1)$, then
        \[
        (x^{-1}gx) \, N = h \, N
        \iff
        \begin{cases}
            (x_0^{-1}(g_0 g_1) x_0) \, M = (h_0 h_1) \, M\\
            x_1 \, M = (g_1 x_0 h_1^{-1}) \, M;
        \end{cases}
        \]
        \item if $x = (x_0, x_1) a \in \mathrm{Stab}_\Gamma(1)\cdot  a$, then
        \[
        (x^{-1}gx) \, N = h \, N
        \iff
        \begin{cases}
            (x_0^{-1} (g_0 g_1) x_0) \, M = (h_1 h_0) \, M\\
            x_1 \, M = (g_0^{-1} x_0 h_1) \, M.
        \end{cases}
        \]
    \end{enumerate}
\end{enumerate}
    Notice that in the above cases, we have reduced the problem of checking the conjugacy of $g$ and $h$ to a problem involving the conjugacy of their first-level restrictions.
\end{lemma}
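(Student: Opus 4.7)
The plan is to reduce each assertion to an elementary calculation in the wreath-product decomposition $\Aut(\Tree) = \Aut(\Tree)\wr\Sym(2)$, restricted to $\Gamma$ via the embedding $\Psi_\Gamma\colon \mathrm{Stab}_\Gamma(1)\to \Gamma\times\Gamma$, using the hypothesis $\Psi_\Gamma(N) = M\times M$ to pull congruences modulo $N$ back to congruences modulo $M$ coordinatewise.

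For part~\ref{lem:lifting-conjugacy/1}, I would note that $\mathrm{Stab}_\Gamma(1)$ is normal of index $2$ in $\Gamma$. Hence $\mathrm{Stab}_\Gamma(1)$ is a union of cosets of $N$ (since $N\leq \mathrm{Stab}_\Gamma(1)$), so the condition $g\in\mathrm{Stab}_\Gamma(1)$ is $N$-invariant, and conjugation by any $x\in\Gamma$ preserves $\mathrm{Stab}_\Gamma(1)$. Therefore $g\in\mathrm{Stab}_\Gamma(1)\iff x^{-1}gx\in\mathrm{Stab}_\Gamma(1)\iff h\in\mathrm{Stab}_\Gamma(1)$.

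For part~\ref{lem:lifting-conjugacy/2}, with $g,h\in\mathrm{Stab}_\Gamma(1)$, I would compute $x^{-1}gx\cdot h^{-1}\in\mathrm{Stab}_\Gamma(1)$ and then apply $\Psi_\Gamma$ to test membership in $N=\Psi_\Gamma^{-1}(M\times M)$. When $x=(x_0,x_1)$, a direct calculation in the direct product gives
\[
\Psi_\Gamma(x^{-1}gx\cdot h^{-1}) = \bigl(x_0^{-1}g_0x_0h_0^{-1},\ x_1^{-1}g_1x_1h_1^{-1}\bigr),
\]
which lies in $M\times M$ exactly as claimed. When $x=(x_0,x_1)a$, I would use that conjugation by $a$ swaps the two factors (since $a^2=1$ and $a(y_0,y_1)a=(y_1,y_0)$), yielding $\Psi_\Gamma(x^{-1}gx)=(x_1^{-1}g_1x_1,\ x_0^{-1}g_0x_0)$; comparing with $\Psi_\Gamma(h)=(h_0,h_1)$ gives the cross-coordinate equations stated.

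For part~\ref{lem:lifting-conjugacy/3}, with $g=(g_0,g_1)a$ and $h=(h_0,h_1)a$, I would first use that $x^{-1}gx$ and $h$ both lie in $\mathrm{Stab}_\Gamma(1)\cdot a$ (by part~\ref{lem:lifting-conjugacy/1}), so $x^{-1}gx\cdot h^{-1}\in\mathrm{Stab}_\Gamma(1)$ and $\Psi_\Gamma$ applies. When $x=(x_0,x_1)$, expanding in the wreath product gives
\[
x^{-1}gx = (x_0^{-1}g_0x_1,\ x_1^{-1}g_1x_0)\cdot a,
\]
so $x^{-1}gx\cdot h^{-1}\in N$ is equivalent to $x_0^{-1}g_0x_1 h_0^{-1}\in M$ and $x_1^{-1}g_1x_0 h_1^{-1}\in M$. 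The second of these is exactly the stated identity $x_1\,M = g_1x_0h_1^{-1}\,M$; substituting this into the first and using that $M\trianglelefteq\Gamma$ (so cosets commute with elements on either side up to $M$) eliminates $x_1$ and yields the first stated identity $x_0^{-1}(g_0g_1)x_0\,M = (h_0h_1)\,M$. The case $x=(x_0,x_1)a$ is analogous, with an extra swap from the outer $a$, producing the corresponding identities with $h_1h_0$ and $g_0^{-1}x_0h_1$ in the appropriate places.

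The only real subtlety is the algebraic manipulation in part~\ref{lem:lifting-conjugacy/3} where one must substitute the expression for $x_1$ into the other congruence and use normality of $M$ to rearrange into the form involving only $x_0$; everything else is a mechanical unraveling of the wreath-product structure together with the hypothesis $\Psi_\Gamma(N)=M\times M$. I would present the proof case by case with the computations laid out explicitly since each case is used later in constructing the recursive algorithm for $Q^N$ and $Q^N_n$.
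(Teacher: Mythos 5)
Your proposal is correct and follows essentially the same route as the paper: unravel the wreath-product decomposition coordinatewise, use $\Psi_\Gamma(N)=M\times M$ to convert the congruence mod $N$ into congruences mod $M$, and in part~(3) eliminate $x_1$ (your substitution of $x_1\,M=g_1x_0h_1^{-1}\,M$ into the other congruence is the same manipulation the paper performs by multiplying the two congruences). No gaps; the computations you indicate, including the handling of the swap induced by $a$ in the mixed cases, match the paper's proof.
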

\begin{proof}
We see that (\ref{lem:lifting-conjugacy/1}) in the lemma statement follows as $N \leq \mathrm{Stab}_\Gamma(1)$ is a normal subgroup.
We prove the remaining parts of the statement as follows.

\begin{enumerate}[leftmargin=*]
\item[(2)]
Suppose that $g,h\in \mathrm{Stab}_\Gamma(1)$ with $g = (g_0, g_1)$ and $h = (h_0, h_1)$. We then write $gN = (g_0M, g_1M)$ and $hN = (h_0M, h_1M)$.
\begin{enumerate}
    \item 
    Suppose that $x= (x_0,x_1) \in \mathrm{Stab}_\Gamma(1)$. We then see that $xN = (x_0 M, x_1 M)$ and $x^{-1}N = (x_0^{-1} M, x_1^{-1} M)$. Thus, 
        \[
        (x^{-1} g x)\, N = h \, N \iff
        \begin{cases}
            (x_0^{-1} g_0 x_0) \, M = h_0 \, M\\
            (x_1^{-1} g_1 x_1) \, M = h_1 \, M.
        \end{cases}
        \]
    \item 
    Suppose that $x = (x_0,x_1)\in \mathrm{Stab}_\Gamma(1)\cdot a$, then $xN = (x_0 M, x_1 M) \cdot a$, and thus, $x^{-1}N = (x_1^{-1} M, x_0^{-1} M) \cdot a$.
    Therefore, we see that
        \[
        (x^{-1} g x) \, N = h \, N
        \iff
        \begin{cases}
            (x_0^{-1} g_0 x_0) \, M = h_1 \, M\\
            (x_1^{-1} g_1 x_1) \, M = h_0 \, M.
        \end{cases}
        \]
\end{enumerate}
\item[(3)] 
Suppose that $g,h \in \mathrm{Stab}_\Gamma(1)\cdot a$ with $g = (g_0, g_1) \cdot a$ and $h = (h_0,h_1) \cdot a$. From this we can write $gN = (g_0 M, g_1 M) \cdot a$ and $hN = (h_0M, h_1 M) \cdot a$.
\begin{enumerate}
    \item 
    If $x= (x_0,x_1) \in \mathrm{Stab}_\Gamma(1)$, we then see that $xN = (x_0 M, x_1 M)$ and $x^{-1}N = (x_0^{-1} M, x_1^{-1} M)$. Thus, we see that
        \[
        (x^{-1} g x)\, N = h \, N \iff
        \begin{cases}
            (x_0^{-1} g_0 x_1) \, M = h_0 \, M\\
            (x_1^{-1} g_1 x_0) \, M = h_1 \, M.
        \end{cases}
        \]
    Moreover, we see that this system of equations is equivalent to
    \[
        \begin{cases}
            (x_0^{-1} g_0 x_1)(x_1^{-1} g_1 x_0) \, M = h_0 h_1 \, M\\
            x_1^{-1} M = (h_1 x_0^{-1} g_1^{-1})\, M
        \end{cases}
    \]
    which can then be simplified to the system of equations
    \[
        \begin{cases}
            (x_0^{-1}(g_0 g_1) x_0) \, M = (h_0 h_1) \, M\\
            x_1 \, M = (g_1 x_0 h_1^{-1}) \, M.
        \end{cases}
    \]
    \item 
    Suppose that $x = (x_0,x_1)\in \mathrm{Stab}_\Gamma(1)\cdot a$, then $xN = (x_0 M, x_1 M) \cdot a$, and thus, $x^{-1}N = (x_1^{-1} M, x_0^{-1} M) \cdot a$.
    Therefore, we see that
    \[
        (x^{-1}gx)\, N = h\, N
        \iff
        \begin{cases}
            (x_0^{-1} g_0 x_1) \, M = h_1 \, M\\
            (x_1^{-1} g_1 x_0) \, M = h_0 \, M.
        \end{cases}
    \]
    Moreover, we see that this system of equations is equivalent to
    \[
        \begin{cases}
            (x_0^{-1} g_0 x_1)(x_1^{-1} g_1 x_0) \, M = h_1 h_0 \, M\\
            x_1^{-1}\, M = (h_0 x_0^{-1} g_1^{-1})\, M
        \end{cases}
    \]
    which can then be simplified to the system of equations
    \[
        \begin{cases}
            (x_0^{-1} (g_0 g_1) x_0) \, M = (h_1 h_0) \, M\\
            x_1 \, M = (g_0^{-1} x_0 h_1) \, M.
        \end{cases}
    \]
\end{enumerate}
\end{enumerate}
Thus, we have our desired result.
\end{proof}

From the above lemma, we immediately obtain the following corollary which provides a recursive formula for the function $Q^A\colon \Gamma\times\Gamma\to \mathcal{P}(\Gamma/A)$.

\begin{corollary}\label{cor:rec-compute-Q}
    Let $A,B\leq \mathrm{Stab}_\Gamma(1)$ be finite-index normal subgroups of $\Gamma$ for which $B\times B \leq \Psi_\Gamma(A)$.
    Let $g,h \in \Gamma$, then $Q^A(g,h)$ can be calculated as follows.
    \begin{enumerate}[leftmargin=*]
        \item\label{cor:rec-compute-Q/1} If $g,h\in \mathrm{Stab}_\Gamma(1)$ with $g = (g_0, g_1)$ and $h = (h_0, h_1)$, then
        \begin{multline*}
            Q^A(g,h)
            =
            \Lift_{ B,A}\left( Q^B(g_0, h_0) \times Q^B(g_1, h_1) \right)
            \\\cup 
            \Lift_{ B,A}\left( Q^B(g_1, h_0) \times Q^B(g_0, h_1) \right)\cdot a.
        \end{multline*}
        \item\label{cor:rec-compute-Q/2} If $g,h \notin \mathrm{Stab}_{\Gamma}(1)$ with $g = (g_0, g_1) \cdot a$ and $h = (h_0, h_1) \cdot a$, then
        \begin{multline*}
            Q^A(g,h)
            =
            \Lift_{ B,A}\left\{
                (x_0 B, x_1 B) \in \Gamma/B \times \Gamma/B
            \ \middle|\
            \begin{aligned}
                x_0 B \in Q^B(g_0 g_1, h_0 h_1)
                \\\text{ and }
                x_1 B = g_1 x_0 h_1^{-1} B
            \end{aligned}
            \right\}
            \\\cup
            \Lift_{ B,A}\left\{
                (x_0 B, x_1 B) \in \Gamma/B \times \Gamma/B
            \ \middle|\
            \begin{aligned}
                x_0 B \in Q^B(g_0 g_1, h_1 h_0)
                \\\text{ and }
                x_1 B = g_0^{-1} x_0 h_1 B
            \end{aligned}
            \right\}\cdot a.
        \end{multline*}
    \end{enumerate}
    In all remaining cases we have $Q^A(g,h) = \emptyset$.
\end{corollary}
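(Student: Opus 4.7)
The plan is a case analysis on the first-level factorizations of $g$, $h$, and the conjugator, using Lemma~\ref{lem:lifting-conjugacy} for the algebraic reduction and Lemma~\ref{lem:branch-subgroup-prop} to translate between $\mathrm{Stab}_\Gamma(1)/A$ and $\Gamma/B \times \Gamma/B$. First I would dispatch the ``remaining cases'' in which $g$ and $h$ lie on opposite sides of the decomposition $\Gamma = \mathrm{Stab}_\Gamma(1) \sqcup \mathrm{Stab}_\Gamma(1) \cdot a$: if $\tilde{x}$ exactly conjugates $g$ to $h$ then in particular $\tilde{x}^{-1} g \tilde{x} \cdot A = hA$, so Lemma~\ref{lem:lifting-conjugacy}(\ref{lem:lifting-conjugacy/1}) applied with $N = A$ gives $g \in \mathrm{Stab}_\Gamma(1) \iff h \in \mathrm{Stab}_\Gamma(1)$. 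No such $\tilde{x}$ can exist, hence $Q^A(g, h) = \emptyset$.

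For case~(\ref{cor:rec-compute-Q/1}), I would argue the equality by double containment. For ``$\subseteq$'', take an exact conjugator $\tilde{x} \in xA$ with $\tilde{x}^{-1} g \tilde{x} = h$ and split on whether $\tilde{x} \in \mathrm{Stab}_\Gamma(1)$ or $\tilde{x} \in \mathrm{Stab}_\Gamma(1) \cdot a$. Expanding via the wreath-product multiplication, as in the computation underlying Lemma~\ref{lem:lifting-conjugacy}(\ref{lem:lifting-conjugacy/2}) but now read with exact equalities, shows that the restrictions $\tilde{x}_0, \tilde{x}_1$ are themselves exact conjugators of the appropriate pairs of restrictions of $g$ and $h$ (paired trivially in the first sub-case, swapped in the $a$-sub-case). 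Their $B$-cosets therefore lie in the corresponding $Q^B$-sets, and Lemma~\ref{lem:branch-subgroup-prop} certifies that $\tilde{x} A$ is the matching lift. For ``$\supseteq$'', given a pair $(y_0 B, y_1 B)$ in the $Q^B$-product and a lift $x' A \in \Lift_{B, A}(y_0 B, y_1 B)$, pick exact conjugators $\tilde{y}_i \in y_i B$ and form $\tilde{y} = (\tilde{y}_0, \tilde{y}_1) \in \mathrm{Stab}_\Gamma(1)$, which exactly conjugates $g$ to $h$. The key step is that $\tilde{y} A = x' A$: the quotient $\tilde{y}^{-1} x' = (b_0, b_1)$ has both coordinates in $B$, so it lies in $\Psi_\Gamma^{-1}(B \times B) \leq A$ by the hypothesis $B \times B \leq \Psi_\Gamma(A)$.

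Case~(\ref{cor:rec-compute-Q/2}) proceeds in exactly the same spirit, using Lemma~\ref{lem:lifting-conjugacy}(\ref{lem:lifting-conjugacy/3}) in place of (\ref{lem:lifting-conjugacy/2}). The new feature is that the $a$-factors in $g$ and $h$ couple the two first-level restrictions of the conjugator: one is determined modulo $B$ by the other via relations such as $x_1 B = g_1 x_0 h_1^{-1} B$. The input set for $\Lift_{B, A}$ is therefore the subset of $\Gamma/B \times \Gamma/B$ carved out by a conjugacy condition on $x_0$ together with the forced value of $x_1$, rather than a genuine product of $Q^B$-sets.

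The main obstacle is the ``$\supseteq$'' direction, which requires producing an element exactly (not merely modulo $A$) conjugating $g$ to $h$ from restriction-level data. This is precisely where the hypothesis $B \times B \leq \Psi_\Gamma(A)$ is essential: it lets one absorb the discrepancy between an exact conjugator $\tilde{y}$ assembled from the restrictions and an arbitrary representative $x'$ of the prescribed $A$-coset into $A$ itself, via the injectivity of $\Psi_\Gamma$ on $\mathrm{Stab}_\Gamma(1)$. Without this hypothesis the two sides of the claimed equality would differ by exactly the failure of the map $\mathrm{Stab}_\Gamma(1)/A \to \Gamma/B \times \Gamma/B$ to be injective.
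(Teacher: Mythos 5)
Your proposal is correct and follows essentially the same route as the paper: split on whether $g$, $h$ (and the conjugator) lie in $\mathrm{Stab}_\Gamma(1)$ or in $\mathrm{Stab}_\Gamma(1)\cdot a$, apply Lemma~\ref{lem:lifting-conjugacy} with $N=M=\{1\}$ to turn exact conjugacy into conditions on the first-level restrictions, and use Lemma~\ref{lem:branch-subgroup-prop} together with $B\times B\leq \Psi_\Gamma(A)$ to identify the resulting $A$-cosets with the stated lifts. The only point to make explicit in your ``$\supseteq$'' step is that the existence of $\tilde y=(\tilde y_0,\tilde y_1)$ as an element of $\mathrm{Stab}_\Gamma(1)$ (rather than merely of $\Aut(\Tree)$) is itself supplied by Lemma~\ref{lem:branch-subgroup-prop}, via the nonemptiness of $\Lift_{B,A}(y_0B,y_1B)$.
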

\begin{proof}
Let $g,h,x\in \Gamma$. We then separate our proof into three parts as follows.

Suppose that $g,h\in \mathrm{Stab}_\Gamma(1)$ with $g = (g_0, g_1)$ and $h = (h_0, h_1)$. 
Then from Lemma~\ref{lem:branch-subgroup-prop} and~\ref{lem:lifting-conjugacy} (with $N=M = \{1\}$), we see that
\begin{itemize}[leftmargin=*]
\item
if $x = (x_0, x_1) \in \mathrm{Stab}_\Gamma(1)$, then 
\[
x^{-1} g x = h \iff
\left\{
\begin{aligned}
    x_0^{-1} g_0 x_0 = h_0\\
    x_1^{-1} g_1 x_1 = h_1
\end{aligned}
\right.
\iff
\begin{cases}
    x_0 B \in Q^B(g_0, h_0)\\
    x_1 B \in Q^B(g_1, h_1);
\end{cases}
\]
\item
if $x = (x_0,x_1) \cdot a \in \mathrm{Stab}_\Gamma(1)\cdot a$, then
\[
x^{-1} g x = h \iff
\left\{
\begin{aligned}
    x_0^{-1} g_0 x_0 = h_1\\
    x_1^{-1} g_1 x_1 = h_0
\end{aligned}
\right.
\iff
\begin{cases}
    x_0 B \in Q^B(g_0, h_1)\\
    x_1 B \in Q^B(g_1, h_0).
\end{cases}
\]
\end{itemize}
Hence, from Lemma~\ref{lem:branch-subgroup-prop}, we see that
\begin{multline*}
    Q^A(g,h)
    =
    \Lift_{ B,A}\left( Q^B(g_0, h_0) \times Q^B(g_1, h_1) \right)
    \\\cup 
    \Lift_{ B,A}\left( Q^B(g_1, h_0) \times Q^B(g_0, h_1) \right)\cdot a.
\end{multline*}

Suppose that $g,h \in \mathrm{Stab}_\Gamma(1)\cdot a$ with $g = (g_0,g_1) \cdot a$ and $h = (h_0,h_1) \cdot a$.
Then from Lemma~\ref{lem:branch-subgroup-prop} and \ref{lem:lifting-conjugacy} (with $N = M = \{1\}$), we see that
\begin{itemize}[leftmargin=*]
\item 
if $x = (x_0, x_1)\in \mathrm{Stab}_\Gamma(1)$, then
\[
x^{-1}gx = h
\iff
\left\{
\begin{aligned}
    x_0^{-1}(g_0 g_1) x_0 = h_0 h_1\\
    x_1 = g_1 x_0 h_1^{-1}
\end{aligned}
\right.
\iff
\begin{cases}
    x_0 B \in Q^B(g_0g_1, h_0h_1) \\
    x_1 B = g_1 x_0 h_1^{-1} B;
\end{cases}
\]
\item 
if $x = (x_0, x_1)a \in \mathrm{Stab}_\Gamma(1)\cdot a$, then
\[
x^{-1}gx = h
\iff
\left\{
\begin{aligned}
    x_0^{-1} (g_0 g_1) x_0 = h_1 h_0\\
    x_1 = g_0^{-1} x_0 h_1
\end{aligned}
\right.
\iff
\begin{cases}
    x_0 B \in Q^B(g_0, g_1, h_1 h_0) \\
    x_1 B = g_0^{-1} x_0 h_1 B.
\end{cases}
\]
\end{itemize}
Hence, from Lemma~\ref{lem:branch-subgroup-prop}, we see that
\begin{multline*}
    Q^A(g,h)
    =
    \Lift_{ B,A}\left\{
        (x_0 B, x_1 B) \in \Gamma/B \times \Gamma/B
    \ \middle|\
    \begin{aligned}
        x_0 B \in Q^B(g_0 g_1, h_0 h_1)
        \\\text{ and }
        x_1 B = g_1 x_0 h_1^{-1} B
    \end{aligned}
    \right\}
    \\\cup
    \Lift_{ B,A}\left\{
        (x_0 B, x_1 B) \in \Gamma/B \times \Gamma/B
    \ \middle|\
    \begin{aligned}
        x_0 B \in Q^B(g_0 g_1, h_1 h_0)
        \\\text{ and }
        x_1 B = g_0^{-1} x_0 h_1 B
    \end{aligned}
    \right\}\cdot a.
\end{multline*}

Since $\mathrm{Stab}_\Gamma(1)$ is a normal subgroup of $\Gamma$ which contains $B$ as a subgroup, it follows that in all remaining cases we have $Q^B(g,h) = \emptyset$.
\end{proof}

Furthermore, Lemma~\ref{lem:lifting-conjugacy} can then be used to find recurrence formulas for the functions of the form $Q^A_n \colon \Gamma\times \Gamma \to \mathcal{P}(\Gamma/A)$ as follows.

\begin{corollary}\label{cor:rec-compute-QFin}
    Let $A,B\leq \mathrm{Stab}_\Gamma(1)$ be finite-index normal subgroups of $\Gamma$ for which $B\times B \leq \Psi_\Gamma(A)$, and let $n\in\mathbb{N}$ with $\mathrm{Stab}_\Gamma(n+1)\leq A$ and $\mathrm{Stab}_\Gamma(n)\leq B$.
    Let $g,h \in \Gamma$, then the function $Q^A_{n+1}(g,h)$ can be calculated as follows.
    \begin{enumerate}[leftmargin=*]
        \item If $g,h\in \mathrm{Stab}_\Gamma(1)$ with $g = (g_0, g_1)$ and $h = (h_0, h_1)$, then
        \begin{multline*}
            Q^A_{n+1}(g,h)
            =
            \Lift_{ B,A}\left( Q^B_n(g_0, h_0) \times Q^B_n(g_1, h_1) \right)
            \\\cup 
            \Lift_{ B,A}\left( Q^B_n(g_1, h_0) \times Q^B_n(g_0, h_1) \right)\cdot a.
        \end{multline*}
        \item If $g,h \notin \mathrm{Stab}_{\Gamma}(1)$ with $g = (g_0, g_1) \cdot a$ and $h = (h_0, h_1) \cdot a$, then
        \begin{multline*}
            Q^A_{n+1}(g,h)
            =
            \Lift_{ B,A}\left\{
                (x_0 B, x_1 B) \in \Gamma/B \times \Gamma/B
            \ \middle|\
            \begin{aligned}
                x_0 B \in Q^B_n(g_0 g_1, h_0 h_1)
                \\\text{ and }
                x_1 B = g_1 x_0 h_1^{-1} B
            \end{aligned}
            \right\}
            \\\cup
            \Lift_{ B,A}\left\{
                (x_0 B, x_1 B) \in \Gamma/B \times \Gamma/B
            \ \middle|\
            \begin{aligned}
                x_0 B \in Q^B_n(g_0 g_1, h_1 h_0)
                \\\text{ and }
                x_1 B = g_0^{-1} x_0 h_1 B
            \end{aligned}
            \right\}\cdot a.
        \end{multline*}
    \end{enumerate}
    In all remaining cases we have $Q^A_{n+1}(g,h) = \emptyset$.
\end{corollary}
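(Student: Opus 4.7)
The plan is to mirror the case analysis already carried out in the proof of Corollary~\ref{cor:rec-compute-Q}, with the only essential change being that every invocation of Lemma~\ref{lem:lifting-conjugacy} is made with $N=\mathrm{Stab}_\Gamma(n+1)$ and $M=\mathrm{Stab}_\Gamma(n)$ instead of with $N=M=\{1\}$. To legitimise this choice, I would first record the self-similarity identity $\Psi_\Gamma(\mathrm{Stab}_\Gamma(n+1))=\mathrm{Stab}_\Gamma(n)\times\mathrm{Stab}_\Gamma(n)$, which holds because an element of $\Gamma\leq\Aut(\Tree)$ fixes every vertex of depth at most $n+1$ precisely when it lies in $\mathrm{Stab}_\Gamma(1)$ and both of its first-level restrictions fix every vertex of depth at most $n$. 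Combined with the hypotheses $\mathrm{Stab}_\Gamma(n+1)\leq A$ and $\mathrm{Stab}_\Gamma(n)\leq B$, this is exactly what allows conditions phrased modulo $\mathrm{Stab}_\Gamma(n+1)$ inside $A$ to be rewritten, via restriction to first-level subtrees, as conditions phrased modulo $\mathrm{Stab}_\Gamma(n)$ inside $B$, which is precisely the data recorded by $Q^B_n$.

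With that in place, I would proceed by cases on the first-level parity of $g$ and $h$. The ``all remaining cases'' clause (empty $Q^A_{n+1}(g,h)$ whenever $g$ and $h$ differ in first-level parity) follows immediately from Lemma~\ref{lem:lifting-conjugacy}\ref{lem:lifting-conjugacy/1}, since any witness $x$ to $(x^{-1}gx)\mathrm{Stab}_\Gamma(n+1)=h\mathrm{Stab}_\Gamma(n+1)$ forces $g$ and $h$ to agree on first-level parity. For the two substantive cases, I would split further according to whether the witness $x$ lies in $\mathrm{Stab}_\Gamma(1)$ or in $\mathrm{Stab}_\Gamma(1)\cdot a$, and for each of the four resulting sub-situations apply the corresponding subpart of Lemma~\ref{lem:lifting-conjugacy}\ref{lem:lifting-conjugacy/2} or \ref{lem:lifting-conjugacy/3}. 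This converts the single modular conjugacy relation on $g,h$ into the pair of modular conjugacy relations on their first-level restrictions, and membership of $(x_0 B,x_1 B)$ in the appropriate product of two $Q^B_n$ sets.

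To conclude, I would reassemble these pairs of $B$-coset data into $A$-cosets using Lemma~\ref{lem:branch-subgroup-prop}, obtaining the two $\Lift_{B,A}$-terms (the second one carrying the trailing factor $a$ coming from the witness subcase $x\in\mathrm{Stab}_\Gamma(1)\cdot a$). The only subtle point—and the one step I expect to require a little care rather than being purely mechanical—is the representative-independence of this reassembly in the ``$\supseteq$'' direction: given pairs of $B$-cosets in the relevant product of $Q^B_n$ sets, one needs to produce a genuine element $x=(x_0,x_1)\in\Gamma$ whose $A$-coset realises the claimed element of $\Lift_{B,A}$. This follows because $B\times B\leq\Psi_\Gamma(A)$ implies that any two elements of $\mathrm{Stab}_\Gamma(1)$ with the same pair of first-level $B$-coset restrictions differ by an element of $\Psi_\Gamma^{-1}(B\times B)\leq A$, so $\Lift_{B,A}$ genuinely outputs a single $A$-coset and witness selection at the restriction level lifts coherently. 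Once that bookkeeping is checked, both inclusions in parts~(1) and~(2) of the statement drop out directly from the four subcases, in complete parallel to the argument for Corollary~\ref{cor:rec-compute-Q}.
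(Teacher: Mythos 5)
Your plan is essentially the paper's own proof: the paper likewise reruns the argument of Corollary~\ref{cor:rec-compute-Q}, invoking Lemma~\ref{lem:lifting-conjugacy} with $N=\mathrm{Stab}_\Gamma(n+1)$ and $M=\mathrm{Stab}_\Gamma(n)$, splitting according to whether the witness $x$ lies in $\mathrm{Stab}_\Gamma(1)$ or $\mathrm{Stab}_\Gamma(1)\cdot a$, and reassembling the $B$-coset data into $A$-cosets via Lemma~\ref{lem:branch-subgroup-prop}; your explicit treatment of the single-$A$-coset property of $\Lift_{B,A}$ and of coherent witness selection in the ``$\supseteq$'' direction is, if anything, more careful than the paper's.

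One justification does need repair. The ``self-similarity identity'' $\Psi_\Gamma(\mathrm{Stab}_\Gamma(n+1))=\mathrm{Stab}_\Gamma(n)\times\mathrm{Stab}_\Gamma(n)$ does not follow from the membership criterion you cite: that criterion only yields $\Psi_\Gamma(\mathrm{Stab}_\Gamma(n+1))=\bigl(\mathrm{Stab}_\Gamma(n)\times\mathrm{Stab}_\Gamma(n)\bigr)\cap\Psi_\Gamma(\mathrm{Stab}_\Gamma(1))$, and $\Psi_\Gamma(\mathrm{Stab}_\Gamma(1))$ is a \emph{proper} subgroup of $\Gamma\times\Gamma$ (of index $8$), so the identity is actually false for small $n$; for instance $\mathrm{Stab}_\Gamma(1)\times\mathrm{Stab}_\Gamma(1)$ has index $4$ in $\Gamma\times\Gamma$ and so cannot lie in $\Psi_\Gamma(\mathrm{Stab}_\Gamma(1))$. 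Under the hypotheses of the corollary the identity is true, but for a different reason: $\mathrm{Stab}_\Gamma(n)\times\mathrm{Stab}_\Gamma(n)\leq B\times B\leq\Psi_\Gamma(A)\leq\Psi_\Gamma(\mathrm{Stab}_\Gamma(1))$, and any $\Psi_\Gamma$-preimage of a pair in $\mathrm{Stab}_\Gamma(n)\times\mathrm{Stab}_\Gamma(n)$ lies in $\mathrm{Stab}_\Gamma(1)$ with both restrictions in $\mathrm{Stab}_\Gamma(n)$, hence in $\mathrm{Stab}_\Gamma(n+1)$. (Alternatively, observe that the proof of Lemma~\ref{lem:lifting-conjugacy} only uses $\Psi_\Gamma(N)\subseteq M\times M$ together with the implication that an element of $\mathrm{Stab}_\Gamma(1)$ whose restrictions lie in $M$ belongs to $N$, and this weaker property holds unconditionally for $N=\mathrm{Stab}_\Gamma(n+1)$, $M=\mathrm{Stab}_\Gamma(n)$.) With that fix, your argument goes through exactly as in the paper.
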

\begin{proof}
In this proof, let $g,h,x\in \Gamma$.

Suppose that $g,h\in \mathrm{Stab}_\Gamma(1)$ with $g = (g_0, g_1)$ and $h = (h_0, h_1)$. 
Then from Lemma~\ref{lem:branch-subgroup-prop} and~\ref{lem:lifting-conjugacy} (with $N = \mathrm{Stab}_\Gamma(n+1)$ and $M = \mathrm{Stab}_\Gamma(n)$), we see that
\begin{itemize}[leftmargin=*]
\item
if $x = (x_0, x_1) \in \mathrm{Stab}_\Gamma(1)$, then 
\begin{align*}
(x^{-1} g x) \mathrm{Stab}_\Gamma(n+1) &= h \, \mathrm{Stab}_\Gamma(n+1)
\\&\qquad\iff
\begin{cases}
    (x_0^{-1} g_0 x_0)\, \mathrm{Stab}_\Gamma(n) = h_0 \, \mathrm{Stab}_\Gamma(n)\\
    (x_1^{-1} g_1 x_1)\, \mathrm{Stab}_\Gamma(n) = h_1 \, \mathrm{Stab}_\Gamma(n)
\end{cases}
\\&\qquad\iff
\begin{cases}
    x_0 B \in Q^B_n(g_0, h_0)\\
    x_1 B \in Q^B_n(g_1, h_1);
\end{cases}
\end{align*}
\item
if $x = (x_0,x_1) \cdot a \in \mathrm{Stab}_\Gamma(1)\cdot a$, then
\begin{align*}
(x^{-1} g x) \, \mathrm{Stab}_\Gamma(n+1) &= h \, \mathrm{Stab}_\Gamma(n+1)
\\&\qquad\iff
\begin{cases}
    (x_0^{-1} g_0 x_0) \, \mathrm{Stab}_\Gamma(n) = h_1 \, \mathrm{Stab}_\Gamma(n)\\
    (x_1^{-1} g_1 x_1) \, \mathrm{Stab}_\Gamma(n) = h_0 \, \mathrm{Stab}_\Gamma(n)
\end{cases}
\\&\qquad\iff
\begin{cases}
    x_0 B \in Q^B_n(g_0, h_1)\\
    x_1 B \in Q^B_n(g_1, h_0).
\end{cases}
\end{align*}
\end{itemize}
Hence, from Lemma~\ref{lem:branch-subgroup-prop}, we see that
\begin{multline*}
    Q^A_{n+1}(g,h)
    =
    \Lift_{ B,A}\left( Q^B_n(g_0, h_0) \times Q^B_n(g_1, h_1) \right)
    \\\cup 
    \Lift_{ B,A}\left( Q^B_n(g_1, h_0) \times Q^B_n(g_0, h_1) \right)\cdot a.
\end{multline*}

Suppose that $g,h \in \mathrm{Stab}_\Gamma(1)\cdot a$ with $g = (g_0,g_1) \cdot a$ and $h = (h_0,h_1) \cdot a$.
Then from Lemma~\ref{lem:branch-subgroup-prop} and \ref{lem:lifting-conjugacy} (with $N = \mathrm{Stab}_\Gamma(n+1)$ and $M = \mathrm{Stab}_\Gamma(n)$), we see that
\begin{itemize}[leftmargin=*]
\item 
if $x = (x_0, x_1)\in \mathrm{Stab}_\Gamma(1)$, then
\begin{align*}
(x^{-1}gx) \, \mathrm{Stab}_\Gamma(n+1) &= h \, \mathrm{Stab}_\Gamma(n+1)
\\&\qquad\iff
\begin{cases}
    (x_0^{-1}(g_0 g_1) x_0) \, \mathrm{Stab}_\Gamma(n) = (h_0 h_1) \, \mathrm{Stab}_\Gamma(n)\\
    x_1 \, \mathrm{Stab}_\Gamma(n) = (g_1 x_0 h_1^{-1}) \, \mathrm{Stab}_\Gamma(n)
\end{cases}
\\&\qquad\iff
\begin{cases}
    x_0 B \in Q^B_n(g_0g_1, h_0h_1) \\
    x_1 B = (g_1 x_0 h_1^{-1})\, B;
\end{cases}
\end{align*}
\item 
if $x = (x_0, x_1)a \in \mathrm{Stab}_\Gamma(1)\cdot a$, then
\begin{align*}
(x^{-1}gx) \, \mathrm{Stab}_\Gamma(n+1) &= h \, \mathrm{Stab}_\Gamma(n+1)
\\&\qquad\iff
\begin{cases}
    (x_0^{-1} (g_0 g_1) x_0)\, \mathrm{Stab}_\Gamma(n) = (h_1 h_0) \, \mathrm{Stab}_\Gamma(n)\\
    x_1\, \mathrm{Stab}_\Gamma(n) = (g_0^{-1} x_0 h_1)\, \mathrm{Stab}_\Gamma(n)
\end{cases}
\\&\qquad\iff
\begin{cases}
    x_0 B \in Q^B_n(g_0, g_1, h_1 h_0) \\
    x_1 B = g_0^{-1} x_0 h_1 B.
\end{cases}
\end{align*}
\end{itemize}
Hence, from Lemma~\ref{lem:branch-subgroup-prop}, we see that
\begin{multline*}
    Q^A_{n+1}(g,h)
    =
    \Lift_{ B,A}\left\{
        (x_0 B, x_1 B) \in \Gamma/B \times \Gamma/B
    \ \middle|\
    \begin{aligned}
        x_0 B \in Q^B_n(g_0 g_1, h_0 h_1)
        \\\text{ and }
        x_1 B = g_1 x_0 h_1^{-1} B
    \end{aligned}
    \right\}
    \\\cup
    \Lift_{ B,A}\left\{
        (x_0 B, x_1 B) \in \Gamma/B \times \Gamma/B
    \ \middle|\
    \begin{aligned}
        x_0 B \in Q^B_n(g_0 g_1, h_1 h_0)
        \\\text{ and }
        x_1 B = g_0^{-1} x_0 h_1 B
    \end{aligned}
    \right\}\cdot a.
\end{multline*}

Since $\mathrm{Stab}_\Gamma(1)$ is a normal subgroup of $\Gamma$ which contains $B$ as a subgroup, it follows that in all remaining cases we have $Q^B_{n+1}(g,h) = \emptyset$.
\end{proof}

\subsection{The subgroup \texorpdfstring{$K$}{K}}
Let $K\leq \Gamma$ be the normal closure of the element $(ab)^2$ in the group $\Gamma$.
Then it is known from the Proposition on p.~230 of \cite{harpe2000} that
\begin{itemize}
    \item $
    K
    =
    \left\langle
        (ab)^2,
        (bada)^2,
        (abad)^2
    \right\rangle$;
    \item $K$ is a finite-index normal subgroup of $\Gamma$ with index $[\Gamma:K]=16$;
    \item $\mathrm{Stab}_\Gamma(3)\leq K \leq \mathrm{Stab}_\Gamma(1)$; and
    \item $K\times K\leq \Psi(K)$.
\end{itemize}
From these properties, we see that Corollaries~\ref{cor:rec-compute-Q} and~\ref{cor:rec-compute-QFin} apply with $A=B=K$.
Moreover, from Figure~2 in \cite{Lysenok2010}, we see that the cosets of $K$ in $\Gamma$, with respect to the generating set $\{a,b,d\}$, form the Schreier graph as in Figure~\ref{fig:schr-graph} where
\begin{align*}
    z_0 &= K, &
    z_1 &= Kd, &
    z_2 &= Kda,&
    z_3 &= Kdad,
    \\
    z_4 &= K(ad)^2,&
    z_5 &= Kada, &
    z_6 &= K ad, &
    z_7 &= K a,
    \\
    z_8 &= Kb,&
    z_9 &= K c,&
    z_{10} &= Kca,&
    z_{11} &= Kcad,
    \\
    z_{12} &= K badad,&
    z_{13} &= K bada, &
    z_{14} &= K bad, &
    z_{15} &= K ab.
\end{align*}
Moreover, from Lemma~2.10 in \cite{Lysenok2010}, we see that
\[
    z_i \in \Lift_{K,K}(z_j, z_k)
\]
if and only if $(j,k), i$ is an entry in Table~\ref{tab:lifting}.

\begin{figure}[ht!]
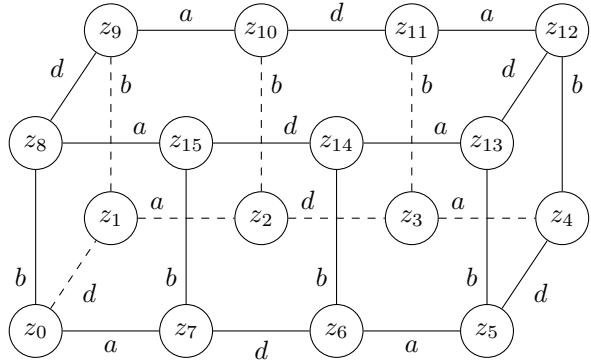

\centering
\includestandalone{figure/schreier-graph}
\caption{Schreier graph of $K$ in $G$ (where $z_0 \in K$) with respect to the generating set $\{a,b,d\}$ by right-multiplication \cite[Figure~2]{Lysenok2010}.}
\label{fig:schr-graph}
\end{figure}

\begin{table}[ht!]
    \centering
    \begin{tabular}{| c | c | p{1em} | c | c | p{1em} | c | c | p{1em} | c | c | }
        \hhline{--~--~--~--}
        $(w_0,w_1)$ & $w$ &&
        $(w_0,w_1)$ & $w$ &&
        $(w_0,w_1)$ & $w$ &&
        $(w_0,w_1)$ & $w$
        \\
        \hhline{==~==~==~==}
        (0,0) & 0 &&
        (8,0) & 5 &&
        (4,4) & 0 &&
        (12,4) & 5
        \\
        \hhline{--~--~--~--}
        (0,8) & 1 &&
        (8,8) & 4 &&
        (4,12)& 1 &&
        (12,12)&4
        \\
        \hhline{--~--~--~--}
        (1,7) & 13 &&
        (9,7) & 8 &&
        (5,3) & 13 &&
        (13,3) & 8
        \\
        \hhline{--~--~--~--}
        (1,15) & 12 &&
        (9,15) & 9 &&
        (5,11) & 12 &&
        (13,11) & 9
        \\
        \hhline{--~--~--~--}
        (2,6) & 4 &&
        (10,6) & 1 &&
        (6,2) & 4 &&
        (14,2)&1
        \\
        \hhline{--~--~--~--}
        (2,14) & 5 &&
        (10,14) & 0 &&
        (6,10) & 5 &&
        (14,10) & 0
        \\
        \hhline{--~--~--~--}
        (3,5) & 9 &&
        (11,5) & 12 &&
        (7,1) & 9 &&
        (15,1) & 12
        \\
        \hhline{--~--~--~--}
        (3,13) & 8 &&
        (11,13) & 13 &&
        (7,9) & 8 &&
        (15,9) & 13
        \\
        \hhline{--~--~--~--}
    \end{tabular}
    
    \caption{Lifting map \cite[Table~1]{Lysenok2010}}
    \label{tab:lifting}
\end{table}

In our proofs within this section, we make use of the commutator subgroup of $\Gamma$ which is given by
\[
    [\Gamma,\Gamma]
    =\left\langle\{ [x,y] \mid x,y\in \{a,b,c,d\}\right\rangle.
\]
In particular, we see that
\[
    \Gamma/[\Gamma,\Gamma]
    =
    \left\langle
        a,b,c,d\ \middle|\ 
        \begin{aligned}
            a^2=b^2=d^2=bcd=1,\text{ and}\\
            [x,y]=1\text{ for all }x,y\in \{a,b,c,d\}
        \end{aligned}
    \right\rangle.
\]
From the Schreier graph in Figure~\ref{fig:schr-graph}, we see that $[\Gamma,\Gamma] = K \cup K (ad)^2$.
In particular, this means that $[\Gamma,\Gamma]$ is an index 8 normal subgroup of $\Gamma$, and that
\[
    \Gamma = [\Gamma,\Gamma]\{ 1,a,b,c,d,ab,ad,ac \}.
\]
We then have the following results.

\begin{lemma}\label{lem:comm-subgroup}
    Let $g\in \Gamma$ and suppose that $g = (g_0,g_1)\cdot a$. Then
    \begin{itemize}
        \item if $g \in [\Gamma,\Gamma] a$, then $g_0 g_1,\, g_1 g_0 \in [\Gamma,\Gamma]$,
        \item if $g \in [\Gamma,\Gamma] ab$, then $g_0 g_1,\, g_1 g_0 \in [\Gamma,\Gamma]ac$,
        \item if $g \in [\Gamma,\Gamma] ac$, then $g_0 g_1,\, g_1 g_0 \in [\Gamma,\Gamma]ad$, and
        \item if $g \in [\Gamma,\Gamma] ad$, then $g_0 g_1,\, g_1 g_0 \in [\Gamma,\Gamma]b$.
    \end{itemize}
\end{lemma}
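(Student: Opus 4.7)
The key observations are that $\Gamma/[\Gamma,\Gamma]$ is abelian (elementary abelian of order $8$, with basis $a,b,d$ since $bcd=1$ forces $c\equiv bd$ in the abelianization), and that $[\Gamma,\Gamma]\leq\mathrm{Stab}_\Gamma(1)$ (which follows from $[\Gamma,\Gamma]=K\cup K(ad)^2$, together with $K\leq\mathrm{Stab}_\Gamma(1)$ and $(ad)^2=(b,b)\in\mathrm{Stab}_\Gamma(1)$). The plan is to construct a well-defined map
\[
\rho\colon (\mathrm{Stab}_\Gamma(1)\cdot a)/[\Gamma,\Gamma] \to \Gamma/[\Gamma,\Gamma],\qquad [(g_0,g_1)a] \mapsto [g_0 g_1],
\]
and then verify the four required values on the four coset representatives $a, ab, ac, ad$. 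Since the target is abelian, $g_0 g_1 \equiv g_1 g_0 \pmod{[\Gamma,\Gamma]}$, so the $g_1 g_0$ statement is automatic once the $g_0 g_1$ statement is proved.

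The verification on representatives will be a direct wreath-product calculation using $b=(a,c)$, $c=(a,d)$, $d=(1,b)$, and the identity $a(h_0,h_1)=(h_1,h_0)a$: one finds $a=(1,1)\cdot a$, $ab=(c,a)\cdot a$, $ac=(d,a)\cdot a$, and $ad=(b,1)\cdot a$, whose products $g_0 g_1$ are $1$, $ca\equiv ac$, $da\equiv ad$, and $b$ in $\Gamma/[\Gamma,\Gamma]$ respectively, matching the four cases of the statement.

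The heart of the argument is the well-definedness of $\rho$. For any $k=(k_0,k_1)\in[\Gamma,\Gamma]$ and any $g=(g_0,g_1)a$, one has $gk=(g_0 k_1, g_1 k_0)a$, so
\[
(g_0 k_1)(g_1 k_0)\equiv g_0 g_1\cdot k_0 k_1 \pmod{[\Gamma,\Gamma]},
\]
which reduces the task to showing $k_0 k_1\in[\Gamma,\Gamma]$ whenever $(k_0,k_1)\in[\Gamma,\Gamma]$. This is the main obstacle, and I would handle it by introducing the homomorphism $\pi\colon\mathrm{Stab}_\Gamma(1)\to\Gamma/[\Gamma,\Gamma]$ given by $\pi((h_0,h_1))= h_0 h_1 [\Gamma,\Gamma]$ (which is a homomorphism precisely because its target is abelian), and establishing $[\Gamma,\Gamma]\subseteq\ker\pi$. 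Since $\{b,c,d\}$ forms a Klein four-group with $bcd=1$, $[\Gamma,\Gamma]$ is the normal closure in $\Gamma$ of $\{(ab)^2,(ac)^2,(ad)^2\}$. Computing $(ab)^2=(ca,ac)$, $(ac)^2=(da,ad)$, and $(ad)^2=(b,b)$ shows that $\pi$ vanishes on each of these three elements. Conjugation by $\mathrm{Stab}_\Gamma(1)$ preserves $\pi$-values automatically (abelian target), and conjugation by $a$ sends $(k_0,k_1)$ to $(k_1,k_0)$, which has the same $\pi$-value by commutativity of the quotient; hence $\pi$ is $\Gamma$-conjugation invariant on $\mathrm{Stab}_\Gamma(1)$ and vanishes on the full normal closure $[\Gamma,\Gamma]$, completing the proof.
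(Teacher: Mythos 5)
Your proof is correct and follows essentially the same route the paper indicates: the paper's (sketched) argument is exactly ``$\Gamma/[\Gamma,\Gamma]$ is abelian plus the recursive definitions of $a,b,c,d$'', and your computation on the coset representatives $a$, $ab$, $ac$, $ad$ together with the well-definedness argument (the homomorphism $(h_0,h_1)\mapsto h_0h_1[\Gamma,\Gamma]$ vanishing on the normal closure of $(ab)^2$, $(ac)^2$, $(ad)^2$) simply supplies the details the paper leaves implicit.
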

\begin{proof}
These results follow from the fact that $\Gamma/[\Gamma,\Gamma]$ is abelian, and from the recursive definition of the generators $a$, $b$, $c$, and $d$.
Alternatively, one can derive this result form Figure~\ref{fig:schr-graph}, Table~\ref{tab:lifting}, and the property that $[\Gamma,\Gamma] = K\cup K(ad)^2$.
\end{proof}

We then have the following result for $Q^K\colon \Gamma\times\Gamma\to\mathcal{P}(\Gamma/K)$.

\begin{lemma}[p.~157 of \cite{grigorchuk2005solved}]\label{lem:base-cong}
    We have
   \begin{align*}
       Q^K(a,a) &= K\{1,a,dad,(ad)^2\}, &
       Q^K(b,b) &= K\{1,b,c,d\},\\
       Q^K(c,c) &= K\{1,b,c,d\},\ \ \text{and}&
       Q^K(d,d) &= K\{1,b,c,d,ada,(ad)^2,bada, badad\}.
   \end{align*}
   Moreover, $Q(1,1)=\Gamma/K$, and $Q(x,y)=\emptyset$ for each $x,y\in \{a,b,c,d,1\}$ with $x\neq y$.
\end{lemma}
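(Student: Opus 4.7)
I would split the claim into four independent pieces: (i) the trivial case $Q^K(1,1)=\Gamma/K$, (ii) the empty off-diagonal cases $Q^K(x,y)=\emptyset$ for $x\neq y$ in $\{1,a,b,c,d\}$, (iii) the inclusion $\supseteq$ in the four centralizer equalities, and (iv) the reverse inclusion $\subseteq$. Piece (i) is immediate because $x^{-1}\cdot 1\cdot x=1$ for every $x\in\Gamma$. Piece (ii) is handled uniformly by the abelianization: conjugation is trivial on $\Gamma/[\Gamma,\Gamma]\cong(\mathbb{Z}/2)^3$, and the excerpt already records that $1,a,b,c,d$ (with $d=bc$) lie in five distinct cosets of $[\Gamma,\Gamma]$, so non-conjugacy modulo $[\Gamma,\Gamma]$ forces non-conjugacy modulo $K\leq[\Gamma,\Gamma]$.

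For piece (iii), the plan is a direct verification using the recursive decompositions $a=(1,1)s$, $b=(a,c)$, $c=(a,d)$, $d=(1,b)$. For example, a short wreath-product calculation gives $dad=(b,b)s$ and $(ad)^{2}=(b,b)$, and from these one immediately reads off $(dad)a(dad)=a$ and $(ad)^{2}\, a\, (ad)^{-2}=a$; combined with the obvious $1,a\in C_\Gamma(a)$, this shows $K\{1,a,dad,(ad)^2\}\subseteq Q^K(a,a)$. The corresponding checks for $b$, $c$, $d$ are the same kind of finite bookkeeping, slightly longer for $d$ because eight cosets appear; each claimed representative can be verified to lie in $C_\Gamma(x)$ (in fact, exactly, not just modulo $K$) by computing its first-level decomposition.

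Piece (iv) is the real obstacle and the one I would approach most carefully. Since $[\Gamma:K]=16$ and the sixteen cosets $z_0,\dots,z_{15}$ are explicitly enumerated (Figure~\ref{fig:schr-graph}), the claim ``$Q^K(x,x)$ is no larger than the listed set'' is, in principle, a finite check: for each $z_i$, pick a representative and compute $z_i^{-1}xz_i$ modulo $K$ using the Schreier graph and Table~\ref{tab:lifting}. To organize this and avoid a brute-force case split, I would use the self-similar recursion of Corollary~\ref{cor:rec-compute-Q} applied with $A=B=K$: any $z\in Q^K(x,x)$ for $x\in\{a,b,c,d\}\subseteq\mathrm{Stab}_\Gamma(1)\cup\mathrm{Stab}_\Gamma(1)\cdot a$ has first-level restrictions constrained by conjugacy equations in the two factors, and for torsion generators these equations propagate through $\Psi_\Gamma$ by the lifting rule of Lemma~\ref{lem:branch-subgroup-prop}. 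Feeding the base-level restrictions of $a,b,c,d$ into this recursion produces only the listed cosets in $\Gamma/K$, which gives $\subseteq$.

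The main technical burden is therefore the $\subseteq$ direction, and specifically the bookkeeping of Table~\ref{tab:lifting} through one level of the recursion. I would also double-check the quoted count for $Q^K(d,d)$ (eight cosets) by confirming via $\Gamma/[\Gamma,\Gamma]$ that $|C_\Gamma(d)K/K|$ divides $|[\Gamma,\Gamma]:K|=2$ multiplied by the abelian part — this matches $|K\{1,b,c,d,ada,(ad)^2,bada,badad\}|=8$ and serves as a sanity check that no coset has been lost or double-counted.
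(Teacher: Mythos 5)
The paper does not actually prove this lemma: it is imported as a quoted fact from p.~157 of \cite{grigorchuk2005solved}, being the computation of the images of the centralizers $C_\Gamma(a),C_\Gamma(b),C_\Gamma(c),C_\Gamma(d)$ in $\Gamma/K$ together with the non-conjugacy of distinct elements of $\{1,a,b,c,d\}$. So any written-out argument is ``different'' from the paper, and most of your outline is sound: $Q^K(1,1)=\Gamma/K$ is trivial; the off-diagonal cases follow because distinct elements of $\{1,a,b,c,d\}$ have distinct images in $\Gamma/[\Gamma,\Gamma]\cong(\mathbb{Z}/2)^3$, hence are not conjugate in $\Gamma$ (note that $Q^K(x,y)=\emptyset$ is a statement about genuine conjugacy in $\Gamma$, with only the conjugator recorded modulo $K$, so the abelianization argument is exactly what is needed); and the inclusions $\supseteq$ do reduce to checking that the listed representatives centralize the generator on the nose, which your wreath computations ($dad=(b,b)\cdot a$, $(ad)^2=(b,b)$, $ada=(b,1)$, commutativity of the Klein four-group $\{1,b,c,d\}$) accomplish.

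The genuine gap is in the $\subseteq$ step as you describe it. ``Feeding the base-level restrictions of $a,b,c,d$ into the recursion'' of Corollary~\ref{cor:rec-compute-Q} is not a well-founded computation: since $b=(a,c)$, $c=(a,d)$ and $d=(1,b)$, the recursion for $Q^K(b,b)$ calls $Q^K(c,c)$, which calls $Q^K(d,d)$, which calls $Q^K(b,b)$ again, so the recursion never bottoms out and, taken literally, your step (iv) does not produce anything. The missing idea is to use Corollary~\ref{cor:rec-compute-Q} only as an upper bound and to break the cycle with the trivial bound $Q^K(\cdot,\cdot)\subseteq\Gamma/K$ on the not-yet-controlled factor (the cross terms vanish by your off-diagonal part): first $Q^K(a,a)$ is contained in $\Lift_{K,K}$ of the diagonal pairs together with its translate by $a$, which by Table~\ref{tab:lifting} and Figure~\ref{fig:schr-graph} lies in $K\{1,(ad)^2,a,dad\}$; then $Q^K(b,b),Q^K(c,c)\subseteq\Lift_{K,K}\bigl(Q^K(a,a)\times\Gamma/K\bigr)\subseteq K\{1,b,c,d\}$; then $Q^K(d,d)\subseteq\Lift_{K,K}\bigl(\Gamma/K\times Q^K(b,b)\bigr)\subseteq K\{1,b,c,d,ada,(ad)^2,bada,badad\}$. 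This is precisely the Table~\ref{tab:lifting} bookkeeping the paper performs for the truncated functions $Q^K_n$ in the proof of Lemma~\ref{lem:Q-agreement}, and with it your argument closes. Your final ``sanity check'' for $Q^K(d,d)$ via indices in $\Gamma/[\Gamma,\Gamma]$ is too vague to certify anything, but it is also unnecessary once the above bound is in place.
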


The subgroup $K$ is of particular interest due to the following lemma.

\begin{lemma}\label{lem:Q-agreement}
    If $g,h\in \Gamma$ with $g \cdot \mathrm{Stab}_\Gamma(1)\neq h\cdot  \mathrm{Stab}_\Gamma(1)$, then $Q_3(g,h) = \emptyset$.
    Moreover, if $g,h\in \Gamma$ with $\ell(g),\ell(h)\leq 1$, then $Q^K(g,h) = Q^K_6(g,h)$.
\end{lemma}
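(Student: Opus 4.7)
The first part is essentially immediate: since $\mathrm{Stab}_\Gamma(1)$ is a normal subgroup of $\Gamma$ containing $\mathrm{Stab}_\Gamma(3)$, any $x\in\Gamma$ with $x^{-1}gx\cdot\mathrm{Stab}_\Gamma(3)=h\cdot\mathrm{Stab}_\Gamma(3)$ would reduce modulo $\mathrm{Stab}_\Gamma(1)$ to $g\cdot\mathrm{Stab}_\Gamma(1)=h\cdot\mathrm{Stab}_\Gamma(1)$, contradicting the hypothesis; so no such $x$ exists and $Q^K_3(g,h)=\emptyset$.

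For the second part, the containment $Q^K(g,h)\subseteq Q^K_6(g,h)$ comes from~\eqref{eq:Qcontainment}, so it suffices to establish the reverse inclusion. I would do this by a finite case analysis over the pairs $(g,h)\in\{1,a,b,c,d\}^2$. Several pairs are immediate: pairs in distinct $\mathrm{Stab}_\Gamma(1)$-cosets are handled by the first part together with~\eqref{eq:Qcontainment}; the pair $(1,1)$ gives $\Gamma/K$ on both sides; and any pair of the form $(1,x)$ or $(x,1)$ with $x\in\{a,b,c,d\}$ yields the empty set on both sides, since $Q^K(1,x)=\emptyset$ by Lemma~\ref{lem:base-cong} and $Q^K_6(1,x)=\emptyset$ because none of $a,b,c,d$ lies in $\mathrm{Stab}_\Gamma(6)$ (one checks from the recursive descriptions that each of them already acts nontrivially on level $3$). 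The substantive cases are $(a,a)$ together with the nine pairs from $\{b,c,d\}\times\{b,c,d\}$.

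My plan for these cases is to apply the recursive formulas of Corollaries~\ref{cor:rec-compute-Q} and~\ref{cor:rec-compute-QFin} in parallel to $Q^K(g,h)$ and $Q^K_6(g,h)$. Writing $a=(1,1)\cdot a$, $b=(a,c)$, $c=(a,d)$ and $d=(1,b)$, one sees that the first-level restrictions and the products $g_0g_1$, $h_0h_1$, $h_1h_0$ produced by the formulas all remain in $\{1,a,b,c,d\}$, so the recursion stays self-contained. The ``off-diagonal'' Lift-terms appearing at each step (involving pairs such as $(a,c)$, $(c,a)$ or $(b,1)$) lie in distinct $\mathrm{Stab}_\Gamma(1)$-cosets and are therefore killed by the first part of this lemma, while the identities $Q^K_n(1,1)=\Gamma/K$ for all $n$ and $Q^K_n(a,a)=\Lift_{K,K}\{(xK,xK)\mid x\in\Gamma\}\cdot\{1,a\}$ for all $n\geq 4$ (the latter obtained by one application of the recursion, since the stabilizer part of $a$ is trivial) immediately collapse the corresponding factors on the two sides to the same Lift-expression.

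The principal obstacle is that the map $g\mapsto g|_1$ acts as a $3$-cycle $b\to c\to d\to b$ on the remaining generators, so each of $Q^K_n(b,b)$, $Q^K_n(c,c)$, $Q^K_n(d,d)$ is expressed through nested Lifts involving $Q^K_{n-3}$ of the same element. Corollary~\ref{cor:rec-compute-QFin} with $B=K$ permits the recursive step $Q^K_{n+1}\to Q^K_n$ only when $\mathrm{Stab}_\Gamma(n)\leq K$, i.e.\ when $n\geq 3$, so depth $6$ is exactly what is needed to traverse the $3$-cycle once (from $Q^K_6$ down to $Q^K_3$) in parallel with the formally analogous unwinding of $Q^K$. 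The proof then reduces to checking that the three successive lifts (dictated by Table~\ref{tab:lifting}) of $Q^K_3(b,b)$ collapse back onto $K\{1,b,c,d\}=Q^K(b,b)$, and analogously for $(c,c)$ and $(d,d)$, which I would verify by direct comparison against Lemma~\ref{lem:base-cong}; this final reconciliation, exploiting the contractive nature of the lifting table, is where I expect the bulk of the computational work to lie.
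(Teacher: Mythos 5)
Your proposal is correct and follows essentially the same route as the paper: one inclusion comes from \eqref{eq:Qcontainment}, and the reverse inclusion is obtained by upper-bounding the finite-level sets through Corollaries~\ref{cor:rec-compute-Q} and~\ref{cor:rec-compute-QFin} together with Table~\ref{tab:lifting}, then comparing against the exact values recorded in Lemma~\ref{lem:base-cong}. The only difference is bookkeeping: rather than unwinding the full $b\to c\to d\to b$ cycle from level $6$ down to $Q^K_3(b,b)$ (which can only be bounded trivially, since the recursion bottoms out there), the paper works bottom-up --- $(a,a)$ at level $4$, then $(b,b)$ and $(c,c)$ at level $5$ by bounding the unknown second coordinate by $\Gamma/K$ and letting the $Q^K(a,a)$-coordinate alone collapse the Lift, then $(d,d)$ at level $6$ --- and the table verifications you defer do go through by exactly this mechanism, so there is no gap.
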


\begin{proof}
The first half of the lemma statement follows from Corollaries~\ref{cor:rec-compute-Q} and~\ref{cor:rec-compute-QFin}.

Throughout this proof, we implicitly use the property that
\[
    Q^K(g,h)\subseteq Q^K_n(g,h) \subseteq Q^K_m(g,h)
\]
for each $n\geq m \geq 3$, which follows from the definition of these functions.
Moreover, we also use the property that $a$, $b$, $c$, and $d$ are the elements in $\Gamma$ of length 1.

From  the definition of the elements $a,b,c,d\in \Gamma$, we see that
\[
    a,b,c,d\notin \mathrm{Stab}_\Gamma(3)\leq K
    \qquad
    \text{and}
    \qquad
    1\in \mathrm{Stab}(3)\leq K,
\]
and thus,
\[
    Q^K(g,1) = Q^K(1,h) = Q^K_3(g,1) = Q^K_3(1,h) = \emptyset
\]
for each $g,h\in \{a,b,c,d\}$.
Moreover, by definition, it is clear that
\[
    Q^K(1,1) = Q^K_3(1,1) = \Gamma/K.
\]
Thus, it is now sufficient for us to consider only the case where $x,y\in \{a,b,c,d\}$.

Notice that $b,c,d\in \mathrm{Stab}_\Gamma(1)$ and $a\notin \mathrm{Stab}_\Gamma(1)$.
In particular, this implies that
\[
    Q^K(a,h) = Q^K(g,a) = Q^K_3(a,h) = Q^K_3(g,a)=\emptyset
\]
for each $g,h\in \{b,c,d\}$.

Considering Corollary~\ref{cor:rec-compute-QFin}, we see that
\begin{align*}
    Q^K_4(b,d) &=
    \Lift_{K,K}(Q^K_3(a,1) \times  Q^K_3(c,b))
    \cup\Lift_{K,K}(Q^K_3(c,1) \times  Q^K_3(a,b))\cdot a
    \\
    &=
    \Lift_{K,K}(\emptyset \times  Q^K_3(c,d))
    \cup\Lift_{K,K}(\emptyset \times  \emptyset)\cdot a = \emptyset,
    \\
    Q^K_4(c,d) &=
    \Lift_{K,K}(Q^K_3(a,1)\times Q^K_3(d,b))
    \cup\Lift_{K,K}(Q^K_3(d,1) \times Q^K_3(a,b))\cdot a
    \\
    &=
    \Lift_{K,K}(\emptyset\times Q^K_3(d,b))
    \cup\Lift_{K,K}(\emptyset \times \emptyset)\cdot a = \emptyset.
\end{align*}
Again from Corollary~\ref{cor:rec-compute-QFin}, we see that
\begin{align*}
    Q^K_5(b,c)
    &=
    \Lift_{K,K}(Q^K_4(a,a) \times Q^K_4(c,d))
    \cup\Lift_{K,K}(Q^K_4(c,a)\times Q^K_4(a,d))
    \\
    &=
    \Lift_{K,K}(Q^K_4(a,a) \times \emptyset)
    \cup\Lift_{K,K}(\emptyset\times\emptyset)\cdot a
    =\emptyset.
\end{align*}
From the definition of the function $Q^K_n\colon \Gamma\times\Gamma\to\mathcal{P}(\Gamma/K)$, we then see that 
\[
    Q^K(g,h) = Q^K_5(g,h) = \emptyset
\]
for each $g,h \in \{b,c,d\}$ with $g\neq h$.

Thus, all that remains is to show that $Q^K(g,g) = Q^K_6(g,g)$ for $g\in \{a,b,c,d\}$. From Corollary~\ref{cor:rec-compute-QFin}, we see that
\begin{align*}
    Q_4(a,a)
    &=
    \Lift_{K,K}\left\{
        (x_0 K, x_1 K)
    \ \middle|\
    \begin{aligned}
        x_0 K \in Q^K_3(1,1)
        \\\text{ and }
        x_1 K = x_0 K
    \end{aligned}
    \right\}
    \\&\qquad\qquad\cup
    \Lift_{K,K}\left\{
        (x_0 K, x_1 K)
    \ \middle|\
    \begin{aligned}
        x_0 K \in Q^K_3(1,1)
        \\\text{ and }
        x_1 K = x_0 K
    \end{aligned}
    \right\}\cdot a
    \\
    &\subseteq
    \Lift_{K,K}(\{(x,x) \mid x\in G/K\})
    \cup\Lift_{K,K}(\{(x,x) \mid x\in G/K\})\cdot a.
\end{align*}
Thus, from Table~\ref{tab:lifting} and Figure~\ref{fig:schr-graph}, we see that
\[
    Q^K_4(a,a)\subseteq \{z_0, z_4\}\cup \{z_0, z_4\}\cdot a
    =\{z_0, z_4\}\cup\{ z_7, z_3\}
    = K\{ 1,(ad)^2, a, dad \}.
\]
Thus, from Lemma~\ref{lem:base-cong}, we see that $Q^K(a,a) = Q^K_4(a,a)$.

From Corollary~\ref{cor:rec-compute-QFin}, we see that
\begin{align*}
    Q_5^K(b,b) &=
    \Lift_{K,K}(Q^K_4(a,a)\times Q^K_4(c,c)) \cup 
    \Lift_{K,K}(Q^K_4(c,a)\times Q^K_4(a,c))\cdot a
    \\
    &=
    \Lift_{K,K}(Q^K_4(a,a)\times Q^K_4(c,c)) 
    \\
    &\subseteq
    \Lift_{K,K}(Q^K(a,a)\times G/K)
    =
    \Lift_{K,K}(\{z_0, z_3, z_4, z_7\}\times G/K)
\end{align*}
and
\begin{align*}
    Q_5^K(c,c) &=
    \Lift_{K,K}(Q^K_4(a,a)\times Q^K_4(d,d)) \cup 
    \Lift_{K,K}(Q^K_4(d,a)\times Q^K_4(a,d))\cdot a
    \\
    &=
    \Lift_{K,K}(Q^K_4(a,a)\times Q^K_4(d,d)) 
    \\
    &\subseteq
    \Lift_{K,K}(Q^K(a,a)\times G/K)
    =
    \Lift_{K,K}(\{z_0, z_3, z_4, z_7\}\times G/K).
\end{align*}
From Table~\ref{tab:lifting}, we then see that
\[
    Q^K_5(b,b), Q^K_5(c,c)
    \subseteq
    \{ z_0,z_1,z_8,z_9 \}
    =
    K\{ 1,d,b,c \}.
\]
From Lemma~\ref{lem:base-cong}, we conclude that $Q^K(b,b) = Q^K(c,c)= Q^K_5(b,b) = Q^K_5(c,c)$.

From Corollary~\ref{cor:rec-compute-QFin}, we see that
\begin{align*}
    Q_6^K(d,d) &=
    \Lift_{K,K}(Q^K_5(1,1)\times Q^K_5(b,b)) \cup 
    \Lift_{K,K}(Q^K_5(b,1)\times Q^K_5(1,b))\cdot a
    \\
    &=
    \Lift_{K,K}(Q^K_5(1,1)\times Q^K_5(b,b))
    \\
    &=
    \Lift_{K,K}(Q^K(1,1)\times Q^K(b,b))
    =
    \Lift_{K,K}(G/K\times \{z_0,z_1,z_8,z_9\}).
\end{align*}
From Table~\ref{tab:lifting}, we then see that
\[
    Q^K_6(d,d)
    =
    \{
        z_0, z_1, z_4,z_5,z_8,z_9,z_{12}, z_{13}
    \}
    =
    K
    \{
    1,d,(ad)^2,ada,b,c,badad,bada
    \}.
\]
Then from Lemma~\ref{lem:base-cong}, we see that $Q^K(d,d) = Q^K_6(d,d)$.

Thus, we have proven all cases of our lemma.
\end{proof}

We may then generalize the above lemmas as follows.

\begin{lemma}\label{lem:Q-agreement2}
    If $x,y\in \Gamma$ with $\ell(x),\ell(y)\leq 2$, then $Q^K(x,y) = Q^K_{10}(x,y)$.
\end{lemma}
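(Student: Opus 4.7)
The plan is to extend the strategy of Lemma~\ref{lem:Q-agreement} by iteratively applying the recursive formulas from Corollaries~\ref{cor:rec-compute-Q} and~\ref{cor:rec-compute-QFin} until the arguments have length at most $1$, where Lemma~\ref{lem:Q-agreement} provides the base case. Throughout, I use the monotonicity $Q^K \subseteq Q^K_m \subseteq Q^K_n$ for $m\geq n$, so that it suffices to establish $Q^K(x,y)=Q^K_{n_\star}(x,y)$ for any $n_\star\leq 10$.

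If $\ell(x),\ell(y)\leq 1$, the claim is immediate from Lemma~\ref{lem:Q-agreement} and monotonicity. Otherwise, at least one of $x,y$ has length $2$. The Grigorchuk relations $b^2=c^2=d^2=bcd=1$ force the length-$2$ elements to be precisely $\{ab,ba,ac,ca,ad,da\}$, all lying in $\mathrm{Stab}_\Gamma(1)\cdot a$, and sorting into the $[\Gamma,\Gamma]$-cosets $[\Gamma,\Gamma]\cdot ab$, $[\Gamma,\Gamma]\cdot ac$, and $[\Gamma,\Gamma]\cdot ad$. Since $\mathrm{Stab}_\Gamma(10)\leq K\leq[\Gamma,\Gamma]$, conjugation modulo $\mathrm{Stab}_\Gamma(10)$ preserves $[\Gamma,\Gamma]$-cosets, so $Q^K(x,y)=Q^K_{10}(x,y)=\emptyset$ when $x$ and $y$ lie in different $[\Gamma,\Gamma]$-cosets. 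We reduce to the case where $x$ and $y$ share a $[\Gamma,\Gamma]$-coset.

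If both lie in $\mathrm{Stab}_\Gamma(1)$, write $x=(x_0,x_1)$ and $y=(y_0,y_1)$; Lemma~\ref{lem:length-contraction} gives $\ell(x_i),\ell(y_j)\leq 1$, and the two recursion formulas for $Q^K_{10}(x,y)$ and $Q^K(x,y)$ differ only by $Q^K_9$ versus $Q^K$ on the restrictions, where these agree by Lemma~\ref{lem:Q-agreement}. If instead both lie in $\mathrm{Stab}_\Gamma(1)\cdot a$, write $x=(x_0,x_1)a$ and $y=(y_0,y_1)a$; the comparison reduces to $Q^K_9(x_0x_1,y_0y_1)=Q^K(x_0x_1,y_0y_1)$ and the analogous equality with $y_1y_0$ in place of $y_0y_1$. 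The even case of Lemma~\ref{lem:length-contraction} bounds $\ell(x_0x_1),\ell(y_0y_1),\ell(y_1y_0)\leq 2$, so these are again length-$\leq 2$ instances and we iterate.

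To see the iteration terminates, track the $[\Gamma,\Gamma]$-coset via Lemma~\ref{lem:comm-subgroup}: the map $g\mapsto g_0g_1$ induces the trajectory $[\Gamma,\Gamma]\cdot ab\to [\Gamma,\Gamma]\cdot ac \to [\Gamma,\Gamma]\cdot ad \to [\Gamma,\Gamma]\cdot b$, with only the last coset lying in $\mathrm{Stab}_\Gamma(1)$. Hence at most three rounds of the $\mathrm{Stab}_\Gamma(1)\cdot a$-reduction suffice to push the running arguments into $\mathrm{Stab}_\Gamma(1)$; concretely, the chains are $ab\to ca\to ad\to b$ and $ba\to ac\to da\to b$, with all intermediates of length $2$ and terminal element of length $1$. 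Each round drops the required level by one, so from $Q^K_{10}$ in the worst case we arrive at $Q^K_7(u,v)$ with $u,v\in\mathrm{Stab}_\Gamma(1)$ of length $\leq 1$, and Lemma~\ref{lem:Q-agreement} with monotonicity finishes (the inequality $7\geq 6$ is what is used). The main obstacle is not conceptual but the careful simultaneous bookkeeping of recursion level, word length, and $[\Gamma,\Gamma]$-coset through up to three iterations; the length stays bounded by $2$ throughout by the even case of Lemma~\ref{lem:length-contraction}, and the coset trajectory is pinned down by Lemma~\ref{lem:comm-subgroup}.
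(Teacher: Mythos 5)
Your proof is correct and follows essentially the same route as the paper's: the recursion of Corollaries~\ref{cor:rec-compute-Q} and~\ref{cor:rec-compute-QFin}, the length bound of Lemma~\ref{lem:length-contraction}, the commutator-coset trajectory of Lemma~\ref{lem:comm-subgroup}, and the base case Lemma~\ref{lem:Q-agreement}, with the same level count $10 \to 7$. The only differences are cosmetic: you dispatch the mixed-length and cross-coset cases in one stroke via the abelianization observation (both $Q^K$ and $Q^K_{10}$ empty), where the paper runs explicit one-step recursions, and under the paper's convention the concrete chain is $ab\to ac\to ad\to b$ rather than $ab\to ca\to ad\to b$ --- immaterial, since your argument only uses the coset and length bookkeeping.
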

\begin{proof}
From Lemma~\ref{lem:Q-agreement}, we see that if $\ell(x),\ell(y)\leq 1$, then $Q^K(x,y) = Q^K_6(x,y)$.
Thus, in the remainder of this proof, we assume that either $\ell(x) = 2$ or $\ell(y)=2$.

We first note that the set of all elements of length 2 can be given as
\[
    S_2=
    \{
        ab,ba,ac,ca,ad,da
    \}.
\]
Notice then that $g\notin \mathrm{Stab}_\Gamma(1)$ for each $g\in S_2$.

If $g\in S_2$, it then follows from Corollary~\ref{cor:rec-compute-QFin} that
\[
    Q^K(g,y) = Q^{K}_3(g,y)= Q^K(x,g) = Q^{K}_3(x,g) = \emptyset
\]
for each $x,y \in \{1,b,c,d\} \subset \mathrm{Stab}_\Gamma(1)$.

If $g = (g_0,g_1) \cdot a\in S_2$ and $h = (h_0,h_1) \cdot a\in S_2$, then from Lemma~\ref{lem:length-contraction}, we have $\ell(g_0 g_1),\ell(g_1 g_0),\ell(h_0 h_1),\ell(h_1 h_0)\leq 2$.
From Corollary~\ref{cor:rec-compute-QFin}, we see that
        \begin{multline*}
            Q^K_{7}(g,a)
            =
            \Lift_{ K,K}\left\{
                (x_0 K, x_1 K)
            \ \middle|\
            \begin{aligned}
                x_0 K \in Q^K_6(g_0 g_1, 1)
                \\\text{ and }
                x_1 K = g_1 x_0 K
            \end{aligned}
            \right\}
            \\\cup
            \Lift_{K,K}\left\{
                (x_0 K, x_1 K)
            \ \middle|\
            \begin{aligned}
                x_0 K \in Q^K_6(g_0 g_1, 1)
                \\\text{ and }
                x_1 K = g_0^{-1} x_0 K
            \end{aligned}
            \right\}\cdot a
        \end{multline*}
and
        \begin{multline*}
            Q^K_7(a,h)
            =
            \Lift_{ K,K}\left\{
                (x_0 K, x_1 K)
            \ \middle|\
            \begin{aligned}
                x_0 K \in Q^K_6(1, h_0 h_1)
                \\\text{ and }
                x_1 K = x_0 h_1^{-1} K
            \end{aligned}
            \right\}
            \\\cup
            \Lift_{ K,K}\left\{
                (x_0 K, x_1 K)
            \ \middle|\
            \begin{aligned}
                x_0 K \in Q^K_6(1, h_1 h_0)
                \\\text{ and }
                x_1 K = x_0 h_1 K
            \end{aligned}
            \right\}\cdot a.
        \end{multline*}
Hence, each element of the form $g_i g_j$ and $h_i h_j$ is either of length 2 and thus does not belong to $\mathrm{Stab}_\Gamma(1)$, or of length at most 1. Therefore, from earlier in this proof and from Lemma~\ref{lem:Q-agreement}, we see that
\[ Q^K_6(g_i g_j,1)=Q^K(g_i g_j,1)\quad\text{and}\quad Q^K_6(1,h_i h_j) = Q^K(1,h_i h_j)\] for each $(i,j)\in \{(0,1),(1,0)\}$.
Therefore, from Corollary~\ref{cor:rec-compute-QFin}, we see that \[Q^K_7(g,a) = Q^K(g,a)\quad\text{and}\quad Q^K_7(a,h) = Q^K(a,h)\] for each $g,h \in S_2$.
Thus, all that remains is to prove our result for $x,y \in S_2$.

Suppose that $g  =(g_0,g_1) \cdot a\in S_2$ and $h=(h_0,h_1) \cdot a\in S_2$. Lemma~\ref{lem:length-contraction} then implies $\ell(g_0 g_1),\ell(g_1 g_0),\ell(h_0 h_1),\ell(h_1 h_0)\leq 2$.
From Corollary~\ref{cor:rec-compute-Q} and~\ref{cor:rec-compute-QFin}, if
\begin{align*}
    Q^K_n(g_0 g_1, h_0 h_1)&=Q^K(g_0 g_1, h_0 h_1)
    \quad\text{and}\\
    Q^K_n(g_0 g_1, h_1 h_0)&=Q^K(g_0 g_1, h_1 h_0)
\end{align*}
for some $n\geq 4$, then $Q^K_{n+1}(g,h) = Q^K(g,h)$.
Notice that since $g\in S_2$, we see that $g \in [\Gamma, \Gamma]\{ab,ac,ad\}$.
We consider these case one after the other as follows.
\begin{itemize}
    \item
    If $g\in [\Gamma,\Gamma]ad$, then from Lemma~\ref{lem:comm-subgroup}, we have $g_0 g_1, g_1 g_0 \notin S_2$. Thus,
    \begin{align*}
        Q^K_7(g_0 g_1, h_0 h_1) &= Q^K(g_0 g_1, h_0 h_1)\quad\text{and}\\
        Q^K_7(g_0 g_1, h_1 h_0) &= Q^K(g_0 g_1, h_1 h_0),
    \end{align*}
    and therefore, $Q^K_8(g,h) = Q^K(g,h)$.
    \item
    If $g\in [\Gamma,\Gamma]ac$, then from Lemma~\ref{lem:comm-subgroup}, we have $g_0 g_1, g_1 g_0 \in [\Gamma,\Gamma]ad$. Thus
    \begin{align*}
        Q^K_8(g_0 g_1, h_0 h_1) &= Q^K(g_0 g_1, h_0 h_1)\quad\text{and}\\
        Q^K_8(g_0 g_1, h_1 h_0) &= Q^K(g_0 g_1, h_1 h_0),
    \end{align*}
    and hence, $Q^K_9(g,h) = Q^K(g,h)$.
    \item
    If $g\in [\Gamma,\Gamma]ab$, then from Lemma~\ref{lem:comm-subgroup}, we have $g_0 g_1, g_1 g_0 \in [\Gamma,\Gamma]ac$. Therefore,
    \begin{align*}
        Q^K_9(g_0 g_1, h_0 h_1) &= Q^K(g_0 g_1, h_0 h_1)\quad\text{and}\\
        Q^K_9(g_0 g_1, h_1 h_0) &= Q^K(g_0 g_1, h_1 h_0),
    \end{align*}
    and thus, $Q^K_{10}(g,h) = Q^K(g,h)$.
\end{itemize}
We now conclude that $Q^K_{10}(g,h) = Q^K(g,h)$ for each $\ell(g),\ell(h)\leq 2$.
\end{proof}

\subsection{Splitting trees}
Let $g,h\in \Gamma$ and $m\geq 3$. We now construct a finite rooted tree $\mathcal T_{g,h,m}$, which we call a \emph{splitting tree}, as follows:
\begin{enumerate}
    \item The root of the tree is labelled $(m;x,y)$.
    \item For each vertex labelled as $(n+1,x,y)$ with $n\geq 3$,
    \begin{itemize}
        \item if $x\cdot\mathrm{Stab}_\Gamma(1)\neq y\cdot\mathrm{Stab}_\Gamma(1)$, then the vertex does not have any children;
        \item if $x,y\in \mathrm{Stab}_\Gamma(1)$ with $x=(x_0,x_1)$ and $y=(y_0,y_1)$, then the vertex has 4 children as in Figure~\ref{fig:splitting_case_1}; and
        \item if $x,y\notin \mathrm{Stab}_\Gamma(1)$ with $x=(x_0,x_1)\cdot a$ and $y=(y_0,y_1)\cdot a$, then the vertex has 2 children as in Figure~\ref{fig:splitting_case_2}.
    \end{itemize}
\end{enumerate}
Notice that the tree $\mathcal{T}_{g,h,m}$ is locally finite and has depth at most $m-3$.
Thus, the tree $\mathcal{T}_{g,h,m}$ has finitely many vertices.
Notice also that the splitting tree $\mathcal{T}_{g,h,m}$ corresponds to the computation one would perform in order to calculate $Q^K_m(g,h)$ from the recursive formulas given in Corollary~\ref{cor:rec-compute-QFin}.

\begin{figure}[ht!]
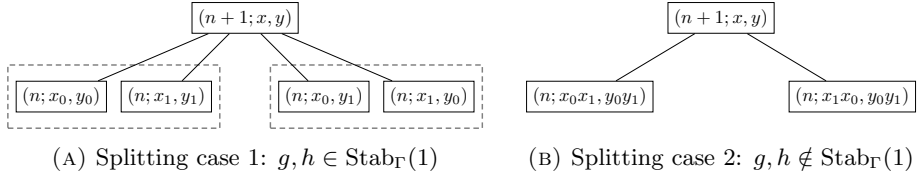

    \begin{subfigure}{.5\linewidth}
        \centering
        \resizebox{\linewidth}{!}{\includestandalone{figure/algorithmCase1}}
        \caption{Splitting case 1: $g,h\in \mathrm{Stab}_\Gamma(1)$}
        \label{fig:splitting_case_1}
    \end{subfigure}%
    \begin{subfigure}{.5\linewidth}
        \centering
        \resizebox{\linewidth}{!}{\includestandalone{figure/algorithmCase2}}
        \caption{Splitting case 2: $g,h\notin\mathrm{Stab}_\Gamma(1)$}
        \label{fig:splitting_case_2}
    \end{subfigure}
\caption{Splitting cases}\label{fig:splitting}
\end{figure}

We now prove some properties of splitting trees as follows.

\begin{lemma}\label{lem:splitting-tree-lem}
    Let $\mathcal{T}_{g,h,m}$ be a splitting tree.
    If $(n;g,h)$ labels a leaf of the tree with $n\geq 4$, then $Q^K_n(g,h) = Q^K(g,h)=\emptyset$.
    Moreover, suppose that there is a subset $V$ of the vertices of $\mathcal{T}_{g,h,m}$ such that
    \begin{itemize}
        \item 
        each vertex in $V$ has a label of the form $(n;x,y)$ with $Q^K_n(x,y) = Q^K(x,y)$,
        \item
        and every root-to-leaf path in $\mathcal T_{g,h,m}$ contains some vertex in $V$,
    \end{itemize}
    then we have $Q^K_m(g,h) = Q^K(g,h)$.
\end{lemma}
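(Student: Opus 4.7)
The plan is twofold: first, I will dispatch the claim about leaves with $n\geq 4$; then I will prove the main assertion by induction on a pruned version of the splitting tree.

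For the first claim, a leaf labeled $(n;g,h)$ with $n\geq 4$ must have arisen via the first bullet in the construction of $\mathcal{T}_{g,h,m}$: otherwise the second or third bullet applies (since $n\geq 4$ ensures the splitting rule fires) and the vertex would have children. Thus $g\cdot\mathrm{Stab}_\Gamma(1)\neq h\cdot\mathrm{Stab}_\Gamma(1)$, and by Lemma~\ref{lem:Q-agreement} we have $Q^K_3(g,h)=\emptyset$. The containment chain~\eqref{eq:Qcontainment} then yields $Q^K(g,h)\subseteq Q^K_n(g,h)\subseteq Q^K_3(g,h)=\emptyset$.

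For the main assertion, my plan is to prune the tree at $V$. Let $\mathcal{T}'$ denote the subtree of $\mathcal{T}_{g,h,m}$ obtained by deleting every strict descendant of every vertex of $V$. The covering hypothesis forces each leaf of $\mathcal{T}'$ to lie in $V$: if a leaf of $\mathcal{T}_{g,h,m}$ survives in $\mathcal{T}'$, then no strict ancestor of that leaf lies in $V$, so by covering the leaf itself must lie in $V$. I will then argue by induction on reverse-depth within $\mathcal{T}'$ that every vertex with label $(n;x,y)$ satisfies $Q^K_n(x,y)=Q^K(x,y)$; applied to the root $(m;g,h)$, this gives the desired equality.

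The base case is a leaf of $\mathcal{T}'$, which lies in $V$ and is good by hypothesis. The inductive step exploits a structural identity between the two recursions: for an internal vertex of $\mathcal{T}'$ with label $(n;x,y)$, Corollary~\ref{cor:rec-compute-QFin} expresses $Q^K_n(x,y)$ in terms of $Q^K_{n-1}$ evaluated at precisely the pairs that label its children in $\mathcal{T}_{g,h,m}$, while Corollary~\ref{cor:rec-compute-Q} expresses $Q^K(x,y)$ by the very same formula with $Q^K$ in place of $Q^K_{n-1}$. The lifting map $\Lift_{K,K}$ and all coset conditions on $x_0K,\,x_1K$ are identical in both formulas. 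The inductive hypothesis equates $Q^K_{n-1}$ with $Q^K$ at each child, so substitution term by term gives $Q^K_n(x,y)=Q^K(x,y)$.

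The main subtlety I foresee lies with leaves of $\mathcal{T}_{g,h,m}$ labeled $(3;x,y)$ at depth $m-3$, for which neither the first claim nor the recursive formulas directly compute $Q^K_3(x,y)=Q^K(x,y)$. The role of the covering hypothesis on $V$ is precisely to ensure the induction never has to visit such leaves: the walk up stops at the layer $V$. No further obstacle is expected; the only care required is to verify, in both the $\mathrm{Stab}_\Gamma(1)$-case and the non-stabilizer case of the recursion, that the two corollaries produce identical expressions once the inductive substitution is made, which is immediate from their statements.
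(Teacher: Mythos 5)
Your proposal is correct and follows essentially the same route as the paper: the leaf case is the "remaining cases" clause of the recursion (your detour through Lemma~\ref{lem:Q-agreement} and the containment chain~\eqref{eq:Qcontainment} is equivalent), and your prune-at-$V$ induction is just a careful formalization of the paper's one-line "lift the equalities up the tree via Corollaries~\ref{cor:rec-compute-Q} and~\ref{cor:rec-compute-QFin}." The only (harmless) omission is that a leaf of the pruned tree could also be an internal vertex of $\mathcal{T}_{g,h,m}$ all of whose children were deleted, but such a vertex is forced to lie in $V$ itself, so your base case still covers it.
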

\begin{proof}
From the definition of the splitting tree, if a leaf of the tree has a label of the form $(n;x,y)$ with $n\geq 4$, then $x\cdot\mathrm{Stab}_\Gamma(1) \neq y\cdot\mathrm{Stab}_\Gamma(1)$.
Thus, from Corollary~\ref{cor:rec-compute-QFin}, we see that $Q^K_n(x,y) = Q^K(x,y) = \emptyset$.

Suppose that $V$ is a set of vertices as described in the lemma statement.
From Corollaries~\ref{cor:rec-compute-Q} and~\ref{cor:rec-compute-QFin} and the definition of splitting trees, we may lift the equalities $Q^K_n(x,y) = Q^K(x,y)$, which corresponding to the labels $(n,x,y)$ of vertices in $V$, to obtain $Q^K_m(g,h) = Q^K(g,h)$.
\end{proof}

Using splitting trees, we strengthen Lemma~\ref{lem:Q-agreement2} as follows.

\begin{lemma}\label{lem:additional-prop-K}
    Let $r\in \mathbb{N}$ and $m = 4\lceil \log_2(2r)\rceil+10$, then \[Q^K(g,h) = Q^K_m(g,h)\] for each $g,h\in \Gamma$ with $\ell(g), \ell(h)\leq r$.
\end{lemma}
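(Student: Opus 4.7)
The plan is to induct on $r$. The base case $r \leq 2$ follows directly from Lemma~\ref{lem:Q-agreement2}: since $m = 4\lceil\log_2(2r)\rceil + 10 \geq 10$ in this range, the containment chain of \eqref{eq:Qcontainment} sandwiches $Q^K_m(g,h)$ between $Q^K(g,h)$ and $Q^K_{10}(g,h) = Q^K(g,h)$, forcing equality. For the inductive step with $r \geq 3$, I would construct the splitting tree $\mathcal{T}_{g,h,m}$ and exhibit a set $V$ of vertices at depth four that intersects every root-to-leaf path and each of whose labels $(n;x,y)$ satisfies $Q^K_n(x,y) = Q^K(x,y)$; Lemma~\ref{lem:splitting-tree-lem} then closes the argument by lifting the equalities from $V$ back to the root.

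The key ingredient is a length-contraction estimate over four consecutive descents. Lemma~\ref{lem:length-contraction} gives $\ell \mapsto \lceil \ell/2\rceil$ at each Case-$1$ descent and only $\ell \mapsto \ell+1$ at each Case-$2$ descent, while Lemma~\ref{lem:comm-subgroup} shows that the Case-$2$ operation cycles the non-stabilizer cosets $[\Gamma,\Gamma]\{a,ab,ac,ad\}$ in such a way that after at most three applications the element lands back in $\mathrm{Stab}_\Gamma(1)$, forcing the next descent to be of Case-$1$ type. Consequently any four consecutive descents contain at least one halving, contracting length from $\ell$ to roughly $\ell/2$. Taking $r' = \lceil r/2\rceil$, a direct computation using the formula for $m(r)$ yields $m(r')\leq m(r)-4$, so once we bound $\ell(x),\ell(y)\leq r'$ for each label $(m-4;x,y)\in V$, the inductive hypothesis provides $Q^K_{m-4}(x,y) = Q^K(x,y)$.

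The hard part will be reconciling the additive constant produced by the Case-$2$ descents with the strict halving that the induction seems to demand: the naive four-level bound is of the form $\ell/2 + O(1)$ rather than $\lceil\ell/2\rceil$, so the inductive step cannot be closed verbatim at the boundary values of $r$. I expect to handle this either by strengthening the inductive claim to tolerate a constant additive slack in $r$ (absorbing it into the generous $+10$ present in the formula for $m(r)$), or by a finer branchwise analysis showing that, on any root-to-leaf path, the worst-case pattern of three consecutive Case-$2$ descents cannot persist on the stretches that matter because the parity-sensitive bound $\ell(g|_0)+\ell(g|_1)\leq \ell(g)+1$ of Lemma~\ref{lem:length-contraction} already saves the extra constant along alternating Case-$1$ descents.
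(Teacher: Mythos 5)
Your overall strategy (splitting tree, the four-level contraction obtained by combining Lemmas~\ref{lem:length-contraction} and~\ref{lem:comm-subgroup}, Lemma~\ref{lem:Q-agreement2} as the base, and Lemma~\ref{lem:splitting-tree-lem} to lift equalities back to the root) is the same as the paper's, but the way you package it --- an induction on $r$ with $r' = \lceil r/2\rceil$ --- has a genuine gap, and it is exactly the one you flag yourself. Over a block of four consecutive vertices the only available estimate is of the form $\ell \mapsto \ell/2 + O(1)$: a Case-2 descent replaces $(g_0,g_1)\cdot a$ by $g_0g_1$ and Lemma~\ref{lem:length-contraction} only gives $\ell(g_0g_1)\leq \ell(g)+1$, and Lemma~\ref{lem:comm-subgroup} permits up to three such descents before a Case-1 halving is forced. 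Hence the labels at depth $4$ need not have length $\leq \lceil r/2\rceil$, the inductive hypothesis at $r'=\lceil r/2\rceil$ does not apply to them, and the induction as set up does not close. Since you leave the repair at the level of ``I expect to handle this either by \dots or by \dots'', the proposal as written does not prove the lemma.

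The resolution --- and this is what the paper actually does --- is to drop the induction on $r$ altogether and run the contraction estimate directly down the tree. One shows that along any root-to-leaf path, passing from a vertex at one level of a four-vertex block to the corresponding vertex four levels lower contracts the length bound according to $\ell \mapsto \ell/2 + 1$; iterating this $\lceil\log_2(2r)\rceil$ times from the root gives a bound $r/2^{k} + 2(1-2^{-k}) < 2.5$, i.e.\ length at most $2$, at depth $m-10 = 4\lceil\log_2(2r)\rceil$. The additive constants are thus absorbed by a geometric series whose limit is $2$, and this is precisely why Lemma~\ref{lem:Q-agreement2} is stated for elements of length at most $2$ (the fixed point of $\ell\mapsto \ell/2+1$) rather than length at most $1$: no strengthened inductive statement is needed, because the base case already tolerates the accumulated slack. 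One then puts the depth-$(m-10)$ vertices (together with any leaves met earlier, where the sets are empty) into the set $V$ and applies Lemma~\ref{lem:splitting-tree-lem} to conclude $Q^K_m(g,h)=Q^K(g,h)$. Your first proposed fix (``strengthening the inductive claim to tolerate a constant additive slack'') is morally this computation, but carrying it out is the substance of the proof of Lemma~\ref{lem:additional-prop-K}, not a detail, so it needs to be done explicitly; the second proposed fix (a parity analysis ruling out the bad Case-2 runs) is not needed.
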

\begin{proof}
Consider the splitting tree $\mathcal{T}_{g,h,m}$.

In the following, we construct a set of vertices $V$ as in Lemma~\ref{lem:splitting-tree-lem}.

For each root to leaf path $p = v_0 \to v_1 \to \cdots \to v_k$ in $\mathcal T_{g,h,m}$, we proceed as follows:
\begin{itemize}
    \item 
    If $k \leq m-10$, then $v_k$ is labelled as $(n;x,y)$ for $n\geq 10$. 
    From Lemma~\ref{lem:splitting-tree-lem}, we see that $Q^K_n(x,y) = Q^K(x,y)$, thus we add $v_k$ to $V$.
    \item 
    Otherwise, from Lemma~\ref{lem:comm-subgroup} and the definition of a splitting tree, we see that every 4 consecutive vertices $v_i\to v_{i+1}\to v_{v+2} \to v_{i+3}$ in $p$ must contain at least one vertex labelled as $(n,x,y)$ where $x,y\in \mathrm{Stab}_\Gamma(1)$.
    From Lemma~\ref{lem:length-contraction} and the definition of splitting trees, we see that for each $i$ if $v_i$ is labelled by $(m-i,x,y)$ and the vertex $v_{i+3}$ is labelled by $(m-i+3,x',y')$, then $\ell(g')\leq \ell(g)/2 + 1$ and $\ell(h')\leq \ell(h)/2 + 1$.

    Notice that in this case, the vertex $v_{m-10}$ must be labelled by $Q^K_{10}(g,h)$ where $\ell(g),\ell(h) \leq 2$.
    From Lemma~\ref{lem:Q-agreement2}, we see that $Q^K_{10}(g,h) = Q^K(g,h)$.
    Thus, we add $v_{m-10}$ to the set $V$.
\end{itemize}
From these two cases, we see that we may construct a subset of vertices $V$ as in Lemma~\ref{lem:splitting-tree-lem}, and thus, $Q^K_m(x,y) = Q^K(x,y)$ as desired.
\end{proof}

\subsection{The subgroups \texorpdfstring{$K_m$}{Km}}
From the subgroup $K$, we can define a sequence of subgroups $K_m$ as
\[
    K_0 = K
    \quad
    \text{and}
    \quad
    K_{m+1} = \Psi_\Gamma^{-1}(K_m\times K_m)
    \quad\text{for each}\quad
    m\geqslant 0.
\]
We then see that each of these subgroups $K_m$ is normal in $\Gamma$ by an induction on $m$ as follows.
By definition, we see that $K_0=K$ is a normal subgroup.
Suppose that $K_m$ is a normal subgroup, and let $x = (x_0,x_1)\in \Gamma$ and $k = (k_0,k_1) \in K_{m+1}$. We then have
\[
    x^{-1}k x = (x_1^{-1} k_1 x_1, x_0^{-1} k_0 x_0) \in K_{m+1}
\]
since $k_i \in K_{m}$ implies that $x_i^{-1}k_ix_i \in K_m$.
Thus, $K_{m+1}$ is normal if $K_m$ is normal.

Recall that $\mathrm{Stab}_\Gamma(3) \leq K\leq \mathrm{Stab}_\Gamma(1)$.
Therefore, it follows that \[\mathrm{Stab}_\Gamma(m+3)\leq K_m\leq \mathrm{Stab}_\Gamma(m+1)\] for each $m\geqslant 0$.
Notice then that it follows that each $K_m$ is a finite-index subgroup of $\Gamma$.
We generalize Corollary~\ref{lem:additional-prop-K} to obtain the following lemma.

\begin{lemma}\label{lem:additional-prop-K-general}
    Let $r, m \in \mathbb N$, then there exists some $n$ such that \[Q^{K_m}(g,h) = Q^{K_m}_n(g,h)\] for each $g,h\in \Gamma$ with $\ell(g), \ell(h)\leq r$.
\end{lemma}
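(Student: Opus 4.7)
The plan is to induct on $m$, with the base case $m=0$ being precisely Lemma~\ref{lem:additional-prop-K}. For the inductive step, the key observation is that the recursion machinery of Corollaries~\ref{cor:rec-compute-Q} and~\ref{cor:rec-compute-QFin} applies with $A = K_{m+1}$ and $B = K_m$: both are finite-index normal subgroups contained in $\mathrm{Stab}_\Gamma(1)$, and by the defining property $\Psi_\Gamma(K_{m+1}) = K_m \times K_m$ we have the hypothesis $K_m \times K_m \leq \Psi_\Gamma(K_{m+1})$ for free.

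Assuming the lemma for $m$, fix $r \in \mathbb{N}$. Apply the induction hypothesis with the parameter $r+1$ to obtain some $n_0 \in \mathbb{N}$ such that $Q^{K_m}(x,y) = Q^{K_m}_{n_0}(x,y)$ for every $x,y \in \Gamma$ with $\ell(x), \ell(y) \leq r+1$. Set $N = \max(n_0+1,\, m+4)$; this choice ensures $N-1 \geq m+3$, so that $\mathrm{Stab}_\Gamma(N) \leq K_{m+1}$ and $\mathrm{Stab}_\Gamma(N-1) \leq K_m$, which are exactly the hypotheses needed to apply Corollary~\ref{cor:rec-compute-QFin} with $n = N-1$. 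Moreover, by the inclusions in~(\ref{eq:Qcontainment}) and $N-1 \geq n_0$, the induction hypothesis upgrades to $Q^{K_m}(x,y) = Q^{K_m}_{N-1}(x,y)$ for all $x,y$ with $\ell(x), \ell(y) \leq r+1$.

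Now fix $g, h \in \Gamma$ with $\ell(g), \ell(h) \leq r$ and compare the formulas for $Q^{K_{m+1}}(g,h)$ and $Q^{K_{m+1}}_N(g,h)$ given by Corollaries~\ref{cor:rec-compute-Q} and~\ref{cor:rec-compute-QFin}, respectively. If $g \cdot \mathrm{Stab}_\Gamma(1) \neq h \cdot \mathrm{Stab}_\Gamma(1)$, both sides vanish by the ``remaining cases'' clauses. If $g, h \in \mathrm{Stab}_\Gamma(1)$ with $g = (g_0, g_1)$ and $h = (h_0,h_1)$, then Lemma~\ref{lem:length-contraction} yields $\ell(g_i), \ell(h_j) \leq r/2 + 1 \leq r+1$, so all of the inner $Q^{K_m}(g_i, h_j)$ terms coincide with their truncations $Q^{K_m}_{N-1}(g_i, h_j)$ by the upgraded induction hypothesis, and the two $\Lift_{K_m, K_{m+1}}$ expressions match. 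If $g, h \notin \mathrm{Stab}_\Gamma(1)$ with $g = (g_0, g_1)a$ and $h = (h_0, h_1)a$, then $\ell(g_0 g_1), \ell(g_1 g_0), \ell(h_0 h_1), \ell(h_1 h_0) \leq \ell(g_0) + \ell(g_1) \leq \ell(g)+1 \leq r+1$ by the additive bound in Lemma~\ref{lem:length-contraction}; again the $Q^{K_m}$ terms coincide with their $Q^{K_m}_{N-1}$ counterparts, and the auxiliary constraints of the form $x_1 K_m = g_1 x_0 h_1^{-1} K_m$ are syntactically identical in both formulas. Thus $Q^{K_{m+1}}(g,h) = Q^{K_{m+1}}_N(g,h)$, completing the induction.

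I do not anticipate a serious obstacle: the splitting-tree technology already absorbed the combinatorial difficulty at the base case, and the inductive step only requires bookkeeping to propagate the length bound $r \mapsto r+1$ across the single level of restriction and to confirm the stabilizer inclusions needed to invoke Corollary~\ref{cor:rec-compute-QFin}. The one potentially delicate point is that the induction hypothesis must be applied with a strictly larger parameter $r+1$ than the one currently under consideration, but this is harmless since the statement quantifies over all $r \in \mathbb{N}$.
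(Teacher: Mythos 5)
Your proof is correct and follows essentially the same route as the paper: both reduce one level of the recursion at a time via Corollaries~\ref{cor:rec-compute-Q} and~\ref{cor:rec-compute-QFin} (with $A=K_{m+1}$, $B=K_m$), use Lemma~\ref{lem:length-contraction} to see the length bound only grows by $1$ per level, and bottom out at Lemma~\ref{lem:additional-prop-K}. The only difference is presentational: the paper unrolls the induction to extract the explicit value $n = 4\lceil\log_2(2(r+m))\rceil + 10 + m$, whereas your induction on $m$ establishes existence of $n$ without tracking the constant, which suffices for the statement.
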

\begin{proof}
From Corollaries~\ref{cor:rec-compute-Q} and \ref{cor:rec-compute-QFin}, we see that for each $m>1$ and $n\geq 4$, we have $Q^{K_m}(g,h) = Q^{K_m}_n(g,h)$ if
\begin{align*}
    Q^{K_{m-1}}(g|_{u_1}, h|_{v_1})
    &=Q^{K_{m-1}}_{n-1}(g|_{u_1}, h|_{v_1})\quad\text{and}
    \\
    Q^{K_{m-1}}((g|_{u_1}) (g|_{u_2}), (h|_{v_1}) (h|_{v_2}))
    &= Q^{K_{m-1}}_{n-1}((g|_{u_1}) (g|_{u_2}), (h|_{v_1}) (h|_{v_2}))
\end{align*}
for all $(u_1,u_2),(v_1,v_2)\in \{(0,1),(1,0)\}$.
Thus, from Lemma~\ref{lem:length-contraction}, we see that
\[
    \ell(g|_{u_1}),\ell(h_{v_1}), \ell((g|_{u_1}) (g|_{u_2})), \ell((h|_{v_1}) (h|_{v_2})))\leq \max\{\ell(g),\ell(h)\} +1.
\]
From this, we see that if
\[
    Q^{K_{m-1}}(x,y) = Q^{K_{m-1}}_{n-1}(x,y)
\]
for each $x,y\in \Gamma$ with $\ell(x),\ell(y)\leq \max\{\ell(g),\ell(h)\} + 1$, then
\[
    Q^{K_{m}}(x,y) = Q^{K_{m}}_{n}(x,y).
\]
From an induction on $n$ and $m$, we see that for all $m>1$ and $n\geq m+4$, if
\[
    Q^{K}(x,y) = Q^K_{n-m}(x,y)
\]
for each $x,y\in \Gamma$ with $\ell(x),\ell(y)\leq \max\{\ell(g),\ell(h)\} + m$, then
\[
    Q^{K_{m}}(g,h) = Q^{K_{m}}_{n}(g,h).
\]
From Lemma~\ref{lem:additional-prop-K}, we see that
\[
    Q^{K}(x,y) = Q^{K}_{n-m}(x,y)
\]
for each $x,y\in \Gamma$ with $\ell(x),\ell(y)\leq r+m$ where $n = 4\lceil\log_2(2(r+m))\rceil + 10 + m$.
Thus, from our earlier induction, we have
\[
    Q^{K_m}(g,h) = Q^{K_m}_n(g,h)
\]
for each $\ell(g),\ell(h)\leq r$ where $n = 4\lceil\log_2(2(r+m))\rceil + 10 + m$.
\end{proof}

\subsection{Main result}
We now prove the main theorem of this section as follows.

\GrigTheorem

\begin{proof}
Notice that if we are given some finite-index subgroup $H\leq \Gamma$, then from Lemma~\ref{lem:csp}, we have 
$K_m \leq \mathrm{Stab}_\Gamma(m)\leq H$ for some $m\geqslant 1$.
We then see that given any two elements $g,h\in H$, we have that $g$ and $h$ are conjugate in $H$ if and only if the intersection $Q^{K_m}(g,h) \cap H/K_m$ is non-empty.
From Lemma~\ref{lem:additional-prop-K-general}, we then know that there exists some $n\geqslant 1$ such that this intersection can be calculated as
\[
    Q^{K_m}(g,h)\cap H/K_m = Q^{K_m}_n(g,h) \cap H/K_m.
\]
Thus, $g,h\in H$ are conjugate in $H$ if and only if they are conjugate in $H/\mathrm{Stab}_H(n)$.
We then see that the finite-index subgroup $H$ is conjugacy separable.

It is well-known that the Grigorchuk group is a 2-group (see Theorem~17 on p.~222 of~\cite{harpe2000}).
Thus, the subgroup $H$ and hence the quotient $H/\mathrm{Stab}_H(n)$ are also 2-groups.
From this, we then conclude that $H$ is 2-conjugacy separable, and thus the Grigorchuk group is 2-hereditarily conjugacy separable.
\end{proof}

Our proof of the above theorem does not generalize to other groups acting on rooted trees that are known to be conjugacy separable, such as Gupta-Sidki groups $\mathrm{GS}(p)$, which begs the natural question: are the Gupta-Sidki groups $GS(p)$ hereditarily conjugacy separable, and thus $p$-hereditarily conjugacy separable? In particular, we pose the following question.
\begin{question}
    Suppose that $G$ is regular branch group, and suppose that $G$ is contracting and that $G$ has congruence subgroup property, i.e.~every subgroup of finite index contains a level stabilizer subgroup. Is it true that $G$ is hereditarily conjugacy separable?
\end{question}

\section{Wreath products of cyclic subgroups separable groups}\label{section:wreath products of CSS groups}
The main aim of this section is to show the following proposition.
    \PropositionCSS
If $\C$ is the class of all finite groups, then it satisfies the assumption of Proposition~\ref{proposition:wreath products of cyclic subgroup separable}. Applying Proposition~\ref{proposition:wreath products of cyclic subgroup separable} in the context of the class of all finite groups thus immediately gives us the following corollary.
\begin{corollary}
    \label{corollary:wreath products of CSS groups}
    Suppose that $A$ and $B$ are cyclic subgroup separable groups. Then the wreath product $A \wr B$ is a cyclic subgroup separable group if and only if either of the following is true:
    \begin{enumerate}[label=(\roman*)]
        \item $B$ is finite, or
        \item $A$ is abelian.
    \end{enumerate}
\end{corollary}
Let us note that, unlike in Theorem~\ref{theorem:CCS} and Theorem~\ref{theorem:CC_HCS}, in the case that the acting group $B$ is infinite, it is not enough to assume that the base group $A$ is abelian and residually-$\C$. Indeed, it is true that every residually-$\C$ abelian group is $\C$-conjugacy separable, but it is not true that every residually-$\C$ abelian group is $\C$-CSS. For example, the locally cyclic group
    \begin{displaymath}
        \mathbb{Z}\left[1/2\right] = \langle a_1, a_2, \dots \mid  a_{i+1}^2 = a_i \mbox{ for $i= 1,2,\dots$}\rangle
    \end{displaymath}
is abelian and residually finite but not cyclic subgroup separable. Clearly, $a_j \notin \langle a_i \rangle$ whenever $i < j$. At the same time, it is generally known that 
 every finite quotient of $\mathbb{Z}\left[1/2\right]$ is a cyclic group of an odd order, meaning that whenever $\pi \colon \mathbb{Z}\left[1/2\right] \to Q$ is a homomorphism, where $|Q| < \infty$, we see that $\langle \pi(a_i)\rangle = \langle \pi(a_j) \rangle$ for any pair $i,j \in \mathbb{N}$. 

Before we proceed to proving Proposition~\ref{proposition:wreath products of cyclic subgroup separable}, we make some easy observations about how the presence of cyclic groups of prime orders affects which residually-$\C$ can be $\C$-CSS. Recall that a residually-$p$ group $G$ is $p$-CSS only if $G$ is a torsion group, i.e.~$G$ does not contain an element of infinite order. In fact, it follows that a residually-$p$ group is $p$-CSS if and only if it is a torsion group; if all cyclic subgroups of $G$ are finite, then they are $p$-separable in $G$ because $G$ is a residually $p$ group. The following lemma generalizes this observation.
\begin{lemma}
    \label{lemma:CSS - missing primes iff torsion}
    Let $\C$ be an extension-closed pseudovariety of finite groups and suppose that $C_p \notin \C$ for some prime number $p \in \mathbb{P}$. Then a residually-$\C$ group is $\C$-CSS if and only if it is a torsion group.
\end{lemma}
\begin{proof}
    Suppose that $G$ is a residually-$\C$ group. If $G$ is torsion, then every cyclic subgroup of $G$ is finite and therefore must be $\C$-separable since $G$ is residually-$\C$.
    
    In the opposite direction, suppose that $G$ contains an element $g$ of infinite order. Clearly, $g \notin \langle g^p \rangle$. Since $C_p \notin \C$, we see that $C_{pk} \notin \C$ for any $k \in \mathbb{N}$. This means that whenever $\pi \colon G \to G/N$ is a natural projection for some $N \in \NC(G)$ the cyclic group $\langle \pi(g)\rangle \leq G/N \in \C$ is isomorphic to $C_n$ for some $n \in \mathbb{N}$ such that $\gcd(p,n) = 1$. That means $\pi(g)^p$ generates $\langle \pi(g)\rangle$ and $\pi(g) \in \langle \pi(g)^p \rangle$. We see that $\langle g^p \rangle$ is not $\C$-closed in $G$ and therefore $G$ cannot be $\C$-CSS.
\end{proof}

\begin{lemma}
    \label{lemma:CSS infinite order needs all primes}
    Let $\C$ be an extension-closed pseudovariety of finite groups. Then the following are equivalent:
    \begin{itemize}
        \item[(i)] $C_p \in \C$ for every prime $p \in \mathbb{P}$,
        \item[(ii)] $\C$ contains every finite cyclic group.
        \item[(iii)] the infinite cyclic group is $\C$-CSS,        
    \end{itemize}
\end{lemma}
\begin{proof}
    We see that (ii) implies (i). Similarly, since every cyclic group can be constructed from cyclic groups of prime order through a finite sequence of group extensions, we see that (ii) implies (i).

    Suppose that (ii) holds, and let $\langle g \rangle$ be the infinite cyclic group. Then for any $n \in \mathbb{N}$, we have that $\langle g \rangle /\langle g^n\rangle \simeq C_n \in \C$, and we then immediately see that $\langle g^n\rangle$ is $\C$-closed in $\langle g\rangle$ since $\langle g^n\rangle \in \NC(\langle g\rangle)$.

    Suppose that (iii) holds, and let $\langle g \rangle$ be the infinite cyclic group. Clearly, $\langle g \rangle$ is residually-$\C$, and we then see that $C_p \in \C$ for every prime number $p$ by Lemma~\ref{lemma:CSS - missing primes iff torsion}.
\end{proof}

\begin{lemma}
    \label{lemma:restriction_cyclic}
    Let $G$ be a $\C$-CSS group, and let $C \leq G$ be an infinite cyclic subgroup. Then $\proC(C)$ is a restriction of $\proC(G)$.
\end{lemma}
\begin{proof}
    Let $N \leq C$ be $\C$-open in $C$. As $C$ is cyclic, by necessity, $N$ is cyclic as well. Since $G$ is a $\C$-CSS group, the subgroup $N$ is $\C$-closed in $G$. Using Lemma~\ref{lemma:restriction}, we immediately get the result. 
\end{proof}

It is easy to see that unlike $\C$-conjugacy separability, the property of being $\C$-CSS is inherited by subgroups. Thus, if $G$ is a $\C$-CSS group, $H \leq G$, and $h, g \in H$ are given such that $g \notin \langle h \rangle$, then there is some $K \in \NC(G)$ such that $gK \notin \langle h \rangle K$ in $G/K$. Thus, one may consider the restriction $\pi|_H$ of the natural projection $\pi \colon G \to G/K$. The implication in the opposite direction does not hold in general, but holds if $H \leq G$ is $\C$-open in $G$, which we prove as follows.
\begin{lemma}
    \label{lemma:inheriting C-CSS upwards}
    Let $G$ be a group, and suppose that $H \leq G$ is $\C$-open in $G$. If $H$ is $\C$-CSS, then $G$ is $\C$-CSS.
\end{lemma}
\begin{proof}
    Let $H \leq G$ be as above and let $g \in G$ be arbitrary. Let us note that $G$ is residually-$\C$: $H$ is residually-$\C$ by assumption and thus $G$ is residually-$\C$ by \cite[Corollary 2.8]{mf}. Therefore, if $g$ is a torsion element, the cyclic subgroup $\langle g \rangle$ is $\C$-closed in $G$. We see that, without loss of generality, we may assume that $\ord(g) = \infty$. By Lemma~\ref{lemma:closed/open subgroups} we see that $H \fileq G$, therefore $\langle g\rangle \cap H \fileq \langle g\rangle$, meaning that $\langle g\rangle \cap H = \langle g^k\rangle$ for some $1 \leq k \leq |G:H|$. By assumption, $\langle g^k \rangle$ is 
    $\C$-closed in $H$, and therefore, by Lemma~\ref{lemma:restriction}, it is $\C$-closed in $G$ as well. It then follows that $\langle g \rangle = \bigcup_{i = 0}^{k-1} g^i \langle g^k\rangle$ is $\C$-separable in $G$ as it is a union of finitely many $\C$-separable subsets of $G$.
\end{proof}

\begin{lemma}
    \label{lemma:C-CSS direct products}
    Let $\C$ be an extension-closed pseudovariety of finite groups. Then the class of $\C$-CSS groups is closed under forming direct products.
\end{lemma}
\begin{proof}
    If $C_p \not\in \C$ for some prime number, then following Lemma~\ref{lemma:CSS - missing primes iff torsion}, the class of $\C$-CSS groups contains only residually-$\C$ torsion groups. Clearly, a direct product of residually-$\C$ torsion groups is again a residually-$\C$ torsion group and therefore a $\C$-CSS group.
    
    Now suppose that $C_p \in \C$ for every prime number $p$, let $G_1, G_2$ be $\C$-CSS groups, and let $H \leq G = G_1 \times G_2$ be cyclic. Let $h = (h_1, h_2) \in G_1 \times G_2$ be a generator of $H$. Set $H_1 = \langle h_1 \rangle \leq G_1$ and $H_2 = \langle h_2 \rangle \leq G_2$. Using Lemma~\ref{lemma:restriction_cyclic}, we see that $\proC(H_1)$ is a restriction of $\proC(H_1)$, and $\proC(H_2)$ is a restriction of $\proC(G_2)$. It follows by Lemma~\ref{lemma:restriction_product} that $\proC(H_1 \times H_2)$ is a restriction of $\proC(G_1 \times G_2)$. If we show that $H$ is $\C$-separable in $H_1 \times H_2$, then we are done. We have three cases to consider:
    \begin{enumerate}
        \item[(i)] both $h_1$ and $h_1$ are of finite order,
        \item [(ii)] exactly one of $h_1$ and $h_2$ is of infinite order,
        \item[(iii)] both $h_1$ and $h_1$ are of infinite order.
    \end{enumerate}
    
    Thus, if $H_1 \times H_2$ is finite, then $H < H_1 \times H_2$ is finite as well, and therefore, it is $\C$-closed in $G$ as $G$ is residually-$\C$.
    
    Suppose that exactly one of the elements $h_1, h_2$ is of infinite order, without loss of generality we may assume it is $h_1$. Then we have a short exact sequence
    \begin{displaymath}
        1 \to H_1 \to H_1\times H_2 \to H_2 \to 1.
    \end{displaymath}
    Note that $H_1 \times H_2$ is a residually-$\C$ group since it is a subgroup of a residually-$\C$ group $G$. By Lemma~\ref{lemma:finite subgroups} we see that $H_2 \in \C$, and therefore, $H_1 \in \NC(H_1 \times H_2)$. Note that since $G_1$ is $\C$-CSS, we see that $\langle h_1^n \rangle$ is $\C$-separable in $G_1$, and therefore, it is $\C$-closed in $H_1$. Furthermore, we see that $H_1$ is $\C$-CSS, and thus, we see that $H_1 \times H_2$ is $\C$-CSS by Lemma~\ref{lemma:inheriting C-CSS upwards}. Hence, we see that $H$ is $\C$-closed in $H_1 \times H_2$.
    
    Finally, suppose that both $h_1$ and $h_2$ are of infinite order, i.e.\ both $H_1$ and $H_2$ are infinite cyclic groups. We see that the infinite cyclic group is $\C$-CSS, and thus, by Lemma~\ref{lemma:CSS infinite order needs all primes}, we see that $C_p \in \C$ for every prime number $p$. Let $m,n \in \mathbb{Z}$ be arbitrary integers such that $(h_1^m,h_1^n) \notin H = \langle (h_1,h_2)\rangle$. Clearly, $m \neq n$. Pick some prime number $p \in \mathbb{P}$ such that $p$ does not divide $m-n$, i.e.\ $m \not\equiv n \mod p$, and consider the natural projection
    \begin{displaymath}
        \pi_{p} \colon H_1 \times H_2 \to  \langle \left(h_1 \rangle/\langle h_1^p \rangle \right) \times  \left(\langle h_2 \rangle/\langle h_2^p \rangle \right) \simeq C_p \times C_p.
    \end{displaymath}
    Since $C_p \in \C$, we see that $C_p \times C_p \in \C$. Suppose that $\pi_p((h_1^m, h_2^n))\in \langle\pi_p((h_1, h_2))\rangle$, i.e.\ $ \pi_p((h_1^m, h_2^n)) = \pi((h_1^e,  h_2^e ))$ for some $e \in \mathbb{Z}$. This means that $e \equiv m \mod p$ and $e \equiv n \mod p$. That implies $m \equiv n \mod p$ which is a contradiction with the choice of $p$. Hence, we see that $\pi_p((h_1^m, h_2^n))\not\in \langle\pi_p((h_1, h_2)) \rangle = \pi_p(H)$ and, consequently, $H$ is $\C$-closed in $C_1 \times C_2$.
\end{proof}

Together with Lemma~\ref{lemma:inheriting C-CSS upwards}, the lemma we just stated immediately allows us to show that the class of $\C$-CSS groups is closed under forming wreath products with finite acting group, thus proving the first half of Proposition~\ref{proposition:wreath products of cyclic subgroup separable}.
 \begin{corollary}
    \label{corollary:C-CSS wreath products, finite act}
     Let $\C$ be an extension-closed pseudovariety of finite groups. If $A$ is a $\C$-CSS group and $B \in \C$, then the wreath product $G = A \wr B$ is a $\C$-CSS group.
 \end{corollary}
 \begin{proof}
     First, let us note that $G = A^B \rtimes B$ and thus $A^B \in \NC(G)$. As $A$ is $\C$-CSS, $A^B$ is cyclic subgroup separable by Lemma~\ref{lemma:C-CSS direct products} as it is a direct product of finitely many copies of $A$. The rest follows by Lemma~\ref{lemma:inheriting C-CSS upwards}.
 \end{proof}
 
In the rest of this section we will assume that the base group $A$ is abelian. For the ease of writing, we will use the additive notation for elements of the abelian groups $A$ and $A^B$, while maintaining multiplicative notation for elements of $B$. In particular, we will use the symbol $+$ to denote the group operation in $A$ and $A^B$. Furthermore, for $a \in A$ and $h \in A^B$ we will use $-a$ and $-h$ to denote their respective inverses. In fact, the abelian group $A^B$ can be seen as a $\mathbb{Z}$ module on which $B$ acts by conjugation. For all $f,g \in A^B$, $b,c \in B$, and $k \in \mathbb{Z}$, we then can write:
\begin{align*}
    b\cdot(f+g) &= b \cdot f + b\cdot g,\\
    b \cdot(-f) &= - b\cdot f\\
    b \cdot(kf) &= k (b\cdot f),\\
    (fb) (gc) &= (f + b \cdot g)bc,\\
    (fb)^{-1} &= (-b^{-1}\cdot f) b^{-1}.  
\end{align*}

For improved readability, the proof of the second half of Proposition~\ref{proposition:wreath products of cyclic subgroup separable} will be split into three separate statements.
\begin{lemma}
    \label{lemma:CSS subgroup reduction 1}
     Let $\C$ be an extension-closed pseudovariety of finite groups and suppose that $A, B$ are $\C$-CSS groups such that $A$ is abelian.  For any $f \in A^B$ and $b \in B$, we have that the cyclic subgroup $\langle fb\rangle \leq A \wr B$ is $\C$-closed in $A \wr B$ if and only if it is $\C$-closed in the subgroup $A^B \rtimes \langle b \rangle $.
\end{lemma}
\begin{proof}
Let $A$ and $B$ be groups as in the lemma statement.
Moreover, let $G = A \wr B$ and $G_b = A^B \rtimes \left\langle b\right\rangle$.
We consider the directions of the statement as follows.

\smallskip

\noindent
$(\Longrightarrow)$
Suppose that the  cyclic subgroup $\langle fb\rangle$ is $\C$-closed in $G$ and let $x \in G_b \setminus \langle fb\rangle$ be arbitrary. By assumption there is $N \in \NC(G)$ such that $xN \cap \langle fb\rangle N = \emptyset$ in $G$. Clearly, $N \cap G_b \in \NC(G_b)$ and $x(N \cap G_b)\cap \langle fb\rangle (N\cap G_b) = \emptyset$ in $G_b$ and thus, we see that $\langle fb \rangle$ is $\C$-closed in $G_b$.

\smallskip
\noindent
$(\Longleftarrow)$
Suppose that $\left\langle fb \right\rangle$ is $\C$-closed in $G_b$.
Let $x \in G\setminus \left\langle fb \right\rangle$ be an arbitrary element, then it is sufficient to show that $x$ is $\C$-separable from the cyclic subgroup $\left\langle f b \right\rangle$.

We begin by noting that $G_b = \rho^{-1}(\left\langle b \right\rangle)$ where $\rho\colon A^B\rtimes B \to B$ is the canonical retraction.
Since $B$ is $\C$-CSS, we then see that $\left\langle b \right\rangle$ is $\C$-closed in $B$. Thus, applying Lemma~\ref{lemma:continuous} with $\rho$, we see that $G_b$ is $\C$-closed in $G$ as it is a preimage of a closed set.

Since $G_b$ is $\C$-closed, we see that if $x\notin G_b$, then $x$ is $\C$-separable from $\left\langle fb\right\rangle\leq G_b$, and we are done.
Hence, in the remainder of this proof, we assume that $x = g b^n$ for some $g\in A^B$ and some $n\in \mathbb Z$.

Since $\left\langle fb \right\rangle$ is $\C$-closed in $G_b$, there exists some subgroup $M_b\in \NC(G_b)$ such that
\[
  gb^n M_b \cap \left\langle fb\right\rangle M_b = \emptyset.
\]
Moreover, since $M_b$ is finite index in $G_b$, there exists an $m\in \mathbb N$ with $M_b\cap \left\langle b \right\rangle = \left\langle b^m\right\rangle$.
Then, by Lemma~\ref{lemma:order is a multiple}, there is a subgroup $K_1 \in \NC(B)$ such that $K_1 \cap \left\langle b \right\rangle \leq \left\langle b^m \right\rangle$, in particular, there exists some $k\in \mathbb N$ for which $K_1\cap \left\langle b \right\rangle = \left\langle b^{km}\right\rangle$.

To simplify the remainder of this proof, we define the set $\mathbb B$ as $\mathbb B \coloneqq \mathbb Z$ if $b$ is not torsion, and $\mathbb B \coloneqq \{0,{\penalty51}1,{\penalty51}2,{\penalty51}\ldots, \mathrm{ord}(b)-1\}$ if $b$ is torsion of order $\mathrm{ord}(b)$.

We now define $A_q \leq A^B$ as
\begin{displaymath}
    A_q = \left\{ h \in A^B \mid \supp(h) \subseteq \{q\}\right\} \leq A^B
\end{displaymath}
for each $q\in B$. Clearly, $A_q \simeq A$. Notice then that $q' \cdot A_q = A_{q'q}$ for each $q'\in B$.
Let $T_b\subseteq B$ be a right traversal of $\left\langle b \right\rangle$ in $B$. We then can write
\[
  G_b
  =
  \left(
    \bigoplus_{c\in T_b} \bigoplus_{i\in \mathbb B} A_{b^i c}
  \right)\rtimes \left\langle b \right\rangle.
\]
Moreover, for each $c\in T_b$, we see that $\bigoplus_{i\in \mathbb Z} A_{b^i c}$ is a normal subgroup of $G_b$.
This becomes clear after considering that
\[
  h \left(\bigoplus_{i\in \mathbb B} A_{b^i c}\right) h^{-1}
  =
  \bigoplus_{i\in \mathbb B} A_{b^i c}
  \qquad\text{and}\qquad
  \left\langle b \right\rangle \cdot \left(\bigoplus_{i\in \mathbb B} A_{b^i c}\right)
  =
  \bigoplus_{i\in \mathbb B} A_{b^i c}
\]
for each $c\in T_b$ and $h\in A^B$.

We now define a finite set of coset representatives $C\subseteq T_b$ as
\[
  C
  =
  \left\{
    c \in T_b \ \middle|\
    \left\langle b \right\rangle c \in \mathrm{supp}(g)
    \ \mathrm{or}\ 
    \left\langle b \right\rangle c \in \mathrm{supp}(f)
  \right\}.
\]
We may assume without loss of generality that $C$ is nonempty. Otherwise, we would have $f =1 =g$ and, consequently, $x = g b^n = b^n \in \left\langle b \right\rangle = \left\langle fb\right\rangle$, which is a contradiction with the choice of $x$.
Moreover, we see that $C$ is finite as $g$ and $f$, by definition, have finite support.

Since $C$ is finite and $B$ is $\C$-CSS, there exists some subgroup $K_2\in \NC(B)$ such that $\left\langle b \right\rangle c_1 K_2 = \left\langle b\right\rangle c_2 K_2$ implies that $\left\langle b\right\rangle c_1 = \left\langle b \right\rangle c_2$. If $\ord(b)\leq \infty$ the we also pick $K_3 \in \NC(B)$ such that $\langle b\rangle \cap K_3 = \{1\}$, i.e.\ the finite subgroup $\langle b \rangle$ embeds into $B/K_3$. Otherwise, if $\ord(b) = \infty$, we set $K_3 = B$.

Let $K_B = K_1 \cap K_2 \cap K_3$.
From Lemma~\ref{lemma:map_extension} we see that, with
\[
  L_B
  \coloneqq
  \left\{
    h \in A^B
  \ \middle|\ 
    \sum_{k\in K_B} h(kq)=0
    \ \ \mathrm{for\ each}\ q\in B
  \right\},
\]
we have that $L_B K_B$ is the normal closure of $K_B$ in $G$, and $G/L_B K_B = A\wr (B/K_B)$. It follows from Corollary~\ref{corollary:C-CSS wreath products, finite act} that $G/L_B K_B = A\wr (B/K_B)$ is $\C$-CSS.

Let $\pi \colon G \to G/L_B K_B$ denote the quotient map.
We now consider the restriction of this map $\pi_C = \pi|_{G_C}$ where
\[
  G_C
  \coloneqq
  \left(
    \bigoplus_{c\in C} \bigoplus_{i\in \mathbb B} A_{b^i c}
  \right)\rtimes \left\langle b \right\rangle.
\]
From our definition of $C$, we see that $fb, gb^n \in G_C$.

\medskip

\noindent
\textbf{Claim:} $\ker(\pi_C) \leq M_B$.

\smallskip

\noindent
\textbf{Proof of claim:}
For each $c_1,c_2\in C$ we see that if $c_1 \neq c_2$, then
\[
  \left\langle b\right\rangle c_1 K_B \cap
  \left\langle b\right\rangle c_2 K_B
  \subseteq
  \left\langle b\right\rangle c_1 K_2 \cap
  \left\langle b\right\rangle c_2 K_2
  =\emptyset.
\]
From this, we then observe that
\[
  \pi_C(G_C)
  =
  \left(
    \bigoplus_{c \in C} \bigoplus_{0 \leq i < \beta} A_{(\pi(b))^i \pi(c)}
  \right)
  \rtimes \left\langle \pi(b) \right\rangle
\]
where $\beta = \mathrm{ord}(\pi_C(b))$. By construction, if $\ord(b) < \infty$ then $\beta = \ord(b)$ and, consequently, we see that $\pi_C$ is injective and $G_C \simeq \pi_C(G_C)$. In particular, we see that $\ker(\pi_C) = \{1\} \leq M_b$. 

Now suppose that $\ord(b) = \infty$. Notice that $K_B\cap \left\langle b \right\rangle \leq K_1 \cap \left\langle b \right\rangle \leq \left\langle b^m\right\rangle$.
Thus, there exists some $k'\in \mathbb N$ for which $K_B\cap \left\langle b \right \rangle = \langle b^{k'm}\rangle$.
Moreover, we note here that $\beta = k'm$.

We then see that for $h \in A^B$ we have $h \in \ker(\pi_C)$ if and only if for every $c\in C$ and every $j\in \{0,1,2,\ldots, \beta-1\}$ the following holds:
\begin{equation}\label{eq:sum1}
  \sum_{i\in \mathbb Z} h(b^{j+ik'm} c) = \sum_{i\in \mathbb Z} h(b^{j+i\beta}) = 0.
\end{equation}
Further, we see that the normal closure of $\langle b^m \rangle$ in $G_b$ is generated by elements of the form $hb^{m}h^{-1} b^i = (h - b^{m} \cdot h)b^i$, where $h \in A^B$ and $i\in \mathbb{Z}$. We see that $h$ belongs to the normal closure of $\left\langle b^m\right\rangle = M_b\cap \left\langle b \right\rangle$ in $G_b$ if and only if
\begin{equation}\label{eq:sum2}
  \sum_{i\in \mathbb Z} h(b^{im}q)=0
\end{equation}
for each $q \in B$.

It can be seen that if $h\in A^B$ satisfies property~(\ref{eq:sum1}), then it also satisfies property~(\ref{eq:sum2}).
That is, $\ker(\pi_C)$ is a subgroup of the normal closure $\left\llangle b^m \right\rrangle_{G_b}$.

We then see that
\[
  \ker(\pi_C) \leq \left\llangle b^m \right\rrangle_{G_b} =
  \left\llangle \left\langle b \right\rangle \cap M_b \right\rrangle_{G_b}
  \leq M_b.
\]
This holds as the subgroup $M_b \in \NC(G_b)$ is normal in $G_b$ by definition, which proves the claim. \newline

Recall that $M_b$ was chosen such that
$
  gb^n M_b \cap \left\langle fb\right\rangle M_b = \emptyset.
$
Thus, from our claim, we have
$
  \pi_C(gb^n) \notin \left\langle \pi_C(fb)\right\rangle.
$
Hence,
$
  \pi(gb^n)\notin \left\langle \pi(fg)\right\rangle.
$
This then immediately implies that
$
  \pi(gb^n)\notin \pi(\left\langle fb\right\rangle).
$
Thus, $gb^n$ is $\C$-separable from $\left\langle fb\right\rangle$ and we see that the cyclic subgroup $\left\langle fb\right\rangle$ is $\C$-closed in $G$.
\end{proof}

Before we proceed, let us note that using the additive notation, for $f \in A^B$, $b \in B$, and $n \in \mathbb{Z}$ we may write
\begin{align*}
    (fb)^n  &= (f + b\cdot f + b^2 \cdot f + \dots + b^{n-1} \cdot f)b^n\\
            &= \left( \sum_{i=0}^{n-1} b^i \cdot f\right)b^n. 
\end{align*}
For the ease of writing, we will use $f^{b\otimes n}$ to denote $\sum_{i=0}^{n-1} b^i \cdot f$. One can easily check that for $m,n \in \mathbb{N}$ we have $f^{b\otimes(m+n)} = f^{b\otimes n} + b^n \cdot f^{b\otimes m}$. Using this notation, if $\ord(b) = \infty$, we see that $gb^m \in \langle fb \rangle$ if and only if $g = f^{b \otimes m}$. Similarly, if $\ord(b) = n$, then $gb^m \in \langle fb \rangle$ if and only if $g = f^{b \otimes (m + kn)} = f^{b \otimes m} + k f^{b \otimes n}$ for some $k \in \mathbb{Z}$.
\begin{lemma}
    \label{lemma:CSS lamplighter reduction}
     Let $A$ be abelian, and let $\langle b \rangle \simeq \mathbb{Z}$ be the infinite cyclic group. Then for any $f,g \in A^{\langle b \rangle}$ and $m \in \mathbb{Z}$ such that $gb^m \notin \langle fb \rangle$, there is $n \in \mathbb{N}$ such that for the natural projection
    \begin{displaymath}
        \pi_n \colon A \wr \langle b \rangle \to A \wr \left(\langle b \rangle / \langle b^n\rangle \right),
    \end{displaymath}
    we have that $\pi_n(gb^m) \notin \langle \pi(fb) \rangle$.
\end{lemma}
\begin{proof}
    First, let us note that we may without loss of generality assume that $\supp(f) \subseteq \{1\}$ (notice that if $f$ were the trivial function, then $\supp(f)=\emptyset\subset \{1\}$). By Lemma~\ref{lemma:support in distinct cosets}, there is an element $h \in A^{\langle b \rangle}$ such that the element $h (fb)(-h) = f'b$, where $f' = h + f - b\cdot h$ and all elements of $\supp(f')$ lie in distinct cosets of $\langle b \rangle$. Since $b$ generates the whole acting group $\langle b \rangle$, we see that $|\supp(f')| \leq 1$, i.e.\ $\supp(f') \subseteq \{b^k\}$ for some $k \in \mathbb{Z}$. Furthermore, if $k \neq 0$, we may conjugate the element $f'b$ by $b^{-k}$. Clearly, for any $K \unlhd G$, we have that 
    \begin{align*}
        (b^{-k}h) gb^n (hb^{-k})^{-1}K \cap \langle (hb^{-k})fb(hb^{-k})^{-1} \rangle K &=\\
        (hb^{-k})(gb^n K \cap \langle fb \rangle K)(hb^{-k})^{-1} &= \emptyset
    \end{align*}
    if and only if $gb^n K \cap \langle fb \rangle K = \emptyset$.

    By assumption, $gb^n \not\in \langle fb \rangle$ which means $g \neq f^{b \otimes m}$. Therefore, there is some $j\in \mathbb{Z}$ such that $b^j \in S  = \supp(f^{{b \otimes m}})\cup \supp(g)$ and $g(b^j) \neq f^{{b \otimes m}}(b^j)$. Write $S_\mathbb{Z} = \{i \in \mathbb{Z} \mid b^i \in S\}$, and let $l_1, l_2 \in \mathbb{Z}$ be such that $S_{\mathbb{Z}} \subseteq [l_1,l_2]$. Set $n = 2(l_2 - l_1)$, and consider the map
    \begin{displaymath}
        \pi \colon A \wr \langle b \rangle \to A \wr (\langle b \rangle / \langle b^n \rangle) \simeq A \wr C_n,
    \end{displaymath}
    where $C_n$ is the cyclic group of order $n$. We note that, by construction $\pi(f^{b \otimes m})(b^i \langle b^n \rangle) = f^{b \otimes m}(b^i)$ and $\pi(g)(b^i \langle b^n \rangle) = g(b^i)$ for every $i \in S_{\mathbb{Z}}$. Suppose that $\pi(gb^m) \in \langle \pi(fb)\rangle$, i.e.\ $\pi(gb^n) = \pi(fb)^e$ for some $e \in \mathbb{Z}$. By necessity, $e \equiv m \mod n$, meaning that $e = m + kn$ for some $k \in \mathbb{Z}$. Then
    \begin{align*}
        \pi(fb)^e = \pi\left((fb)^e\right) &= \pi\left((fb)^{m + kn}\right)\\
                                &= \pi\left(f^{b \otimes(m+kn)} b^{m+kn}\right)\\
                                &= \pi\left(f^{b \otimes(m+kn)}\right)\pi\left( b^{m+kn}\right)\\
                                &= \pi\left(f^{b \otimes m} + b^{kn}\cdot f^{b \otimes kn}\right) \pi(b^m)\\
                                &= \left(\pi\left(f^{b \otimes m}\right) + k\pi\left(f^{b \otimes kn}\right)\right) \pi(b^m).
    \end{align*}
    That means $\pi(g) = \pi\left(f^{b \otimes m}\right) + k\pi\left(f^{b \otimes kn}\right)$. Denote $f(1) = a$. Since $\supp(f) = \{1\}$, we see that
    \begin{displaymath}
        f^{b \otimes n}(b^i) = \begin{cases}
            a &\mbox{ if $i \in \{0, 1, \dots, n-1\}$},\\
            0    &\mbox{ otherwise}.
        \end{cases}
    \end{displaymath}
    In particular, we see that $\pi(g) - \pi(f^{b \otimes m}) = \pi(g - f^{b \otimes m}) = k \overline{a}$ where $\overline{a} \colon \langle b \rangle / \langle b^n\rangle \to A$ is the function given by $\overline{a}(x) = a$ for all $x \in \langle b \rangle / \langle b^n\rangle$, i.e.\ $\pi(g - f^{b \otimes m})$ must be constant on $\langle b \rangle / \langle b^n\rangle$. Since $g \neq f^{b \otimes m}$, there is some $j \in S_{\mathbb{Z}}$ such that $g(b^j) \neq f^{{b \otimes m}}(b^j)$, meaning that $\pi(g - f^{b \otimes m})(r^j\langle b^n \rangle) \neq 0$. At the same time
    \begin{displaymath}
        \pi(g)(b^i \langle b^n \rangle) = 0 = \pi(f^{b \otimes m})(b^i \langle b^n \rangle)
    \end{displaymath}
    for all $i \in [0, \dots, n-1] \setminus S \mod n$, which is a contradiction. We see that $\pi(gb^m) \notin \langle \pi(fb)\rangle$.
\end{proof}

\begin{lemma}
    \label{lemma:CSS subgroup reduction 2}
    Let $\C$ be an extension-closed pseudovariety of finite groups such that $\mathbb{Z} \times \mathbb{Z}$ is $\C$-CSS, and suppose that $A, B$ are $\C$-CSS groups such that $A$ is abelian. For any $f \in A^B$ and $b \in B$, we have that the cyclic subgroup $\langle fb\rangle \leq A \wr B$ is $\C$-closed in $A^B \rtimes \langle b \rangle$.
\end{lemma}
\begin{proof}
    For this proof, let us denote $G = A \wr B$ and $G_b = A^B \wr \langle b \rangle.$ Let $g \in A^B$ and $m \in \mathbb{Z}$ be given such that $gb^m \not \in \langle fb \rangle$. Note that this is the case if and only if $g \neq f^{b \otimes m}$.
    
    Let $T_b \subseteq B$ be right transversal for $\langle b \rangle$ in $B$. For $h \in A^B$ and $c \in T_b$, we define $h_c \in A^B$ as
    \begin{displaymath}
        h_c(x) = \begin{cases}
            h(x) &\mbox{ if $x \in \langle b \rangle$c},\\
            0    &\mbox{ otherwise}.   
        \end{cases}
    \end{displaymath}
    We see that $h = \sum_{c \in T_b} h_c$, and for any pair $h,h' \in A^B$, we have $h = h'$ if and only if $h_c = h'_c$ for all $c \in T_b$. It follows that $g = f^{b\otimes m}$ if and only if $g_c = f^{b\otimes m}_c$ for all $c \in T_b$. By assumption, $g \neq f^{b\otimes m}$ so there is some $c' \in T_b$ such that $g_{c'} \neq f^{b\otimes m}_{c'}$. Consider the map
    \begin{displaymath}
        \pi_{c'} \colon G_b = \left( \bigoplus_{c \in T_b} \left( \bigoplus_{i \in \mathbb{Z}} A_{b^i c}\right) \right)\rtimes \langle b \rangle \to \left( \bigoplus_{i \in \mathbb{Z}} A_{b^i c'}\right) \rtimes \langle b \rangle \simeq A \wr \langle b\rangle
    \end{displaymath}
    given by $\pi_{c'}(hb^i) = h_{c'} b^i$. Let us note that, for the ease of writing, we are slightly abusing notation, and consider $h_{c'}$ both as function defined on $B$, i.e.\ $h_{c'} \in A^B = \bigoplus_{q \in B} A_q$. As a function defined on the coset $\langle b \rangle c'$, i.e.
    \begin{displaymath}
        h_{c'} \in A^{\langle b \rangle c'} = \bigoplus_{i \in \mathbb Z} A_{b^i c'}.
    \end{displaymath}
    Clearly, $\pi_{c'}(g) \neq \pi_{c'}(f^{b \otimes m})$, and so, $\pi_{c'}(gb^m) \notin \langle \pi_{c'}(fb)\rangle$. If the element $b$ is of finite order, then $G_C$ is a $\C$-CSS group by Corollary~\ref{corollary:C-CSS wreath products, finite act}. Hence, $\langle \pi_C(fb)\rangle$ is $\C$-closed in $\pi_{c'}(G_b)$, and we see that $\langle fb\rangle$ is $\C$-closed in $G_b$ by Lemma~\ref{lemma:lazy lemma separability}. If $\ord(b) = \infty$, then by Lemma~\ref{lemma:CSS lamplighter reduction}, there is $n \in \mathbb{N}$, such that for the map $\pi_n \colon A \wr \langle b\rangle \to A \wr C_n$, we have that $\pi_n(\pi_{c'}(gb^m)) \notin \langle \pi_n(\pi_{c'}(fb) \rangle$. Let us denote $\pi = \pi_n \circ \pi_{c'} \colon G_b \to A \wr C_n$. As we just demonstrated $\pi(gb^m) \not \in \langle \pi(fb) \rangle$ and that the group $A \wr C_n$ is $\C$-CSS by Corollary~\ref{corollary:C-CSS wreath products, finite act}, we see that the cyclic subgroup $\langle fb \rangle$ is $\C$-closed in $G_b$ by Lemma~\ref{lemma:lazy lemma separability}.
\end{proof}

\begin{corollary}
    \label{corollary:C-CSS wreath products, infinite act}
    Let $\C$ be an extension-closed pseudovariety of finite groups such that $\mathbb{Z} \times \mathbb{Z}$ is $\C$-CSS, and suppose that $A, B$ are $\C$-CSS groups such that $A$ is abelian. Then the wreath product $G = A \wr B$ is a $\C$-CSS group.
\end{corollary}
\begin{proof}
    Let $f \in A^B$ and $b \in B$ be arbitrary. By Lemma~\ref{lemma:CSS subgroup reduction 1}, we see that the subgroup $\langle fb\rangle$ is $\C$-closed in $G$ if and only if it is $\C$-closed in $G_b = A^B \rtimes \langle b \rangle$. By Lemma~\ref{lemma:CSS subgroup reduction 2}, we see that the cyclic subgroup $\langle fb \rangle$ is $\C$-closed in $G_b$, and therefore, it is $\C$-closed in $G$, which concludes the proof.
\end{proof}

We are now ready to prove Proposition~\ref{proposition:wreath products of cyclic subgroup separable}.
\begin{proof}[Proof of Proposition~\ref{proposition:wreath products of cyclic subgroup separable}]
Suppose that the wreath product $G = A \wr B$ is a $\C$-CSS group. By necessity, $G$ is also residually-$\C$. Then by \cite[Theorem~3.1]{gruenberg} we see that either $B \in \C$ or $A$ is abelian.

 Now suppose that $A$ and $B$ are given as above. If $B \in \C$ then $G = A \wr B$ is $\C$-CSS by Corollary~\ref{corollary:C-CSS wreath products, finite act}. If $B$ is infinite and $A$ is abelian then $G$ is $C$-CSS by Corollary~\ref{corollary:C-CSS wreath products, infinite act}.
 \end{proof}

\section{Derived length of wreath products of solvable groups with abelian base}
\label{section: Wreath prouducts of solvable groups}
In this section, we study wreath products of solvable groups. In particular, we show that iterated wreath products of $k$ residually finite, cyclic subgroup separable abelian groups produces a $k$-step solvable group that is hereditarily conjugacy separable and cyclic subgroup separable.

The following lemma will be used to establish an upper bound on the length of the derived series of a group of the form $A \wr B$, where $A$ is abelian and $G$ is solvable.
\begin{lemma}\label{derived_series_wreath_product}
    \label{lemma:wreath derived series}
    Let $A$ be an abelian group and let $G$ be a group. Then for every $i \in \mathbb{N}$, we have
    \begin{displaymath}
       \left( A \wr B \right)^{(i)}  \leq \left[A^G, G^{(i-1)}\right]\rtimes G^{(i)} 
    \end{displaymath}
\end{lemma}
\begin{proof}
    Before we proceed, let us observe that if a group $H$ splits as a semidirect product $H = K \rtimes R$, then $H' = [K,K][K,R][R,R]$. Furthermore, if $K$ is abelian, we have $H' = [K,R]\rtimes R'$.

    We will prove the statement by induction on $i$. For the base case when $i=1$, using the observation above, we see that
    \begin{displaymath}
        (A \wr G)' = (A^G \rtimes G)' = [A^G, A^G][A^G,G][G,G] = [A^G,G][G,G] = \left[A^G, G\right]\rtimes G'.
    \end{displaymath}
    For the inductive hypothesis, assume that
    \begin{displaymath}
       \left( A \wr G \right)^{(i)} \leq \left[A^G, G^{(i-1)}\right]\rtimes G^{(i)},
    \end{displaymath}
    and consider $i+1$. Using the induction hypothesis, we may write
    \begin{displaymath}
        \left( A \wr G \right)^{(i+1)} = \left( \left( A \wr G \right)^{(i)}\right)' \leq \left( \left[A^G, G^{(i-1)}\right]\rtimes G^{(i)}\right)'.
    \end{displaymath}
    We observe that $\left[A^G, G^{(i-1)}\right] \subseteq A^G$, which is abelian, and therefore, by the observation above, we have
    \begin{displaymath}
         \left( A \wr G \right)^{(i+1)} \leq \left[\left[A^G, G^{(i-1)}\right],G^{(i)} \right]\rtimes G^{(i+1)}.
    \end{displaymath}
    We will show that $\left[A^G,G^{(i)} \right] \leq \left[\left[A^G, G^{(i-1)}\right],G^{(i)} \right]$. Let $f \in A^G$ and $x,y \in G$ be arbitrary. Using the additive notation for $A^G$, we can write
    \begin{align*}
        [[f,x],y]   &= [f - x \cdot f, y] = (f - x \cdot f) - y\cdot (f - x \cdot f) = f - x\cdot f - y \cdot f + yx \cdot f\\
                    &= f - y\cdot f - (x\cdot f - yx\cdot f)\\ &= [f, y] - [x \cdot f, y].
    \end{align*}
    If $x \in G^{(i-1)}$ and $y \in G^{(i)}$, we then see that $[[f,x],y] \in \left[A^G, G^{(i)}\right]$, as $[f,y], [x \cdot f, y] \in \left[A^G, G^{(i)}\right]$, meaning that $\left[\left[A^G, G^{(i-1)}\right],G^{(i)} \right] \subseteq \left[A^G,G^{(i)} \right]$. This means that
    \begin{displaymath}
        \left[A^G, G^{(i)}\right]\rtimes G^{(i+1)} \leq \left[\left[A^G, G^{(i-1)}\right],G^{(i)} \right]\rtimes G^{(i+1)} \leq \left( A \wr G \right)^{(i+1)}
    \end{displaymath}
    which concludes the proof.
\end{proof}

The following lemma will be used to establish a lower bound on the length of the derived series group of the form $A \wr B$, where $A$ is abelian and $G$ is solvable.
\begin{lemma}
    \label{lemma:descending series}
     Let $G$ be a group, and let $G = G_0 \geq G_1 \geq \dots \geq G_k = \{1\}$ be a proper descending series of subgroups. Let $x_0, x_1, \dots, x_{k -1} \in G$ be a sequence of elements such that for all $i \in 
     \{1, \dots ,k-1\}$, we have $x_i \in G_i \setminus G_{i+1}$. Then for the subsets $S_i \subset G$, where $S_0 = \{1\}$ and $S_{i+1} = S_i \cup x_iS_i$, we have $S_i \cap x_iS_i = \emptyset$.
 \end{lemma}
 \begin{proof}
     We will build the sequence inductively. We may pick $x_0 \in G\setminus G_1$ to be an arbitrary element. As $x_0$ must be non-trivial, we see that the sets $S_0 = \{1\}$ and $x_0 S_0 = \{x_0\}$ are disjoint. Now suppose that we have a sequence of elements $x_0, x_1, \dots, x_{i-1} \in G$ where $i < k-1$ satisfying the conditions of this lemma, and let $x_i \in G_{i} \setminus G_{i+1}$ be arbitrary. 

     \noindent\textbf{Claim}: Every element of $S_i$ can be written as a product of the form $x_{j_1} x_{j_2} \dots x_{j_n}$ with $i-1 \geq n$ and $i - 1 \geq j_1 > j_2 > \dots > j_n \geq 0$.

     \noindent \textbf{Proof of Claim:} We proceed by induction on $i$. For $i = 1$, we see that $S_1 = S_0 \cup x_0S_0 = \{1, x_0\}$ and the statement holds trivially. Now suppose that the statement holds for $i-1$, and consider some $w \in S_i = S_{i-1} \cup x_{i-1} S_{i-1}$. If $w \in S_{i-1}$ then by the induction hypothesis, $w$ can be written as a product $w= x_{j_1} x_{j_2} \dots x_{j_n}$ with $i-2 \geq n$ and $i - 2 \geq j_1 > j_2 > \dots > j_n \geq 0$. Similarly, if $w \in x_{i-1}S_{i-1}$, then by induction hypothesis, $w$ can be written as a product $w= x_{i-1}x_{j_1} x_{j_2} \dots x_{j_n}$ with $i-2 \geq n$ and $i - 2 \geq j_1 > j_2 > \dots > j_n \geq 0$, and we are done.
     
     Now suppose that $x_i S_i \cap S_i \neq \emptyset$, meaning that $x_i s = t$ for some $s,t \in S_i$. Since every element of $S_i$ can be written a product of elements in $\{x_0, \dots, x_{i-1}\}$, we can decompose both $s$ and $t$ as $s = ux_l w$ and $t = vx_{m}w$, where $l \neq m$. Therefore we have $x_i u x_l = v x_m$.  Note that it follows that $u \in G_{i_l}$ for some $i_l > l$ and $v \in G_{i_m}$ for some $i_m > m$. If $l< m$, we may write $x_l = u^{-1} x_i^{-1} v x_m $. However, since $x_i, x_m u,v \in G_{\min\{i_l,i_m\}} \leq G_{l+1},$ we have $x_l = u^{-1} x_i^{-1} v x_m \in G_{l+1}$, which is a contradiction. Similarly, if $l > m$, then we can write $x_m = v^{-1} x_i u x_l \in G_{m+1}$, which is again a contradiction. It follows that the sets $S_i$ and $x_i S_i$ must be disjoint.
 \end{proof}

\begin{lemma}\label{lem:wreath product solvable iterated}
     Let $G$ be a solvable group of derived length $k$, and let $A$ be an abelian group. Then $A \wr G$ is a solvable group of derived length $k+1$. Furthermore, if $G$ is infinite, then $A \wr G$ is not polycyclic.
 \end{lemma}
\begin{proof}
    By Lemma~\ref{lemma:wreath derived series}, we see that
    \begin{displaymath}
       \left( A \wr B \right)^{(k+1)}  \leq \left[A^G, G^{(k)}\right]\rtimes G^{(k+1)}  = \{1\}
    \end{displaymath}
    because $G^{(k)} = G^{(k+1)} = \{1\}$. Hence, $\left(A \wr B \right)^{(k+1)}$ is of derived length at most $k+1$.

    To see that $\left(A \wr B \right)^{(k+1)}$ is of derived length exactly $k+1$, we will show that $\left(A \wr G\right)^{(k)}$ is non-trivial by inductively construct a sequence of functions $f_0, f_1, \dots, f_k$ such that $f_i \in \left(A \wr G\right)^{(i)} \setminus \left(A \wr G \right)^{(i+1)}$ for $i = 0, \dots, k$. Let $a \in A$ be non-trivial, and let $f_{0} \in A^G$ be a function given by
    \begin{displaymath}
        f_0(x) =
        \begin{cases}
            a \mbox{ if } x=1\\
            0 \mbox{ otherwise}
        \end{cases}.
    \end{displaymath}
    Finally, let $x_0, x_1, \dots, x_{k-1} \in G$ be a sequence of elements such that $x_i \in G^{(i)} \setminus G^{(i+1)}$ for $i = 0, \dots, k$. By Lemma~\ref{lemma:descending series}, if we set $S_0 = \{1\}$ and $S_{i+1} = S_i \cup x_iS_i$, the sets $S_i$ and $x_iS_i$ are always disjoint. For $i = 1, \dots, k$, we then define 
    $$
    f_i = [f_{i-1}, x_{i-1}] = f_{i-1} - x_{i-1}\cdot f_{i-1}.
    $$ Clearly, $f_i \in (A \wr G )^{(i)}$. Furthermore, for $i=0, \dots, k-1$, we claim  that $\supp(f_i) = S_i$. Indeed, we proceed by induction, and note that $\supp(f_0) = \{x_0\} = S_0$. Since the sets $S_i$ and $x_i S_i$ are disjoint, we see that $\supp(f_i) \cap x_i \cdot \supp(f_i) = \emptyset$. We then get
    \begin{displaymath}
        \supp(f_{i+1}) = \supp(f_i - x_i \cdot f_i) = \supp(f_i) \cup x_i \supp(f_i) = S_i \cup x_i S_i = S_{i+1}
    \end{displaymath}
    giving our claim. This means that $f_i$ is non-trivial for every $i \in \{1, \dots, k\}$. In particular, we see that $f_k \in (A\wr G)^{(k)}$ is non-trivial, and hence, $A\wr G$ is of derived length at least $k+1$.  

    Finally, we note that if $G$ is infinite, then either $\mathbb{Z}^\infty$ is a subgroup or $\mathbb{F}_p^\infty$ for some prime $p \in \mathbb{P}$ is a subgroup of $A \wr G$. As neither of those is a polycyclic group, we see that $G$ is not polycyclic.
\end{proof}

 \begin{lemma}
     Suppose that $\mathcal{A} = \left(A_i \right)_{i=0}^{\infty}$ is a sequence of residually finite, cyclic subgroup separable abelian groups. Set $G_1(\mathcal{A}) = A_1$, and recursively define
     \begin{displaymath}
         G_{i+1}(\mathcal{A}) = A_{i+1} \wr G_i(\mathcal{A}).
     \end{displaymath} Then $G_k(\mathcal{A})$ is $k$-step solvable hereditarily conjugacy separable and cyclic subgroup separable group. Furthermore, if $A_i$ is torsion-free for every $i \leq k$, then $G_k(\mathcal{A})$ is torsion-free as well. Furthermore, if $A_j$ is infinite for some $j \geq 2$, then $G_{j'}(\mathcal{A})$ is not polycyclic for all $j' \geq j$.
 \end{lemma}
 \begin{proof}
     Using induction, Corollary~\ref{corollary:wreath products of CSS groups} implies that $G_k(\mathcal{A})$ is a cyclic subgroup separable group for all $k$, and Lemma~\ref{lem:wreath product solvable iterated} implies that $G_k$ is a $k$-step solvable group by induction. Similarly, using Theorem~\ref{theorem:HCS} and induction, we see that $G_k(\mathcal{A})$ is hereditarily conjugacy separable group. Altogether, we see that $G_k(\mathcal{A})$ a $k$-step solvable group and is hereditarily conjugacy separable. We finish by noting that $G_k(\mathcal{A})$ is clearly seen to be torsion-free free when $A_i$ is torsion-free for all $i \leq k$.
 \end{proof}

\IteratedWreath

\appendix
\section{\texorpdfstring{$\C$}{C}-Centralizer condition and the \texorpdfstring{$\C$}{C}-completion}
 The aim of the appendix is to characterize residually-$\C$ groups satisfying the $\C$-centralizer condition in terms of their pro-$\C$ completion, thus generalizing \cite[Corollary~12.2]{ashot_raags} to the setting of pro-$\C$ topologies, where $\C$ is an extension-closed pseudovariety of finite groups. Background on pro-$\C$ completions can be found in the books \cite{rz} and \cite{ribes2017profinite}.

    The pro-$\C$ completion $\hat{G}^{\C}$ of $G$ is the inverse limit of all finite quotients of form $G/K$ where $K \in \NC(G)$. Moreover, there exists a canonical embedding of $\hat{G}^{\C}$ into $\prod_{N \in \NC(G)} G/N$. Thus, $\hat{G}^{\C}$ can be equipped with the topology induced by the product topology on $\prod_{N \in \NC(G)}G/N$ where each finite group $G/N$ is equipped with the discrete topology. Hence, $\hat{G}^{\C}$ is a compact topological group. Finally, for a subgroup $H \leq G$, we denote its pro-$\C$-closure in $\hat{G}^{\C}$ as $\overline{H}.$
   
    Let $H \leq G$ and $g \in H$ be arbitrary. We say that the pair $(H,g)$ satisfies $\C\text{-CC}_G$ if for every subgroup $K \in \NC(G)$ there is a group $L \in \NC(G)$ such that $L \leq K$ and
    \begin{displaymath}
        C_{\pi_L(H)}(\pi_L(g)) \subseteq \pi_L(C_H(g)K) \mbox{ in G/L}. 
    \end{displaymath}

    The following statement is a generalization of \cite[Proposition~12.1]{ashot_raags} into the general pro-$\C$ setting and our proof closely follows that of \cite[Proposition~12.1]{ashot_raags}.
    \begin{proposition}\label{appendix: first statement}
        Let $G$ be a residually-$\C$ group, and let $H \leq G, g \in G$ be arbitrary. Then the following are equivalent:
        \begin{enumerate}
            \item[(i)] the pair $(H,g)$ satisfies $\C\text{-CC}_G$,
            \item[(ii)] $C_{\overline{H}}(g) = \overline{C_H(g)}$ in $\widehat{G}^{\C}$.
        \end{enumerate}
    \end{proposition}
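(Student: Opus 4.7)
The plan is to work through the two implications separately. Throughout, I will write $\pi_N \colon \widehat{G}^{\C} \to G/N$ for the continuous extension of the natural projection, noting that $\ker(\pi_N) = \overline{N}$ and $\pi_N(\overline{H}) = \pi_N(H)$ (the latter because $\pi_N^{-1}(\pi_N(H))$ is a clopen subset of $\widehat{G}^\C$ containing $H$). I will also use the standard fact that a base of identity neighborhoods in $\widehat{G}^\C$ is given by $\{\overline{N} \mid N \in \NC(G)\}$, so that for any subset $S \subseteq \widehat{G}^\C$ one has $\overline{S} = \bigcap_{N \in \NC(G)} S\overline{N}$.

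For (i)$\Rightarrow$(ii), the inclusion $\overline{C_H(g)} \subseteq C_{\overline{H}}(g)$ is immediate: conjugation is continuous, so $C_{\overline{H}}(g) = \overline{H} \cap C_{\widehat{G}^\C}(g)$ is closed in $\widehat{G}^\C$ and contains $C_H(g)$. For the reverse inclusion, fix $x \in C_{\overline{H}}(g)$ and $K \in \NC(G)$. Apply the $\C$-$CC_G$ hypothesis to get some $L \in \NC(G)$ with $L \leq K$ and $C_{\pi_L(H)}(\pi_L(g)) \subseteq \pi_L(C_H(g)K)$. Since $\pi_L$ is a continuous homomorphism, $\pi_L(x) \in \pi_L(\overline{H}) = \pi_L(H)$ and $\pi_L(x)$ commutes with $\pi_L(g)$, so $\pi_L(x) \in C_{\pi_L(H)}(\pi_L(g)) \subseteq \pi_L(C_H(g)K)$. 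Lifting back via $\ker(\pi_L) = \overline{L}$ gives $x \in C_H(g) K \overline{L} \subseteq C_H(g)\overline{K}$. Since $K$ was arbitrary, $x \in \bigcap_{K \in \NC(G)} C_H(g)\overline{K} = \overline{C_H(g)}$.

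For (ii)$\Rightarrow$(i), fix $K \in \NC(G)$ and set $Y_L = C_{\pi_L(H)}(\pi_L(g))$ for each $L \in \NC(G)$ with $L \leq K$, together with the connecting projections $\pi_{K,L} \colon G/L \to G/K$. I claim the family $\{\pi_{K,L}(Y_L)\}_{L \leq K}$ is a decreasing family of subsets of the finite set $G/K$, and that
\begin{displaymath}
    \bigcap_{L \leq K} \pi_{K,L}(Y_L) = \pi_K(C_H(g)).
\end{displaymath}
Granting the claim, since $\{\pi_{K,L}(Y_L)\}_L$ is a decreasing family of finite sets directed by reverse inclusion of $L$, it stabilizes; thus there exists $L \leq K$ with $\pi_{K,L}(Y_L) = \pi_K(C_H(g))$, which gives $Y_L \subseteq \pi_{K,L}^{-1}(\pi_K(C_H(g))) = \pi_L(C_H(g)K)$ as required. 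The decreasing property follows because any $L' \leq L$ makes $\pi_{L,L'}$ a group homomorphism that sends $Y_{L'}$ into $Y_L$. The core of the claim, and the main obstacle, is identifying the inverse limit $\bigcap_{L \leq K} \pi_L^{-1}(Y_L)$ inside the compact group $\widehat{G}^\C$ with $C_{\overline{H}}(g)$: the inclusion $\supseteq$ is clear, and for $\subseteq$, an element $x$ in the intersection satisfies $\pi_L(x) \in \pi_L(H)$ and $\pi_L([x,g]) = 1$ for every $L \in \NC(G)$ with $L \leq K$ (and hence for every $L \in \NC(G)$ after intersecting with $K$), which forces $x \in \overline{H}$ and $[x,g] \in \bigcap_L \overline{L} = \{1\}$.

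Once this identification is established, a standard compactness argument (continuous image of the inverse limit equals the inverse limit of the images) together with the hypothesis (ii) yields
\begin{displaymath}
    \bigcap_{L \leq K} \pi_{K,L}(Y_L) = \pi_K\!\left(\bigcap_{L \leq K} \pi_L^{-1}(Y_L)\right) = \pi_K(C_{\overline{H}}(g)) = \pi_K(\overline{C_H(g)}) = \pi_K(C_H(g)),
\end{displaymath}
where the last equality uses that $G/K$ is discrete and $\pi_K$ is continuous. The hardest step is verifying the compactness exchange $\pi_K(\bigcap_L \pi_L^{-1}(Y_L)) = \bigcap_L \pi_{K,L}(Y_L)$, which requires checking that each $Y_L$ is closed in $G/L$ (trivially, as $G/L$ is discrete and finite) and that the system is a compatible family in a cofiltered limit; this is a routine but technical verification reminiscent of \cite[Corollary~12.2]{ashot_raags} adapted to the pro-$\C$ setting.
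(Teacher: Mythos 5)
Your proof is correct. The direction (i)$\Rightarrow$(ii) is essentially the paper's argument: the trivial inclusion $\overline{C_H(g)} \subseteq C_{\overline{H}}(g)$, and then the $\C$-CC witness $L$ used to pull membership in $C_{\pi_L(H)}(\pi_L(g)) \subseteq \pi_L(C_H(g)K)$ back to $C_H(g)\overline{K}$; the paper phrases this contrapositively (starting from $h \in \overline{H}\setminus\overline{C_H(g)}$), but the substance is identical. For (ii)$\Rightarrow$(i) your route differs in presentation from the paper's: the paper argues by contradiction, choosing for every $L \leq K$ an element $x_L \in H$ with $\varphi_L(x_L) \in C_{\varphi_L(H)}(\varphi_L(g))\setminus\varphi_L(C_H(g)K)$, and using compactness of $\widehat{G}^{\C}$ to extract a cluster point $h$ of this net which lies in $C_{\overline{H}}(g)$ but not in $\overline{C_H(g)}$. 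You instead argue directly: the sets $\pi_{K,L}(Y_L)$ with $Y_L = C_{\pi_L(H)}(\pi_L(g))$ form a codirected family of subsets of the finite group $G/K$ (codirectedness coming from closure of $\NC(G)$ under intersections), their intersection is identified with $\pi_K(C_{\overline{H}}(g)) = \pi_K(\overline{C_H(g)}) = \pi_K(C_H(g))$ via the exchange of a continuous image with a codirected intersection of compact sets, and finiteness forces stabilization at some $L$, which is then exactly a $\C$-CC witness since $\pi_{K,L}^{-1}(\pi_K(C_H(g))) = \pi_L(C_H(g)K)$. Both arguments rest on the same compactness of $\widehat{G}^{\C}$ and on the standard facts $\ker\pi_N = \overline{N}$, $\pi_N(\overline{H}) = \pi_N(H)$ and $\overline{S} = \bigcap_N S\overline{N}$; your version avoids nets and proof by contradiction and explicitly exhibits the witness as a stabilization level, at the cost of having to justify the image-versus-intersection exchange, while the paper's cluster-point argument is shorter to state once those topological facts are granted. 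All the steps you flag as routine (cofinality of $\{L \leq K\}$ in $\NC(G)$, $K\overline{L} \subseteq \overline{K}$, closedness of $Y_L$) do go through, so there is no gap.
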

    \begin{proof}    

    For each $N \in \NC(G)$, we let $\varphi_N \colon G \to G/N$ be the natural surjection. Then the map $\varphi \colon G \to \hat{G}^{\C}$ defined by $\varphi(x) = (\varphi_N(x))_{N \in \NC(G)}$ is a homomorphism, and since $G$ is residually-$\C$, the homomorphism $\varphi$ is an injection. Each homomorphism $\varphi_N$ for each $N \in \NC(G)$ can be then be uniquely extended to a continuous homomorphism $\tilde{\varphi}_N \colon \hat{G}^{\C} \to G/N$.

    Suppose that the pair $(H,g)$ satisfies $\C\text{-CC}_G$. We want to show that $C_{\overline{H}}(g) = \overline{C_H(g)}$ in $\widehat{G}^{\C}$. Let us note that the inclusion $\overline{C_H(g)} \subseteq C_{\overline{H}}(g)$ holds trivially; hence, we only need to show the inclusion in the opposite direction, i.e.~$C_{\overline{H}}(g) \subseteq \overline{C_H(g)}$. Let $h \in \overline{H}\setminus \overline{C_H(g)}$ be arbitrary. By definition, there exists some subgroup $K \in \NC(G)$ such that $\widehat{\pi}_K(h) \notin C_{\pi_K(H)}(g)$. As the pair $(H,g)$ satisfies $\C\text{-CC}_G$, there is a subgroup $L \in \NC(G)$ such that $L \leq K$ and where
        \begin{displaymath}
            C_{\pi_L(H)}(\pi_L(g)) \subseteq \pi_L(C_H(g)K) \mbox{ in G/L}. 
        \end{displaymath}
        Since $L \leq K$, we see that $\widehat{\pi}_K$ factors through $\widehat{\pi}_L$, namely $\widehat{\pi}_K = \pi_{K,L} \circ \widehat{\pi}_L$. This means that $\widehat{\pi}(h) \notin C_{\pi_L(H)}(g)$. Therefore, $h \notin C_{\overline{H}}$, and consequently, $C_{\overline{H}}(g) \subseteq \overline{C_H(g)}$ in $\widehat{G}^{\C}$.

    Now suppose that $C_{\overline{H}}(g) = \overline{C_H(g)}$ in $\widehat{G}^{\C}$. Choose any $K \in \NC(G)$ denote $\mathcal{L} = \{ L \in \NC(G) \: | \: L \leq K\}.$ We proceed by contradiction, and suppose that for each $L \in \mathcal{L}$, there is an $x_L \in H$ such that $\varphi_L(x_L) \in C_{\varphi_L(H)}(\varphi_L(\varphi_L(g))) \backslash \varphi_L(C_H(g)K)$. We observe that $\mathcal{L}$ is a directed set where if $L_1, L_2 \in \mathcal{L},$ then $L_1 \preceq L_2$ if and only if $L_2 \leq L_1.$ Hence, $(x_L)_{L \in \mathcal{L}}$ is a net in $\hat{G}^{\C}$, and since $\hat{G}^{\C}$ is compact, this net has a cluster point $h \in \overline{H} \leq \hat{G}^{\C}$.

    Consider any $N \in \NC(G),$ and set $L=N \cap K \in \NC(G).$ According to the definition of the topology of $\hat{G}^{\C}$, there is a subgroup $M \in \mathcal{L}$ such that $M \leq L$ and $\varphi_L(x_M) = \tilde{\varphi}_L(h)$. By construction, we have that $\varphi_L(x_M) \in C_{\varphi_L(H)}(\varphi_L(g))$. Therefore, $\tilde{\varphi}_L(h) \in C_{\varphi_L(H)}(\varphi_L(g))$ which implies that $\tilde{\varphi}_N(h) \in C_{\varphi_N(H)}(\varphi_N(g))$ because $L \leq N.$ Given that the latter holds for every $N \in \NC(G);$ hence, $h \in C_{\overline{H}}(g).$

    On the other hand, given that $h$ is a cluster point of the net $(x_L)_{L\in \mathcal{L}}$ and $K \in \mathcal{L}$, there exists $M \in \mathcal{L}$ where $\varphi_K(x_M) = \tilde{\varphi}_H(h).$ Since $M \leq K$, we have $x_M \notin C_H(g)KM = C_H(g)K = \varphi_K^{-1}(\varphi_K(C_H(g)))$. Therefore, $\tilde{\varphi}_K(h) = \varphi_K(x_M) \notin \varphi_K(C_H(g))$. Hence, $h \notin \overline{C_H(g)}$ which is a contradiction. Hence, $(H,g)$ satisfies $\C\text{-CC}_G$.
\end{proof}

Therefore, we have the following corollary which generalizes \cite[Corollary~12.2]{ashot_raags} to the pro-$\C$ setting.
\begin{corollary}
    A residually-$\C$ group $G$ satisfies $\C$-CC if and only if $\overline{C_G(g)} = C_{\hat{G}^{\C}}(g)$ for every $g \in G$.
\end{corollary}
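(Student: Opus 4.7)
The plan is to derive this corollary as an immediate specialization of the preceding proposition to the case $H = G$. First I would recall that the $\C$-centralizer condition on the group $G$ is, by the definition given in Section~2.2, exactly the statement that every element $g \in G$ satisfies $\C$-CC in $G$, which after unpacking is the same as requiring that for every $g \in G$ and every $K \in \NC(G)$ the pair $(G,g)$ admits a $\C$-CC witness $L \in \NC(G)$ with $L \leq K$. In the notation of the appendix, this is precisely the condition that the pair $(G,g)$ satisfies $\C\text{-}CC_G$ for every $g \in G$.

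Next I would observe that, since $G$ is residually-$\C$, the canonical map $\varphi \colon G \to \widehat{G}^{\C}$ is injective, and the image $\varphi(G)$ is dense in $\widehat{G}^{\C}$ by construction of the pro-$\C$ completion. Consequently, taking $H = G$ inside the proposition, the pro-$\C$ closure $\overline{H}$ of $G$ in $\widehat{G}^{\C}$ equals $\widehat{G}^{\C}$ itself. Applying the proposition with $H = G$ then yields, for each fixed $g \in G$, the equivalence
\[
    (G,g) \text{ satisfies } \C\text{-}CC_G \iff C_{\widehat{G}^{\C}}(g) = \overline{C_G(g)} \text{ in } \widehat{G}^{\C}.
\]
Quantifying over all $g \in G$ and combining with the reformulation of $\C$-CC in the previous paragraph gives the claimed biconditional.

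There is no real obstacle here: the entire content of the corollary is absorbed into the proposition, and the only thing to verify is the identification $\overline{G} = \widehat{G}^{\C}$ together with the observation that the group-level $\C$-CC condition is the conjunction over $g \in G$ of the pair conditions $\C\text{-}CC_G$ for $(G,g)$. I would therefore keep the proof to a couple of sentences, explicitly citing the proposition and noting density of $G$ in $\widehat{G}^{\C}$ as the only nontrivial ingredient.
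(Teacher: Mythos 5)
Your proposal is correct and matches the paper's (implicit) argument: the corollary is stated as an immediate consequence of the preceding proposition, obtained exactly as you describe by taking $H = G$, noting $\overline{G} = \widehat{G}^{\C}$ by density, and quantifying over all $g \in G$ since the group-level $\C$-CC is the conjunction of the pairwise conditions $(G,g)$.
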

Let us note that the above corollary also follows from \cite[Lemma 2.2]{boggi2025} which states that in a residually-$\C$ group $G$ and arbitrary $g \in G$ the following two statements are equivalent:
\begin{itemize}
    \item[(i)] the conjugacy class $g^K$ is closed in $\proC(G)$ for every $K \in \NC(G)$;
    \item[(ii)] the conjugacy class $g^G$ is closed in $\proC(G)$ and $\overline{C_G(g)} = C_{\hat{G}^{\C}}(g)$.
\end{itemize}

We note that $\C$-conjugacy separability of a residually-$\C$ group $G$ is equivalent to the condition
$$
g^{\hat{G}^{\C}} \cap G = g^G \text{ in } \hat{G}^{\C}, \text{ for all } g \in G.
$$
In other words, the above condition says that two elements $g$ and $h$ of $G$ are conjugate in $\hat{G}^{\C}$ if and only if they are conjugate in $G$.

We finish by reformulating the $\C$-hereditary conjugacy separability of $G$ in purely pro-$\C$ terms.
\begin{corollary}
    Suppose that $G$ is a residually-$\C$ group. Then $G$ is $\C$-hereditarily conjugacy separable if and only if for every $g \in G$ both of the following hold in the pro-$\C$ completion $\hat{G}^{\C}$ of $G$:
    \begin{itemize}
        \item $h^{\hat{G}^{\C}} \cap G = g^G;$
        \item $\overline{C_G(g)} = C_{\hat{G}^{\C}}(g)$.
    \end{itemize}
\end{corollary}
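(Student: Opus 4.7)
The plan is to combine Theorem~\ref{theorem:CC_HCS} with the two translations into pro-$\C$ language that have just been set up in the appendix. By Theorem~\ref{theorem:CC_HCS}, the residually-$\C$ group $G$ is $\C$-hereditarily conjugacy separable if and only if $G$ is $\C$-conjugacy separable and $G$ satisfies the $\C$-centralizer condition. The corollary will follow as soon as each of these two conditions is rephrased as one of the two bullet points.

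For the first bullet, I would quote the observation made in the paragraph immediately preceding the statement: a residually-$\C$ group $G$ is $\C$-conjugacy separable if and only if $g^{\widehat{G}^{\C}}\cap G = g^G$ in $\widehat{G}^{\C}$ for every $g\in G$. This is essentially a restatement of the definition of $\C$-conjugacy separability in the language of the natural embedding $\varphi\colon G \hookrightarrow \widehat{G}^{\C}$, together with the fact that the conjugacy class $g^G$ is $\C$-closed precisely when non-conjugate elements of $G$ are separated in some finite quotient from $\NC(G)$. I would note the (apparent) typo in the statement: the first bullet should read $g^{\widehat{G}^{\C}}\cap G = g^G$.

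For the second bullet, I would apply the corollary immediately preceding the final statement, which says that a residually-$\C$ group $G$ satisfies the $\C$-centralizer condition if and only if $\overline{C_G(g)} = C_{\widehat{G}^{\C}}(g)$ in $\widehat{G}^{\C}$ for every $g\in G$. That corollary was in turn obtained from the Proposition characterizing pairs $(H,g)$ satisfying $\C$-$CC_G$ by applying it to the pair $(G,g)$ for each $g\in G$.

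Putting these two equivalences together gives both directions of the final corollary: if $G$ is $\C$-HCS, then Theorem~\ref{theorem:CC_HCS} guarantees both $\C$-CS and $\C$-CC, which translate into the two bullet points; conversely, if both bullets hold for every $g\in G$, then $G$ is $\C$-CS and satisfies $\C$-CC, so Theorem~\ref{theorem:CC_HCS} gives $\C$-HCS. There is no real obstacle here—the work has already been done in the preceding Proposition and in Theorem~\ref{theorem:CC_HCS}; the only minor care needed is to make sure the quantifiers ``for every $g\in G$'' appear on both sides of the equivalence, and to flag the typo in the first bullet.
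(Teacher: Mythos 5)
Your proposal is correct and is exactly the argument the paper intends (the paper leaves it implicit): combine Theorem~\ref{theorem:CC_HCS} with the pro-$\C$ reformulation of $\C$-conjugacy separability noted before the statement and the preceding corollary translating the $\C$-centralizer condition into $\overline{C_G(g)} = C_{\hat{G}^{\C}}(g)$. Your flagging of the typo in the first bullet ($h$ should be $g$, and $\tilde{G}^{\C}$ should be $\hat{G}^{\C}$) is also accurate.
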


One should compare the results of this appendix to \cite[Lemma~2.3]{boggi2025}, where it is shown that for a finite subgroup $H \leq G$ the following are equivalent:
\begin{enumerate}
    \item[(i)] the conjugacy class $H^K = \{h^k\mid h \in H, k \in K\}$ is closed in $\proC(G)$ for every $K \in \NC(G)$;
    \item[(ii)] the conjugacy class $H^G = \{h^k\mid h \in H, k \in K\}$ is closed in $\proC(G)$ and $N_{\hat{G}^{\C}}(H) = \overline{N_G(H)}$.
\end{enumerate}
It might be worthwhile to formulate a normalizer analogue of the $\C$-centralizer condition and study its connection to $\C$-conjugacy distinguished subgroups in the sense of \cite{ribes2016conjugacy}.
\bibliographystyle{plain}
\bibliography{references}

@article{ribes2016conjugacy,
  title={Conjugacy distinguished subgroups.},
  author={Ribes, Luis and Zalesskii, Pavel A},
  journal={Journal of Group Theory},
  volume={19},
  number={3},
  year={2016}
}

@book{ribes2017profinite,
  title={Profinite graphs and groups},
  author={Ribes, Luis and others},
  volume={66},
  year={2017},
  publisher={Springer}
}

@article{francoeur2020subgroup,
  title={Subgroup induction property for branch groups},
  author={Francoeur, Dominik and Leemann, Paul-Henry},
  journal={arXiv preprint arXiv:2011.13310},
  year={2020}
}

@article{boggi2025,
  title={Finite subgroups of the profinite completion of good groups},
  author={Boggi, Marco and Zalesskii, Pavel},
  journal={Bulletin of the London Mathematical Society},
  volume={57},
  number={1},
  pages={236--255},
  year={2025},
  publisher={Wiley Online Library}
}

@article{1-rel,
  title={One-relator groups with torsion are conjugacy separable},
  author={Minasyan, Ashot and Zalesskii, Pavel},
  journal={Journal of Algebra},
  volume={382},
  pages={39--45},
  year={2013},
  publisher={Elsevier}
}

@article{stebe,
  title={A residual property of certain groups},
  author={Stebe, Peter F},
  journal={Proceedings of the American Mathematical Society},
  volume={26},
  number={1},
  pages={37--42},
  year={1970}
}

@article{limit,
  title={Limit groups are conjugacy separable},
  author={Chagas, S. C. and Zalesskii, P. A.},
  journal={International Journal of Algebra and Computation},
  volume={17},
  number={4},
  pages={851--857},
  year={2007},
  publisher={Singapore: World Scientific, c1991-}
}

@article{armando,
  title={A proof that all Seifert 3-manifold groups and all virtual surface groups are conjugacy separable},
  author={Martino, Armando},
  journal={Journal of Algebra},
  volume={313},
  number={2},
  pages={773--781},
  year={2007},
  publisher={Elsevier}
}

@article{polycyclic_remeslennikov,
  title={Conjugacy in polycyclic groups},
  author={Remeslennikov, Vladimir N},
  journal={Algebra and Logic},
  volume={8},
  number={6},
  pages={404--411},
  year={1969},
  publisher={Kluwer Academic Publishers-Plenum Publishers New York}
}

@article{polycyclic_formanek,
  title={Conjugate separability in polycyclic groups},
  author={Formanek, Edward},
  journal={J. Algebra},
  volume={42},
  number={1},
  pages={1--10},
  year={1976}
}

@article{dyer,
  title={Separating conjugates in free-by-finite groups},
  author={Dyer, Joan L},
  journal={Journal of the London Mathematical Society},
  volume={2},
  number={2},
  pages={215--221},
  year={1979},
  publisher={Oxford University Press}
}

@article{compact,
  title={Separability of double cosets and conjugacy classes in 3-manifold groups},
  author={Hamilton, Emily and Wilton, Henry and Zalesskii, Pavel A},
  journal={Journal of the London Mathematical Society},
  volume={87},
  number={1},
  pages={269--288},
  year={2013},
  publisher={Oxford University Press}
}

@article{stebe_sl3z,
  title={Conjugacy separability of groups of integer matrices},
  author={Stebe, Peter F},
  journal={Proceedings of the American Mathematical Society},
  volume={32},
  number={1},
  pages={1--7},
  year={1972}
}

@article{remeslennikov,
  title={Finite approximability of groups with respect to conjugacy},
  author={Remeslennikov, Vladimir Nikanorovich},
  journal={Sibirskii Matematicheskii Zhurnal},
  volume={12},
  number={5},
  pages={1085--1099},
  year={1971},
  publisher={Sobolev Institute of Mathematics of the Siberian Branch of the Russian~…}
}

@article {Lysenok2010,
    AUTHOR = {Lysenok, Igor and Myasnikov, Alexei and Ushakov, Alexander},
     TITLE = {The conjugacy problem in the {G}rigorchuk group is polynomial
              time decidable},
   JOURNAL = {Groups Geom. Dyn.},
  FJOURNAL = {Groups, Geometry, and Dynamics},
    VOLUME = {4},
      YEAR = {2010},
    NUMBER = {4},
     PAGES = {813--833},
      ISSN = {1661-7207,1661-7215},
   MRCLASS = {20F10 (20E08)},
  MRNUMBER = {2727666},
MRREVIEWER = {Fran\c cois\ Dahmani},
       DOI = {10.4171/GGD/108},
       URL = {https://doi.org/10.4171/GGD/108},
}

@article{grigorchuk2008groups,
  title={Groups of intermediate growth: an introduction},
  author={Grigorchuk, Rostislav and Pak, Igor},
  journal={Enseign. Math.(2)},
  volume={54},
  number={3-4},
  pages={251--272},
  year={2008}
}

@article{grigorchuk2005solved,
  title={Solved and unsolved problems around one group},
  author={Grigorchuk, Rostislav},
  journal={Infinite groups: geometric, combinatorial and dynamical aspects},
  pages={117--218},
  year={2005},
  publisher={Springer}
}

@article{toinet2013conjugacy,
  title={Conjugacy $ p $-separability of right-angled Artin groups and applications},
  author={Toinet, Emmanuel},
  journal={Groups, Geometry, and Dynamics},
  volume={7},
  number={3},
  pages={751--790},
  year={2013}
}

@article{zalesskii,
  title={Normalizers in groups and in their profinite completions},
  author={Ribes, Luis and Zalesskii, Pavel A},
  journal={Revista Matem{\'a}tica Iberoamericana},
  volume={30},
  number={1},
  pages={165--190},
  year={2014}
}

@article{racgs,
  title={On conjugacy separability of some Coxeter groups and parabolic-preserving automorphisms},
  author={Caprace, Pierre-Emmanuel and Minasyan, Ashot},
  journal={Illinois Journal of Mathematics},
  volume={57},
  number={2},
  pages={499--523},
  year={2013},
  publisher={Duke University Press}
}

@article {Noce2023,
    AUTHOR = {Noce, Marialaura and Thillaisundaram, Anitha},
     TITLE = {Ramification structures for quotients of the {G}rigorchuk
              groups},
   JOURNAL = {J. Algebra Appl.},
  FJOURNAL = {Journal of Algebra and its Applications},
    VOLUME = {22},
      YEAR = {2023},
    NUMBER = {2},
     PAGES = {Paper No. 2350047, 16},
      ISSN = {0219-4988,1793-6829},
   MRCLASS = {20E08 (14J29 20D15)},
  MRNUMBER = {4541633},
MRREVIEWER = {Olympia\ Talelli},
       DOI = {10.1142/S0219498823500470},
       URL = {https://doi.org/10.1142/S0219498823500470},
}

@book{harpe2000,
  title={Topics in geometric group theory},
  author={de la Harpe, Pierre},
  year={2000},
  publisher={University of Chicago Press}
}

@article {chagas,
    AUTHOR = {Chagas, Sheila C. and Zalesskii, Pavel A.},
     TITLE = {Finite index subgroups of conjugacy separable groups},
   JOURNAL = {Forum Math.},
  FJOURNAL = {Forum Mathematicum},
    VOLUME = {21},
      YEAR = {2009},
    NUMBER = {2},
     PAGES = {347--353},
      ISSN = {0933-7741,1435-5337},
   MRCLASS = {20E26 (20E06 20E18)},
  MRNUMBER = {2503309},
MRREVIEWER = {Peng\ Choon\ Wong},
       DOI = {10.1515/FORUM.2009.016},
       URL = {https://doi.org/10.1515/FORUM.2009.016},
}

@article{armando_ashot,
url = {https://doi.org/10.1515/form.2011.089},
title = {Conjugacy in normal subgroups of hyperbolic groups},
author = {Armando Martino and Ashot Minasyan},
pages = {889--909},
volume = {24},
number = {5},
journal = {Forum Mathematicum},
doi = {doi:10.1515/form.2011.089},
year = {2012},
lastchecked = {2024-05-06}
}

@article{quantifying,
    author = {Ferov, Michal and Pengitore, Mark},
    title = "{Quantifying conjugacy separability in wreath products of groups}",
    journal = {The Quarterly Journal of Mathematics},
    volume = {73},
    number = {4},
    pages = {1555-1593},
    year = {2022},
    month = {11},
    abstract = "{We study generalizations of conjugacy separability in restricted wreath products of groups. We provide an effective upper bound for \\$\\mathcal\\{C\\}\\$-conjugacy separability of a wreath product \\$A \\wr B\\$ in terms of the \\$\\mathcal\\{C\\}\\$-conjugacy separability of A and B, the growth of \\$\\mathcal\\{C\\}\\$-cyclic subgroup separability of B and the \\$\\mathcal\\{C\\}\\$-residual girth of \\$B.\\$ As an application, we provide a characterization of when \\$A \\wr B\\$ is p-conjugacy separable. We use this characterization to provide for each prime p an example of a wreath product with infinite base group that is p-conjugacy separable. We also provide asymptotic upper bounds for conjugacy separability for wreath products of nilpotent groups, which include the lamplighter groups and provide asymptotic upper bounds for conjugacy separability of the free metabelian groups.}",
    issn = {0033-5606},
    doi = {10.1093/qmath/haac031},
    url = {https://doi.org/10.1093/qmath/haac031},
    eprint = {https://academic.oup.com/qjmath/article-pdf/73/4/1555/47773171/haac031.pdf},
}

@article{wilson1997conjugacy,
  title={Conjugacy separability of certain torsion groups},
  author={Wilson, John S and Zalesskii, Pavel A},
  journal={Archiv der Mathematik},
  volume={68},
  pages={441--449},
  year={1997},
  publisher={Citeseer}
}

@article {meldrum_centralizers,
    AUTHOR = {Meldrum, J. D. P.},
     TITLE = {Centralisers in wreath products},
   JOURNAL = {Proc. Edinburgh Math. Soc. (2)},
  FJOURNAL = {Proceedings of the Edinburgh Mathematical Society. Series II},
    VOLUME = {22},
      YEAR = {1979},
    NUMBER = {2},
     PAGES = {161--168},
      ISSN = {0013-0915},
   MRCLASS = {20E22},
  MRNUMBER = {549461},
MRREVIEWER = {Norman Blackburn},
       DOI = {10.1017/S0013091500016278},
       URL = {https://doi.org/10.1017/S0013091500016278},
}

@article{gruenberg,
  title={Residual properties of infinite soluble groups},
  author={Gr{\"u}nberg, Karl W},
  journal={Proceedings of the London Mathematical Society},
  volume={3},
  number={1},
  pages={29--62},
  year={1957},
  publisher={Wiley Online Library}
}

@article {garrido_GS,
    AUTHOR = {Garrido, Alejandra},
     TITLE = {Abstract commensurability and the {G}upta-{S}idki group},
   JOURNAL = {Groups Geom. Dyn.},
  FJOURNAL = {Groups, Geometry, and Dynamics},
    VOLUME = {10},
      YEAR = {2016},
    NUMBER = {2},
     PAGES = {523--543},
      ISSN = {1661-7207},
   MRCLASS = {20E08 (20B27 20E07 20F10)},
  MRNUMBER = {3513107},
MRREVIEWER = {Vitaly A. Roman\cprime kov},
       DOI = {10.4171/GGD/355},
       URL = {https://doi.org/10.4171/GGD/355},
}

@article {hall,
    AUTHOR = {Hall, Jr., M.},
     TITLE = {Subgroups of finite index in free groups},
   JOURNAL = {Canadian J. Math.},
  FJOURNAL = {Canadian Journal of Mathematics. Journal Canadien de
              Math\'ematiques},
    VOLUME = {1},
      YEAR = {1949},
     PAGES = {187--190},
      ISSN = {0008-414X},
   MRCLASS = {20.0X},
  MRNUMBER = {0028836},
MRREVIEWER = {R. Baer},
}

@article{malcev,
  title={On homomorphisms onto finite groups},
  author={Mal’cev, A. I.},
  journal={American Mathematical Society Translations, Series},
  volume={2},
  number={119},
  pages={67--79},
  year={1983}
}

@article{mostowski,
AUTHOR = {Mostowski, A.},
     TITLE = {On the decidability of some problems in special classes of
              groups},
   JOURNAL = {Fund. Math.},
  FJOURNAL = {Polska Akademia Nauk. Fundamenta Mathematicae},
    VOLUME = {59},
      YEAR = {1966},
     PAGES = {123--135},
      ISSN = {0016-2736},
   MRCLASS = {20.10 (02.00)},
  MRNUMBER = {0224693},
MRREVIEWER = {W. W. Boone and Paul E. Schupp},
       DOI = {10.4064/fm-59-2-123-135},
       URL = {https://doi.org/10.4064/fm-59-2-123-135},
}

@article {mf,
    AUTHOR = {Ferov, M.},
     TITLE = {On conjugacy separability of graph products of groups},
   JOURNAL = {J. Algebra},
  FJOURNAL = {Journal of Algebra},
    VOLUME = {447},
      YEAR = {2016},
     PAGES = {135--182},
      ISSN = {0021-8693},
   MRCLASS = {20E26 (20E06 20E10 20E45)},
  MRNUMBER = {3427632},
MRREVIEWER = {Enric Ventura Capell},
       DOI = {10.1016/j.jalgebra.2015.08.027},
       URL = {http://dx.doi.org/10.1016/j.jalgebra.2015.08.027},
}

@phdthesis{mf_thesis,
  title={Separability properties of graph products of groups},
  author={Ferov, M.},
  year={2015},
  school={University of Southampton}
}

@book {rz,
    AUTHOR = {Ribes, L. and Zalesskii, P.},
     TITLE = {Profinite groups},
    SERIES = {Ergebnisse der Mathematik und ihrer Grenzgebiete. 3. Folge. A
              Series of Modern Surveys in Mathematics [Results in
              Mathematics and Related Areas. 3rd Series. A Series of Modern
              Surveys in Mathematics]},
    VOLUME = {40},
   EDITION = {Second},
 PUBLISHER = {Springer-Verlag, Berlin},
      YEAR = {2010},
     PAGES = {xvi+464},
      ISBN = {978-3-642-01641-7},
   MRCLASS = {20E18},
  MRNUMBER = {2599132},
       DOI = {10.1007/978-3-642-01642-4},
       URL = {https://doi.org/10.1007/978-3-642-01642-4},
}

@article {ashot_raags,
    AUTHOR = {Minasyan, A.},
     TITLE = {Hereditary conjugacy separability of right-angled {A}rtin
              groups and its applications},
   JOURNAL = {Groups Geom. Dyn.},
  FJOURNAL = {Groups, Geometry, and Dynamics},
    VOLUME = {6},
      YEAR = {2012},
    NUMBER = {2},
     PAGES = {335--388},
      ISSN = {1661-7207},
   MRCLASS = {20F36 (20E26 20F10 20F55 20F67)},
  MRNUMBER = {2914863},
MRREVIEWER = {Ian M. Chiswell},
       DOI = {10.4171/GGD/160},
       URL = {http://dx.doi.org/10.4171/GGD/160},
}

@article {separating_cyclic_subgroups_graph_products,
    AUTHOR = {Berlai, Federico and Ferov, Michal},
     TITLE = {Separating cyclic subgroups in graph products of groups},
   JOURNAL = {J. Algebra},
  FJOURNAL = {Journal of Algebra},
    VOLUME = {531},
      YEAR = {2019},
     PAGES = {19--56},
      ISSN = {0021-8693,1090-266X},
   MRCLASS = {20F65 (05C25 20E06)},
  MRNUMBER = {3950469},
MRREVIEWER = {Olga\ Varghese},
       DOI = {10.1016/j.jalgebra.2019.05.001},
       URL = {https://doi.org/10.1016/j.jalgebra.2019.05.001},
}

\end{document}